\long\def\qg#1\ik{\cy{#1}}\long\def\qo#1\wj{}{\catcode`\/=0\catcode`\\=12/long/gdef/qz#1\bytes#2|#3/wj{/expandafter/gdef/expandafter/hj/expandafter{/hj #1}/if/relax/detokenize{#3}/relax/expandafter/qo/fi/qz#3/wj}/long/gdef/qf#1{/gdef/hj{}/qz#1\bytes|/wj}}\long\def\cy#1{\endgroup\gdef\DocumentHash{}\ifx\empty#1\empty{\def\x{\multiply~by2\x}\x}\fi\long\gdef\nz{#1}\qf{#1}\xdef\DocumentHash{\pdfmdfivesum{\hj}}\everyeof{}\newlinechar=13\relax\expandafter\scantokens\expandafter{\nz}}\def\qh{\begingroup\catcode`\\=12\catcode`\{=12\catcode`\}=12\catcode`\$=12\catcode`\&=12\catcode`\#=12\catcode`\^=12\catcode`\_=12\catcode`\%=12\catcode`\~=12\catcode`\ =12\catcode`\^^I=12\catcode`\^^M=12\endlinechar=`\^^M\qg}\everyeof{\ik}\qh\tracingmacros=0\tracingcommands=0\tracingonline=0\let\tracingmacros\relax\let\tracingcommands\relax\newcount\lor\newcount\gv\newcount\vh\newcount\vl\newcount\vt\newtoks\wz\newbox\qB\newbox\qC\newbox\qE\newcount\qN\def\wj{}\def\qr#1#2\pj{\gv=`#1\relax\edef\jz{#2#1}}\def\qa{\expandafter\qr\jz\pj}\def\qu#1{\edef\jz{#1}\qa}{\catcode`\~=12\gdef\qb#1{\lccode`\~=#1\relax\lowercase{\toks0{~}}\wz=\expandafter\expandafter\expandafter{\expandafter\the\expandafter\wz\the\toks0}}}\def\qy#1{\lor=#1\relax\advance\lor by-\gv\relax\ifnum\lor<0\advance\lor by256\relax\fi\ifnum\lor=0\else\ifnum\lor=10\qb{13}\else\qb\lor\fi\fi\qa}\def\cv#1{\vt=`#1\relax\ifnum\vt>92\advance\vt by-34\relax\else\advance\vt by-33\relax\fi}\def\qs#1{\cv#1\vl=\numexpr\vl*85+\vt\relax\vt=\vl\divide\vt by65536\relax\vh=\numexpr\vh*85+\vt\relax\advance\vl by-\numexpr\vt*65536\relax}\def\qd#1#2#3#4#5{\vh=0\relax\vl=0\relax\qs#1\qs#2\qs#3\qs#4\qs#5\vt=\vh\divide\vt by256\relax\qy\vt\advance\vh by-\numexpr\vt*256\relax\qy\vh\vt=\vl\divide\vt by256\relax\qy\vt\advance\vl by-\numexpr\vt*256\relax\qy\vl}\def\ql#1#2#3#4#5{\ifx\wj#1\relax\else\qd#1#2#3#4#5\expandafter\ql\fi}\def\qk{\begingroup\catcode`\{=12\catcode`\}=12\catcode`\$=12\catcode`\&=12\catcode`\#=12\catcode`\^=12\catcode`\_=12\catcode`\%=12\catcode`\~=12\catcode`\ =12\catcode`\^^I=12\catcode`\^^M=12\endlinechar=`\^^M\qm}\long\def\qm#1#2#3|{\endgroup\qu\DocumentHash\cv#2\loop\ifnum\vt>0\qa\advance\vt by-1\relax\repeat\wz{}\ql#3\wj\wj\wj\wj\wj\newlinechar=13\relax\expandafter\scantokens\expandafter{\the\wz}}\expandafter\let\csname bytes\endcsname\qk\documentclass[10pt,reqno]{amsart}\makeatletter\RequirePackage{mathtools}\RequirePackage{amsfonts}\RequirePackage{mathrsfs}\RequirePackage{amssymb}\RequirePackage{amsthm}\RequirePackage{theoremref}\RequirePackage{microtype}\RequirePackage{bm}\RequirePackage{tikz-cd}\RequirePackage{listings}\@ifclassloaded{beamer}{}{\@ifclassloaded{mdwlist}{\RequirePackage[shortlabels]{enumitem}}{\RequirePackage[shortlabels,inline]{enumitem}}\RequirePackage{xr-hyper}\RequirePackage{hyperref}}\RequirePackage{graphicx}\RequirePackage{iftex}\ifPDFTeX\RequirePackage[T1]{fontenc}\fi\RequirePackage{accsupp}\RequirePackage{pdfrender}\RequirePackage{iftex}\AtBeginDocument{ \@ifpackageloaded{hyperref}{\hypersetup{pdfauthor={},pdftitle={},pdfsubject={},pdfkeywords={},pdfcreator={},pdfproducer={}}}{}\ifPDFTeX \pdfinfo{/Author () /Title () /Subject () /Keywords () /Creator () /Producer ()}\fi \ifLuaTeX \pdfextension info{/Author () /Title () /Subject () /Keywords () /Creator () /Producer ()}\fi }\ifPDFTeX\pdfinfoomitdate1\pdfsuppressptexinfo-1\pdftrailerid{}\fi\ifLuaTeX\pdfvariablesuppressoptionalinfo-1\pdfvariabletrailerid{}\fi\let\e\boxtimes\newsavebox{\f}\newdimen{\g}\newdimen{\h}\newcommand{\m}[2]{{\mathsurround=0pt \sbox{\f}{$#1#2$} \g=\dimexpr(\ht\f+\dp\f)*13/16\relax \h=\dimexpr\g/22\relax \vcenter{\hbox{\rlap{\makebox[\wd\f]{\begin{tikzpicture}[ x=\g, y=\g, line width=\h, line cap=butt, line join=miter ] \draw(0,0)rectangle(1,1); \draw(0,0)--(1,1)(0,1)--(1,0);\end{tikzpicture}}}\makebox[\wd\f]{\smash{\pdfrender{TextRenderingMode=Invisible}{$#1\e$}}}}}}}\newcommand{\n}{\varepsilon}\newcommand{\p}[1]{\widetilde{#1}}\usetikzlibrary{decorations,positioning}\pgfkeys{/tikz/.cd,alt double distance/.initial=5pt,alt double step/.initial=1pt,}\pgfdeclaredecoration{double deco}{initial}{\state{initial}[width=\pgfkeysvalueof{/tikz/alt double step},next state=cont]{ \pgfmoveto{\pgfpoint{\pgfkeysvalueof{/tikz/alt double step}}{\pgfkeysvalueof{/tikz/alt double distance}/2}} \pgfpathlineto{\pgfpoint{0.3\pgflinewidth}{\pgfkeysvalueof{/tikz/alt double distance}/2}} \pgfpathmoveto{\pgfpoint{0.3\pgflinewidth}{-\pgfkeysvalueof{/tikz/alt double distance}/2}} \pgfpathlineto{\pgfpoint{1pt}{-\pgfkeysvalueof{/tikz/alt double distance}/2}} \pgfcoordinate{lastup}{\pgfpoint{1pt}{\pgfkeysvalueof{/tikz/alt double distance}/2}} \pgfcoordinate{lastdown}{\pgfpoint{1pt}{-\pgfkeysvalueof{/tikz/alt double distance}/2}} } \state{cont}[width=\pgfkeysvalueof{/tikz/alt double step}]{ \pgfmoveto{\pgfpointanchor{lastup}{center}} \pgfpathlineto{\pgfpoint{\pgfkeysvalueof{/tikz/alt double step}}{\pgfkeysvalueof{/tikz/alt double distance}/2}} \pgfcoordinate{lastup}{\pgfpoint{\pgfkeysvalueof{/tikz/alt double step}}{\pgfkeysvalueof{/tikz/alt double distance}/2}} \pgfmoveto{\pgfpointanchor{lastdown}{center}} \pgfpathlineto{\pgfpoint{\pgfkeysvalueof{/tikz/alt double step}}{-\pgfkeysvalueof{/tikz/alt double distance}/2}} \pgfcoordinate{lastdown}{\pgfpoint{\pgfkeysvalueof{/tikz/alt double step}}{-\pgfkeysvalueof{/tikz/alt double distance}/2}} } \state{final}[width=0pt]{\pgfmoveto{\pgfpointdecoratedpathlast} } }\@ifclassloaded{beamer}{}{\def\@setdate{\datename\ \@date \ifx\s\@empty\else .\ \q\ \s \fi \@addpunct.} \def\q{\textit{Last Update}:} {} \let\s\@empty }\RequirePackage{xparse}\theoremstyle{plain}\newtheorem{theorem}{Theorem}[subsection]\newtheorem{lemma}[theorem]{Lemma}\newtheorem{corollary}[theorem]{Corollary}\newtheorem{proposition}[theorem]{Proposition}\theoremstyle{definition}\newtheorem{definition}[theorem]{Definition}\newtheorem{example}[theorem]{Example}\theoremstyle{remark}\newtheorem{remark}[theorem]{Remark}\RequirePackage{tikz}\usetikzlibrary{backgrounds}\usetikzlibrary{arrows}\usetikzlibrary{shapes,shapes.geometric,shapes.misc}\pgfkeys{/tikz/tikzit fill/.initial=0}\pgfkeys{/tikz/tikzit draw/.initial=0}\pgfkeys{/tikz/tikzit shape/.initial=0}\pgfkeys{/tikz/tikzit category/.initial=0}\pgfdeclarelayer{edgelayer}\pgfdeclarelayer{nodelayer}\pgfsetlayers{background,edgelayer,nodelayer,main}\tikzstyle{every loop}=[]\newcommand{\w}{\mathscr{U}}\newcommand{\x}{\mathfrak{I}}\DeclareFontFamily{U}{MnSymbolC}{}\DeclareFontShape{U}{MnSymbolC}{m}{n}{ <-6> MnSymbolC5 <6-7> MnSymbolC6 <7-8> MnSymbolC7 <8-9> MnSymbolC8 <9-10> MnSymbolC9 <10-12> MnSymbolC10 <12-> MnSymbolC12}{}\DeclareFontShape{U}{MnSymbolC}{b}{n}{ <-6> MnSymbolC-Bold5 <6-7> MnSymbolC-Bold6 <7-8> MnSymbolC-Bold7 <8-9> MnSymbolC-Bold8 <9-10> MnSymbolC-Bold9 <10-12> MnSymbolC-Bold10 <12-> MnSymbolC-Bold12}{}\DeclareSymbolFont{MnSyC}{U}{MnSymbolC}{m}{n}\DeclareMathSymbol{\smallstar}{\mathbin}{MnSyC}{128}\newcommand{\y}{\mathop{-}\nolimits}\newcommand{\z}{\mathcal{DR}}\newcommand{\B}{\mathcal{K}}\usepackage[labelformat=empty]{caption}\usepackage{subcaption}\usepackage[textsize=tiny]{todonotes}\usepackage{tensor}\usepackage{eqparbox}\usepackage[letterpaper,margin=1in,headsep=0.25in,marginparwidth=2.1cm]{geometry}\parindent=0.2in\mathtoolsset{showonlyrefs,showmanualtags}\numberwithin{equation}{section}\usetikzlibrary{arrows,calc,decorations.pathreplacing,decorations.pathmorphing,backgrounds,decorations.markings,shapes.geometric,ext.paths.arcto}\hypersetup{colorlinks=true,allcolors=blue,linktoc=all,}\title{The crystal of the categorified quantum group}\author[S.-H. Lee]{Soo-Hong Lee}\address{Department of Mathematical Sciences, Seoul National University, Seoul 08826, Korea}\email{shlee@crystalline.site}\thanks{This research was supported by Basic Science Research Program through the National Research Foundation of Korea (NRF) funded by the Ministry of Education (RS-2025-25412049).}
\begin{document}\begin{abstract}A categorification of the crystal $B(\widetilde{U}_q(\mathfrak{g}))$ of the modified quantized enveloping algebra associated to a symmetrizable Kac--Moody algebra $\mathfrak{g}$ is given. A crystal is constructed from a categorical $\mathfrak{g}$-action on abelian categories without finiteness conditions, and is used to construct the bicrystal structure on the set of self-dual indecomposable objects of the categorified quantum group. Crystals from Webster's generalized tensor product categorification are identified with tensor products of lowest and highest weight crystals. As an application, a categorification of the extremal weight module is obtained.\end{abstract}\maketitle\makeatletter \let\D\contentsline\renewcommand*{\contentsline}[4]{\D{#1}{\textcolor{black}{#2}}{#3}{#4}}\makeatother \tableofcontents\section{Introduction}In \cite{_} Chuang and Rouquier introduced the notion of $\mathfrak{sl}_2$-categorification. It consists of a pair of biadjoint functors $E$ and $F$ acting on an abelian category and inducing an $\mathfrak{sl}_2$-action on the Grothendieck group, together with natural transformations satisfying the relations of affine Hecke algebras. They showed that this structure, while ubiquitous in representation theory, has a rich theory and used it to prove Brou\'e's abelian defect group conjecture for symmetric groups. Among the structures it entails is the tight control of the endomorphism rings of the objects $F^{(m)}M$ for a simple $M$, which, in retrospect, induces an $\mathfrak{sl}_2$-crystal structure on the set of isomorphism classes of simple objects, generalizing \cite{A} and \cite{B}.\par In \cite{C} and \cite{D}, Khovanov and Lauda introduced a family of algebras, now called the Khovanov--Lauda--Rouquier or quiver Hecke algebras, categorifying the negative half of the quantized enveloping algebra. As a step in establishing the categorification, they considered a graph on the set of isomorphism classes of simple modules, and Lauda and Vazirani showed that it is isomorphic to the crystal $B(\infty)$, and that the one for cyclotomic quotients is isomorphic to the crystal $B(\lambda)$ \cite{E}.\par In \cite{F}, Rouquier introduced the 2-category $\mathscr{U}(\mathfrak{g})$ associated to a symmetrizable Kac--Moody algebra $\mathfrak{g}$, by which an $\mathfrak{sl}_2$-categorification can be understood as a 2-representation of $\mathscr{U}(\mathfrak{sl}_2)$ with certain finiteness conditions. Khovanov and Lauda \cite{G} introduced a 2-category with a different presentation, later shown to be isomorphic to Rouquier's by Brundan \cite{H}, and observed that $\mathscr{U}(\mathfrak{g})$ categorifies the modified quantized enveloping algebra $\widetilde{U}(\mathfrak{g})$ under the so-called \emph{nondegeneracy hypothesis}, now known to hold in full generality due to \cite{I} and \cite{J}.\par In \cite{K}, Webster constructed the categorification of highest weight modules $V(\lambda)$ (independently established by Kang and Kashiwara \cite{L}) as well as their tensor products as 2-representations of $\mathscr{U}(\mathfrak{g})$. The crystal induced by this construction was shown to be isomorphic to the tensor product of the corresponding crystals $B(\lambda)$ by Losev and Webster \cite{M}, building on the work of Losev \cite{N}.\par In this paper, we categorify the crystal $B(\widetilde{U}(\mathfrak{g}))$ of the modified quantized enveloping algebra (\thref{_}). It is a crystal over $\mathfrak{g}\oplus\mathfrak{g}$, and as noted by Kashiwara in \cite{O}, where the crystal was first studied, this is a manifestation of the bimodule structure of $\widetilde{U}(\mathfrak{g})$ over itself. In our construction, the two crystal structures come from the two Yoneda 2-representations $\mathscr{U}(\lambda,-)$ and $\mathscr{U}(-,\mu)$. Each crystal element parametrizes a self-dual indecomposable object of (the additive Karoubi envelope of) $\mathscr{U}(\lambda,\mu)$ for some $\lambda,\mu\in X$, and thus we regard the crystal as a structure intrinsic to the categorified quantum group. As an application, we obtain a categorification of the extremal weight module $V(\lambda)$, the generalization of integrable highest weight modules to arbitrary weights $\lambda\in X$ (\thref{A}).\par Together with earlier results, this work may be viewed as completing a categorical analogue of the theory of crystals developed in a series of works by Kashiwara \cite{P,Q,O}. The boundedness of the weights of connected components of $B(\widetilde{U}(\mathfrak{g}))$ \cite[\S9]{O} has no counterpart in this paper. It would be interesting to find one, and to explore how further structures of the crystal, such as the Peter--Weyl type decomposition of \cite{R}, interact with the categorification.\subsection{Outline}The first half of the paper develops a framework that yields a crystal from a categorical $\mathfrak{g}$-action without the usual finiteness conditions.\par In Section \ref{B}, we fix notations and conventions for 2-categories following Rouquier \cite{F} and Johnson--Yau \cite{S}, as well as for their graded versions and idempotent completions. Next, in Section \ref{C}, we recall the definitions of quiver Hecke algebras, the categorified quantum group $\mathscr{U}$, and related structures.\par In Section \ref{D}, we generalize Proposition 5.20 of \cite{_} to arbitrary abelian categories. This is the statement that controls $\operatorname{End}(F^{(m)}M)$ for a simple $M$, and that forms the basis of the construction of the categorical crystal. Existing versions of the statement are not suitable for our purpose: It is not clear whether the functors $E_i$ and $F_i$ preserve finite-length modules, and thus there is no ``$K_0$'' to work on. Our proof is based on the thick calculus of categorified quantum $\mathfrak{sl}_2$ developed in \cite{T}, and in particular, relies on the Sto\v si\'c formula.\par In Section \ref{E}, we consider a family of humorous categories equipped with a $\mathscr{U}$-action, and show that their orthodox bases carry a natural crystal structure. This provides a setting in which the crystal of the humorous categories $\dot{\w}(\lambda,\mu)$ is understood. Although $E_i$ and $F_i$ do not act locally nilpotently on $\mathscr{U}(\lambda,\mu)$, we show that they do on its graded simple modules. We then give a compatibility condition between the $\mathscr{U}$-action and the duality functors, and show that the crystal operators restrict to self-dual objects.\par The second half of the paper is devoted to identifying the crystal of the orthodox bases of $\dot{\w}(\lambda,\mu)$ with $B(\widetilde{U}(\mathfrak{g}))$. We obtain this by categorifying the construction of the canonical (or global crystal) basis of $\widetilde{U}(\mathfrak{g})$ by Lusztig \cite{U} and Kashiwara \cite{O}, in which each component of $\widetilde{U}(\mathfrak{g})$ is approximated by an inverse system of tensor products of a lowest weight module and a highest weight module. Here, the role of the tensor products is played by algebras introduced by Webster \cite{V}, which we refer to as generalized tensor product categorifications.\par In Section \ref{F}, we show that the crystal of the categorification of $V(\lambda_1)\otimes\cdots\otimes V(\lambda_m)$, where $\lambda_1,\ldots,\lambda_k\in-X^+$ and $\lambda_{k+1},\ldots,\lambda_m\in X^+$, is isomorphic to the tensor product crystal $B(\lambda_1)\otimes\cdots\otimes B(\lambda_m)$. We give a definition of an upper finite fully stratified category of Brundan--Stroppel \cite{W} adapted to our purpose. Then we extend the axiomatic approach of Losev and Webster \cite{M} to \cite{V} so that lowest and highest weight modules can be mixed. We generalize the Ext vanishing lemma of \cite{N,M} to this setting. At the end of the section, we revisit the construction of \cite{V}, which is the first statement in the paper that depends on the nondegeneracy hypothesis.\par Combining the results established so far, we prove the main theorem of the paper in Section \ref{G}: we show that the $\mathfrak{g}\oplus\mathfrak{g}$-crystal structure on the orthodox bases of $\dot{\w}(\lambda,\mu)$ for all $\lambda,\mu\in X$ is isomorphic to $B(\widetilde{U}(\mathfrak{g}))$, independently of the polynomials $Q_{ij}$ (the quiver Hecke datum) and the base field. By an argument of Kashiwara \cite[\S8]{O} with global bases replaced by orthodox bases, we obtain in Section \ref{H} a categorification of the extremal weight module, as well as the fundamental weight modules in affine type. This immediately yields a symmetric bilinear form on the extremal weight module for an arbitrary symmetrizable Kac--Moody algebra, which was not known beyond affine type, to the best of the author's knowledge.\par\section{Preliminaries on 2-categories}\label{B}\par Let $\mathcal{C}$ be a category. For objects $x,y\in\mathcal{C}$, we denote the set of morphisms from $x$ to $y$ by $\mathcal{C}(x,y)$, and the identity morphism of $x$ by $1_x\in\mathcal{C}(x,x)$. When the category $\mathcal{C}$ is evident from the context, we also write $\operatorname{Hom}(x,y)$ for $\mathcal{C}(x,y)$.\par Unless otherwise specified, we will work over a commutative ring with unit $\Bbbk$.\par\subsection{Graded categories}\label{I}\par A graded category $\mathcal{K}$ is a category equipped with an auto-equivalence $q\colon\mathcal{K}\to\mathcal{K}$. We call the elements of $\mathcal{K}_d(x,y)\coloneqq\varinjlim_n\mathcal{K}(q^{n+d}x,q^ny)$ morphisms from $x$ to $y$ of degree $d$, where the direct limit is taken with respect to the maps $f\mapsto q(f)$. We suppress the action of $q$ on morphisms and denote $q^k(f)\colon q^kx\to q^ky$ simply by $f$, with the degree shift understood from the context.\par A graded $\Bbbk$-linear category is a graded category with a $\Bbbk$-linear structure such that $q$ is $\Bbbk$-linear. Given a graded $\Bbbk$-linear category $\mathcal{K}$, we denote by $\mathcal{K}|_{q=1}$ the category enriched over graded $\Bbbk$-modules, whose class of objects is the same as that of $\mathcal{K}$, whose hom-spaces are given by $\mathcal{K}|_{q=1}(x,y)=\bigoplus_{d\in\mathbb{Z}}\mathcal{K}_d(x,y)$, and whose composition is induced by that of $\mathcal{K}$. Upon a choice of an adjoint pair $(q,q^{-1})$ of functors, which is unique up to unique isomorphism, we have a canonical isomorphism $\mathcal{K}_d(x,y)\cong\mathcal{K}(q^dx,y)$, and $\mathcal{K}|_{q=1}$ is the category denoted by $\widehat{\mathcal{K}}$ in \cite[Def.~5.1]{X}. When $\mathcal{K}$ is clear from the context, we also write $\operatorname{Hom}_q(x,y)$ and $\operatorname{End}_q(x)$ for $\mathcal{K}|_{q=1}(x,y)$ and $\mathcal{K}|_{q=1}(x,x)$, respectively.\par For graded $\Bbbk$-linear categories $\mathcal{K}$ and $\mathcal{L}$, we denote by $\mathcal{K}\mathbin{\BeginAccSupp{method=hex,unicode,ActualText=22A0}\mathpalette{\m}{\otimes}\EndAccSupp{}}\mathcal{L}$ the graded $\Bbbk$-linear category whose objects are symbols $x\mathbin{\BeginAccSupp{method=hex,unicode,ActualText=22A0}\mathpalette{\m}{\otimes}\EndAccSupp{}}y$ for $x\in\mathcal{K}$ and $y\in\mathcal{L}$, and whose hom-spaces are $(\mathcal{K}\mathbin{\BeginAccSupp{method=hex,unicode,ActualText=22A0}\mathpalette{\m}{\otimes}\EndAccSupp{}}\mathcal{L})(x\mathbin{\BeginAccSupp{method=hex,unicode,ActualText=22A0}\mathpalette{\m}{\otimes}\EndAccSupp{}}y,x'\mathbin{\BeginAccSupp{method=hex,unicode,ActualText=22A0}\mathpalette{\m}{\otimes}\EndAccSupp{}}y')\coloneqq\bigoplus_{d\in\mathbb{Z}}\mathcal{K}_d(x,x')\otimes_\Bbbk\mathcal{L}_{-d}(y,y')$, where $q(x\mathbin{\BeginAccSupp{method=hex,unicode,ActualText=22A0}\mathpalette{\m}{\otimes}\EndAccSupp{}}y)\coloneqq qx\mathbin{\BeginAccSupp{method=hex,unicode,ActualText=22A0}\mathpalette{\m}{\otimes}\EndAccSupp{}}y$. For $\Bbbk$-linear functors $M$ (resp. $N$) from $\mathcal{K}^{\operatorname{op}}$ (resp.\ $\mathcal{L}^{\operatorname{op}}$) to the category $\Bbbk\operatorname{\!-\!}\operatorname{mod}$ of $\Bbbk$-modules, we denote by $M\mathbin{\BeginAccSupp{method=hex,unicode,ActualText=22A0}\mathpalette{\m}{\otimes}\EndAccSupp{}}N$ the $\Bbbk$-linear functor $(\mathcal{K}\mathbin{\BeginAccSupp{method=hex,unicode,ActualText=22A0}\mathpalette{\m}{\otimes}\EndAccSupp{}}\mathcal{L})^{\operatorname{op}}\to\Bbbk\operatorname{\!-\!}\operatorname{mod}$ given by $(M\mathbin{\BeginAccSupp{method=hex,unicode,ActualText=22A0}\mathpalette{\m}{\otimes}\EndAccSupp{}}N)(x\mathbin{\BeginAccSupp{method=hex,unicode,ActualText=22A0}\mathpalette{\m}{\otimes}\EndAccSupp{}}y)\coloneqq\bigoplus_{d\in\mathbb{Z}}M(q^dx)\otimes_\Bbbk N(q^{-d}y)$ and $(M\mathbin{\BeginAccSupp{method=hex,unicode,ActualText=22A0}\mathpalette{\m}{\otimes}\EndAccSupp{}}N)(f\otimes g)\coloneqq M(f)\otimes N(g)$ for $f\in\mathcal{K}(x,q^ex')$ and $g\in\mathcal{L}(q^ey,y')$.\subsection{2-categories}\label{J}\par Our terminology, notation, and conventions concerning 2-categories mostly follow \cite{S}. Let us briefly recall the 2-categorical notions we use. Among the data of a definition and the coherence conditions imposed on the data, we describe only the data, and refer to \cite{S} for the complete list of coherence conditions. Similarly, when we construct a 2-categorical structure in this paper, we specify only the data and omit the routine verification of the coherence conditions.\par A 2-category is a $\mathrm{Cat}$-enriched category, or equivalently, a bicategory with identity associators and unitors. Let $\mathscr{C}$ be a 2-category. For an object $x\in\mathscr{C}$, the identity object of $\mathscr{C}(x,x)$ is denoted by $\textnormal{\fontfamily{pbk}\selectfont1}_x$. Let $c_{x,y,z}\colon\mathscr{C}(y,z)\times\mathscr{C}(x,y)\to\mathscr{C}(x,z)$ be the composition functor of $\mathscr{C}$. Given 2-morphisms $\delta$ and $\gamma$ in $\mathscr{C}(y,z)$ and $\mathscr{C}(x,y)$, $c_{x,y,z}((\delta,\gamma))$ is called the horizontal composition of $\delta$ and $\gamma$, and is denoted by $\delta\mathbin{\boldsymbol{\cdot}}\gamma$.\par A \emph{pseudofunctor} $\mathcal{F}\colon\mathscr{C}\to\mathscr{D}$ consists of a function on objects, functors $\mathcal{F}\colon\mathscr{C}(x,y)\to\mathscr{D}(\mathcal{F}(x),\mathcal{F}(y))$ for each pair of objects $x,y\in\mathscr{C}$, and natural isomorphisms $\mathcal{F}^2$ and $\mathcal{F}^0$ as in the diagram below, subject to the coherence conditions of \cite[Def.~4.1.2]{S}. \begin{equation}\label{K}\begin{tikzpicture}[xscale=2.3,yscale=.8,baseline=-3pt]\node(A)at(-1,1){$\mathscr{C}(y,z)\times\mathscr{C}(x,y)$};\node(B)at(1,1){$\mathscr{C}(x,z)$};\node(C)at(-1,-1){$\mathscr{D}(\mathcal{F}(y),\mathcal{F}(z))\times\mathscr{D}(\mathcal{F}(x),\mathcal{F}(y))$};\node(D)at(1,-1){$\mathscr{D}(\mathcal{F}(x),\mathcal{F}(z))$};\draw[->](A)--(B)node[midway,above,scale=.9]{$c_{x,y,z}$};\draw[->](A)--(C)node[midway,left]{$\mathcal{F}\times\mathcal{F}$};\draw[->](B)--(D)node[midway,right]{$\mathcal{F}$};\draw[->](C)--(D)node[midway,below,scale=.9]{$c$};\draw[double distance=2pt,-{Implies}](-.15,-.15)--(.15,.15)node[midway,auto=right,]{$\mathcal{F}^2_{x,y,z}$};\end{tikzpicture}\qquad\begin{tikzpicture}[xscale=1.1,yscale=.8,baseline=-3pt]\node(A)at(-1,1){$\textnormal{\fontfamily{pbk}\selectfont1}$};\node(B)at(1,1){$\mathscr{C}(x,x)$};\node(C)at(1,-1){$\mathscr{D}(\mathcal{F}(x),\mathcal{F}(x))$};\draw[->](A)--(B)node[midway,above]{$\textnormal{\fontfamily{pbk}\selectfont1}_x$};\draw[->](A)--(C)node[midway,auto=right,inner sep=.7pt]{$\textnormal{\fontfamily{pbk}\selectfont1}_{\mathcal{F}(x)}$};\draw[->](B)--(C)node[midway,right]{$\mathcal{F}$};\draw[double distance=2pt,-{Implies}](.25,.25)--(.5,.5)node[midway,auto=right,inner sep=1.5pt,scale=.9]{$\mathcal{F}^0_{x}$};\end{tikzpicture}\end{equation} A \emph{2-functor} is a pseudofunctor in which $\mathcal{F}^2$ and $\mathcal{F}^0$ are the identity natural transformations.\par A \emph{strong transformation} $\alpha\colon\mathcal{F}\to\mathcal{G}$, where $\mathcal{F},\mathcal{G}\colon\mathscr{C}\to\mathscr{D}$ are pseudofunctors, consists of 1-morphisms $\alpha_x\in\mathscr{D}(\mathcal{F}(x),\mathcal{G}(x))$ for each object $x\in\mathscr{C}$, and a natural isomorphism $\alpha\colon(\alpha_x)^\ast\circ\mathcal{G}\to\mathcal{F}\circ(\alpha_y)_\ast$, where $(\alpha_x)^\ast\colon\mathscr{D}(\mathcal{G}(x),\mathcal{G}(y))\to\mathscr{D}(\mathcal{F}(x),\mathcal{G}(y))$ and $(\alpha_y)_\ast\colon\mathscr{D}(\mathcal{F}(x),\mathcal{F}(y))\to\mathscr{D}(\mathcal{F}(x),\mathcal{G}(y))$ are functors given by precomposition with $\alpha_x$ and postcomposition with $\alpha_y$, respectively. This data is subject to the coherence conditions of \cite[Def.~4.2.1]{S}. In more explicit terms, for each $x,y\in\mathscr{C}$ and $f\in\mathscr{C}(x,y)$, we are given 1-morphisms $\alpha_x,\alpha_y$ and an invertible 2-morphism $\alpha_f$ fitting into the diagram below. \[\begin{tikzpicture}[xscale=.9,yscale=.8]\node(A)at(-1,1){$\mathcal{F}(x)$};\node(B)at(1,1){$\mathcal{F}(y)$};\node(C)at(-1,-1){$\mathcal{G}(x)$};\node(D)at(1,-1){$\mathcal{G}(y)$};\draw[->](A)--(B)node[midway,above,scale=.9]{$\mathcal{F}(f)$};\draw[->](A)--(C)node[midway,left]{$\alpha_x$};\draw[->](B)--(D)node[midway,right]{$\alpha_y$};\draw[->](C)--(D)node[midway,below,scale=.9]{$\mathcal{G}(f)$};\draw[double distance=2pt,-{Implies}](-.15,-.15)--(.15,.15)node[midway,auto=right,inner sep=2pt]{$\alpha_f$};\end{tikzpicture}\vspace{-.5em}\] A \emph{2-natural transformation} is a strong transformation in which $\alpha_f$ for every 1-morphism $f$ in $\mathscr{C}$ is the identity 2-morphisms. A \emph{modification} $\gamma\colon\alpha\to\beta$ between two strong transformations $\alpha,\beta\colon\mathcal{F}\to\mathcal{G}$ consists of 2-morphisms $\gamma_x\colon\alpha_x\to\beta_x$ for each object $x\in\mathscr{C}$, subject to the coherence conditions of \cite[Def.~4.4.1]{S}.\par Given a 2-category $\mathscr{C}$, there exists a 2-category $\mathscr{C}^{\operatorname{op}}$ satisfying $\mathscr{C}^{\operatorname{op}}(x,y)=\mathscr{C}(y,x)$ for $x,y\in\mathscr{C}$, and a 2-category $\mathscr{C}^{\operatorname{co}}$ satisfying $\mathscr{C}^{\operatorname{co}}(x,y)=\mathscr{C}(x,y)^{\operatorname{op}}$ (cf.\ \cite[\S 2.6]{S}). We denote $\mathscr{C}^{\operatorname{coop}}\coloneqq(\mathscr{C}^{\operatorname{op}})^{\operatorname{co}}=(\mathscr{C}^{\operatorname{co}})^{\operatorname{op}}$. Given a pseudofunctor $\mathcal{F}\colon\mathscr{C}\to\mathscr{D}$, there exist pseudofunctors $\mathcal{F}^{\operatorname{op}}\colon\mathscr{C}^{\operatorname{op}}\to\mathscr{D}^{\operatorname{op}}$ and $\mathcal{F}^{\operatorname{co}}\colon\mathscr{C}^{\operatorname{co}}\to\mathscr{D}^{\operatorname{co}}$. Given a strong transformation $\alpha\colon\mathcal{F}\to\mathcal{G}$ of pseudofunctors, there exist strong transformations $\alpha^{\operatorname{op}}\colon\mathcal{G}^{\operatorname{op}}\to\mathcal{F}^{\operatorname{op}}$ and $\alpha^{\operatorname{co}}\colon\mathcal{F}^{\operatorname{co}}\to\mathcal{G}^{\operatorname{co}}$.\par There exists a 2-category $\mathrm{Bicat}(\mathscr{C},\mathscr{D})$, whose objects are pseudofunctors, 1-morphisms are strong transformations, and 2-morphisms are modifications.\footnote{In \cite[Cor.~4.4.13]{S}, this 2-category is denoted by $\mathsf{Bicat}^{\mathsf{ps}}(\mathscr{C},\mathscr{D})$, and $\mathsf{Bicat}(\mathscr{C},\mathscr{D})$ denotes a larger 2-category of lax functors.} Given a pseudofunctor $\mathcal{F}\colon\mathscr{C}'\to\mathscr{C}$, there exists a canonical 2-functor $\mathcal{F}^\ast\colon\mathrm{Bicat}(\mathscr{C},\mathscr{D})\to\mathrm{Bicat}(\mathscr{C}',\mathscr{D})$, such that $\mathcal{F}^\ast(\mathcal{G})=\mathcal{G}\mathcal{F}$ on objects, and $\mathcal{F}^\ast(\alpha)=\alpha\mathbin{\smallstar}\mathcal{F}$ on 1-morphisms, where $\alpha\mathbin{\smallstar}\mathcal{F}$ denotes the \emph{pre-whiskering} (as in \cite[Def.~11.1.1]{S}). Given a pseudofunctor $\mathcal{H}\colon\mathscr{D}\to\mathscr{D}'$, there exists a canonical pseudofunctor $\mathcal{H}_\ast\colon\mathrm{Bicat}(\mathscr{C},\mathscr{D})\to\mathrm{Bicat}(\mathscr{C},\mathscr{D}')$ which is a 2-functor if $\mathcal{H}$ is a 2-functor, such that $\mathcal{H}_\ast(\mathcal{G})=\mathcal{H}\mathcal{G}$ on objects, and $\mathcal{H}_\ast(\alpha)=\mathcal{H}\mathbin{\smallstar}\alpha$ on 1-morphisms, where $\mathcal{H}\mathbin{\smallstar}\alpha$ denotes the \emph{post-whiskering} (as in \cite[Def.~11.1.1]{S}).\par Let us remark on the commutativity of whiskerings. Given pseudofunctors $\mathcal{F}'\colon\mathscr{C}''\to\mathscr{C}'$ and $\mathcal{H}'\colon\mathscr{D}'\to\mathscr{D}''$, we have $(\mathcal{F}\circ\mathcal{F}')^\ast=\mathcal{F}'^\ast\circ\mathcal{F}^\ast$, $(\mathcal{H}'\circ\mathcal{H})_\ast=\mathcal{H}'_\ast\circ\mathcal{H}_\ast$, and $\mathcal{H}_\ast\circ\mathcal{F}^\ast=\mathcal{F}^\ast\circ\mathcal{H}_\ast$. Thus, for instance, for a strong transformation $\alpha\colon\mathcal{G}_1\to\mathcal{G}_2$ of functors $\mathcal{G}_1,\mathcal{G}_2\colon\mathscr{C}\to\mathscr{D}$, we write $\mathcal{H}'\mathbin{\smallstar}\mathcal{H}\mathbin{\smallstar}\alpha\mathbin{\smallstar}\mathcal{F}\mathbin{\smallstar}\mathcal{F}'$ for the strong transformation $\mathcal{H}'_\ast\mathcal{H}_\ast\mathcal{F}'^{\ast}\mathcal{F}^\ast(\alpha)$ and it is unambiguous without parentheses.\par Let $\mathcal{F},\mathcal{F}'\colon\mathcal{B}\to\mathcal{C}$ and $\mathcal{G},\mathcal{G}'\colon\mathcal{C}\to\mathcal{D}$ be pseudofunctors, and $\alpha\colon\mathcal{F}\to\mathcal{F}'$ and $\beta\colon\mathcal{G}\to\mathcal{G}'$ be strong transformations. There is an invertible modification \begin{equation}\label{L}\gamma\colon(\mathcal{G}'\mathbin{\smallstar}\alpha)\circ(\beta\mathbin{\smallstar}\mathcal{F})\to(\beta\mathbin{\smallstar}\mathcal{F}')\circ(\mathcal{G}\mathbin{\smallstar}\alpha)\end{equation} whose component at an object $x\in\mathcal{B}$ is given by $\gamma_x\coloneqq\beta_{\alpha_x}$ (cf.\ \cite[\S 11]{S}). We will only need this when $\beta$ is a 2-natural transformation, in which case $\gamma$ is the identity.\subsection{$\Bbbk$-linear and graded 2-categories}We call a 2-category $\mathscr{C}$ \emph{$\Bbbk$-linear} if each Hom-category has the structure of a $\Bbbk$-linear category, and each composition functor $c_{x,y,z}$ is $\Bbbk$-bilinear. A pseudofunctor or a 2-functor $\mathcal{F}\colon\mathscr{C}\to\mathscr{D}$ between $\Bbbk$-linear 2-categories is \emph{$\Bbbk$-linear} if the functor $\mathcal{F}\colon\mathscr{C}(x,y)\to\mathscr{D}(\mathcal{F}(x),\mathcal{F}(y))$ is $\Bbbk$-linear for all $x,y\in\mathscr{C}$.\par Let $\mathscr{C}$ and $\mathscr{D}$ be $\Bbbk$-linear 2-categories. There exists a sub-2-category $\mathrm{Bicat}_\Bbbk(\mathscr{C},\mathscr{D})$ of $\mathrm{Bicat}(\mathscr{C},\mathscr{D})$, whose objects are $\Bbbk$-linear pseudofunctors, and having the same 1-morphisms and 2-morphisms as $\mathrm{Bicat}(\mathscr{C},\mathscr{D})$. The constructions in the previous subsection restrict to give 2-functors $\mathrm{Bicat}_\Bbbk(\mathscr{C},\mathscr{D})\allowbreak\to\mathrm{Bicat}_\Bbbk(\mathscr{C}',\mathscr{D})$ and $\mathrm{Bicat}_\Bbbk(\mathscr{C},\mathscr{D})\to\mathrm{Bicat}_\Bbbk(\mathscr{C},\mathscr{D}')$.\par We call $\mathscr{C}$ a \emph{graded $\Bbbk$-linear} 2-category if it is $\Bbbk$-linear, and each Hom-category is a graded $\Bbbk$-linear category with an autoequivalence $q$, such that the composition functor $c_{x,y,z}$ satisfies $c_{x,y,z}\circ(q\times q)=q\circ c_{x,y,z}$ for all $x,y,z\in\mathscr{C}$. For a graded $\Bbbk$-linear 2-category $\mathscr{C}$, we denote by $\mathscr{C}|_{q=1}$ the 2-category obtained by replacing Hom-categories $\mathscr{C}(x,y)$ with $\mathscr{C}(x,y)|_{q=1}$, and by naturally extending the composition functors.\subsection{Idempotent completion of 2-categories}\label{M}\par For a $\Bbbk$-linear category $\mathcal{K}$, we denote by $\dot{\mathcal{K}}$ the \emph{additive Karoubi envelope} of $\mathcal{K}$, that is, the Karoubi envelope of the additive closure of $\mathcal{K}$. Objects of $\dot{\mathcal{K}}$ are pairs $(\bigoplus_{i=1}^nx_i,(e_{ij})_{1\le i,j\le n})$ for $n\in\mathbb{Z}_{\ge0}$, $x_i\in\mathcal{K}$, and $e_{ij}\in\mathcal{K}(x_j,x_i)$ such that $(e_{ij})$ is an idempotent matrix.\par Let $\mathcal{K}$ be a small $\Bbbk$-linear category, and $\iota_{\mathcal{K}}\colon\mathcal{K}\to\dot{\mathcal{K}}$ be the canonical inclusion functor. Let $\mathcal{L}$ be a $\Bbbk$-linear category, and $\iota_{\mathcal{K}}^\ast\colon\mathrm{Cat}_\Bbbk(\dot{\mathcal{K}},\mathcal{L})\to\mathrm{Cat}_\Bbbk(\mathcal{K},\mathcal{L})$ be the functor given by precomposition with $\iota_{\mathcal{K}}$. Then we have (cf.\ \cite[Prop.~6.5.9]{Y}): (i) $\iota_{\mathcal{K}}^\ast$ is fully faithful. (ii) If $\mathcal{L}$ is additive and idempotent-complete, then $\iota_{\mathcal{K}}^\ast$ is surjective on objects.\par Let $\mathscr{C}$ be a $\Bbbk$-linear 2-category. We define $\dot{\mathscr{C}}$ to be the $\Bbbk$-linear 2-category whose objects are the same as $\mathscr{C}$, and whose Hom-categories $\dot{\mathscr{C}}(x,y)$ are given by the additive Karoubi envelope of $\mathscr{C}(x,y)$ for $x,y\in\mathscr{C}$. The 2-category structure on $\mathscr{C}$ naturally extends to $\dot{\mathscr{C}}$. Let $\iota_{\mathscr{C}}\colon\mathscr{C}\to\dot{\mathscr{C}}$ be the canonical 2-functor. The proof of the following lemma is routine, and is omitted.\par\begin{lemma}\thlabel{N} Let $\mathscr{C}$ be a small $\Bbbk$-linear 2-category, and $\iota_{\mathscr{C}}\colon\mathscr{C}\to\dot{\mathscr{C}}$ be the canonical 2-functor. Let $\mathscr{D}$ be a $\Bbbk$-linear 2-category, and $\iota_{\mathscr{C}}^\ast\colon\mathrm{Bicat}_\Bbbk(\dot{\mathscr{C}},\mathscr{D})\to\mathrm{Bicat}_\Bbbk(\mathscr{C},\mathscr{D})$ be the 2-functor given by precomposition with $\iota_{\mathscr{C}}$. \begin{enumerate}[(i)\ensuremath{^{\prime}},font=\normalfont]\item $\iota_{\mathscr{C}}^\ast$ induces isomorphisms on Hom-categories.\item If $\mathcal{F}\in\mathrm{Bicat}_\Bbbk(\mathscr{C},\mathscr{D})$ is such that $\mathscr{D}(\mathcal{F}(x),\mathcal{F}(y))$ is additive and idempotent-complete for all $x,y\in\mathscr{C}$, then $\mathcal{F}$ has an inverse image under $\iota_{\mathscr{C}}^\ast$. \label{O}\end{enumerate}\end{lemma}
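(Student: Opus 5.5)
The plan is to reduce both parts to the 1-categorical facts (i) and (ii) recalled just before the statement, applied Hom-category by Hom-category, and then to check that the 2-categorical data (the constraints $\mathcal{F}^2$, $\mathcal{F}^0$, the components $\alpha_f$ of strong transformations, and modifications) extend uniquely along $\iota_{\mathscr{C}}$. The basic tool is the following uniqueness principle. If two natural transformations agree after whiskering with $\iota_{\mathscr{C}(x,y)}$ or with $\iota\times\iota$, then they agree. This holds because every object of $\dot{\mathscr{C}}(x,y)$ is a retract of a finite direct sum of objects in the image of $\iota$, and the functors involved are $\Bbbk$-linear, hence additive.

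For (i)$'$, fix $\Bbbk$-linear pseudofunctors $\mathcal{F},\mathcal{G}\colon\dot{\mathscr{C}}\to\mathscr{D}$. Since $\dot{\mathscr{C}}$ and $\mathscr{C}$ have the same objects, a strong transformation $\alpha\colon\mathcal{F}\iota\to\mathcal{G}\iota$ already supplies the 1-morphisms $\alpha_x$. Its 2-cells form a natural isomorphism between the $\Bbbk$-linear functors $(\alpha_x)^\ast\circ\mathcal{G}\circ\iota$ and $\mathcal{F}\circ\iota\circ(\alpha_y)_\ast$ on $\mathscr{C}(x,y)$. By fully faithfulness of $\iota_{\mathscr{C}(x,y)}^\ast$ (fact (i)), this transformation extends uniquely to a natural transformation on $\dot{\mathscr{C}}(x,y)$. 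The extension is invertible, because the inverse extends as well and the composites are identities by uniqueness. The coherence axioms of \cite[Def.~4.2.1]{S} are equalities of natural transformations between $\Bbbk$-linear functors out of $\dot{\mathscr{C}}(y,z)\times\dot{\mathscr{C}}(x,y)$ or out of $\dot{\mathscr{C}}(x,x)$. They hold after restriction along $\iota$, so they hold on $\dot{\mathscr{C}}$ by the uniqueness principle, which applies since the functors are additive in each variable. Modifications have components $\gamma_x$ indexed by objects only, so they are literally unchanged, and their coherence condition is checked on $\mathscr{C}$ by the same argument. It follows that $\iota_{\mathscr{C}}^\ast$ is bijective on 1-morphisms and on 2-morphisms, i.e.\ an isomorphism of Hom-categories.

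For (ii), I define $\dot{\mathcal{F}}$ to agree with $\mathcal{F}$ on objects. For each pair $x,y$, fact (ii) gives a $\Bbbk$-linear functor $\dot{\mathcal{F}}_{x,y}\colon\dot{\mathscr{C}}(x,y)\to\mathscr{D}(\mathcal{F}(x),\mathcal{F}(y))$ with $\dot{\mathcal{F}}_{x,y}\iota=\mathcal{F}_{x,y}$ on the nose. I can arrange strict equality by choosing the extension to send $\iota(f)$ to $\mathcal{F}(f)$, and, on a general object, a chosen image of the corresponding idempotent on a direct sum. To build $\dot{\mathcal{F}}^2$, I compare the functors $c\circ(\dot{\mathcal{F}}\times\dot{\mathcal{F}})$ and $\dot{\mathcal{F}}\circ c$ on $\dot{\mathscr{C}}(y,z)\times\dot{\mathscr{C}}(x,y)$. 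On the image of $\iota\times\iota$ we have $\mathcal{F}^2$, and I extend it.

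The main obstacle is that fact (ii) concerns a single category, while $\dot{\mathcal{F}}^2$ lives on a product. I plan to use that $\dot{\mathscr{C}}(y,z)\times\dot{\mathscr{C}}(x,y)$ embeds into the additive Karoubi envelope of $\mathscr{C}(y,z)\otimes_\Bbbk\mathscr{C}(x,y)$ compatibly with bilinear functors. Alternatively, I can extend in one variable at a time, applying fact (i) twice. Concretely, a pair of objects $(e,e')$ with $e$ a retract of $\bigoplus_i\iota(g_i)$ and $e'$ a retract of $\bigoplus_j\iota(f_j)$ lets me define $\dot{\mathcal{F}}^2_{e,e'}$ as the compression of $\bigoplus_{i,j}\mathcal{F}^2_{g_i,f_j}$ by the idempotents. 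Naturality of $\mathcal{F}^2$ makes this compression commute with the idempotents, so it is well defined and natural. $\dot{\mathcal{F}}^0$ is $\mathcal{F}^0$, because $\textnormal{\fontfamily{pbk}\selectfont1}_x$ lies in the image of $\iota$. The coherence conditions of \cite[Def.~4.1.2]{S} then follow from those of $\mathcal{F}$ by the uniqueness principle, since all the functors involved are multi-additive, and $\iota_{\mathscr{C}}^\ast\dot{\mathcal{F}}=\mathcal{F}$ holds by construction.
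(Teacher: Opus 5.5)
Your proposal is correct, and it is the routine verification the paper omits. You apply the 1-categorical facts (i) and (ii) Hom-category by Hom-category, extend $\mathcal{F}^2$ and the components $\alpha_f$ using that every object of $\dot{\mathscr{C}}(x,y)$ is a retract of a finite sum of objects in the image of $\iota_{\mathscr{C}}$ together with multi-additivity, and transfer the coherence axioms by the resulting uniqueness principle. One imprecision: the aside that $\dot{\mathscr{C}}(y,z)\times\dot{\mathscr{C}}(x,y)$ ``embeds'' into the Karoubi envelope of $\mathscr{C}(y,z)\otimes_\Bbbk\mathscr{C}(x,y)$ is not accurate, since the canonical bilinear functor there is not faithful. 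Your variable-by-variable (or compression) construction of $\dot{\mathcal{F}}^2$ does not need it.
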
\par\begin{remark}In \thref{N} \ref{O}, an inverse image of a 2-functor may only exist as a pseudofunctor.\end{remark}\par\subsection{2-representations}\label{P}\par Let $\mathscr{C}$ be a $\Bbbk$-linear 2-category. A \emph{2-representation} of $\mathscr{C}$ is a $\Bbbk$-linear pseudofunctor $\mathcal{V}\colon\mathscr{C}\to\mathrm{Cat}_\Bbbk$. In this case, we say that $\mathscr{C}$ \emph{acts} on a collection of categories $\{\mathcal{V}_x\coloneqq\mathcal{V}(x)\}_{x\in\mathscr{C}}$, in analogy with the standard notation for weight spaces in representation theory.\par\begin{remark}By the strictification theorems (e.g.\ \cite[\S 8]{S}), any $\Bbbk$-linear pseudofunctor $\mathcal{V}\colon\mathscr{C}\to\mathrm{Cat}_\Bbbk$ is equivalent to a 2-functor, and hence statements on 2-representations that depend only on isomorphism classes of objects of $\mathcal{V}_x$ can be proved by assuming that $\mathcal{V}$ is a 2-functor, which is the reduction we will apply throughout. For the 2-category $\mathscr{U}$ in \thref{Q}, there is no relation imposed on generating 1-morphisms, and this implies that any pseudofunctor $\mathcal{V}\colon\mathscr{U}\to\mathrm{Cat}_\Bbbk$ can be strictified to a 2-functor by an icon instead of a strong transformation.\end{remark}\par By a morphism between two 2-representations $\mathcal{V},\mathcal{V}'\colon\mathscr{C}\to\mathrm{Cat}_\Bbbk$, we mean a morphism in the 2-category $\mathrm{Bicat}_\Bbbk(\mathscr{C},\mathrm{Cat}_\Bbbk)$, that is, a strong transformation $\phi\colon\mathcal{V}\to\mathcal{V}'$. Similarly, we say that $\mathcal{V}$ and $\mathcal{V}'$ are equivalent if there exist 1-morphisms $f\colon\mathcal{V}\to\mathcal{V}'$ and $g\colon\mathcal{V}'\to\mathcal{V}$ in $\mathrm{Bicat}_\Bbbk(\mathscr{C},\mathrm{Cat}_\Bbbk)$ such that $fg$ and $gf$ are isomorphic to the identity 1-morphisms.\par\begin{lemma}\thlabel{R} Suppose that $\mathcal{V}\colon\mathscr{C}\to\mathrm{Cat}_\Bbbk$ is a $\Bbbk$-linear pseudofunctor, and $\mathcal{V}_x$ is additive and idempotent-complete for all $x\in\mathscr{C}$. Then there exists a $\Bbbk$-linear pseudofunctor $\dot{\mathcal{V}}\colon\dot{\mathscr{C}}\to\mathrm{Cat}_\Bbbk$ such that $\mathcal{V}=\dot{\mathcal{V}}\circ\iota_{\mathscr{C}}$.\end{lemma}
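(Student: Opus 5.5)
This is a direct application of \thref{N}\,\ref{O} with $\mathscr{D}=\mathrm{Cat}_\Bbbk$ and $\mathcal{F}=\mathcal{V}$. The plan has three steps, in this order.

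First, I verify the hypothesis of \thref{N}\,\ref{O}: for all $x,y\in\mathscr{C}$, the category $\mathrm{Cat}_\Bbbk(\mathcal{V}_x,\mathcal{V}_y)$ of $\Bbbk$-linear functors and natural transformations must be additive and idempotent-complete.

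\emph{Additivity.} Since $\mathcal{V}_y$ is additive, I define $F\oplus G$ objectwise by $(F\oplus G)(m)=F(m)\oplus G(m)$. On morphisms it acts diagonally, and it is $\Bbbk$-linear. The zero functor serves as a zero object. The hom-sets of natural transformations are $\Bbbk$-modules, and composition is bilinear.

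\emph{Idempotent completeness.} Let $e\colon F\to F$ be an idempotent natural transformation. For each $m$, the idempotent $e_m$ splits in $\mathcal{V}_y$, since $\mathcal{V}_y$ is idempotent-complete. I choose the image $F_e(m)$ together with maps $i_m\colon F_e(m)\to F(m)$ and $p_m\colon F(m)\to F_e(m)$ satisfying $p_mi_m=1$ and $i_mp_m=e_m$. On a morphism $f$, I set $F_e(f)\coloneqq p\,F(f)\,i$. Naturality of $e$ gives functoriality: $p\,F(g)\,i\,p\,F(f)\,i=p\,F(g)\,e\,F(f)\,i=p\,e\,F(gf)\,i=p\,F(gf)\,i$. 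The functor $F_e$ is $\Bbbk$-linear, and $i$ and $p$ are natural transformations splitting $e$.

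Second, with this hypothesis in hand, \thref{N}\,\ref{O} yields a pseudofunctor $\dot{\mathcal{V}}\in\mathrm{Bicat}_\Bbbk(\dot{\mathscr{C}},\mathrm{Cat}_\Bbbk)$ with $\iota_{\mathscr{C}}^\ast(\dot{\mathcal{V}})=\mathcal{V}$. Since $\iota_{\mathscr{C}}^\ast(\dot{\mathcal{V}})=\dot{\mathcal{V}}\circ\iota_{\mathscr{C}}$, this is exactly the claimed equality $\mathcal{V}=\dot{\mathcal{V}}\circ\iota_{\mathscr{C}}$.

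Third, I address the main obstacle: reading ``inverse image'' in \thref{N} as an equality rather than an equivalence. If it only gives $\dot{\mathcal{V}}\circ\iota_{\mathscr{C}}\simeq\mathcal{V}$, I construct $\dot{\mathcal{V}}$ directly instead. On objects, $\dot{\mathcal{V}}(x)=\mathcal{V}_x$. On each Hom-category, I use the functor $\dot{\mathscr{C}}(x,y)\to\mathrm{Cat}_\Bbbk(\mathcal{V}_x,\mathcal{V}_y)$ supplied by property (ii) of the additive Karoubi envelope, applied with $\mathcal{L}=\mathrm{Cat}_\Bbbk(\mathcal{V}_x,\mathcal{V}_y)$, which step one showed is additive and idempotent-complete.

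This extension can be chosen to restrict strictly to $\mathcal{V}$ on $\mathscr{C}(x,y)$. The reason is that $\iota_{\mathcal{K}}$ is injective on objects and on morphisms, so on objects in the image of $\iota_{\mathcal{K}}$ I may take the chosen direct sums and splittings to be the trivial ones.

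The structure isomorphisms $\dot{\mathcal{V}}^2$ and $\dot{\mathcal{V}}^0$ then come from the full faithfulness in (i). The composites $\dot{\mathscr{C}}(y,z)\times\dot{\mathscr{C}}(x,y)\to\mathrm{Cat}_\Bbbk(\mathcal{V}_x,\mathcal{V}_z)$ obtained via the two routes both extend the same bilinear functor on $\mathscr{C}$. Hence $\mathcal{V}^2$ extends uniquely to a natural isomorphism between them, and the same argument handles $\mathcal{V}^0$. Coherence follows from uniqueness of these extensions, since both sides of each coherence axiom extend the corresponding axiom for $\mathcal{V}$.
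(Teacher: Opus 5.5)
Your proof is correct and follows the paper's argument. The paper likewise notes that $\mathrm{Cat}_\Bbbk(\mathcal{V}_x,\mathcal{V}_y)$ is additive and idempotent-complete because splittings can be computed objectwise in $\mathcal{V}_y$, then applies \thref{N}\,\ref{O}; your third step is redundant there, since ``inverse image'' already means a strict preimage under $\iota_{\mathscr{C}}^\ast$, though it essentially sketches how that part of \thref{N} is proved.
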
 \begin{proof}Given an idempotent in $\mathrm{Cat}_\Bbbk(\mathcal{V}_x,\mathcal{V}_y)$, its splitting is given by a coequalizer of the idempotent and the identity, which can be computed objectwise. Since $\mathcal{V}_y$ is idempotent-complete, $\mathrm{Cat}_\Bbbk(\mathcal{V}_x,\mathcal{V}_y)$ is idempotent-complete. Noting that $\mathrm{Cat}_\Bbbk(\mathcal{V}_x,\mathcal{V}_y)$ is additive, we may apply \thref{N} \ref{O}.\end{proof}\par For a 1-morphism $F\in\mathscr{C}(x,y)$ and an object $M\in\mathcal{V}_x$, we set $FM\coloneqq\mathcal{V}(F)(M)$. Also, we compose $\mathcal{V}$ with the 2-functor $\dot{(-)}\colon\mathrm{Cat}_\Bbbk\to\mathrm{Cat}_\Bbbk$ taking additive Karoubi envelopes, and implicitly choose its lift $\dot{\mathcal{V}}\colon\dot{\mathscr{C}}\to\mathrm{Cat}_\Bbbk$ by applying \thref{R}. Then we set $\dot{F}M\coloneqq\dot{\mathcal{V}}(\dot{F})(M)$ for $\dot{F}\in\dot{\mathscr{C}}(x,y)$.\par Finally, let us briefly describe relations between these notions and 2-representations found in the literature. When $\mathscr{C}$ is $\mathscr{U}$ or $\mathscr{U}|_{q=1}$ defined in \thref{Q} below, a 2-functor $\mathscr{C}\to\mathrm{Cat}_\Bbbk$ coincides with the notion of \emph{categorical $\mathfrak{g}$-action} found in the literature \cite{_,F,M,H}. A morphism between 2-representations coincides with a \emph{strongly equivariant functor} in \cite{X} (see \cite[Rem.~4.7--4.8]{Z}). The convention for $\dot{F}M$ aligns with e.g. \cite{_} where $\dot{F}M$ for a certain 1-morphism $\dot{F}$ living in an idempotent completion is defined by choosing a splitting of $FM$.\section{Categorification of quantum groups}\label{C}\subsection{Quantized enveloping algebras}\label{S}\par A Cartan datum is a tuple $(I,\cdot)$ of a set $I$ and a bilinear form $\cdot\colon\mathbb{Z}[I]\otimes\mathbb{Z}[I]\to\mathbb{Z}$, such that $i\cdot i\in2\mathbb{N}$ and $2\frac{i\cdot j}{i\cdot i}\in\mathbb{Z}_{\le0}$ holds for any $i\ne j$ in $I$, and $i\cdot j=0$ for all but finitely many $j\in I$ for each $i\in I$.\par\begin{definition}A \emph{weight datum} of type $(I,\cdot)$ is a pair $(X,\{\langle i,-\rangle\}_{i\in I})$ consisting of a set $X$ equipped with an action $X\times\mathbb{Z}[I]\to X$ \,($(\lambda,\nu)\mapsto\lambda+\nu$), and a family of maps $\langle i,-\rangle\colon X\to\mathbb{Z}$ $(i\in I)$, satisfying \begin{enumerate}\item $\langle i,\lambda+j\rangle=\langle i,\lambda\rangle+2\frac{i\cdot j}{i\cdot i}$ for all $i,j\in I$ and $\lambda\in X$,\item $\langle i,\lambda\rangle=0$ for all but finitely many $i\in I$ for each $\lambda\in X$.\end{enumerate} For $\epsilon\in\{-,+\}$, we set $X^{\epsilon}\coloneqq\left\{\kern.07em\lambda\in X\,\middle|\,\epsilon\langle i,\lambda\rangle\ge0\text{ for all }i\in I\kern.07em\right\}$.\end{definition}\par\begin{remark}\thlabel{T} A root datum $(Y,X,\langle-,-\rangle)$ of type $(I,\cdot)$ in the sense of \cite{a} gives a weight datum $(X,\{\langle i,-\rangle\}_{i\in I})$, where the action of $i\in I$ on $\lambda\in X$ is given by $\lambda+i'$ (the sum in $X$), and the maps $\langle i,-\rangle$ are restrictions of the pairing $\langle-,-\rangle\colon Y\times X\to\mathbb{Z}$.\end{remark}\par Given a weight datum, let $\widetilde{U}$ be the associated idempotent-completed quantized enveloping algebra. It is a $\mathbb{Q}(q)$-linear category, whose set of objects is $X$, and whose morphisms are generated by $E_i1_\lambda\colon\lambda\to\lambda+i$ and $F_i1_\lambda\colon\lambda\to\lambda-i$ for $i\in I$ and $\lambda\in X$ with relations $E_iF_j1_\lambda-F_jE_i1_\lambda=\delta_{ij}[\langle i,\lambda\rangle]_i1_\lambda$ and $0=\sum_{m+n=a_{ij}}(-1)^mE_i^{(m)}E_jE_i^{(n)}1_\lambda=\sum_{m+n=a_{ij}}(-1)^mF_i^{(m)}F_jF_i^{(n)}1_\lambda$, where $a_{ij}=1-2\frac{i\cdot j}{i\cdot i}$ and standard notations for $q$-integers and divided powers are used. Let $\widetilde{U}_\mathbb{Z}$ be the $\mathbb{Z}[q^{\pm}]$-linear subcategory of $\widetilde{U}$ generated by $E_i^{(n)}1_\lambda$ and $F_i^{(n)}1_\lambda$ for $i\in I$, $\lambda\in X$, and $n\in\mathbb{N}$. We also denote $E_i$ and $F_i$ by $E_i^+$ and $E_i^-$ as usual.\par For $\lambda\in X^\epsilon$, we denote by $V(\lambda)$ the integrable $\widetilde{U}$-module generated by a vector $u_\lambda$ of weight $\lambda$ with $E_i^\epsilon u_\lambda=0$ $(i\in I)$, and by $V_\mathbb{Z}(\lambda)$ its $\widetilde{U}_\mathbb{Z}$-submodule generated by $u_\lambda$.\par\subsection{Quiver Hecke algebras}A \emph{quiver Hecke datum} of type $(I,\cdot)$ is a matrix $Q=(Q_{ij})_{i,j\in I}$ with entries in $\Bbbk[u,v]$ satisfying: \begin{enumerate}[(i)]\item $Q_{ii}=0$ for all $i\in I$,\item $Q_{ij}(u,v)=Q_{ji}(v,u)$ for all $i,j\in I$,\item $Q_{ij}(u^{i\cdot i},0)=t_{ij}u^{-2i\cdot j}$ for some $t_{ij}\in\Bbbk^\times$ for all $i\ne j$, $i,j\in I$,\item $Q_{ij}(u^{i\cdot i},v^{j\cdot j})$ is a homogeneous polynomial of degree $-2i\cdot j$.\end{enumerate} We set $\overline{Q}_{ij}(u,v,w)\coloneqq\frac{Q_{ij}(u,v)-Q_{ij}(w,v)}{u-w}\in\Bbbk[u,v,w]$.\par Associated to $Q$, we define two quiver Hecke algebras $\mathcal{R}^-$ and $\mathcal{R}^+$. These will be regarded as categorifications of the negative and positive parts $U^-$ and $U^+$ of the quantized enveloping algebra (cf.\ \cite{D}).\par\begin{definition}For $\epsilon\in\{-,+\}$, a graded $\Bbbk$-linear strict monoidal category $\mathcal{R}^\epsilon$ is generated by objects $i\in I$, and by morphisms $x_i\colon q^{i\cdot i}i\to i$ and $\tau_{ij}\colon q^{-i\cdot j}j\otimes i\to i\otimes j$ ($i,j\in I$), subject to relations \begin{enumerate}[(a)]\item $(1_i\otimes x_j)\tau_{ij}-\tau_{ij}(x_j\otimes1_i)=\tau_{ij}(1_j\otimes x_i)-(x_i\otimes1_j)\tau_{ij}=\epsilon\delta_{ij}1_{j\otimes i}$,\item $\tau_{ji}\tau_{ij}=Q_{ij}(1_j\otimes x_i,x_j\otimes1_i)$,\item $(1_i\otimes\tau_{jk})(\tau_{ik}\otimes1_j)(1_k\otimes\tau_{ij})-(\tau_{ij}\otimes1_k)(1_j\otimes\tau_{ik})(\tau_{jk}\otimes1_i)=\epsilon\delta_{i,k}\overline{Q}_{ij}(1_{k\otimes j}\otimes x_i,1_k\otimes x_j\otimes1_i,x_k\otimes1_{j\otimes i})$.\end{enumerate} For an element $u=q^di_1\otimes\cdots\otimes i_m\in\mathcal{R}^\epsilon$, we set $|u|\coloneqq i_1+\cdots+i_m\in\mathbb{N}[I]$. Given an element $\nu\in\mathbb{N}[I]$, we denote by $\mathcal{R}^\epsilon(\nu)$ the full subcategory of $\mathcal{R}^\epsilon$ consisting of objects $u$ satisfying $|u|=\nu$.\end{definition}\par\begin{definition}\thlabel{U} \begin{enumerate*}[(i),font=\normalfont]\item A graded strict monoidal functor $\ast\colon\mathcal{R}^\epsilon\to(\mathcal{R}^\epsilon)^{\operatorname{rev}}$ is defined by $i\mapsto i$ on objects and by $x_i\mapsto x_i$ and $\tau_{ij}\mapsto-\tau_{ji}$ on morphisms. \label{V}\item A strict monoidal functor $-\colon\mathcal{R}^\epsilon\to(\mathcal{R}^\epsilon)^{\operatorname{op}}$ is defined by $q^di\mapsto q^{-d}i$ on objects and by $x_i\mapsto x_i$ and $\tau_{ij}\mapsto\tau_{ji}$ on morphisms. \label{W}\end{enumerate*}\end{definition}\par Assume that a weight datum $(X,\{\langle i,-\rangle\}_{i\in I})$ of type $(I,\cdot)$ is given. We define cyclotomic quiver Hecke algebras that categorify integrable highest and lowest weight modules of $\widetilde{U}$.\par\begin{definition}\thlabel{X} Given $\lambda\in X^{\epsilon}$ and $\mu\in X$, $\mathcal{R}^\lambda_\mu$ is the disjoint union of the quotient of $\mathcal{R}^{-\epsilon}(\nu)$ by the two-sided ideal generated by $x_i^{\epsilon\langle i,\lambda\rangle}\otimes1_u\in\operatorname{End}_q(i\otimes u)$ for $\nu\in\mathbb{N}[I]$ such that $\lambda-\epsilon\nu=\mu$, $i\in I$, and $u\in\mathcal{R}(\nu-i)$.\end{definition}\subsection{The 2-category $\mathscr{U}$}\label{Y}\par We use a cyclic presentation of the 2-category $\mathscr{U}$ from \cite{b}, but our diagrams are the left--right mirror images of those in loc.~cit., so that the composition of 1-morphisms is read from right to left as in \cite{V} and \cite{K}.\par\begin{definition}A \emph{choice of bubble parameters} associated to a quiver Hecke datum $Q$ and a weight datum $(X,\{\langle i,-\rangle\}_{i\in I})$ is a collection of functions $c=\{c_i\colon X\to\Bbbk^\times\}_{i\in I}$, satisfying $c_i(\lambda+j)=t_{ij}c_i(\lambda)$ for $\lambda\in X$ and $i,j\in I$ (where $t_{ii}=1$ by convention).\end{definition}\par Assume that a choice of bubble parameters $c$ is given.\par\begin{definition}\thlabel{Q} A graded $\Bbbk$-linear 2-category $\mathscr{U}$ consists of the following data. \begin{itemize}\item The set of objects $X$.\item 1-morphisms generated by $E_i\in\mathscr{U}(\lambda,\lambda+i)$ and $F_i\in\mathscr{U}(\lambda,\lambda-i)$ for $\lambda\in X,i\in I$, together with their formal degree shifts denoted by $q$.\item 2-morphisms generated by the following.\end{itemize} \begin{gather}x\coloneqq\tikz[baseline,very thick,->,scale=1.5]{\draw[postaction={ decorate, decoration={ markings, mark=at position .5 with {\fill(0,0)circle(2.5pt);} } }](0,.3)--(0,-.1)node[at end,below,text height=1.62ex,text depth=.5ex,inner ysep=2pt,scale=.8]{$i$};}\colon q^{i\cdot i}F_i\to F_i,\quad\tau\coloneqq\tikz[baseline,scale=1.5]{\draw[very thick,baseline,->](.2,.3)--(-.2,-.1)node[at end,below,text height=1.62ex,text depth=.5ex,inner ysep=2pt,scale=.8]{$i$};\draw[very thick,baseline,->](-.2,.3)--(.2,-.1)node[at end,below,text height=1.62ex,text depth=.5ex,inner ysep=2pt,scale=.8]{$j$};}\colon q^{-i\cdot j}F_jF_i\to F_iF_j,\label{Z}\displaybreak[0]\\\varepsilon\coloneqq\tikz[baseline,very thick,->,scale=0.4]{\draw[->](0,0)..controls(.4,1.2)and(1.6,1.2)..(2,0)node[at start,below,text height=1.62ex,text depth=.5ex,inner ysep=2pt,scale=0.8]{$i$};\node at(-.5,.5){$\lambda$};}\colon q_i^{\langle i,\lambda\rangle+1}F_iE_i\textnormal{\fontfamily{pbk}\selectfont1}_\lambda\to\textnormal{\fontfamily{pbk}\selectfont1}_\lambda,\quad\varepsilon'\coloneqq\tikz[baseline,very thick,->,scale=0.4]{\draw[<-](0,0)..controls(.4,1.2)and(1.6,1.2)..(2,0)node[at start,below,text height=1.62ex,text depth=.5ex,inner ysep=2pt,scale=0.8]{$i$};\node at(-.5,.5){$\lambda$};}\colon q_i^{-\langle i,\lambda\rangle+1}E_iF_i\textnormal{\fontfamily{pbk}\selectfont1}_\lambda\to\textnormal{\fontfamily{pbk}\selectfont1}_\lambda,\label{a}\displaybreak[0]\\\eta\coloneqq\tikz[baseline,very thick,->,scale=0.4,yscale=-1]{\draw[->](0,0)..controls(.4,1.2)and(1.6,1.2)..(2,0)node[at start,above,text height=1.62ex,text depth=.5ex,inner ysep=2pt,scale=0.8]{$i$};\node at(-.5,.5){$\lambda$};}\colon\textnormal{\fontfamily{pbk}\selectfont1}_\lambda\to q_i^{\langle i,\lambda\rangle-1}E_iF_i\textnormal{\fontfamily{pbk}\selectfont1}_\lambda,\quad\eta'\coloneqq\tikz[baseline,very thick,->,scale=0.4,yscale=-1]{\draw[<-](0,0)..controls(.4,1.2)and(1.6,1.2)..(2,0)node[at start,above,text height=1.62ex,text depth=.5ex,inner ysep=2pt,scale=0.8]{$i$};\node at(-.5,.5){$\lambda$};}\colon\textnormal{\fontfamily{pbk}\selectfont1}_\lambda\to q_i^{-\langle i,\lambda\rangle-1}F_iE_i\textnormal{\fontfamily{pbk}\selectfont1}_\lambda.\label{b}\end{gather} Generating 2-morphisms are subject to the following relations, presented as linear combinations of diagrams, where $F_i$ (resp. $E_i$) corresponds to a downward (resp. upward) endpoint of an oriented strand, and the identity 2-morphism of $F_i$ (resp. $E_i$) is depicted as a downward (resp. upward) oriented strand; the bottom horizontal line corresponds to the source of the 2-morphism, and the top line corresponds to the target. \begin{enumerate}\item KLR relations: \begin{equation}\label{c}\begin{tikzpicture}[scale=.5,very thick,baseline,baseline=-2pt]\draw[postaction={ decorate, decoration={ markings, mark=at position .2 with {{\arrow[scale=.8]{<}}} } }](-4,0)+(-1,-1)--+(1,1)node[below,text height=1.62ex,text depth=.5ex,inner ysep=2pt,at start]{$i$};\draw[postaction={ decorate, decoration={ markings, mark=at position .2 with {{\arrow[scale=.8]{<}}} } }][postaction={ decorate, decoration={ markings, mark=at position .75 with {\fill(0,0)circle(2.5pt);} } }](-4,0)+(1,-1)--+(-1,1)node[below,text height=1.62ex,text depth=.5ex,inner ysep=2pt,at start]{$j$};\node[text height=\the\dimexpr3\fontdimen22\textfont2\relax,text depth=\the\dimexpr\fontdimen22\textfont2\relax,]at(-2,0){$-$};\draw[postaction={ decorate, decoration={ markings, mark=at position .8 with {{\arrow[scale=.8]{<}}} } }](0,0)+(-1,-1)--+(1,1)node[below,text height=1.62ex,text depth=.5ex,inner ysep=2pt,at start]{$i$};\draw[postaction={ decorate, decoration={ markings, mark=at position .8 with {{\arrow[scale=.8]{<}}} } }][postaction={ decorate, decoration={ markings, mark=at position .25 with {\fill(0,0)circle(2.5pt);} } }](0,0)+(1,-1)--+(-1,1)node[below,text height=1.62ex,text depth=.5ex,inner ysep=2pt,at start]{$j$};\node[text height=\the\dimexpr3\fontdimen22\textfont2\relax,text depth=\the\dimexpr\fontdimen22\textfont2\relax,]at(2,0){$=$};\draw[postaction={ decorate, decoration={ markings, mark=at position .8 with {{\arrow[scale=.8]{<}}} } }][postaction={ decorate, decoration={ markings, mark=at position .25 with {\fill(0,0)circle(2.5pt);} } }](4,0)+(-1,-1)--+(1,1)node[below,text height=1.62ex,text depth=.5ex,inner ysep=2pt,at start]{$i$};\draw[postaction={ decorate, decoration={ markings, mark=at position .8 with {{\arrow[scale=.8]{<}}} } }](4,0)+(1,-1)--+(-1,1)node[below,text height=1.62ex,text depth=.5ex,inner ysep=2pt,at start]{$j$};\node[text height=\the\dimexpr3\fontdimen22\textfont2\relax,text depth=\the\dimexpr\fontdimen22\textfont2\relax,]at(6,0){$-$};\draw[postaction={ decorate, decoration={ markings, mark=at position .2 with {{\arrow[scale=.8]{<}}} } }][postaction={ decorate, decoration={ markings, mark=at position .75 with {\fill(0,0)circle(2.5pt);} } }](8,0)+(-1,-1)--+(1,1)node[below,text height=1.62ex,text depth=.5ex,inner ysep=2pt,at start]{$i$};\draw[postaction={ decorate, decoration={ markings, mark=at position .2 with {{\arrow[scale=.8]{<}}} } }](8,0)+(1,-1)--+(-1,1)node[below,text height=1.62ex,text depth=.5ex,inner ysep=2pt,at start]{$j$};\node[text height=\the\dimexpr3\fontdimen22\textfont2\relax,text depth=\the\dimexpr\fontdimen22\textfont2\relax,]at(10,0){$=$};\node[text height=\the\dimexpr3\fontdimen22\textfont2\relax,text depth=\the\dimexpr\fontdimen22\textfont2\relax,]at(11,0){$\delta_{ij}$};\draw[postaction={ decorate, decoration={ markings, mark=at position .5 with {{\arrow[scale=.8]{<}}} } }](13,0)+(-1,-1)--+(-1,1)node[below,text height=1.62ex,text depth=.5ex,inner ysep=2pt,at start]{$i$};\draw[postaction={ decorate, decoration={ markings, mark=at position .5 with {{\arrow[scale=.8]{<}}} } }](13,0)+(0,-1)--+(0,1)node[below,text height=1.62ex,text depth=.5ex,inner ysep=2pt,at start]{$j$};\end{tikzpicture}\quad,\end{equation} \begin{equation}\label{d}\begin{tikzpicture}[very thick,baseline,scale=.7,baseline=-4pt]\draw[postaction={ decorate, decoration={ markings, mark=at position .5 with {{\arrow[scale=.8]{<}}} } }](-2.8,-1)..controls(-1.2,0)..(-2.8,1)node[below,text height=1.62ex,text depth=.5ex,inner ysep=2pt,at start]{$i$};\draw[postaction={ decorate, decoration={ markings, mark=at position .5 with {{\arrow[scale=.8]{<}}} } }](-1.2,-1)..controls(-2.8,0)..(-1.2,1)node[below,text height=1.62ex,text depth=.5ex,inner ysep=2pt,at start]{$j$};\node[text height=\the\dimexpr3\fontdimen22\textfont2\relax,text depth=\the\dimexpr\fontdimen22\textfont2\relax,]at(-.5,0){$=$};\draw(1.4,0)+(0,-1)--+(0,1)node[below,text height=1.62ex,text depth=.5ex,inner ysep=2pt,at start]{$i$};\draw(2.2,0)+(0,-1)--+(0,1)node[below,text height=1.62ex,text depth=.5ex,inner ysep=2pt,at start]{$j$};\node[inner xsep=7pt,fill=white,draw,inner ysep=5pt]at(1.8,0){$Q_{ij}(y_1,y_2)$};\end{tikzpicture}\quad,\qquad\begin{tikzpicture}[very thick,baseline,scale=.7,baseline=-4pt]\draw[postaction={ decorate, decoration={ markings, mark=at position .8 with {{\arrow[scale=.8]{<}}} } }](-3,0)+(-1,-1)--+(1,1)node[below,text height=1.62ex,text depth=.5ex,inner ysep=2pt,at start]{$i$};\draw[postaction={ decorate, decoration={ markings, mark=at position .2 with {{\arrow[scale=.8]{<}}} } }](-3,0)+(1,-1)--+(-1,1)node[below,text height=1.62ex,text depth=.5ex,inner ysep=2pt,at start]{$k$};\draw[postaction={ decorate, decoration={ markings, mark=at position .5 with {{\arrow[scale=.8]{<}}} } }](-3,-1)..controls(-4,0)..(-3,1)node[below,text height=1.62ex,text depth=.5ex,inner ysep=2pt,at start]{$j$};\node[text height=\the\dimexpr3\fontdimen22\textfont2\relax,text depth=\the\dimexpr\fontdimen22\textfont2\relax,]at(-1,0){=};\draw[postaction={ decorate, decoration={ markings, mark=at position .2 with {{\arrow[scale=.8]{<}}} } }](1,0)+(-1,-1)--+(1,1)node[below,text height=1.62ex,text depth=.5ex,inner ysep=2pt,at start]{$i$};\draw[postaction={ decorate, decoration={ markings, mark=at position .8 with {{\arrow[scale=.8]{<}}} } }](1,0)+(1,-1)--+(-1,1)node[below,text height=1.62ex,text depth=.5ex,inner ysep=2pt,at start]{$k$};\draw[postaction={ decorate, decoration={ markings, mark=at position .5 with {{\arrow[scale=.8]{<}}} } }](1,-1)..controls(2,0)..(1,1)node[below,text height=1.62ex,text depth=.5ex,inner ysep=2pt,at start]{$j$};\node[text height=\the\dimexpr3\fontdimen22\textfont2\relax,text depth=\the\dimexpr\fontdimen22\textfont2\relax,]at(2.8,0){$+$};\node[text height=\the\dimexpr3\fontdimen22\textfont2\relax,text depth=\the\dimexpr\fontdimen22\textfont2\relax,]at(3.7,0){$\delta_{ik}$};\draw(6.2,0)+(1,-1)--+(1,1)node[below,text height=1.62ex,text depth=.5ex,inner ysep=2pt,at start]{$i$};\draw(6.2,0)+(-1,-1)--+(-1,1)node[below,text height=1.62ex,text depth=.5ex,inner ysep=2pt,at start]{$i$};\draw(6.2,0)+(0,-1)--+(0,1)node[below,text height=1.62ex,text depth=.5ex,inner ysep=2pt,at start]{$j$};\node[inner ysep=5pt,inner xsep=7pt,fill=white,draw]at(6.2,0){$\overline{Q}_{ij}(y_1,y_2,y_3)$};\end{tikzpicture}\quad.\end{equation}\item Cyclicity: \begin{equation}\tikz[baseline,very thick,<-,scale=1.8]{\draw(0,.3)--(0,-.1)node[at end,below,text height=1.62ex,text depth=.5ex,inner ysep=2pt,scale=.8]{$i$} }\ =\ {\tikzstyle{every picture}=[very thick,baseline,scale=.6]\begin{tikzpicture}\node[inner sep=0mm](0)at(-1,0){};\node[inner sep=0mm](1)at(0,0){};\node[inner sep=0mm](2)at(1,0){};\node[inner sep=0mm](3)at(1,1.5){};\node[inner sep=0mm](4)at(-1,-1.5){};\draw[postaction={ decorate, decoration={ markings, mark=at position .5 with {{\arrow[scale=.8]{>}}} } }](4.center)node[below]{$i$}to(0.center);\draw[postaction={ decorate, decoration={ markings, mark=at position .5 with {{\arrow[scale=.8]{>}}} } }](2.center)to(3.center);\draw[bend left=90,looseness=2.50,postaction={ decorate, decoration={ markings, mark=at position .8 with {{\arrow[scale=.8]{>}}} } }](0.center)to(1.center);\draw[bend right=90,looseness=2.50](1.center)to(2.center);\end{tikzpicture}}\ =\ {\tikzstyle{every picture}=[very thick,baseline,scale=.6,xscale=-1]\begin{tikzpicture}\node[inner sep=0mm](0)at(-1,0){};\node[inner sep=0mm](1)at(0,0){};\node[inner sep=0mm](2)at(1,0){};\node[inner sep=0mm](3)at(1,1.5){};\node[inner sep=0mm](4)at(-1,-1.5){};\draw[postaction={ decorate, decoration={ markings, mark=at position .5 with {{\arrow[scale=.8]{>}}} } }](4.center)node[below]{$i$}to(0.center);\draw[postaction={ decorate, decoration={ markings, mark=at position .5 with {{\arrow[scale=.8]{>}}} } }](2.center)to(3.center);\draw[bend left=90,looseness=2.50,postaction={ decorate, decoration={ markings, mark=at position .8 with {{\arrow[scale=.8]{>}}} } }](0.center)to(1.center);\draw[bend right=90,looseness=2.50](1.center)to(2.center);\end{tikzpicture}}\ ,\qquad\tikz[baseline,very thick,<-,scale=1.8]{\draw[postaction={ decorate, decoration={ markings, mark=at position .5 with {\fill(0,0)circle(2.5pt);} } }](0,.3)--(0,-.1)node[at end,below,text height=1.62ex,text depth=.5ex,inner ysep=2pt,scale=.8]{$i$};}\ \coloneqq\ {\tikzstyle{every picture}=[very thick,baseline,scale=.6]\begin{tikzpicture}\node[inner sep=0mm](0)at(-1,0){};\node[inner sep=0mm](1)at(0,0){};\node[inner sep=0mm](2)at(1,0){};\node[inner sep=0mm](3)at(1,1.5){};\node[inner sep=0mm](4)at(-1,-1.5){};\draw[postaction={ decorate, decoration={ markings, mark=at position .5 with {{\arrow[scale=.8]{>}}} } }](4.center)node[below]{$i$}to(0.center);\draw[postaction={ decorate, decoration={ markings, mark=at position .5 with {{\arrow[scale=.8]{>}}} } }](2.center)to(3.center);\draw[bend left=90,looseness=2.50,postaction={ decorate, decoration={ markings, mark=at position .8 with {{\arrow[scale=.8]{>}}} } }][postaction={ decorate, decoration={ markings, mark=at position .95 with {\fill(0,0)circle(2.5pt);} } }](0.center)to(1.center);\draw[bend right=90,looseness=2.50](1.center)to(2.center);\end{tikzpicture}}\ =\ {\tikzstyle{every picture}=[very thick,baseline,scale=.6,xscale=-1]\begin{tikzpicture}\node[inner sep=0mm](0)at(-1,0){};\node[inner sep=0mm](1)at(0,0){};\node[inner sep=0mm](2)at(1,0){};\node[inner sep=0mm](3)at(1,1.5){};\node[inner sep=0mm](4)at(-1,-1.5){};\draw[postaction={ decorate, decoration={ markings, mark=at position .5 with {{\arrow[scale=.8]{>}}} } }](4.center)node[below]{$i$}to(0.center);\draw[postaction={ decorate, decoration={ markings, mark=at position .5 with {{\arrow[scale=.8]{>}}} } }](2.center)to(3.center);\draw[bend left=90,looseness=2.50,postaction={ decorate, decoration={ markings, mark=at position .8 with {{\arrow[scale=.8]{>}}} } }][postaction={ decorate, decoration={ markings, mark=at position .95 with {\fill(0,0)circle(2.5pt);} } }](0.center)to(1.center);\draw[bend right=90,looseness=2.50](1.center)to(2.center);\end{tikzpicture}}\label{e}\end{equation} \begin{equation}\tikz[baseline,scale=1.5]{\draw[very thick,baseline,<-](.2,.3)--(-.2,-.1)node[at end,below,text height=1.62ex,text depth=.5ex,inner ysep=2pt,scale=.8]{$i$};\draw[very thick,baseline,<-](-.2,.3)--(.2,-.1)node[at end,below,text height=1.62ex,text depth=.5ex,inner ysep=2pt,scale=.8]{$j$};}\ \coloneqq\ {\tikzstyle{every picture}=[very thick,baseline,scale=.3]\begin{tikzpicture}\node[inner sep=0mm](0)at(-1,1){};\node[inner sep=0mm](1)at(1,1){};\node[inner sep=0mm](2)at(-1,-1){};\node[inner sep=0mm](3)at(1,-1){};\node[inner sep=0mm](4)at(-5,-1){};\node[inner sep=0mm](5)at(-3,-1){};\node[inner sep=0mm](6)at(3,1){};\node[inner sep=0mm](7)at(5,1){};\node[inner sep=0mm](8)at(-5,3){};\node[inner sep=0mm](9)at(-3,3){};\node[inner sep=0mm](10)at(3,-3){};\node[inner sep=0mm](11)at(5,-3){};\draw[postaction={ decorate, decoration={ markings, mark=at position .5 with {{\arrow[scale=.8]{<}}} } }](8.center)to(4.center);\draw[postaction={ decorate, decoration={ markings, mark=at position .5 with {{\arrow[scale=.8]{<}}} } }](9.center)to(5.center);\draw[postaction={ decorate, decoration={ markings, mark=at position .5 with {{\arrow[scale=.8]{>}}} } }](10.center)to(6.center);\draw[postaction={ decorate, decoration={ markings, mark=at position .5 with {{\arrow[scale=.8]{>}}} } }](11.center)to(7.center);\draw[bend right=90,looseness=1.50,postaction={ decorate, decoration={ markings, mark=at position .5 with {{\arrow[scale=.8]{<}}} } }](4.center)to(3.center);\draw[bend right=90,looseness=1.50,postaction={ decorate, decoration={ markings, mark=at position .5 with {{\arrow[scale=.8]{<}}} } }](5.center)to(2.center);\draw[bend left=90,looseness=1.50,postaction={ decorate, decoration={ markings, mark=at position .5 with {{\arrow[scale=.8]{<}}} } }](0.center)to(7.center);\draw[bend left=90,looseness=1.50,postaction={ decorate, decoration={ markings, mark=at position .5 with {{\arrow[scale=.8]{<}}} } }](1.center)to(6.center);\draw[in=-90,out=90,looseness=0.75,postaction={ decorate, decoration={ markings, mark=at position .8 with {{\arrow[scale=.8]{<}}} } }](2.center)to(1.center);\draw[in=-90,out=90,looseness=0.75,postaction={ decorate, decoration={ markings, mark=at position .8 with {{\arrow[scale=.8]{<}}} } }](3.center)to(0.center);\end{tikzpicture}}\ =\ {\tikzstyle{every picture}=[very thick,baseline,scale=.3,xscale=-1]\begin{tikzpicture}\node[inner sep=0mm](0)at(-1,1){};\node[inner sep=0mm](1)at(1,1){};\node[inner sep=0mm](2)at(-1,-1){};\node[inner sep=0mm](3)at(1,-1){};\node[inner sep=0mm](4)at(-5,-1){};\node[inner sep=0mm](5)at(-3,-1){};\node[inner sep=0mm](6)at(3,1){};\node[inner sep=0mm](7)at(5,1){};\node[inner sep=0mm](8)at(-5,3){};\node[inner sep=0mm](9)at(-3,3){};\node[inner sep=0mm](10)at(3,-3){};\node[inner sep=0mm](11)at(5,-3){};\draw[postaction={ decorate, decoration={ markings, mark=at position .5 with {{\arrow[scale=.8]{<}}} } }](8.center)to(4.center);\draw[postaction={ decorate, decoration={ markings, mark=at position .5 with {{\arrow[scale=.8]{<}}} } }](9.center)to(5.center);\draw[postaction={ decorate, decoration={ markings, mark=at position .5 with {{\arrow[scale=.8]{>}}} } }](10.center)to(6.center);\draw[postaction={ decorate, decoration={ markings, mark=at position .5 with {{\arrow[scale=.8]{>}}} } }](11.center)to(7.center);\draw[bend right=90,looseness=1.50,postaction={ decorate, decoration={ markings, mark=at position .5 with {{\arrow[scale=.8]{<}}} } }](4.center)to(3.center);\draw[bend right=90,looseness=1.50,postaction={ decorate, decoration={ markings, mark=at position .5 with {{\arrow[scale=.8]{<}}} } }](5.center)to(2.center);\draw[bend left=90,looseness=1.50,postaction={ decorate, decoration={ markings, mark=at position .5 with {{\arrow[scale=.8]{<}}} } }](0.center)to(7.center);\draw[bend left=90,looseness=1.50,postaction={ decorate, decoration={ markings, mark=at position .5 with {{\arrow[scale=.8]{<}}} } }](1.center)to(6.center);\draw[in=-90,out=90,looseness=0.75,postaction={ decorate, decoration={ markings, mark=at position .8 with {{\arrow[scale=.8]{<}}} } }](2.center)to(1.center);\draw[in=-90,out=90,looseness=0.75,postaction={ decorate, decoration={ markings, mark=at position .8 with {{\arrow[scale=.8]{<}}} } }](3.center)to(0.center);\end{tikzpicture}}\label{f}\end{equation}\item Infinite Grassmannian relations: \begin{equation}\label{g}\begin{tikzpicture}[very thick,baseline,baseline=-2.5pt]\draw[postaction={ decorate, decoration={ markings, mark=at position .5 with {{\arrow[scale=.8]{>}}} } }][postaction={ decorate, decoration={ markings, mark=at position .999 with {\fill(0,0)circle(2.5pt);\node[right,xshift=2pt,scale=.8]at(0,0){$k$};} } }](0,0)circle(15pt);\node at(-.7,-.3){$\lambda$};\end{tikzpicture}\ =\begin{cases}0&k<\langle i,\lambda\rangle-1\\c_i(\lambda)&k=\langle i,\lambda\rangle-1\end{cases}\ ,\quad\begin{tikzpicture}[very thick,baseline,baseline=-2.5pt]\draw[postaction={ decorate, decoration={ markings, mark=at position .5 with {{\arrow[scale=.8]{<}}} } }][postaction={ decorate, decoration={ markings, mark=at position .999 with {\fill(0,0)circle(2.5pt);\node[right,xshift=2pt,scale=.8]at(0,0){$k$};} } }](0,0)circle(15pt);\node at(-.7,-.3){$\lambda$};\end{tikzpicture}\ =\begin{cases}0&k<-\langle i,\lambda\rangle-1\\c_i(\lambda)^{-1}&k=-\langle i,\lambda\rangle-1\end{cases}\end{equation} \begin{equation}\label{h}\begin{tikzpicture}[baseline,very thick,baseline]\node at(-1,0){$\displaystyle\sum_{k=\langle i,\lambda\rangle-1}^{n+\langle i,\lambda\rangle+1}$};\draw[postaction={ decorate, decoration={ markings, mark=at position .5 with {{\arrow[scale=.8]{>}}} } }][postaction={ decorate, decoration={ markings, mark=at position .999 with {\fill(0,0)circle(2.5pt);\node[right,xshift=2pt,scale=.8]at(0,0){$k$};} } }](.5,0)circle(15pt);\node at(1.375,-.6){$\lambda$};\draw[postaction={ decorate, decoration={ markings, mark=at position .5 with {{\arrow[scale=.8]{<}}} } }][postaction={ decorate, decoration={ markings, mark=at position .001 with {\fill(0,0)circle(2.5pt);\node[right,xshift=2pt,scale=.8]at(0,0){$n-k$};} } }](2.25,0)circle(15pt);\node at(5.7,0){$=\begin{cases}1&n=-2\\0&n>-2,\end{cases}$};\end{tikzpicture}\end{equation} where all strands are colored $i\in I$, and a ``bubble'' with a negative number of dots (called a \emph{fake bubble}) is understood as a formal element of $\mathscr{U}(\lambda,\lambda)(\textnormal{\fontfamily{pbk}\selectfont1}_\lambda,q_i^d\textnormal{\fontfamily{pbk}\selectfont1}_\lambda)$. \eqref{g} and \eqref{h} allow one to express such bubbles in terms of bubbles with a nonnegative number of dots.\item Commutator relations: Define sideways crossings as follows. \begin{equation}\sigma\coloneqq\tikz[baseline,scale=1.5]{\draw[very thick,baseline,<-](.2,.3)--(-.2,-.1)node[at end,below,text height=1.62ex,text depth=.5ex,inner ysep=2pt,scale=.8]{$i$};\draw[very thick,baseline,->](-.2,.3)--(.2,-.1)node[at end,below,text height=1.62ex,text depth=.5ex,inner ysep=2pt,scale=.8]{$j$};}\ \coloneqq\ {\tikzstyle{every picture}=[very thick,baseline,scale=.3]\begin{tikzpicture}\node[inner sep=0mm](0)at(-1,1){};\node[inner sep=0mm](1)at(1,1){};\node[inner sep=0mm](2)at(-1,-1){};\node[inner sep=0mm](3)at(1,-1){};\node[inner sep=0mm](4)at(-3,1){};\node[inner sep=0mm](5)at(3,-1){};\node[inner sep=0mm](6)at(-1,-2.5){};\node[inner sep=0mm](7)at(-3,-2.5){};\node[inner sep=0mm](8)at(1,2.5){};\node[inner sep=0mm](9)at(3,2.5){};\draw[postaction={ decorate, decoration={ markings, mark=at position .6 with {{\arrow[scale=.8]{>}}} } }](7.center)to(4.center);\draw[postaction={ decorate, decoration={ markings, mark=at position .4 with {{\arrow[scale=.8]{>}}} } }](5.center)to(9.center);\draw[postaction={ decorate, decoration={ markings, mark=at position .9 with {{\arrow[scale=.8]{<}}} } }](6.center)to(2.center);\draw[postaction={ decorate, decoration={ markings, mark=at position .2 with {{\arrow[scale=.8]{<}}} } }](1.center)to(8.center);\draw[bend left=90,looseness=1.50](4.center)to(0.center);\draw[bend right=90,looseness=1.50](3.center)to(5.center);\draw[in=90,out=-90,looseness=0.75](0.center)to(3.center);\draw[in=-90,out=90,looseness=0.75](2.center)to(1.center);\end{tikzpicture}}\ ,\qquad\sigma'\coloneqq\tikz[baseline,scale=1.5]{\draw[very thick,baseline,->](.2,.3)--(-.2,-.1)node[at end,below,text height=1.62ex,text depth=.5ex,inner ysep=2pt,scale=.8]{$i$};\draw[very thick,baseline,<-](-.2,.3)--(.2,-.1)node[at end,below,text height=1.62ex,text depth=.5ex,inner ysep=2pt,scale=.8]{$j$};}\ \coloneqq\ {\tikzstyle{every picture}=[very thick,baseline,scale=.3,xscale=-1]\begin{tikzpicture}\node[inner sep=0mm](0)at(-1,1){};\node[inner sep=0mm](1)at(1,1){};\node[inner sep=0mm](2)at(-1,-1){};\node[inner sep=0mm](3)at(1,-1){};\node[inner sep=0mm](4)at(-3,1){};\node[inner sep=0mm](5)at(3,-1){};\node[inner sep=0mm](6)at(-1,-2.5){};\node[inner sep=0mm](7)at(-3,-2.5){};\node[inner sep=0mm](8)at(1,2.5){};\node[inner sep=0mm](9)at(3,2.5){};\draw[postaction={ decorate, decoration={ markings, mark=at position .6 with {{\arrow[scale=.8]{>}}} } }](7.center)to(4.center);\draw[postaction={ decorate, decoration={ markings, mark=at position .4 with {{\arrow[scale=.8]{>}}} } }](5.center)to(9.center);\draw[postaction={ decorate, decoration={ markings, mark=at position .9 with {{\arrow[scale=.8]{<}}} } }](6.center)to(2.center);\draw[postaction={ decorate, decoration={ markings, mark=at position .2 with {{\arrow[scale=.8]{<}}} } }](1.center)to(8.center);\draw[bend left=90,looseness=1.50](4.center)to(0.center);\draw[bend right=90,looseness=1.50](3.center)to(5.center);\draw[in=90,out=-90,looseness=0.75](0.center)to(3.center);\draw[in=-90,out=90,looseness=0.75](2.center)to(1.center);\end{tikzpicture}}.\label{i}\end{equation} Then the commutator relations are given by \begin{equation}\begin{tikzpicture}[very thick,baseline,baseline=-2pt]\draw[postaction={ decorate, decoration={ markings, mark=at position .5 with {{\arrow[scale=.8]{<}}} } }](0,-1)to[out=90,in=-90]node[at start,below,text height=1.62ex,text depth=.5ex,inner ysep=2pt]{$i$}(1,0)to[out=90,in=-90](0,1);\draw[postaction={ decorate, decoration={ markings, mark=at position .5 with {{\arrow[scale=.8]{>}}} } }](1,-1)to[out=90,in=-90]node[at start,below,text height=1.62ex,text depth=.5ex,inner ysep=2pt]{$j$}(0,0)to[out=90,in=-90](1,1);\end{tikzpicture}\ =\ \begin{tikzpicture}[very thick,baseline,baseline=-2pt]\draw[postaction={ decorate, decoration={ markings, mark=at position .5 with {{\arrow[scale=.8]{<}}} } }](1,-1)to[out=90,in=-90]node[at start,below,text height=1.62ex,text depth=.5ex,inner ysep=2pt]{$i$}(1,1);\draw[postaction={ decorate, decoration={ markings, mark=at position .5 with {{\arrow[scale=.8]{>}}} } }](1.7,-1)to[out=90,in=-90]node[at start,below,text height=1.62ex,text depth=.5ex,inner ysep=2pt]{$j$}(1.7,1);\end{tikzpicture}\ ,\qquad\begin{tikzpicture}[very thick,baseline,baseline=-2pt]\draw[postaction={ decorate, decoration={ markings, mark=at position .5 with {{\arrow[scale=.8]{>}}} } }](0,-1)to[out=90,in=-90]node[at start,below,text height=1.62ex,text depth=.5ex,inner ysep=2pt]{$i$}(1,0)to[out=90,in=-90](0,1);\draw[postaction={ decorate, decoration={ markings, mark=at position .5 with {{\arrow[scale=.8]{<}}} } }](1,-1)to[out=90,in=-90]node[at start,below,text height=1.62ex,text depth=.5ex,inner ysep=2pt]{$j$}(0,0)to[out=90,in=-90](1,1);\end{tikzpicture}\ =\ \begin{tikzpicture}[very thick,baseline,baseline=-2pt]\draw[postaction={ decorate, decoration={ markings, mark=at position .5 with {{\arrow[scale=.8]{>}}} } }](1,-1)to[out=90,in=-90]node[at start,below,text height=1.62ex,text depth=.5ex,inner ysep=2pt]{$i$}(1,1);\draw[postaction={ decorate, decoration={ markings, mark=at position .5 with {{\arrow[scale=.8]{<}}} } }](1.7,-1)to[out=90,in=-90]node[at start,below,text height=1.62ex,text depth=.5ex,inner ysep=2pt]{$j$}(1.7,1);\end{tikzpicture}\qquad\text{for }i\ne j,\label{j}\end{equation} and in the case of $i=j$, \begin{equation}\label{k}\begin{tikzpicture}[very thick,baseline,baseline=-2pt]\draw[postaction={ decorate, decoration={ markings, mark=at position .5 with {{\arrow[scale=.8]{<}}} } }](0,-1)node[below]{$i$}to[out=90,in=-90](1,0)to[out=90,in=-90](0,1);\draw[postaction={ decorate, decoration={ markings, mark=at position .5 with {{\arrow[scale=.8]{>}}} } }](1,-1)node[below]{$i$}to[out=90,in=-90](0,0)to[out=90,in=-90](1,1);\end{tikzpicture}=\begin{tikzpicture}[very thick,baseline,baseline=-2pt]\node[text height=\the\dimexpr3\fontdimen22\textfont2\relax,text depth=\the\dimexpr\fontdimen22\textfont2\relax,]at(.7,0){$-$};\draw[postaction={ decorate, decoration={ markings, mark=at position .5 with {{\arrow[scale=.8]{<}}} } }](1,-1)to[out=90,in=-90](1,1);\draw[postaction={ decorate, decoration={ markings, mark=at position .5 with {{\arrow[scale=.8]{>}}} } }](1.7,-1)to[out=90,in=-90](1.7,1);\end{tikzpicture}\ +\begin{tikzpicture}[very thick,baseline,baseline=-2pt]\node at(3,0){$\displaystyle\sum_{a+b+c=-2}$};\draw[postaction={ decorate, decoration={ markings, mark=at position .5 with {{\arrow[scale=.8]{<}}} } }][postaction={ decorate, decoration={ markings, mark=at position .8 with {\fill(0,0)circle(2.5pt);\node[right,xshift=2pt,scale=.8]at(0,0){$a$}; } } }](4,-1)to[out=90,in=-180](4.5,-.5)to[out=0,in=90](5,-1);\draw[postaction={ decorate, decoration={ markings, mark=at position .5 with {{\arrow[scale=.8]{>}}} } }][postaction={ decorate, decoration={ markings, mark=at position .8 with {\fill(0,0)circle(2.5pt);\node[right,xshift=2pt,scale=.8]at(0,0){$c$}; } } }](4,1)to[out=-90,in=-180](4.5,.5)to[out=0,in=-90](5,1);\draw[postaction={ decorate, decoration={ markings, mark=at position .5 with {{\arrow[scale=.8]{<}}} } }][postaction={ decorate, decoration={ markings, mark=at position .999 with {\fill(0,0)circle(2.5pt);\node[right,xshift=2pt,scale=.8]at(0,0){$b$}; } } }](5.5,0)circle(10pt);\end{tikzpicture}\end{equation} \begin{equation}\label{l}\begin{tikzpicture}[very thick,baseline,baseline=-2pt]\draw[postaction={ decorate, decoration={ markings, mark=at position .5 with {{\arrow[scale=.8]{>}}} } }](0,-1)node[below]{$i$}to[out=90,in=-90](1,0)to[out=90,in=-90](0,1);\draw[postaction={ decorate, decoration={ markings, mark=at position .5 with {{\arrow[scale=.8]{<}}} } }](1,-1)node[below]{$i$}to[out=90,in=-90](0,0)to[out=90,in=-90](1,1);\end{tikzpicture}=\begin{tikzpicture}[very thick,baseline,baseline=-2pt]\node[text height=\the\dimexpr3\fontdimen22\textfont2\relax,text depth=\the\dimexpr\fontdimen22\textfont2\relax,]at(.7,0){$-$};\draw[postaction={ decorate, decoration={ markings, mark=at position .5 with {{\arrow[scale=.8]{>}}} } }](1,-1)to[out=90,in=-90](1,1);\draw[postaction={ decorate, decoration={ markings, mark=at position .5 with {{\arrow[scale=.8]{<}}} } }](1.7,-1)to[out=90,in=-90](1.7,1);\end{tikzpicture}\ +\begin{tikzpicture}[very thick,baseline,baseline=-2pt]\node at(3,0){$\displaystyle\sum_{a+b+c=-2}$};\draw[postaction={ decorate, decoration={ markings, mark=at position .5 with {{\arrow[scale=.8]{>}}} } }][postaction={ decorate, decoration={ markings, mark=at position .8 with {\fill(0,0)circle(2.5pt);\node[right,xshift=2pt,scale=.8]at(0,0){$a$}; } } }](4,-1)to[out=90,in=-180](4.5,-.5)to[out=0,in=90](5,-1);\draw[postaction={ decorate, decoration={ markings, mark=at position .5 with {{\arrow[scale=.8]{<}}} } }][postaction={ decorate, decoration={ markings, mark=at position .8 with {\fill(0,0)circle(2.5pt);\node[right,xshift=2pt,scale=.8]at(0,0){$c$}; } } }](4,1)to[out=-90,in=-180](4.5,.5)to[out=0,in=-90](5,1);\draw[postaction={ decorate, decoration={ markings, mark=at position .5 with {{\arrow[scale=.8]{>}}} } }][postaction={ decorate, decoration={ markings, mark=at position .999 with {\fill(0,0)circle(2.5pt);\node[right,xshift=2pt,scale=.8]at(0,0){$b$}; } } }](5.5,0)circle(10pt);\end{tikzpicture}\end{equation} where the sum is over $a,b,c\in\mathbb{Z}$ making the corresponding diagram in the summand well-defined. For instance, if \eqref{k} is understood as an identity of 2-morphisms $E_iF_i\textnormal{\fontfamily{pbk}\selectfont1}_\lambda\to F_iE_i\textnormal{\fontfamily{pbk}\selectfont1}_\lambda$, the sum is over $a,c\ge0$ and $b\ge-\langle i,\lambda\rangle-1$.\end{enumerate}\end{definition}\par We write $\mathscr{U}(c)$ for the 2-category $\mathscr{U}$ associated with a choice of bubble parameters $c$, with the underlying Cartan datum, weight datum, and quiver Hecke datum understood implicitly. For any two choices of bubble parameters $c$ and $c'$ with the other data fixed, there exists a canonical isomorphism of 2-categories $\mathscr{U}(c)\to\mathscr{U}(c')$ that is the identity on objects and 1-morphisms \cite[Thm.~2.1]{b}.\par Let $(-X,\{\langle i,-\rangle\}_{i\in I})$ be the weight datum given by $-X\coloneqq\left\{\kern.07em-\lambda\,\middle|\,\lambda\in X\kern.07em\right\}$, $-\lambda+\nu\coloneqq-(\lambda-\nu)$ for $\nu\in\mathbb{Z}[I]$, and $\langle i,-\lambda\rangle\coloneqq-\langle i,\lambda\rangle$. Let $c^\ast=\{c^\ast_i\}_{i\in I}$ be the choice of bubble parameters associated to this weight datum, given by $c^\ast_i(-\lambda)=c_i(\lambda)^{-1}$ for all $\lambda\in X$.\par\begin{definition}\thlabel{m} \leavevmode \begin{enumerate}[(a)]\item The Chevalley involution $\psi\colon\mathscr{U}(c)\to\mathscr{U}(c^\ast)$ is the 2-functor defined on objects by $\lambda\mapsto-\lambda$; on generating 1-morphisms by $q\mapsto q$, $E_i\mapsto F_i$, and $F_i\mapsto E_i$; and on 2-morphisms by reversing the orientation of all strands and multiplying by $(-1)^{\#\text{crossings}}$.\label{n}\item The antiautomorphism $\ast\colon\mathscr{U}(c)\to\mathscr{U}(c^\ast)^{\operatorname{op}}$ is the 2-functor defined on objects by $\lambda\mapsto-\lambda$; on generating 1-morphisms by $q\mapsto q$, $E_i\mapsto E_i$, and $F_i\mapsto F_i$; and on 2-morphisms by reflecting the diagram across the vertical axis and multiplying by $(-1)^{\#\text{crossings}}$.\label{o}\item The bar-involution $\mathop{-}\nolimits\colon\mathscr{U}(c)\to\mathscr{U}(c)^{\operatorname{co}}$ is the 2-functor defined on objects by $\lambda\mapsto\lambda$; on generating 1-morphisms by $q\mapsto q^{-1}$, $E_i\mapsto E_i$, and $F_i\mapsto F_i$; and on 2-morphisms by reflecting the diagram across the horizontal axis and reversing the orientation of all strands.\label{p}\item The involution $\rho\colon\mathscr{U}(c)\to\mathscr{U}(c)^{\operatorname{op}}$ is the 2-functor defined on objects by $\lambda\mapsto\lambda$; on generating 1-morphisms by \[q\mapsto q,\quad E_i\textnormal{\fontfamily{pbk}\selectfont1}_\lambda\mapsto q_i^{1+\langle i,\lambda\rangle}F_i\textnormal{\fontfamily{pbk}\selectfont1}_{\lambda+i}\,,\quad F_i\textnormal{\fontfamily{pbk}\selectfont1}_\lambda\mapsto q_i^{1-\langle i,\lambda\rangle}E_i\textnormal{\fontfamily{pbk}\selectfont1}_{\lambda-i}\,;\] and on 2-morphisms by reflecting the diagram across the vertical axis and reversing the orientation of all strands.\item The involution $\tau\colon\mathscr{U}(c)\to\mathscr{U}(c)^{\operatorname{coop}}$ is defined as the composition $\tau\coloneqq\psi^{\operatorname{coop}}\circ\ast^{\operatorname{co}}\circ\mathop{-}\nolimits$. It acts on 2-morphisms by rotating the corresponding diagram by $180^\circ$.\label{q}\end{enumerate}\end{definition}\par\begin{remark}\thlabel{r} The 2-functor $\rho$ categorifies the anti-automorphism of $U$ denoted by the same symbol $\rho$ in \cite[\S 19.1.1]{a}.\end{remark}\par For $u\in\mathscr{U}(\mu,\nu)$, let $\varepsilon_u\colon\rho(\overline{u})u\to\textnormal{\fontfamily{pbk}\selectfont1}_{\mu}$ and $\eta_u\colon\textnormal{\fontfamily{pbk}\selectfont1}_{\nu}\to u\rho(\overline{u})$ be the cap and cup diagrams connecting $u$ and $\rho(\overline{u})$, that is, if $u=E_i\textnormal{\fontfamily{pbk}\selectfont1}_\mu$ (resp. $F_i\textnormal{\fontfamily{pbk}\selectfont1}_\mu$), then $(\varepsilon_u,\eta_u)=(\varepsilon,\eta)$ (resp. $(\varepsilon',\eta')$), and for $u=u_2u_1$, $\varepsilon_u$ (resp. $\eta_u$) is given by nesting $\varepsilon_{u_1}$ above $\varepsilon_{u_2}$ (resp. $\eta_{u_2}$ above $\eta_{u_1}$). Note that $\varepsilon_u$ and $\eta_u$ are the counit and unit for the adjunction $\rho(\overline{u})\dashv u$ in $\mathscr{U}$.\par For $\lambda\in X$, $\epsilon\in\{-,+\}$, and $\nu\in\mathbb{N}[I]$, there exists a graded $\Bbbk$-linear functor $\iota^{\epsilon,\lambda}_{\nu}\colon\mathcal{R}^\epsilon(\nu)\to\mathscr{U}(\lambda,\lambda+\epsilon\nu)$ given by sending an object $q^di_1\otimes\cdots\otimes i_m$ to $q^dE^\epsilon_{i_m}\cdots E^\epsilon_{i_1}\textnormal{\fontfamily{pbk}\selectfont1}_\lambda$, and sending a morphism $1\otimes\cdots\otimes x_{i_k}\otimes\cdots\otimes1$ (resp. $1\otimes\cdots\otimes\tau_{i_{k+1}i_k}\otimes\cdots\otimes1$) to a 2-morphism represented by a tangle diagram consisting of $m$ straight strands with a dot at the $k$-th strand from the left (resp. with a crossing between the $k$-th and $(k+1)$-th strands from the left). To simplify notation, for $u\in\mathcal{R}^\epsilon(\nu)$, we often write $\iota^{\epsilon,\lambda}_\nu(u)$ as $u\textnormal{\fontfamily{pbk}\selectfont1}_\lambda$, or simply $u$ when $\lambda$ is clear from the context.\par\subsection{Categorification of divided powers}\label{s}\par We recall the results of \cite{T} on the categorified divided powers. Consider a Cartan datum $(I=\{\ast\},\cdot)$ of type $\mathfrak{sl}_2$. The nilHecke algebra $\mathit{NH}_n$ can be realized as $\mathit{NH}_n\coloneqq\mathcal{R}^+|_{q=1}(\ast^{\otimes n},\ast^{\otimes n})$. Set $x_k\coloneqq1^{\otimes(n-k)}\otimes x_1\otimes1^{\otimes(k-1)}$ $(1\le k\le n)$ and $\tau_k\coloneqq1^{\otimes(n-k-1)}\otimes\tau_{11}\otimes1^{\otimes(k-1)}$ $(1\le k\le n-1)$. Then $e_n\coloneqq x_1^{n-1}\cdots x_{n-1}\tau_{\omega_n}\in\mathit{NH}_n$ and $\overline{e_n}=\tau_{\omega_n}x_1^{n-1}\cdots x_{n-1}$ (see \thref{U} \ref{W}), are idempotents, where $\tau_{\omega_n}$ is the product of $\tau_k$'s corresponding to a reduced expression of the longest element of $S_n$.\par We define 1-morphisms \begin{equation}\begin{matrix*}[l]E_i^{(n)}\textnormal{\fontfamily{pbk}\selectfont1}_\lambda&\coloneqq\Big(q_i^{-\binom{n}{2}}E_i^{n}\textnormal{\fontfamily{pbk}\selectfont1}_\lambda,e_n\Big)&\cong\Big(q_i^{\binom{n}{2}}E_i^{n}\textnormal{\fontfamily{pbk}\selectfont1}_\lambda,\overline{e_n}\Big)&\in\ \dot{\mathscr{U}}(\lambda,\lambda+ni),\\F_i^{(n)}\textnormal{\fontfamily{pbk}\selectfont1}_\lambda&\coloneqq\Big(q_i^{\binom{n}{2}}F_i^{n}\textnormal{\fontfamily{pbk}\selectfont1}_\lambda,\tau(e_n)\Big)&\cong\Big(q_i^{-\binom{n}{2}}F_i^{n}\textnormal{\fontfamily{pbk}\selectfont1}_\lambda,\tau(\overline{e_n})\Big)&\in\ \dot{\mathscr{U}}(\lambda,\lambda-ni).\end{matrix*}\label{t}\end{equation} Here, $e_n\in\mathit{NH}_n$ is regarded as an element of $\operatorname{End}_q(E_i^n\textnormal{\fontfamily{pbk}\selectfont1}_\lambda)$ via the embedding $\mathit{NH}_n\to\mathcal{R}^+(n\cdot i)(i^{\otimes n},i^{\otimes n})\xrightarrow{\iota^{+,\lambda}_{n\cdot i}}\operatorname{End}_q(E_i^n\textnormal{\fontfamily{pbk}\selectfont1}_\lambda)$, where the first map is the obvious map scaling degrees by $\frac{i\cdot i}{2}$. Then we have an isomorphism $E_i^{(a)}E_i^{(b)}\textnormal{\fontfamily{pbk}\selectfont1}_\lambda\cong\genfrac{[}{]}{0pt}{}{a+b}{a}_iE_i^{(a+b)}\textnormal{\fontfamily{pbk}\selectfont1}_\lambda$ (\cite[Thm.~5.1.1]{T}), and a categorification of the commutator relation \begin{equation}E^{(a)}_iF^{(b)}_i\textnormal{\fontfamily{pbk}\selectfont1}_\lambda\cong\bigoplus_{j=0}^{\min(a,b)}\genfrac{[}{]}{0pt}{}{\langle i,\lambda\rangle-(b-a)}{j}_iF^{(b-j)}_iE^{(a-j)}_i\textnormal{\fontfamily{pbk}\selectfont1}_\lambda\qquad\text{if }\langle i,\lambda\rangle\ge b-a.\label{u}\end{equation} where the explicit isomorphism is given by the Sto\v si\'c formula \cite[Thm.~5.2.5]{T}.\par For $(u_1,e_1),(u_2,e_2)\in\dot{\mathscr{U}}(\mu,\nu)$ and $f\in\mathscr{U}(\mu,\nu)(u_1,u_2)$ satisfying $e_2fe_1=f$, we denote the 2-morphism $(u_1,e_1)\to(u_2,e_2)$ in $\dot{\mathscr{U}}(\mu,\nu)$ induced by $f$ by the same symbol $f$, by abuse of notation.\par For a 1-morphism $(u,e)\in\dot{\mathscr{U}}(\mu,\nu)$ with $u\in\mathscr{U}(\mu,\nu)$ and an idempotent $e\colon u\to u$, we set $\varepsilon_{(u,e)}=\varepsilon_u\circ(1_{\rho(\overline{u})}\mathbin{\boldsymbol{\cdot}}e)=\varepsilon_u\circ(\rho(\overline{e})\mathbin{\boldsymbol{\cdot}}1_u)$, and $\eta_{(u,e)}=(e\mathbin{\boldsymbol{\cdot}}1_{\rho(\overline{u})})\circ\eta_u=(1_u\mathbin{\boldsymbol{\cdot}}\rho(\overline{e}))\circ\eta_u$, which give the counit and unit for the adjunction $\rho(\overline{(u,e)})\dashv(u,e)$. We have \begin{equation}\label{v}\rho(E_i^{(n)}\textnormal{\fontfamily{pbk}\selectfont1}_\lambda)=q_i^{n(n+\langle i,\lambda\rangle)}F_i^{(n)}\textnormal{\fontfamily{pbk}\selectfont1}_{\lambda+ni},\quad\rho(F_i^{(n)}\textnormal{\fontfamily{pbk}\selectfont1}_\lambda)=q_i^{n(n-\langle i,\lambda\rangle)}E_i^{(n)}\textnormal{\fontfamily{pbk}\selectfont1}_{\lambda-ni}.\end{equation}\section{$\mathscr{U}$-actions on abelian categories}\label{D}\subsection{Basic properties of categorical $\mathfrak{sl}_2$-actions}Let $\mathscr{U}(\mathfrak{sl}_2)$ denote the 2-category $\mathscr{U}$ defined over $\mathbb{Z}$, associated to the Cartan datum for $\mathfrak{sl}_2$, the weight datum induced from its root datum (with $X=\mathbb{Z}$), the trivial quiver Hecke datum $Q=(0)$, and the bubble parameters $c_\ast(n)=1$ for all $n\in\mathbb{Z}$ (where $I=\{\ast\}$).\par We fix a pseudofunctor $\mathcal{C}\colon\mathscr{U}(\mathfrak{sl}_2)\to\mathrm{Ab}$ throughout this and the next subsection. Note that the functors $E\colon\mathcal{C}_n\to\mathcal{C}_{n+2}$ and $F\colon\mathcal{C}_{n}\to\mathcal{C}_{n-2}$ induced by 1-morphisms of $\mathscr{U}(\mathfrak{sl}_2)$ are exact, since they have both left and right adjoints. For $M\in\mathcal{C}_n$, let us denote \begin{equation}\varepsilon(M)\coloneqq\sup\left\{\kern.07emm\in\mathbb{Z}_{\ge0}\,\middle|\,E^mM\ne0\kern.07em\right\},\quad\varphi(M)\coloneqq\sup\left\{\kern.07emm\in\mathbb{Z}_{\ge0}\,\middle|\,F^mM\ne0\kern.07em\right\}.\label{w}\end{equation} We also denote $d(M)\coloneqq\varepsilon(M)+\varphi(M)$.\par\begin{lemma}\thlabel{x} Let $M\in\mathcal{C}_n$ with $d(M)<\infty$. Then we have \begin{enumerate*}[(i),font=\normalfont,series=sl2_rep_basic_properties]\item $\varphi(M)-\varepsilon(M)=n$ and \label{y}\item $d(N)\le d(M)$ for any subquotient $N$ of $M$. \label{z}\end{enumerate*}Furthermore, for any 1-morphism $u$ of $\mathscr{U}(\mathfrak{sl}_2)$, \begin{enumerate}[(i),font=\normalfont,resume=sl2_rep_basic_properties]\item If $uM\ne0$, then $d(uM)=d(M)$.\label{!}\item If $M$ is simple, then $d(L)=d(M)$ holds for any subobject or quotient $L$ of $uM$. \label{?}\end{enumerate}\end{lemma}\par\begin{proof}We have $n\le\varphi(M)$. Indeed, this is immediate if $n<0$, and if $n\ge0$, $M$ is a direct summand of $E^nF^nM$ by \eqref{u}, so $F^nM\ne0$. By \eqref{u} applied to $\mathcal{C}\circ\psi$, we have $F^{\varphi(M)+\varepsilon(M)+1}E^{\varepsilon(M)}M=0$. Thus, $n+2\varepsilon(M)\le\varphi(E^{\varepsilon(M)}M)\le\varphi(M)+\varepsilon(M)$, so $n\le\varphi(M)-\varepsilon(M)$. The same argument applied to $\mathcal{C}\circ\psi$ gives $-n\le\varepsilon(M)-\varphi(M)$, proving \ref{y}. If $EM\ne0$, then $\varepsilon(EM)=\varepsilon(M)-1$. Together with \ref{y}, this implies $d(EM)=d(M)$. Similarly, $d(FM)=d(M)$ if $FM\ne0$, and an iterated application of these statements proves \ref{!}. \ref{z} follows from the exactness of $E$ and $F$. Suppose that $M$ is simple and $L$ is a subobject of $uM$. By adjunction, there exists a nonzero morphism $\rho(\overline{u})L\to M$, which is surjective since $M$ is simple. This and \ref{z} imply $d(M)\le d(\rho(\overline{u})L)$. Also, we have $d(\rho(\overline{u})L)=d(L)\le d(uM)=d(M)$ by \ref{z} and \ref{!}, so we have $d(L)=d(M)$. The case of a quotient can be proved similarly.\end{proof}\par We say that $M\in\mathcal{C}_n$ is \emph{integrable} if $x\colon q^2F\textnormal{\fontfamily{pbk}\selectfont1}_n\to F\textnormal{\fontfamily{pbk}\selectfont1}_n$ and $x\colon q^2E\textnormal{\fontfamily{pbk}\selectfont1}_n\to E\textnormal{\fontfamily{pbk}\selectfont1}_n$ act nilpotently on $M$; that is, if there exists $N\in\mathbb{N}$ such that $x^N(M)\colon q^{2N}FM\to FM$ and $x^N(M)\colon q^{2N}EM\to EM$ are zero (cf.\ \thref{&}).\par\begin{lemma}\thlabel{*} Suppose that $M\in\mathcal{C}_n$ is integrable. Then $d(M)<\infty$, and any $\gamma\in\operatorname{End}_q(\textnormal{\fontfamily{pbk}\selectfont1}_n)$ of positive degree acts nilpotently on $M$.\end{lemma}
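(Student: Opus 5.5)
The plan is to prove the nilpotency statement first, using only the dot on the strand closest to $M$, and then to deduce $d(M)<\infty$ from it, applied to the objects $F^mM$.

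\emph{Nilpotency of $\operatorname{End}_q(\textnormal{\fontfamily{pbk}\selectfont1}_n)$.} By the relations of \thref{Q}, $\operatorname{End}_q(\textnormal{\fontfamily{pbk}\selectfont1}_n)$ is a commutative algebra generated by dotted bubbles. The fake bubbles are expressed in terms of the real ones via \eqref{g} and \eqref{h}.

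\begin{itemize}
\item A real clockwise (resp.\ counterclockwise) bubble with $k$ dots, evaluated on $M$, is the composite of a cup, the map $x^k(M)$ on $FM$ (resp.\ on $EM$), and a cap. By \eqref{e} the dots can be slid onto the strand adjacent to $M$.
\item Choose $N$ with $x^N=0$ on $FM$ and on $EM$. Then every real bubble with at least $N$ dots acts by zero on $M$.
\item Form the generating series $\beta^{+}(t)$ and $\beta^{-}(t)$ of clockwise and counterclockwise bubbles, each evaluated on $M$. Their images in the commutative ring $A:=\operatorname{End}(M)$ are polynomials in $t$, because of the truncation above.
\item The infinite Grassmannian relation \eqref{h}, together with the normalisation \eqref{g}, says that $\beta^{+}(t)\beta^{-}(t)$ equals a unit constant.
\end{itemize}

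Over a commutative ring, if a product of two polynomials has constant term a unit and vanishing higher coefficients, then every positive-degree coefficient of each factor is nilpotent. This is seen by reducing modulo each prime ideal, where both factors must become constants. Hence every positive-degree bubble acts nilpotently on $M$. Any $\gamma$ of positive degree is a polynomial in positive-degree bubbles with no constant term. Since $A$ is commutative, $\gamma$ therefore acts nilpotently.

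\emph{Finiteness of $d(M)$.} By symmetry (apply $\psi$, i.e.\ pass to $\mathcal{C}\circ\psi$), it suffices to show $\varphi(M)<\infty$. Suppose instead that $F^mM\neq0$ for all $m$, and put $P_m:=F^{m}M\in\mathcal{C}_{n-2m}$.

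\begin{itemize}
\item First I would show that integrability passes from $M$ to $FM$ and $EM$. The dot on the outer strand of $F F M$ is conjugated by the crossing $\tau$ to the inner dot, up to the identity correction from \eqref{c}. This gives an explicit nilpotency bound for the outer dot, of the form $x_2^{2N}=0$ once $x_1^N=0$, using $x_2^a\tau-\tau x_1^a=\sum_i x_1^ix_2^{a-1-i}$ in $\mathit{NH}_2$. For the $E$ strand on $FM$ one argues similarly, using the sideways crossing $\sigma$ and \eqref{k}.
\item The key point is a bound on the dot on the strand adjacent to $P_m$ that is \emph{uniform in $m$}. To get it, I would use the fact that the full symmetric algebra $\mathrm{Sym}_m\subset \mathit{NH}_m$ acts on $F^{(m)}M$. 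The first step then yields nilpotency of $e_1(x_1,\dots,x_m)$ on $F^mM$ with order bounded in terms of $N$, using the matrix-algebra structure $\mathit{NH}_m\cong\mathrm{Mat}(\mathrm{Sym}_m)$.
\item Now take $m$ with $-\langle\ast,n-2m\rangle-1=2m-n-1$ larger than this bound. By \eqref{g}, the clockwise bubble at weight $n-2m$ with $2m-n-1$ dots equals $c^{-1}=1$. After sliding the dots by \eqref{e}, it factors through the dot power on the strand adjacent to $P_m$, which vanishes. Hence $\mathrm{id}_{P_m}=0$, i.e.\ $P_m=0$, a contradiction.
\end{itemize}

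The main obstacle is the uniformity in $m$ in the second bullet. Integrability is assumed only for $FM$ and $EM$, while the nilpotency order on $E F^mM$ a priori grows with $m$. If the symmetric-function argument fails, I would fall back on $E F^{(m)}M\cong F^{(m)}EM\oplus[\,n-2m+2\,]F^{(m-1)}M$, which follows from \eqref{u} applied through $\psi$. This reduces the dot on $E P_m$ to dots on $EM$ and on $F P_{m-1}$, and I would then induct.
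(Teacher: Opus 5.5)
Your nilpotency argument for bubbles is sound once you run it in the right ring. Read the generating series in the image of $\operatorname{End}_q(\textnormal{\fontfamily{pbk}\selectfont1}_n)$ inside $\operatorname{End}_q(M)$. That image is commutative because $\operatorname{End}_q(\textnormal{\fontfamily{pbk}\selectfont1}_n)$ is. By contrast, $\operatorname{End}(M)$ need not be commutative and does not contain the positive-degree maps $q^{2r}M\to M$. With that correction, the unit-in-$A[t]$ argument gives nilpotency without using $d(M)<\infty$.

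The gap is in the proof of $d(M)<\infty$. Your contradiction needs a single $K$, independent of $m$, such that $x^K=0$ on the $E$-strand of $EF^mM$ adjacent to $F^mM$. None of your proposed mechanisms produces such a $K$.
\begin{itemize}
\item Iterating the $\mathit{NH}_2$ conjugation only gives bounds like $x_k^{2^{k-1}N}=0$, which grow with $k$.
\item The fallback via $EF^{(m)}M\cong F^{(m)}EM\oplus[n-2m+2]F^{(m-1)}M$ plus induction also yields $m$-dependent bounds. Moreover, transporting the outer $E$-dot across that isomorphism creates correction terms you do not control.
\item The symmetric-function bullet bounds $e_1(x_1,\ldots,x_m)$ on $F^mM$. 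You give no mechanism converting this into a bound for the $E$-strand dot on $EF^mM$, which is what your final bubble step needs.
\end{itemize}
You flag this yourself as the main obstacle, and it remains unresolved. As written, the finiteness half is not proved.

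The missing idea is that no uniformity is needed, and no integrability of $F^mM$ or $EM$ either. Only the dot on the strand adjacent to $M$ matters.
\begin{itemize}
\item Suppose $x^N=0$ on $FM$. In the action of $\mathit{NH}_m$ on $F^mM$, the $N$-th power of the dot next to $M$ acts as $F^{m-1}(x^N(M))=0$.
\item Hence the whole two-sided ideal it generates acts by zero. The action therefore factors through the cyclotomic nilHecke algebra of level $N$: for $m=N+1$ this is $\mathcal{R}^{N}_{-N-2}$ in the paper's notation, resp.\ $\mathcal{R}^{-N}_{N+2}$ on the $E$ side.
\item That algebra is zero for $m=N+1$, since it categorifies a weight space outside $V(N)$.
\item Since $1_{F^{N+1}M}$ is the image of $1$, you get $F^{N+1}M=0$ directly, and $E^{N+1}M=0$ by symmetry.
\end{itemize}
This is exactly the paper's argument.

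Your own remark $\mathit{NH}_m\cong\mathrm{Mat}(\mathrm{Sym}_m)$ already contains it. Under that isomorphism, the two-sided ideal generated by $x_1^N$ is $\mathrm{Mat}(J)$ with $\mathrm{Sym}_m/J\cong H^\ast(\mathrm{Gr}(m,N))$, and this ring is $0$ for $m>N$. So instead of extracting a nilpotency bound for $e_1$, you should read off that the identity of $F^mM$ itself vanishes.
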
 \begin{proof}Suppose that $x^N(M)\colon q^{2N}EM\to EM$ and $x^N(M)\colon q^{2N}FM\to FM$ are zero. Then $\iota^{+,n}_{(N+1)\cdot\ast}\colon\mathcal{R}^+((N+1)\cdot\ast)\to\operatorname{End}_q(E^{N+1}\textnormal{\fontfamily{pbk}\selectfont1}_n)$ factors through $\mathcal{R}^{-N}_{N+2}$, which vanishes since $\mathcal{R}^{-N}_m$ categorifies the weight $m$ space of the integrable $\mathfrak{sl}_2$-representation of lowest weight $-N$. Hence, $E^{N+1}M=0$ and $\varepsilon(M)<\infty$. Similarly, $\varphi(M)<\infty$.\par Let $\Lambda$ be the ring of symmetric functions, and let $h_r$ (resp. $e_r$) be the complete (resp. elementary) symmetric function of degree $r$. \bytes
$\Delta_{m,\varepsilon}(p)=\sum p_{(1)}\otimes p_{(2)}$, where $\Delta_{m,\varepsilon}$ is the coproduct separating the first $m$ variables and the last $\varepsilon$ variables. Under the isomorphism $\Psi(M)$, the action of $p$ corresponds to an action of $\sum p_{(1)}\otimes p_{(2)}$ on the bottom right of \eqref{7}, where $p_{(1)}$ acts on $Z_{m,\varphi}$ via multiplication, and $p_{(2)}$ acts on $E^{(\varepsilon)}M$ via $\iota^{+,n}_{\varepsilon\cdot\ast}(p_{(2)}e_\varepsilon)$ by \cite[(2.61)]{T}. Since $E^{(\varepsilon)}M$ is simple, the terms with $\deg p_{(2)}>0$ act as zero, and this reduces to the multiplication by $p(x_1,\ldots,x_m,0,\ldots,0)$ on $Z_{m,\varphi}$. It follows that $V$ is stable under the multiplication by $Z_{m,\varphi}$, i.e., $V$ is a right ideal of $Z_{m,\varphi}\otimes_\mathbb{Z}D$.\par Then the symmetric algebra structure of $Z_{m,\varphi}$ (see \cite[\S 3.3.2]{_}) forces $V\supset\omega D$. Indeed, let $v=\sum_{\alpha\subset(\varphi-m)^m}s_\alpha\otimes d_\alpha\in V$ $(d_\alpha\in D)$ be a nonzero element. Pick a partition $\alpha_0$ with a minimal $|\alpha_0|$ such that $d_{\alpha_0}\ne0$. Then $v\cdot(s_{\alpha_0^c}\otimes d_{\alpha_0}^{-1})=\omega\otimes1\in V$, where $\alpha_0^c\coloneqq(\varphi-m-(\alpha_0)_m,\ldots,\varphi-m-(\alpha_0)_1)$. Therefore, $E^{(\varepsilon+m)}L$ contains $q^{-m(\varphi-m)}\omega\otimes E^{(\varepsilon)}M$, which is precisely the image of the map $\Psi(\omega\otimes-)(M)$ (cf.\ \eqref{0}). Hence, $\Psi(\omega\otimes-)(M)$ factors through $E^{(\varepsilon+m)}(f)$. By adjunction, $\Phi$ factors through $f$.\end{proof}\par Now we can prove the following generalization of \cite[Prop.~5.20]{_} and \cite[Thm.~4.31]{Z} to arbitrary abelian categories.\par\begin{corollary}\thlabel{8} $F^{(m)}M$ has a simple essential socle and a simple essential head, and the following hold. \begin{enumerate}[(a),font=\normalfont]\item The map $\gamma_m\colon\mathbb{Z}[x_1,\ldots,x_m]^{S_m}\to\operatorname{End}_q(F^{(m)}M)$, $\gamma_m(p)\coloneqq\tau\big(\iota^{+,n-2m}_{m\cdot\ast}(pe_m)\big)(M)$, factors through $Z_{m,\varphi}$, and extends to an isomorphism of graded rings $Z_{m,\varphi}\otimes_\mathbb{Z}\operatorname{End}_q(M)\to\operatorname{End}_q(F^{(m)}M)$ by sending $p\otimes f$ to $\gamma_m(p)\circ F^{(m)}(f)$. \label{9}\item $q^{2m(\varphi-m)}\operatorname{hd}_e(F^{(m)}M)\cong\operatorname{soc}_e(F^{(m)}M)\cong\operatorname{im}\gamma_m(\omega)$. \label{:}\item $F^{(m)}M$ has a filtration $F^{(m)}M=A_0\supset B_0\supset A_1\supset\cdots\supset B_{m(\varphi-m)}=0$ such that \begin{enumerate}[(1),font=\normalfont]\item $d(B_k/A_{k+1})<d(M)$, and\item $\operatorname{hd}_eA_k\cong A_k/B_k\cong q^{2k}(\operatorname{hd}_e(F^{(m)}M))^{\oplus c_k}$, where $c_k$ is the coefficient of $q^{2k}$ in $q^{m(\varphi-m)}\genfrac{[}{]}{0pt}{}{\varphi}{m}$.\end{enumerate} \label{;}\end{enumerate}\end{corollary} \begin{proof}By \thref{6} applied to $\mathcal{C}$ and $\operatorname{op}\circ\mathcal{C}^{\operatorname{co}}\circ-$, $F^{(m)}M$ has a simple essential socle and a simple essential head. Let us show \ref{9} when $m=\varphi$. The map $F^{(\varphi)}\colon\operatorname{End}_q(M)\to\operatorname{End}_q(F^{(\varphi)}M)$ is injective, since any nonzero homogeneous element of the domain is invertible. Note that the adjunction gives a nonzero map $\pi\colon q^{\varphi\varepsilon}E^{(\varphi)}F^{(\varphi)}M\to M$, which realizes the essential head of the domain. Hence, for any $g\in\operatorname{End}_q(F^{(\varphi)}M)$ of degree $c$, the map $q^{c+\varphi\varepsilon}E^{(\varphi)}F^{(\varphi)}M\to M$ induced by adjunction from $g$, factors as $h\circ\pi$ for some $h\colon q^cM\to M$. Then $F^{(\varphi)}(h)=g$, so $F^{(\varphi)}$ is surjective.\par We have a commutative diagram below: \begin{equation}\label{<}\begin{tikzpicture}[xscale=2,yscale=1,baseline=0]\node at(0,0)(A){$\mathbb{Z}[x_1,\ldots,x_m]^{S_m}\otimes_\mathbb{Z}\operatorname{End}_q(M)$};\node at(5,0)(B){$\operatorname{End}_q(F^{(m)}M)$};\node at(0,-1.5)(C){$\mathbb{Z}[x_1,\ldots,x_m]^{S_m}\otimes_\mathbb{Z}\operatorname{End}_q(E^{(\varepsilon)}M)$};\node at(2.7,-1.5)(D){$\operatorname{End}_q(Z_{m,\varphi}\otimes_\mathbb{Z}E^{(\varepsilon)}M)$};\node at(5,-1.5)(E){$\operatorname{End}_q(E^{(\varepsilon+m)}F^{(m)}M)$};\draw[->](A)--(B)node[midway,above,scale=.9]{$p\otimes f\mapsto\gamma_m(p)\circ F^{(m)}(f)$};\draw[->](A)--(C)node[midway,left,scale=.9]{$\sim$}node[midway,right,scale=.9]{$1\otimes E^{(\varepsilon)}(-)$};\draw[{Hooks[left]}->](B)--(E)node[midway,right,scale=.9]{$E^{(\varepsilon+m)}(-)$};\draw[->](C)--(D);\draw[decorate,decoration=double deco,line width=0.8\pgflinewidth,alt double distance=1.8pt](D)--(E)node[midway,below,scale=.9]{$\sim$}node[midway,above,scale=.9]{$\Psi(M)$};\end{tikzpicture}\end{equation} At the bottom left arrow, $\mathbb{Z}[x_1,\ldots,x_m]^{S_m}$ acts on $Z_{m,\varphi}$ via multiplication, and $\operatorname{End}_q(E^{(\varepsilon)}M)$ acts on the second factor. Indeed, the two images of $p\otimes1$ at the bottom right coincide by $E^{(\varepsilon+m)}\big(\gamma_m(p)\big)\circ\Psi(p'\otimes-)(M)=\Psi(pp'\otimes-)(M)$ (cf.\ \eqref{+}), and the commutativity for the general case follows from the commutativity of horizontal compositions.\par By the $m=\varphi$ case for $\mathcal{C}\circ\psi$, the left vertical map is an isomorphism, and by the exactness of $E^{(\varepsilon+m)}$ and \thref{x} \ref{?}, the right vertical map is injective. The bottom left arrow kills $s_\lambda\otimes f$ for $\lambda\not\subset(\varphi-m)^m$. It follows that the top horizontal arrow factors through $Z_{m,\varphi}\otimes_\mathbb{Z}\operatorname{End}_q(M)$, which proves the first part of \ref{9}. Moreover, the map $Z_{m,\varphi}\otimes_\mathbb{Z}\operatorname{End}_q(E^{(\varepsilon)}M)\to\operatorname{End}_q(Z_{m,\varphi}\otimes_\mathbb{Z}E^{(\varepsilon)}M)$ is clearly injective, so the map $Z_{m,\varphi}\otimes_\mathbb{Z}\operatorname{End}_q(M)\to\operatorname{End}_q(F^{(m)}M)$ is injective.\par To see \ref{:}, it suffices to argue that $X\coloneqq\operatorname{im}\gamma_m(\omega)$ is simple by the uniqueness of simple essential socle and head. Under the isomorphism $\Psi(M)$, $E^{(\varepsilon+m)}X\subset E^{(\varepsilon+m)}F^{(m)}M$ corresponds to $\mathbb{Z}\omega\otimes E^{(\varepsilon)}M$, so $E^{(\varepsilon+m)}X$ is simple. Being a subobject of $F^{(m)}M$, $X$ has a simple essential socle. By \thref{x} \ref{?}, $E^{(\varepsilon+m)}\operatorname{soc}_e(X)$ is a nonzero subobject of $E^{(\varepsilon+m)}X$, so $E^{(\varepsilon+m)}\operatorname{soc}_e(X)=E^{(\varepsilon+m)}X$, and dually $E^{(\varepsilon+m)}X=E^{(\varepsilon+m)}\operatorname{hd}_e(X)$. If $X$ is not simple, $\operatorname{soc}_e(X)\to X\to\operatorname{hd}_e(X)$ vanishes, but applying $E^{(\varepsilon+m)}$ gives a nonzero map, a contradiction.\par Let $A_k\coloneqq\sum_{|\lambda|=k}\operatorname{im}\gamma_m(s_\lambda)$. We have well-defined maps\vspace{-.3em} \begin{equation}\label{=}\begin{tikzpicture}\node(A)at(0,0){$q^{2k}\bigoplus_{|\lambda|=k}F^{(m)}M$};\node(B)at(3.6,0){$A_k$};\node(C)at(8.2,0){$q^{2k-2m(\varphi-m)}\bigoplus_{|\lambda|=k}\operatorname{im}\gamma_m(\omega)$\:,};\draw[->>](A)--(B)node[midway,above,scale=.9]{$(\gamma_m(s_\lambda))_{\lambda}$};\draw[->](B)--(C)node[midway,above,scale=.9]{$(\gamma_m(s_{\lambda^c}))^{\mathsf{T}}_{\lambda}$};\end{tikzpicture}\vspace{-.3em}\end{equation} whose composition realizes the essential head of the domain, since the composition gives a diagonal matrix with entries $\gamma_m(\omega)$. This shows $\operatorname{hd}_eA_k\cong q^{2k}(\operatorname{hd}_e(F^{(m)}M))^{\oplus c_k}$. Let $B_k\coloneqq\ker(\gamma_m(s_{\lambda^c}))^{\mathsf{T}}_{\lambda}=A_k\cap\bigcap_{|\mu|=m(\varphi-m)-k}\ker\gamma_m(s_\mu)$. Then clearly $B_k\supset A_{k+1}$. By applying $E^{(\varepsilon+m)}$ to \eqref{=}, we get $(Z_{m,\varphi})_{\ge k}\otimes_\mathbb{Z}E^{(\varepsilon)}M\to\mathbb{Z}\omega\otimes_\mathbb{Z}E^{(\varepsilon)}M$. Thus, $E^{(\varepsilon+m)}B_k\cong(Z_{m,\varphi})_{\ge k+1}\otimes_\mathbb{Z}E^{(\varepsilon)}M\cong E^{(\varepsilon+m)}A_{k+1}$. Therefore, we have $E^{(\varepsilon+m)}(B_k/A_{k+1})=0$, implying $d(B_k/A_{k+1})<d(M)$. This proves \ref{;}.\par It remains to show the surjectivity of the map in \ref{9}. Since $F^{(\varphi-m)}X$ is simple and admits a surjection from $F^{(\varphi-m)}F^{(m)}M$, $F^{(\varphi-m)}X$ is graded isomorphic to $F^{(\varphi)}M$. By the $m=\varphi$ case of \ref{9}, the maps $\operatorname{End}_q(M)\to\operatorname{End}_q(F^{(\varphi)}M)\leftarrow\operatorname{End}_q(X)$ are isomorphisms, so the canonical map $\operatorname{End}_q(M)\to\operatorname{End}_q(\operatorname{hd}_eF^{(m)}M)$ is an isomorphism. By \ref{;} and \thref{x}, the dimension of the degree $d$ component of the target as a right $\operatorname{End}(M)$-vector space is at most $\sum_{d-2k\in S}c_k$ where $S\coloneqq\left\{\kern.07eme\in\mathbb{Z}\,\middle|\,q^{e}M\cong M\kern.07em\right\}$. This coincides with the dimension of the degree $d$ component of the source. Since we already know that the map is injective, it must be surjective.\end{proof}\par\begin{remark}The main obstacle to extending the proof of \cite[Prop.~5.20]{_} is that $F^{(m)}M$ is not guaranteed to have a simple subobject at all; its existence is needed \textit{a priori} to bootstrap the rest of the argument. The noetherian and artinian assumptions of \cite{_} and the locally Schurian setup of \cite{Z} both ensure its existence via ambient finiteness conditions. Realizing this simple subobject intrinsically (\thref{6}) allows us to dispense with assumptions on the abelian categories altogether.\par We also point out an analogy with the argument of \cite[Thm.~3.2]{f}, which is further developed in \cite{g}: the role of the renormalized $R$-matrix is, albeit by different means, played by the map $\Phi$.\par\end{remark}\subsection{Categorical crystals}Recall that $\Bbbk$ is a commutative ring with unit.\par\begin{definition}\thlabel{&} Let $\mathcal{C}\colon\mathscr{U}\to\mathrm{Ab}_\Bbbk$ be a $\Bbbk$-linear pseudofunctor. We say that $M\in\mathcal{C}_\mu$ is \emph{integrable (with respect to $\mathcal{C}$)} if $x\colon q_i^2F_i\textnormal{\fontfamily{pbk}\selectfont1}_\mu\to F_i\textnormal{\fontfamily{pbk}\selectfont1}_\mu$ and $x\colon q_i^2E_i\textnormal{\fontfamily{pbk}\selectfont1}_\mu\to E_i\textnormal{\fontfamily{pbk}\selectfont1}_\mu$ act nilpotently on $M$ for all $i\in I$. We denote the full subcategory of $\mathcal{C}_\mu$ consisting of integrable objects by $\mathcal{C}_\mu^{\operatorname{int}}$.\end{definition}\par\begin{lemma}\thlabel{>} The category $\mathcal{C}_\mu^{\operatorname{int}}$ is a Serre subcategory of $\mathcal{C}_\mu$, and the pseudofunctor $\mathcal{C}$ restricts to a pseudofunctor $\mathcal{C}^{\operatorname{int}}$.\end{lemma}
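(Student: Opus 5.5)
We must show two things: that $\mathcal{C}_\mu^{\operatorname{int}}$ is closed under subobjects, quotients and extensions, and that every generating 1-morphism $E_i\textnormal{\fontfamily{pbk}\selectfont1}_\mu$, $F_i\textnormal{\fontfamily{pbk}\selectfont1}_\mu$ sends integrable objects to integrable objects. Once these hold, $\mathcal{C}^{\operatorname{int}}$ is the restriction of $\mathcal{C}$ to the full subcategories $\mathcal{C}^{\operatorname{int}}_\mu$. The structure isomorphisms $\mathcal{C}^2$, $\mathcal{C}^0$ restrict automatically, since these subcategories are full.

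\textbf{Serre property.} Let $0\to M'\to M\to M''\to0$ be exact. The natural transformation $x\colon q_i^2F_i\textnormal{\fontfamily{pbk}\selectfont1}_\mu\to F_i\textnormal{\fontfamily{pbk}\selectfont1}_\mu$ gives a morphism of short exact sequences $q_i^{2N}F_i(-)\to F_i(-)$, because $F_i$ is exact (it has adjoints on both sides). If $x^N(M)=0$, then by naturality and exactness $x^N(M')$ is the restriction of $x^N(M)$ and $x^N(M'')$ is induced by it, so both vanish. Conversely, suppose $x^a(M')=0$ and $x^b(M'')=0$. Then $x^b(M)$ maps $F_iM$ into $F_iM'$, and afterwards $x^a$ kills $F_iM'$, so $x^{a+b}(M)=0$. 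The same argument works for $E_i$, and we take the maximum over the two functors for each $i$. For infinitely many $i\in I$ this is only nilpotence for each $i$ separately, which is all the definition asks.

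\textbf{Stability under $E_j$ and $F_j$.} Let $M$ be integrable; we show $F_jM$ is integrable (the case $E_jM$ follows by symmetry, applying the argument to $\mathcal{C}\circ\psi$). We must show that for every $i$, the dot $x$ on the $F_i$ strand acts nilpotently on $F_iF_jM$, and likewise on $E_iF_jM$.

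For $F_i$ we use the KLR relations \eqref{c}. On $F_iF_j$, the dot on the left strand slides through a crossing into the dot on the right strand, up to terms in which the crossing is resolved. Concretely, we work in $\operatorname{End}_q(F_iF_jM)$ and compare with $\operatorname{End}_q(F_jF_iM)$, using $\tau\circ(\text{dot on }i)=(\text{dot on }i)\circ\tau+\delta_{ij}$. Several cases arise:
\begin{enumerate}
\item If $i=j$, the dots $x_1,x_2$ on $F_i^2M$ generate symmetric polynomials, and $x_1+x_2$ and $x_1x_2$ act through the action on $F_i^{(2)}M$, i.e.\ through $F_i$ applied twice. Since $x$ is nilpotent on $F_iM$, the elementary symmetric function $e_1^N$ is nilpotent there, and $x_1$ is integral over these symmetric functions. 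Hence $x_1$ is nilpotent.
\item If $i\ne j$ and $Q_{ij}$ is involved, the cleaner route is to use $\tau\tau=Q_{ij}(y_1,y_2)$ together with the nilpotence of the $j$-dot on $F_jM$. This only gives nilpotence of a leading coefficient, so it may not suffice on its own.
\end{enumerate}
I would instead route everything through \thref{*}: positive-degree bubbles act nilpotently on integrable objects. For $E_iF_j$ with $i\ne j$, \eqref{j} gives $E_iF_j\cong F_jE_i$ compatibly with dots, so the dot on $E_i$ in $E_iF_jM$ corresponds to $F_j$ applied to the dot on $E_iM$, which is nilpotent. For $E_iF_iM$ we use \eqref{k}: $E_iF_i\textnormal{\fontfamily{pbk}\selectfont1}_\mu$ decomposes as $F_iE_i$ plus copies of the identity, with the dot acting upper-triangularly. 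The off-diagonal entries are built from bubbles and dots, and the diagonal entries are the dot on $F_iE_iM$ and bubble/dot endomorphisms of $M$. All of these are nilpotent, using \thref{*} and the nilpotence of the dot on $E_iM$. A triangular matrix with nilpotent diagonal is nilpotent.

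\textbf{Main obstacle.} The hard case is $F_iF_jM$ with $i\ne j$, where sliding a dot past a crossing is not an isomorphism. There I would realize $F_iF_j\textnormal{\fontfamily{pbk}\selectfont1}_\mu$ as a summand related to $F_jF_i$ through the map $\tau$. Write $y$ for the dot on the $i$-strand and $z$ for the dot on the $j$-strand. The plan is:
\begin{enumerate}
\item By \eqref{c}, the crossing intertwines $y$ on both sides, and $\tau\tau=Q_{ij}(y,z)$.
\item Using adjunction, show that the dot on the left strand of $F_iF_jM$ is nilpotent by testing against $\operatorname{Hom}(F_iF_jM,N)\cong\operatorname{Hom}(F_jM,\rho(\overline{F_i})N)$.
\item This adjunction converts the dot on $F_i$ into a dot on $E_i$ (via cyclicity \eqref{e}) acting on $E_iN$-type objects.
\end{enumerate}
The cleanest version of step 2 takes $N=F_iF_jM$ itself, which reduces the problem to nilpotence on $E_iF_iF_jM$ and recurses. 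If this circularity cannot be broken, the fallback is to use $d(M)<\infty$ from \thref{*} together with the structure theorem \thref{8} for $\mathfrak{sl}_2$-pieces, and argue by induction on length along the filtration of \thref{8}\ref{;}.
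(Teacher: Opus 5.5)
\textbf{There is a genuine gap: nilpotence of the outer dot on $F_iF_jM$ is never proved.} Your overall reduction (Serre closure, then stability under the generators $E_j$, $F_j$) matches the paper. Your Serre argument is correct, and so is the case $E_iF_jM$ with $i\neq j$.

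The trouble is the heart of the lemma. For $i=j$ your argument is circular. Integrability of $M$ only gives $y_{\mathrm{in}}^N=0$ for the inner dot on $F_i^2M$. Since $y_{\mathrm{in}}$ is nilpotent and commutes with $y_{\mathrm{out}}$, nilpotence of $e_1=y_{\mathrm{in}}+y_{\mathrm{out}}$ is equivalent to nilpotence of $y_{\mathrm{out}}$, which is exactly the claim. Passing to $F_i^{(2)}M$ adds nothing.

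For $i\neq j$ you abandon the $Q_{ij}$ route, and the adjunction plan recurses, as you note. The fallback via \thref{8} cannot work. That result concerns a simple object under a single $\mathfrak{sl}_2$, whereas here $M$ is an arbitrary integrable object with no length to induct on, and two colours interact.

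The missing idea in the paper is the following. Since $x^N$ kills both $F_iM$ and $F_jM$, the action of $\mathcal{R}^-(i+j)$ on $F_jF_iM$ kills the $N$-th power of the first-strand dot. It therefore factors through a cyclotomic quotient, in which all dots are nilpotent by Kang--Kashiwara \cite[Lem.~4.3 (a)]{L}.

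Your own ingredients would also close the gap if pushed further:
\begin{itemize}
\item For $i\neq j$, the $i$-dot $y$ slides through the crossing onto the inner strand of $F_jF_iM$, so $\tau\circ y^N=0$ and hence $Q_{ij}(y,z)\,y^N=0$. Write $Q_{ij}(y,z)=t_{ij}y^m+z\,p(y,z)$ with $t_{ij}\in\Bbbk^\times$ and $z$ the nilpotent inner dot, which commutes with $y$. Then $y^{km+N}=w^k y^N$ with $w=-t_{ij}^{-1}z\,p(y,z)$ nilpotent, so $y$ is nilpotent.
\item For $i=j$, iterating the relation $y_{\mathrm{out}}\tau-\tau y_{\mathrm{in}}=\pm1$ gives $y_{\mathrm{out}}^N\tau-\tau y_{\mathrm{in}}^N=\pm h_{N-1}(y_{\mathrm{in}},y_{\mathrm{out}})$. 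Hence $h_{N-1}=\pm y_{\mathrm{out}}^N\tau$, and $h_{N-1}\tau=0$ because $\tau^2=0$. Multiplying the relation by the central element $h_{N-1}$ gives $h_{N-1}(y_{\mathrm{in}},y_{\mathrm{out}})=0$. This puts $y_{\mathrm{out}}^{N-1}$ in $y_{\mathrm{in}}$ times a commuting element, so $y_{\mathrm{out}}^{N(N-1)}=0$.
\end{itemize}

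\textbf{Your treatment of $E_iF_iM$ is also not correct as stated.} The matrix of the dot with respect to $E_iF_i\cong F_iE_i\oplus(\text{identity summands})$ is not triangular. Its components between the $F_iE_i$ summand and the identity summands are curls with dots, and they are in general nonzero in both directions. For example, the cap with $\langle i,\mu\rangle$ dots does not lie in the span, over bubbles, of caps with fewer dots. So a nilpotent diagonal proves nothing. Moreover, for $\langle i,\mu\rangle<0$ the summand relation is reversed.

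The paper avoids choosing a decomposition. It expands $x^{2N}$ on the outer strand using the commutator relation and slides the dots through. Every resulting term is either $\sigma\circ(\cdots)\circ\sigma'$ with at least $N$ dots on the strand acting on $E_iM$, or contains a cup or cap carrying at least $N$ dots that can be moved onto a strand acting directly on $M$. All such terms vanish, so $x^{2N}=0$ on $E_iF_iM$, with no appeal to \thref{*}.
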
 \begin{proof}Clearly, $\mathcal{C}_\mu^{\operatorname{int}}$ is closed under subobjects, quotients, and extensions. Fix $M\in\mathcal{C}_\mu^{\operatorname{int}}$, and let $i,j\in I$. Since the map $\iota^{-,\mu}_{i+j}(M)|_{q=1}\colon\mathcal{R}^-|_{q=1}(i\otimes j,i\otimes j)\to\operatorname{End}_q(F_jF_iM)$ factors through a cyclotomic quotient, $x\mathbin{\boldsymbol{\cdot}}1_{F_iM}$ acts nilpotently on $F_jF_iM$ by \cite[Lem.~4.3 (a)]{L}. Suppose that $x^N$ acts as zero on $E_jM$ and $F_jM$. By the commutator relations and \eqref{c}, $x^{2N}\mathbin{\boldsymbol{\cdot}}1_{F_i}\in\operatorname{End}_q(E_jF_i\textnormal{\fontfamily{pbk}\selectfont1}_\mu)$ is a linear combination of a diagram with two sideways crossings $\sigma\circ\sigma'$ and diagrams in which the top two endpoints are connected by a cup, each having at least $N$ dots on a strand adjacent to the leftmost area. Hence $x\mathbin{\boldsymbol{\cdot}}1_{F_iM}$ acts nilpotently on $E_jF_iM$, implying $F_iM\in\mathcal{C}_{\mu-i}^{\operatorname{int}}$. Similarly we have $E_iM\in\mathcal{C}_{\mu+i}^{\operatorname{int}}$.\end{proof}\par For an abelian category $\mathcal{A}$, we denote by $\mathbf{B}(\mathcal{A})$ the set of isomorphism classes of simple objects in $\mathcal{A}$. Whenever we write $\mathbf{B}(\mathcal{A})$, we implicitly assume that the full subcategory of simple objects of $\mathcal{A}$ is essentially small. For a pseudofunctor $\mathcal{C}\colon\mathscr{U}\to\mathrm{Ab}_\Bbbk$, we denote $\mathbf{B}(\mathcal{C})\coloneqq\bigsqcup_{\mu\in X}\mathbf{B}(\mathcal{C}_\mu)$.\par The definition of a crystal (see e.g.\ \cite[\S1.5]{O}) applies verbatim to a weight datum.\par\begin{theorem}\thlabel{@} Let $\mathcal{C}\colon\mathscr{U}\to\mathrm{Ab}_\Bbbk$ be a $\Bbbk$-linear pseudofunctor. Let $M\in\mathbf{B}(\mathcal{C}^{\operatorname{int}}_\mu)$ and $i\in I$. The following endows $\mathbf{B}(\mathcal{C}^{\operatorname{int}})$ with the structure of a seminormal crystal.\begin{align}\varepsilon_i(M)\coloneqq\sup\left\{\kern.07emm\in\mathbb{Z}_{\ge0}\,\middle|\,E_i^mM\ne0\kern.07em\right\},&\kern1.5em\varphi_i(M)\coloneqq\sup\left\{\kern.07emm\in\mathbb{Z}_{\ge0}\,\middle|\,F_i^mM\ne0\kern.07em\right\},\\\widetilde{e}_i(M)\coloneqq q_i^{-\varepsilon_i(M)+1}\operatorname{soc}_eE_iM,&\kern1.5em\widetilde{f}_i(M)\coloneqq q_i^{\varphi_i(M)-1}\operatorname{hd}_eF_iM.\label{^}\end{align} \end{theorem} \begin{proof}Let $\mathscr{U}_i$ be the $\Bbbk$-linear 2-category $\mathscr{U}$ associated to the Cartan datum of $\mathfrak{sl}_2$, a weight datum $(\mu+\mathbb{Z}i,\langle\ast,-\rangle)$ with $\langle\ast,\mu'\rangle\coloneqq\langle i,\mu'\rangle$ for $\mu'\in\mu+\mathbb{Z}i$, the trivial quiver Hecke datum, and the bubble parameters given by restricting $c_i$ to $\mu+\mathbb{Z}i$. We have an obvious 2-functor $\mathscr{U}_i\to\mathscr{U}$. Let $\mathscr{U}_i'$ be the $\Bbbk$-linear 2-category constructed from the same data as $\mathscr{U}_i$ except with bubble parameters $c'_i(\mu')=1$ for all $\mu'$. There exists an isomorphism of $\Bbbk$-linear 2-categories $\mathscr{U}'_i\to\mathscr{U}_i$ that is the identity on objects and 1-morphisms (cf.\ \cite[Thm.~2.1]{b}), and $\mathscr{U}'_i$ is isomorphic to the full sub-2-category of $\mathscr{U}(\mathfrak{sl}_2)$ consisting of objects in $\langle i,\mu\rangle+2\mathbb{Z}$. Let $\mathcal{C}_i\colon\mathscr{U}(\mathfrak{sl}_2)\to\mathrm{Ab}_\Bbbk$ be the pseudofunctor that extends $\mathscr{U}_i'\to\mathscr{U}_i\to\mathscr{U}\to\mathrm{Ab}_\Bbbk$ by sending objects not in $\langle i,\mu\rangle+2\mathbb{Z}$ to the zero category. By applying \thref{8} to $\mathcal{C}_i$, we obtain $\varepsilon_i(M)=\max\left\{\kern.07emm\in\mathbb{Z}_{\ge0}\,\middle|\,E_i^mM\ne0\kern.07em\right\}$ and $\widetilde{e}_i(M)\in\mathbf{B}(\mathcal{C}^{\operatorname{int}}_{\mu+i})\sqcup\{0\}$, and similarly for $\varphi_i(M)$ and $\widetilde{f}_i(M)$. We have $\varphi_i(M)=\varepsilon_i(M)+\langle i,\mu\rangle$ by \thref{x} \ref{y}. It remains to check that $\widetilde{f}_i(M)\cong N$ if and only if $M\cong\widetilde{e}_i(N)$ for $M\in\mathbf{B}(\mathcal{C}^{\operatorname{int}}_\mu)$ and $N\in\mathbf{B}(\mathcal{C}^{\operatorname{int}}_{\mu-i})$. Either isomorphism implies $\varphi_i(M)-1=\varphi_i(N)$ (\thref{x} \ref{?}). Then both conditions are equivalent to the non-vanishing of $\mathcal{C}_{\mu-i}(\widetilde{f}_i(M),N)\cong\mathcal{C}_{\mu-i}(q_i^{\varphi_i(M)-1}F_iM,N)\cong\mathcal{C}_{\mu}(M,q_i^{-\varepsilon_i(N)+1}E_iN)\cong\mathcal{C}_{\mu}(M,\widetilde{e}_i(N))$. \end{proof}\par\section{Crystals of orthodox bases}\label{E}\subsection{Humorous categories}\label{`}\par Let $\mathcal{K}$ be a category with an endofunctor $q$. \begin{definition}\thlabel{|} We say that $\mathcal{K}$ is \emph{locally left-bounded} with respect to $q$ if $\mathcal{K}(x,q^dy)=0$ for $d\gg0$ for all $x,y\in\mathcal{K}$. If, moreover, $\mathcal{K}$ is $\Bbbk$-linear and $\mathcal{K}(x,y)$ is a finitely generated $\Bbbk$-module for all $x,y\in\mathcal{K}$, we say that $\mathcal{K}$ is \emph{locally Laurentian} with respect to $q$. \end{definition}\par We call a graded $\Bbbk$-linear category locally left-bounded (resp.\ locally Laurentian) if it is so with respect to the grading shift functor $q$. When $\Bbbk$ is a field, locally Laurentian graded categories are many-object analogues of the \emph{Laurentian graded algebras} in \cite{h}.\par Suppose that $\Bbbk$ is a commutative Henselian local ring. For a $\Bbbk$-linear category $\mathcal{K}$ whose Hom-spaces are finitely generated over $\Bbbk$, $\operatorname{End}(x)$ is a semiperfect ring for every $x\in\dot{\mathcal{K}}$ \cite[A.30]{i}, and hence $\dot{\mathcal{K}}$ is a Krull--Schmidt category \cite[Cor.~4.4]{j}.\par\begin{definition}[{cf.\ \cite[Def.~1.2]{V}}]\thlabel{_A} A \emph{humorous category} over $\Bbbk$ is a small locally Laurentian Krull--Schmidt graded $\Bbbk$-linear category $\mathcal{K}$, equipped with a duality functor $\circledast\colon\mathcal{K}\to\mathcal{K}^{\operatorname{op}}$ (that is, a functor satisfying $\circledast^{\operatorname{op}}\circ\circledast\cong\textnormal{\fontfamily{pbk}\selectfont1}_{\mathcal{K}}$), such that \begin{enumerate}[(i)]\item $\circledast\circ q\cong q^{-1}\circ\circledast$, and \label{AA}\item every indecomposable object of $\mathcal{K}$ is absolutely indecomposable, and is graded isomorphic to a self-dual object.\label{BA}\end{enumerate} Here, an object $x$ is \emph{self-dual} if $x\cong x^\circledast$, and is \emph{absolutely indecomposable} if $\operatorname{End}(x)\otimes_\Bbbk\Bbbk'$ is a local ring for any local homomorphism $\Bbbk\to\Bbbk'$ of commutative Henselian local rings.\end{definition}\par If $\mathcal{K}$ is a humorous category, then its split Grothendieck group $K_0(\mathcal{K})$ is a free $\mathbb{Z}[q^\pm]$-module, and the isomorphism classes of self-dual indecomposable objects form a basis, called the \emph{orthodox basis} in \cite{V}.\par\begin{remark}\thlabel{CA} In \cite{V}, humorous categories are defined over a field, and are required to have only finitely many graded isomorphism classes of indecomposable objects. We drop the latter condition, since $\dot{\mathscr{U}}(\lambda,\mu)$ does not satisfy it; see \thref{DA}. Note that \cite[\S3--5]{V} works over a commutative complete local ring, and that the arguments readily extend to a commutative Henselian local ring. Theorem~5.17 of \cite{V}, which states that each $\dot{\mathscr{U}}(\lambda,\mu)$ is a humorous category, may thus be understood in the sense of \thref{_A} (see also \thref{EA}).\end{remark}\par\subsection{$\mathscr{U}$-actions on Krull--Schmidt categories}\label{FA}\par For a small $\Bbbk$-linear Krull--Schmidt category $\mathcal{K}$, let $\mathbf{G}(\mathcal{K})$ be the set of isomorphism classes of indecomposable objects in $\mathcal{K}$. For $P\in\mathbf{G}(\mathcal{K})$, $\mathcal{K}(-,P)\in\operatorname{mod}\!\operatorname{-}\hspace{-.8mm}\mathcal{K}$ has the simple head $\mathcal{K}(-,P)/\operatorname{rad}\mathcal{K}(-,P)$, which induces a bijection $\mathbf{G}(\mathcal{K})\to\mathbf{B}(\operatorname{mod}\!\operatorname{-}\hspace{-.8mm}\mathcal{K})$.\par Let $\wedge\colon\mathrm{Cat}_\Bbbk\to\mathrm{Ab}_\Bbbk^{\operatorname{coop}}$ denote the 2-functor sending a category $\mathcal{K}$ to $\operatorname{mod}\!\operatorname{-}\hspace{-.8mm}\mathcal{K}\coloneqq\mathrm{Cat}_\Bbbk(\mathcal{K}^{\operatorname{op}},\Bbbk\operatorname{\!-\!}\operatorname{mod})$. For a pseudofunctor $\mathcal{K}\colon\mathscr{U}\to\mathrm{Cat}_\Bbbk$, define\begin{equation}\label{GA}\begin{tikzcd}\operatorname{mod}\!\operatorname{-}\hspace{-.8mm}\mathcal{K}\coloneqq\wedge^{\operatorname{coop}}\circ\mathcal{K}^{\operatorname{coop}}\circ\tau\colon\ \mathscr{U}\arrow[r,"\tau"]&\mathscr{U}^{\operatorname{coop}}\arrow[r,"{\mathcal{K}^{\operatorname{coop}}}"]&\mathrm{Cat}_\Bbbk^{\operatorname{coop}}\arrow[r,"{\wedge^{\operatorname{coop}}}"]&\mathrm{Ab}_\Bbbk.\end{tikzcd}\end{equation}\par Let $\mathcal{K}\colon\mathscr{U}\to\mathrm{Cat}_\Bbbk$ be a pseudofunctor. Suppose that \begin{enumerate}[(a),font=\normalfont]\item $\mathcal{K}_\mu$ is a small Krull--Schmidt category, and \label{HA}\item all simple objects of $\operatorname{mod}\!\operatorname{-}\hspace{-.8mm}\mathcal{K}_\mu$ are integrable with respect to $\operatorname{mod}\!\operatorname{-}\hspace{-.8mm}\mathcal{K}$ in the sense of \thref{&} \label{IA}\end{enumerate} for all $\mu\in X$. By transferring the crystal structure on $\bigsqcup_{\mu\in X}\mathbf{B}(\operatorname{mod}\!\operatorname{-}\hspace{-.8mm}\mathcal{K}_\mu)$ given by \thref{@} via bijections $\mathbf{G}(\mathcal{K}_\mu)\to\mathbf{B}(\operatorname{mod}\!\operatorname{-}\hspace{-.8mm}\mathcal{K}_\mu)$, we obtain a natural crystal structure on $\mathbf{G}(\mathcal{K})\coloneqq\bigsqcup_{\mu\in X}\mathbf{G}(\mathcal{K}_\mu)$, characterized up to degree shifts by the following proposition.\par\begin{proposition}\thlabel{JA} Let $P\in\mathbf{G}(\mathcal{K}_\mu)$ and $i\in I$. \begin{enumerate}[(i),font=\normalfont]\item $\varepsilon_i(P)=\max\left\{\kern.07emn\in\mathbb{Z}_{\ge0}\,\middle|\,P\text{ is a direct summand of }F_i^nQ\text{ for some }Q\in\mathcal{K}_{\mu+ni}\kern.07em\right\}$. \label{KA}\item $E_i^{(m)}P\cong\genfrac{[}{]}{0pt}{}{\varphi_i(P)+m}{m}_i\widetilde{e}_i^m(P)\oplus\bigoplus_{k=1}^nR_k$ for some $R_k\in\mathbf{G}(\mathcal{K}_{\mu+mi})$ with $d_i(R_k)>d_i(P)$ $(1\le k\le n)$. \label{LA}\end{enumerate} \end{proposition}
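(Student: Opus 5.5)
The plan is to transport everything through the Yoneda embedding $Q\mapsto\mathcal{K}(-,Q)$ and then read off both statements from \thref{8} and \thref{x}, applied to the simple modules $L_P\coloneqq\mathcal{K}(-,P)/\operatorname{rad}\mathcal{K}(-,P)$.

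\textbf{Step 1: the action on representables.} First I will unwind \eqref{GA}. For a 1-morphism $u$ and $M\in\operatorname{mod}\!\operatorname{-}\hspace{-.8mm}\mathcal{K}_\mu$, the module $uM$ is $M\circ\mathcal{K}(\tau(u))$. The functor $\mathcal{K}(\tau(u))$ has the biadjoint $\rho(\overline{\tau(u)})$, and up to a grading shift this is $\mathcal{K}(u)$ itself: by \eqref{v}, $\tau$ composed with $\rho\circ-$ fixes $E_i,F_i$ up to shifts. It follows that $u\,\mathcal{K}(-,Q)\cong q^{s}\mathcal{K}(-,uQ)$ for an explicit shift $s$. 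The same holds for divided powers, using the splittings \eqref{t} and \thref{R}. Thus:
\begin{itemize}
\item $E_i^{(m)}$ and $F_i^{(m)}$ send projective covers to projectives;
\item the decomposition of $E_i^{(m)}P$ into indecomposables is exactly the decomposition of $E_i^{(m)}\mathcal{K}(-,P)$, which in turn is detected by its head.
\end{itemize}
I expect this bookkeeping of which functor corresponds to which, and with what shift, to be the fiddliest part. The shifts only matter for matching the $q_i$-powers in \ref{LA}, and I would fix them at the end by comparing lowest degrees.

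\textbf{Step 2: part \ref{KA}.} By \thref{8}, $\varepsilon_i(P)=\varepsilon_i(L_P)=\max\{n\mid E_i^nL_P\ne0\}$. Every module is a quotient of a sum of representables, so $E_i^nL_P\ne0$ if and only if $\operatorname{Hom}(\mathcal{K}(-,Q),E_i^nL_P)\ne0$ for some $Q$ (up to shift). By adjunction together with Step 1, this Hom space is $\operatorname{Hom}(q^{s}\mathcal{K}(-,F_i^nQ),L_P)$. That space is nonzero if and only if $P$ is a direct summand of $F_i^nQ$, because $L_P$ is the head of $\mathcal{K}(-,P)$ and $\mathcal{K}_{\mu+ni}$ is Krull--Schmidt. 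This proves \ref{KA}.

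\textbf{Step 3: part \ref{LA}, identifying the summands.} For $R\in\mathbf{G}(\mathcal{K}_{\mu+mi})$, the graded multiplicity of $R$ in $E_i^{(m)}P$ is $\dim\operatorname{Hom}_q(E_i^{(m)}\mathcal{K}(-,P),L_R)$ over the division ring $\operatorname{End}(L_R)$. By adjunction this equals $\dim\operatorname{Hom}_q(\mathcal{K}(-,P),F_i^{(m)}L_R)$ up to shift, which is the graded multiplicity of $L_P$ as a composition factor of $F_i^{(m)}L_R$; these are evaluations at $P$, which are finitely generated. Now apply \thref{x} and \thref{8}\ref{;} to the simple integrable module $L_R$:
\begin{itemize}
\item every subquotient of $F_i^{(m)}L_R$ satisfies $d_i\le d_i(R)$;
\item the filtration in \thref{8}\ref{;} has layers $B_k/A_{k+1}$ with $d_i<d_i(R)$ and layers $A_k/B_k\cong q^{2k}(\widetilde{f}_i^mL_R)^{\oplus c_k}$.
\end{itemize}
Hence if $R$ occurs in $E_i^{(m)}P$, then $d_i(R)\ge d_i(P)$. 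Moreover, equality holds only when $L_P\cong\widetilde{f}_i^mL_R$ up to shift, that is, when $L_R\cong\widetilde{e}_i^mL_P$ by the crystal axioms of \thref{@}. In that case the multiplicity is read off from the $A_k/B_k$ layers alone, because the other layers have strictly smaller $d_i$ and so cannot contain $L_P$. This gives the graded multiplicity $\sum_kc_kq^{2k}$ up to an overall shift.

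\textbf{Step 4: the coefficient.} Here $\varphi_i(L_R)=\varphi_i(P)+m$, so $\sum_kc_kq_i^{2k}$ is $q_i^{m\varphi_i(P)}\genfrac{[}{]}{0pt}{}{\varphi_i(P)+m}{m}_i$. After the shift normalisation of Step 1 and the $q_i^{-\varepsilon+1}$ convention in \eqref{^}, this should become exactly $\genfrac{[}{]}{0pt}{}{\varphi_i(P)+m}{m}_i\widetilde{e}_i^m(P)$. The remaining summands $R_k$ then have $d_i(R_k)>d_i(P)$, and there are finitely many of them because $E_i^{(m)}P$ is an object of a Krull--Schmidt category.

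The main obstacle is Step 1 together with this shift check: getting the precise correspondence between $E_i^{(m)}$ on $\mathcal{K}$ and $F_i^{(m)}$ on modules through $\tau$, and confirming that the shifts combine to the symmetric quantum binomial coefficient.
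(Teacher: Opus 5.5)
Your argument is essentially the paper's. You pass to the simple heads $L_P$ and use that the module action is precomposition, so that $(E_i^nL_P)(Q)=L_P(F_i^nQ)$ and $L_R(E_i^{(m)}P)\cong(F_i^{(m)}L_R)(P)$. You then read off \ref{LA} from \thref{x} and the filtration of \thref{8}\ref{;}. Your Step~1 (representables go to representables) is correct but unnecessary: these identities follow directly from \eqref{GA} by Yoneda evaluation, which removes most of the shift bookkeeping you were worried about.

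The one point to repair is in Step~4. In the generality of \S\ref{FA}, $\mathcal{K}_\mu$ is not assumed locally left-bounded, so one may have $q^eP\cong P$ for some $e\ne0$. In that case a ``graded multiplicity up to an overall shift'', normalized by comparing lowest degrees, is not well defined. Instead, as the paper does with $S=\{e\mid q^eP\cong P\}$, compare for each $e$ two numbers:
\begin{itemize}
\item the number of indecomposable summands of $E_i^{(m)}P$ isomorphic to $q^e\widetilde{e}_i^m(P)$, computed as $\dim L_{q^eR}(E_i^{(m)}P)$;
\item the sum of the coefficients of $\genfrac{[}{]}{0pt}{}{\varphi_i(P)+m}{m}_i$ over the coset $e+S$.
\end{itemize}
Your evaluation argument produces exactly this coset-wise equality.
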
 \begin{proof}Let $L\coloneqq\operatorname{hd}\mathcal{K}(-,P)$. The object $E_i^nL$ does not vanish if and only if $E_i^nL(Q)=L(F_i^nQ)\ne0$ for some $Q\in\mathcal{K}_{\mu+ni}$, which is equivalent to $P$ being a summand of $F_i^nQ$. This proves \ref{KA}.\par Let $Q\in\mathbf{G}(\mathcal{K}_{\mu+mi})$ and $L'\coloneqq\operatorname{hd}\mathcal{K}(-,Q)$. The object $Q$ occurs as a summand of $E_i^{(m)}P$ if and only if $0\ne L'(E_i^{(m)}P)\cong F_i^{(m)}L'(P)$. This is equivalent to $L$ being a subquotient of $F_i^{(m)}L'$. Under this condition, we have $d_i(L)\le d_i(L')$, and the equality holds if and only if $L$ is graded isomorphic to $\widetilde{f}_i^mL'$ (\thref{8} \ref{;}). Hence, $E_i^{(m)}P\cong f(q)\widetilde{e}_i^m(P)\oplus R$ for some $f(q)\in\mathbb{N}[q^\pm]$ and $R\in\mathcal{K}_{\mu+mi}$ with $d_i(R)>d_i(P)$. If $\widetilde{e}_i^m(P)=0$, then we are done. Otherwise, we compare dimensions of $L'(E_i^{(m)}P)\cong F_i^{(m)}L'(P)$ as right $\operatorname{End}(L')$-vector spaces for $Q=q^d\widetilde{e}_i^m(P)$. By \thref{8} \ref{;}, we obtain $\sum_{e-d\in S}[q^e]f(q)=\sum_{e+d\in S}[q^e]\genfrac{[}{]}{0pt}{}{\varphi_i(L')}{m}_i$, where $[q^x]f(q)$ denotes the coefficient of $q^x$ in $f(q)$, and $S\coloneqq\left\{\kern.07eme\in\mathbb{Z}\,\middle|\,q^eP\cong P\kern.07em\right\}$. Thus, $f(q)\widetilde{e}_i^m(P)\cong\genfrac{[}{]}{0pt}{}{\varphi_i(P)+m}{m}_i\widetilde{e}_i^m(P)$, which proves \ref{LA}.\end{proof}\subsection{$\mathscr{U}$-actions on locally left-bounded categories}\label{MA}\par Let $\mathcal{K}\colon\mathscr{U}\to\mathrm{Cat}_\Bbbk$ be a pseudofunctor. For $\mu\in X$, $i\in I$ and $N\in\mathbb{Z}_{\ge0}$, let $\mathcal{I}^{\pm i,N}_\mu$ be a two-sided ideal of $\mathcal{K}_\mu$ generated by morphisms $q_i^{2N}E^\pm_ix\to E^\pm_ix$ for $x\in\mathcal{K}_{\mu\mp i}$ that are induced by the 2-morphism $x_i^N\colon q_i^{2N}E^\pm_i\textnormal{\fontfamily{pbk}\selectfont1}_{\mu\mp i}\to E^\pm_i\textnormal{\fontfamily{pbk}\selectfont1}_{\mu\mp i}$.\par Recall that the radical $\operatorname{rad}\mathcal{A}$ of a $\mathbb{Z}$-linear category $\mathcal{A}$ is a two-sided ideal of $\mathcal{A}$ consisting of all morphisms $f\colon x\to y$ such that $1_x-gf$ is invertible for all $g\colon y\to x$.\par\begin{lemma}\thlabel{NA} Let $x\in\mathcal{K}_\mu$ be given. Suppose that $\mathcal{K}_{\mu\mp i}$ is locally left-bounded with respect to $q\textnormal{\fontfamily{pbk}\selectfont1}_{\mu\mp i}$. Then the following hold for all $N\gg0$\kern.05em\textup{:}\kern .3em \begin{enumerate*}[(a),font=\normalfont]\item $\mathcal{I}^{\pm i,N}_\mu(x,x)=0$.\label{OA}\item $\mathcal{I}^{\pm i,N}_\mu(y,x)\subset\operatorname{rad}\mathcal{K}_\mu(y,x)$ for all $y\in\mathcal{K}_\mu$.\label{PA}\end{enumerate*}\end{lemma}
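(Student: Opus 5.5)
The plan is to push the dots off the auxiliary objects $z\in\mathcal{K}_{\mu\mp i}$ and onto an object built from $x$ alone, using the adjunction between $E^\pm_i$ and $E^\mp_i$ (up to shift, via $\rho$ and the cups and caps $\varepsilon,\eta,\varepsilon',\eta'$). Local left-boundedness of $\mathcal{K}_{\mu\mp i}$ is then applied to a single fixed hom-space. Throughout, by the strictification remark in \S\ref{P}, we assume $\mathcal{K}$ is a 2-functor.

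First I describe $\mathcal{I}^{\pm i,N}_\mu(y,x)$ concretely. It is spanned by composites
\[ y\xrightarrow{h}q_i^{2N}E^\pm_iz\xrightarrow{x_i^N(z)}E^\pm_iz\xrightarrow{g}x \]
with $z\in\mathcal{K}_{\mu\mp i}$ and $g,h$ arbitrary morphisms, taken up to degree shifts. Write $G\coloneqq\rho(\overline{E^\pm_i\textnormal{\fontfamily{pbk}\selectfont1}_{\mu\mp i}})$, a shift of $E^\mp_i\textnormal{\fontfamily{pbk}\selectfont1}_\mu$; it is left adjoint to $E^\pm_i\textnormal{\fontfamily{pbk}\selectfont1}_{\mu\mp i}$ with counit $\varepsilon_{E^\pm_i}$ and unit $\eta_{E^\pm_i}$. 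The key step is to rewrite $g\circ x_i^N(z)$. By the adjunction, $g$ corresponds to a morphism $g^\flat$ into a shift of $Gx$ (or out of it, depending on which side of the adjunction $g$ lies). The cyclicity relation \eqref{e} lets a dot slide around a cap, so the dot on the $E^\pm_i$ strand becomes a dot on the $E^\mp_i$ strand of $Gx$. Concretely,
\[ g\circ x_i^N(z)=\widetilde{\varepsilon}\circ E^\pm_i\big(x_i^N(x)\circ g^\flat\big), \]
where $x_i^N(x)$ is now the endomorphism $q_i^{2N}Gx\to Gx$ induced by the $N$-th power of the dot on $G$, and $\widetilde{\varepsilon}$ is the adjunction map $E^\pm_iGx\to x$. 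Keeping track of the $q$-shifts, $x_i^N(x)$ is a morphism of degree $N(i\cdot i)>0$ between the single pair of objects $Gx,Gx$ of $\mathcal{K}_{\mu\mp i}$. Equivalently, it is an element of $\mathcal{K}_{\mu\mp i}(Gx,q^{d}Gx)$ with $|d|$ growing linearly in $N$. The sign of $d$ should be exactly the one for which local left-boundedness gives $\mathcal{K}_{\mu\mp i}(Gx,q^dGx)=0$ for $d\gg0$.

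If the sign works out as expected, $x_i^N(x)=0$ for all $N\gg0$, with a bound depending only on $x$. Hence every generator of $\mathcal{I}^{\pm i,N}_\mu(y,x)$ vanishes, for every $y$, and $\mathcal{I}^{\pm i,N}_\mu(-,x)=0$. This gives (a), and (b) holds trivially.

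I expect the main obstacle to be the bookkeeping of degree shifts and the direction of the adjunction. Left-boundedness is one-sided, so it may turn out that the argument only kills dots sitting on the source side, via $h$ and the other adjunction $E^\pm_i\dashv$ (shift of $E^\mp_i$), rather than on the target side. If so, I will run the mirrored argument: express $x_i^N(z)\circ h$ through $h^\sharp\colon E^\mp_iy\to z$ with the dot moved onto $E^\mp_iy$. This yields $\mathcal{I}^{\pm i,N}_\mu(y,-)=0$ for $N\gg0$ depending only on $y$, and in particular (a) for $y=x$.

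For (b) I will then argue as follows. Let $f\in\mathcal{I}^{\pm i,N}_\mu(y,x)$ and $g\colon x\to y$. Since $\mathcal{I}^{\pm i,N}_\mu$ is a two-sided ideal, $fg\in\mathcal{I}^{\pm i,N}_\mu(x,x)=0$ for $N$ large enough depending only on $x$, so $1_x-fg=1_x$ is invertible. The standard identity $(1_y-gf)^{-1}=1_y+g(1_x-fg)^{-1}f$ then shows $1_y-gf$ is invertible for every $g$. Hence $f\in\operatorname{rad}\mathcal{K}_\mu(y,x)$, with $N$ uniform in $y$.
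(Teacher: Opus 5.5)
The central step of your proof of (a) fails, and the fallback fails for the same reason. The morphism $x_i^N(x)\colon q_i^{2N}Gx\to Gx$ has degree $N(i\cdot i)>0$. So it lies in $\mathcal{K}_{\mu\mp i}(q_i^{2N}Gx,Gx)\cong\mathcal{K}_{\mu\mp i}(Gx,q^{-N(i\cdot i)}Gx)$, where the shift tends to $-\infty$. Local left-boundedness only kills $\mathcal{K}(A,q^dB)$ for $d\gg0$, i.e.\ morphisms of sufficiently \emph{negative} degree. Powers of a dot are positive-degree endomorphisms and are not constrained by local left-boundedness. Moving the dot onto $E^\mp_iy$ through the other adjunction does not change its degree.

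In fact the intermediate statements you aim for are false. Take $\mathcal{K}=\mathscr{U}(\lambda,-)$, which is locally left-bounded, with $\mu=\lambda+i$, $x=E_i\textnormal{\fontfamily{pbk}\selectfont1}_\lambda$ and $z=\textnormal{\fontfamily{pbk}\selectfont1}_\lambda$. The generator $x_i^N(z)$ is then a nonzero element of $\mathcal{I}^{+i,N}_\mu(q_i^{2N}x,x)$ for every $N$, since dots on one strand generate a polynomial ring. Hence $\mathcal{I}^{+i,N}_\mu(-,x)\neq0$. Its shift is a nonzero element of $\mathcal{I}^{+i,N}_\mu(x,q_i^{-2N}x)$, so $\mathcal{I}^{+i,N}_\mu(x,-)\neq0$ too. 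Any argument that absorbs only one of the two outer factors $g,h$ of a generator must fail, because (a) is a statement about the degree-zero space $\mathcal{K}_\mu(x,x)$ with both ends fixed.

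The missing idea is to keep the dot as a fixed factor and show that the dot-free remainder vanishes. The paper transports $h\circ x_i^N(z)\circ g\in\mathcal{K}_\mu(x,x)$, with $g\colon x\to q_i^{2N}E_iz$ and $h\colon E_iz\to x$, along both adjunctions simultaneously. By cyclicity it is the image of $F_i(h)\circ\eta'(z)\circ\varepsilon(z)\circ F_i(g)$ under
\[\psi^{+i,N}_\mu\colon\mathcal{K}_{\mu-i}\big(q_i^{\langle i,\mu\rangle-1}F_ix,\,q_i^{-\langle i,\mu\rangle+1+2N}F_ix\big)\to\mathcal{K}_\mu(x,x),\qquad f\mapsto\varepsilon'(x)\circ x^N(F_ix)\circ E_i(f)\circ\eta(x),\]
so $\mathcal{I}^{+i,N}_\mu(x,x)=\operatorname{im}\psi^{+i,N}_\mu$. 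The dot has degree $N(i\cdot i)$ and the composite has degree zero. So $f$ lies in a hom-space from the fixed object $F_ix$ to its shift by $q_i^{2N+2-2\langle i,\mu\rangle}$, which vanishes for $N\gg0$ by local left-boundedness.

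Once (a) is established this way, your derivation of (b), including the identity $(1_y-gf)^{-1}=1_y+g(1_x-fg)^{-1}f$, is correct and matches the paper.
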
 \begin{proof}Consider a map \[\psi^{+i,N}_\mu\colon\mathcal{K}_{\mu-i}\left(q_i^{\langle i,\mu\rangle-1}F_ix,q_i^{-\langle i,\mu\rangle+1+2N}F_ix\right)\to\mathcal{K}_\mu(x,x)\] that sends $f$ in the domain to a composition \begin{equation}\begin{tikzcd}x\arrow[r,"\eta(x)"]&q_i^{\langle i,\mu\rangle-1}E_iF_ix\arrow[r,"E_i(f)"]&q_i^{-\langle i,\mu\rangle+1+2N}E_iF_ix\arrow[r,"x^N(F_ix)"]&q_i^{-\langle i,\mu\rangle+1}E_iF_ix\arrow[r,"\n'(x)"]&x.\end{tikzcd}\label{QA}\end{equation} Clearly, $\psi^{+i,N}_\mu(f)$ lies in the ideal $\mathcal{I}^{+i,N}_\mu(x,x)$. On the other hand, $\mathcal{I}^{+i,N}_\mu(x,x)$ is spanned by maps of the form $f'\coloneqq x\xrightarrow{g}q_i^{2N}E_iz\xrightarrow{x^N(z)}E_iz\xrightarrow{h}x$ for some $z\in\mathcal{K}_{\mu-i}$ and $g,h$. Using the cyclicity relations \eqref{e}, one can check that the map $F_i(h)\circ\eta'(z)\circ\varepsilon(z)\circ F_i(g)$ lies in the inverse image of $f'$ under $\psi^{+i,N}_\mu$. Indeed, the map $\psi^{+i,N}_\mu$ and the choice of an inverse image may be depicted as below:\vspace{-.7em} \begingroup \[\begin{tikzpicture}[baseline=-3pt]\node[fill=white,draw=black,inner xsep=25pt,line width=0.7pt](A)at(0,0){$f$};\fill(A.north east)++(-.35pt,-.35pt)coordinate(topdot)circle(1.2pt)node[scale=.8,above=2pt]{$i$};\fill(A.south east)++(-.35pt,.35pt)coordinate(bottomdot)circle(1.2pt)node[scale=.8,below=2pt]{$i$};\draw[decorate,decoration=brace,-](.8,-.5)--(-1,-.5)node[midway,below]{$x$};\draw[decorate,decoration=brace,-](-1,.5)--(.8,.5)node[midway,above]{$x$};\end{tikzpicture}\longmapsto\;\begin{tikzpicture}[baseline=-3pt]\node[fill=white,draw=black,inner xsep=25pt,line width=0.7pt](A)at(0,0){$f$};\fill(A.north east)++(-.35pt,-.35pt)coordinate(topdot)circle(1.2pt)node[scale=.8,above=2pt]{$i$};\fill(A.south east)++(-.35pt,.35pt)coordinate(bottomdot)circle(1.2pt)node[scale=.8,below=2pt]{$i$};\draw[decorate,decoration=brace,-](.8,-.5)--(-1,-.5)node[midway,below]{$x$};\draw[decorate,decoration=brace,-](-1,.5)--(.8,.5)node[midway,above]{$x$};\draw[very thick,baseline,-,postaction={ decorate, decoration={ markings, mark=at position .5 with {\fill(0,0)circle(2.5pt);\node[right,xshift=2pt,scale=.8]at(0,0){$N$};} } },postaction={ decorate, decoration={ markings, mark=at position .25 with {{\arrow[scale=.8]{>}}} } }](topdot)arcto[radius=.5,counter clockwise](bottomdot);\end{tikzpicture}\ \ ,\qquad\begin{tikzpicture}[baseline=-3pt]\node[fill=white,draw=black,minimum width=2cm,minimum height=.6,align=center,line width=0.7pt](A)at(0,.7){$h$};\node[fill=white,draw=black,minimum width=2cm,minimum height=.6,align=center,line width=0.7pt](B)at(0,-.7){$g$};\draw[decorate,decoration=brace,-](.95,-1.05)--(-.95,-1.05)node[midway,below]{$x$};\draw[decorate,decoration=brace,-](-.95,1.05)--(.95,1.05)node[midway,above]{$x$};\draw[decorate,decoration=brace,-](-.95,-.3)--(.8,-.3)node[midway,above]{$z$};\draw[line width=0.7pt]([xshift=.35pt]A.south west)--([xshift=.35pt]B.north west);\draw[line width=0.7pt]([xshift=+10pt]A.south west)--([xshift=+10pt]B.north west);\node at(0,.2){$\cdots$};\draw[line width=0.7pt]([xshift=-10pt]A.south east)--([xshift=-10pt]B.north east);\draw[very thick,baseline][postaction={ decorate, decoration={ markings, mark=at position .7 with {{\arrow[scale=.8]{>}}} } }]([xshift=-.6pt]A.south east)..controls+(0,-.4)and+(0,-.4)..+(15pt,0)--([xshift=14.8pt]A.north east);\draw[very thick,baseline][postaction={ decorate, decoration={ markings, mark=at position .7 with {{\arrow[scale=.8]{<}}} } }]([xshift=-.6pt]B.north east)..controls+(0,.4)and+(0,.4)..+(15pt,0)--([xshift=14.8pt]B.south east);\end{tikzpicture}\longmapsto\;\begin{tikzpicture}[baseline=-3pt]\node[fill=white,draw=black,minimum width=2cm,minimum height=.6,align=center,line width=0.7pt](A)at(0,.7){$h$};\node[fill=white,draw=black,minimum width=2cm,minimum height=.6,align=center,line width=0.7pt](B)at(0,-.7){$g$};\draw[decorate,decoration=brace,-](.95,-1.05)--(-.95,-1.05)node[midway,below]{$x$};\draw[decorate,decoration=brace,-](-.95,1.05)--(.95,1.05)node[midway,above]{$x$};\draw[decorate,decoration=brace,-](-.95,-.3)--(.8,-.3)node[midway,above]{$z$};\draw[line width=0.7pt]([xshift=.35pt]A.south west)--([xshift=.35pt]B.north west);\draw[line width=0.7pt]([xshift=+10pt]A.south west)--([xshift=+10pt]B.north west);\node at(0,.2){$\cdots$};\draw[line width=0.7pt]([xshift=-10pt]A.south east)--([xshift=-10pt]B.north east);\draw[very thick,baseline][postaction={ decorate, decoration={ markings, mark=at position .5 with {\fill(0,0)circle(2.5pt);\node[right,xshift=2pt,scale=.8]at(0,0){$N$};} } }][postaction={ decorate, decoration={ markings, mark=at position .25 with {{\arrow[scale=.8]{>}}} } }]([xshift=-.6pt]A.south east)--([xshift=-.6pt]B.north east);\end{tikzpicture}\vspace{-.4em}\] \endgroup It follows that $\mathcal{I}^{+i,N}_\mu(x,x)=\operatorname{im}\psi^{+i,N}_\mu$. Since $\mathcal{K}_{\mu-i}$ is locally left-bounded with respect to $q\textnormal{\fontfamily{pbk}\selectfont1}_{\mu-i}$, the domain of $\psi^{+i,N}_\mu$ vanishes for all sufficiently large $N$, and hence so does its image. Therefore $\mathcal{I}^{+i,N}_\mu(x,x)=0$ for all sufficiently large $N$. For $f\in\mathcal{I}^{\pm i,N}_\mu(y,x)$ and $g\in\mathcal{K}_\mu(x,y)$, $fg\in\mathcal{I}^{\pm i,N}_\mu(x,x)$ vanishes, so $1_x-fg=1_x$ is invertible. Hence, $f\in\operatorname{rad}\mathcal{K}_\mu(y,x)$. This proves the case of the $+$ sign, and the case of the $-$ sign is similar.\end{proof}\par\begin{proposition}\thlabel{RA} Suppose that $\mathcal{K}\colon\mathscr{U}\to\mathrm{Cat}_\Bbbk$ is a pseudofunctor such that $\mathcal{K}_\mu$ is locally left-bounded with respect to $q\textnormal{\fontfamily{pbk}\selectfont1}_\mu$ for all $\mu\in X$. Then all simple modules of $\mathcal{K}_\mu$ are integrable with respect to $\operatorname{mod}\!\operatorname{-}\hspace{-.8mm}\mathcal{K}$.\end{proposition}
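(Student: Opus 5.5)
The plan is to reduce integrability of a simple module $L\in\operatorname{mod}\!\operatorname{-}\hspace{-.8mm}\mathcal{K}_\mu$ to the vanishing of $L$ on the ideals $\mathcal{I}^{\pm i,N}_\mu$ for $N\gg0$. Then \thref{NA} \ref{PA} together with the fact that radical morphisms act by zero on simple modules will finish the argument.

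\emph{Step 1: unwind the action.} By the definition \eqref{GA}, $\operatorname{mod}\!\operatorname{-}\hspace{-.8mm}\mathcal{K}$ is obtained by precomposing with $\tau$. As in the proof of \thref{JA}, the functor $E_i$ acts on a module $L$ by $(E_iL)(y)=L(F_iy)$ for $y\in\mathcal{K}_{\mu+i}$, up to the degree shifts coming from $\tau$. Since $\tau$ rotates diagrams by $180^\circ$, it sends the dot $x$ on an $E_i$-strand to the dot on the corresponding $F_i$-strand. Hence $x^N$ acts on $E_iL$ at $y$ by $L\big(x^N(y)\big)$, where $x^N(y)\colon q_i^{2N}F_iy\to F_iy$. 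This morphism lies in $\mathcal{I}^{-i,N}_\mu$. Symmetrically, $x^N$ acts on $F_iL$ through morphisms $x^N(y)\colon q_i^{2N}E_iy\to E_iy$ with $y\in\mathcal{K}_{\mu-i}$, which lie in $\mathcal{I}^{+i,N}_\mu$. It therefore suffices to show that there is some $N$ such that $L(f)=0$ for every morphism $f$ of $\mathcal{I}^{\pm i,N}_\mu$.

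\emph{Step 2: fix a generator and choose $N$.} Pick $x\in\mathcal{K}_\mu$ and a nonzero $v\in L(x)$. By hypothesis $\mathcal{K}_{\mu\mp i}$ is locally left-bounded, so \thref{NA} \ref{PA} gives a single $N$ (depending only on $x$) such that $\mathcal{I}^{\pm i,N}_\mu(y,x)\subset\operatorname{rad}\mathcal{K}_\mu(y,x)$ for all $y$.

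\emph{Step 3: radical morphisms kill $v$.} Let $f\in\operatorname{rad}\mathcal{K}_\mu(y,x)$ and suppose $w\coloneqq L(f)v\ne0$. Since $L$ is simple, $w$ generates $L$, so there is $g\colon x\to y$ with $L(g)w=v$, that is, $L(fg)v=v$. Then $L(1_x-fg)v=0$. But $1_x-fg$ is invertible because $f$ is in the radical, and this is a contradiction.

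\emph{Step 4: pass to arbitrary objects.} Take any $f\in\mathcal{I}^{\pm i,N}_\mu(y,y')$. Every element of $L(y')$ has the form $L(h)v$ for some $h\colon y'\to x$, again since $v$ generates $L$. Then $L(f)L(h)v=L(hf)v$. Because $\mathcal{I}^{\pm i,N}_\mu$ is a two-sided ideal, $hf\in\mathcal{I}^{\pm i,N}_\mu(y,x)\subset\operatorname{rad}$, so $L(hf)v=0$ by Step 3. Thus $L$ kills $\mathcal{I}^{\pm i,N}_\mu$, and by Step 1 $x^N$ acts as zero on both $E_iL$ and $F_iL$ for every $i$.

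The main obstacle I expect is Step 1: checking carefully that under $\tau$ the dot on $E_iL$ really corresponds to the morphisms generating $\mathcal{I}^{-i,N}_\mu$, with the correct sign and grading conventions. Everything after that is formal.
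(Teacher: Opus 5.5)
Your proposal is correct and follows essentially the paper's argument. Both proofs identify, via $\tau$, the dot action on $E_iL$ and $F_iL$ with morphisms generating $\mathcal{I}^{\pm i,N}_\mu$, and then combine \thref{NA}\ref{PA} with the fact that radical morphisms act by zero on a simple module.

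The differences are cosmetic. You prove the radical-annihilation fact inline and show directly that $L$ kills the whole ideal, treating both signs at once. The paper instead passes through $\mathcal{K}_\mu(-,P)/\operatorname{rad}\mathcal{K}_\mu(-,P)$ and uses exactness of $F$. It also reduces to $\mathfrak{sl}_2$ and obtains the $E$ case by applying the $F$ case to $\mathcal{K}\circ\psi$.
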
 \begin{proof}It suffices to show the statement for $\mathscr{U}(\mathfrak{sl}_2)$. Let $L\in\mathbf{B}(\operatorname{mod}\!\operatorname{-}\hspace{-.8mm}\mathcal{K}_\mu)$. There exists $P\in\mathcal{K}_\mu$ such that $L(P)\ne0$, and a nonzero element of $L(P)$ induces a surjective map $\mathcal{K}_\mu(-,P)\twoheadrightarrow L$, which factors through $H\coloneqq\mathcal{K}_\mu(-,P)/\operatorname{rad}\mathcal{K}_\mu(-,P)$, since elements of the radical act trivially on simple modules \cite[Prop.~9]{k}.\par From \eqref{GA}, the action of the 2-morphism $x^N\colon q^{2N}F\textnormal{\fontfamily{pbk}\selectfont1}_\mu\to F\textnormal{\fontfamily{pbk}\selectfont1}_\mu$ on $\mathcal{K}_\mu(-,P)\in\operatorname{mod}\!\operatorname{-}\hspace{-.8mm}\mathcal{K}_\mu$ gives a map whose component at $Q\in\mathcal{K}_{\mu-2}$ is given by precomposition with $x^N(Q)\colon EQ\to q^{-2N}EQ$. It follows that $x^N(\mathcal{K}_\mu(-,P))(Q)\in\mathcal{I}^{+,N}_\mu(EQ,P)$. Since $\mathcal{I}^{+,N}_\mu(EQ,P)\subset\operatorname{rad}\mathcal{K}(EQ,P)$ holds for all $Q$ for all sufficiently large $N$ by \thref{NA}, we have $x^N(H)=0$ for $N\gg0$, and thus $x\colon q^2F\textnormal{\fontfamily{pbk}\selectfont1}_\mu\to F\textnormal{\fontfamily{pbk}\selectfont1}_\mu$ acts nilpotently on $L$. Applying this to $\mathcal{K}\circ\psi$, $x\colon q^{2}E1_\mu\to E1_\mu$ acts nilpotently on $L$.\end{proof}\par\subsection{Duality of a $\mathscr{U}$-action}\label{SA}\par Let $\operatorname{op}\colon\mathrm{Cat}_\Bbbk\to\mathrm{Cat}_\Bbbk^{\operatorname{co}}$ be the 2-functor sending a category to its opposite category.\par\begin{definition}\thlabel{TA} A \emph{duality} of a pseudofunctor $\mathcal{V}\colon\mathscr{U}\to\mathrm{Cat}_\Bbbk$ is a strong transformation \[\circledast\colon\mathcal{V}^{\operatorname{co}}\circ-\longrightarrow\operatorname{op}\circ\mathcal{V},\] such that the composition of strong transformations \begin{equation}\label{UA}(\operatorname{op}^{\operatorname{co}}\mathbin{\smallstar}\circledast)\circ(\circledast^{\operatorname{co}}\mathbin{\smallstar}\mathop{-}\nolimits)\colon\mathcal{V}=\mathcal{V}\circ\mathop{-}\nolimits^{\operatorname{co}}\circ\mathop{-}\nolimits\xrightarrow{\circledast^{\operatorname{co}}\mathbin{\smallstar}\mathop{-}\nolimits}\operatorname{op}^{\operatorname{co}}\circ\mathcal{V}^{\operatorname{co}}\circ\mathop{-}\nolimits\xrightarrow{\operatorname{op}^{\operatorname{co}}\mathbin{\smallstar}\kern.14em\circledast}\operatorname{op}^{\operatorname{co}}\circ\operatorname{op}\circ\mathcal{V}=\mathcal{V}\end{equation} is isomorphic to $\textnormal{\fontfamily{pbk}\selectfont1}_{\mathcal{V}}$ in the category $\mathrm{Bicat}_\Bbbk(\mathscr{U},\mathrm{Cat}_\Bbbk)(\mathcal{V},\mathcal{V})$.\end{definition}\par\begin{remark}A duality decategorifies to a bar-involution (in the sense of \cite[27.1.2~(c)]{a}) of the $\widetilde{U}_\mathbb{Z}$-module $V=\bigoplus_{\mu\in X}K_0(\dot{\mathcal{V}}_\mu)$. Indeed, we may regard $V$ as a functor $\widetilde{U}_\mathbb{Z}\to\mathbb{Z}[q^{\pm}]\operatorname{\!-\!}\operatorname{mod}$ (see \S\ref{S}). Then the data of a bar-involution of $V$ amounts to a natural isomorphism $\phi\colon V\circ\mathop{-}\nolimits\to V$ satisfying $\phi\circ(\phi\mathbin{\boldsymbol{\cdot}}1_{\mathop{-}\nolimits})=1_V$.\end{remark}\par Unwinding the definition, the data of a duality $\circledast$ consists of $\Bbbk$-linear functors $\circledast_\mu\colon\mathcal{V}_\mu\to\mathcal{V}_\mu^{\operatorname{op}}$ for each $\mu\in X$, natural isomorphisms $\circledast_f$ for each 1-morphism $f\in\mathscr{U}(\mu,\nu)$ fitting into the diagram \eqref{VA} below (these together form the strong transformation $\circledast$), and natural isomorphisms $\gamma_\mu\colon\textnormal{\fontfamily{pbk}\selectfont1}_{\mathcal{V}_\mu}\to\circledast_\mu^{\operatorname{op}}\circ\circledast_\mu$ for each $\mu\in X$ that witness the invertible modification required by the last condition. \begin{equation}\label{VA}\begin{tikzpicture}[yscale=.8,baseline=-10pt]\node(A)at(-1,1){$\mathcal{V}_\mu$};\node(B)at(1,1){$\mathcal{V}_\nu$};\node(C)at(-1,-1){$\mathcal{V}_\mu^{\operatorname{op}}$};\node(D)at(1,-1){$\mathcal{V}_\nu^{\operatorname{op}}$};\draw[->](A)--(B)node[midway,above,scale=.9]{$\mathcal{V}(\overline{f})$};\draw[->](A)--(C)node[midway,left]{$\circledast_\mu$};\draw[->](B)--(D)node[midway,right]{$\circledast_\nu$};\draw[->](C)--(D)node[midway,below,scale=.9]{$\mathcal{V}(f)^{\operatorname{op}}$};\draw[double distance=1.5pt,-{Implies}](.25,.25)--(-.25,-.25)node[midway,auto=right,inner sep=1pt,scale=.9]{$\circledast_f$};\end{tikzpicture}\end{equation}\par\begin{lemma}The strong transformation $\circledast^{-1}\coloneqq\operatorname{op}\mathbin{\smallstar}\circledast^{\operatorname{co}}\mathbin{\smallstar}\mathop{-}\nolimits$ is a quasi-inverse of $\circledast$ in the 2-category $\mathrm{Bicat}_\Bbbk(\mathscr{U},\mathrm{Cat}_\Bbbk^{\operatorname{co}})$. In particular, $\circledast_\mu^{\operatorname{op}}$ is a quasi-inverse of $\circledast_\mu$ for each $\mu\in X$.\end{lemma}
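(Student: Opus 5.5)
We need $\circledast^{-1}\coloneqq\operatorname{op}\mathbin{\smallstar}\circledast^{\operatorname{co}}\mathbin{\smallstar}\mathop{-}\nolimits$ to be a quasi-inverse of $\circledast$. First we check the types. We have $\circledast\colon\mathcal{V}^{\operatorname{co}}\circ\mathop{-}\nolimits\to\operatorname{op}\circ\mathcal{V}$, viewed as a 1-morphism of $\mathrm{Bicat}_\Bbbk(\mathscr{U},\mathrm{Cat}_\Bbbk^{\operatorname{co}})$. Applying $(-)^{\operatorname{co}}$ gives $\circledast^{\operatorname{co}}\colon\mathcal{V}\circ\mathop{-}\nolimits^{\operatorname{co}}\to\operatorname{op}^{\operatorname{co}}\circ\mathcal{V}^{\operatorname{co}}$. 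Pre-whiskering with $\mathop{-}\nolimits$ and using $\mathop{-}\nolimits^{\operatorname{co}}\circ\mathop{-}\nolimits=\textnormal{\fontfamily{pbk}\selectfont1}$ gives $\mathcal{V}\to\operatorname{op}^{\operatorname{co}}\circ\mathcal{V}^{\operatorname{co}}\circ\mathop{-}\nolimits$. Post-whiskering with $\operatorname{op}$ and using $\operatorname{op}\circ\operatorname{op}^{\operatorname{co}}=\textnormal{\fontfamily{pbk}\selectfont1}$ then yields $\circledast^{-1}\colon\operatorname{op}\circ\mathcal{V}\to\mathcal{V}^{\operatorname{co}}\circ\mathop{-}\nolimits$. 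This is the reverse direction, as required.

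Next we show $\circledast^{-1}\circ\circledast\cong\textnormal{\fontfamily{pbk}\selectfont1}$. Post-whisker the defining isomorphism \eqref{UA}, namely $(\operatorname{op}^{\operatorname{co}}\mathbin{\smallstar}\circledast)\circ(\circledast^{\operatorname{co}}\mathbin{\smallstar}\mathop{-}\nolimits)\cong\textnormal{\fontfamily{pbk}\selectfont1}_{\mathcal{V}}$, by the 2-functor $\operatorname{op}$. Whiskering by a 2-functor respects composition of strong transformations strictly, and $\operatorname{op}\circ\operatorname{op}^{\operatorname{co}}=\textnormal{\fontfamily{pbk}\selectfont1}$. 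So this gives $\circledast\circ\circledast^{-1}\cong\textnormal{\fontfamily{pbk}\selectfont1}_{\operatorname{op}\circ\mathcal{V}}$; the order of composition flips because $\operatorname{op}$ reverses 2-cells, which is absorbed by working in $\mathrm{Bicat}(\mathscr{U},\mathrm{Cat}_\Bbbk^{\operatorname{co}})$.

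For the other composite $\circledast^{-1}\circ\circledast$, apply $(-)^{\operatorname{co}}$ to \eqref{UA} and pre-whisker by $\mathop{-}\nolimits$. Then use $\mathop{-}\nolimits^{\operatorname{co}}\circ\mathop{-}\nolimits=\textnormal{\fontfamily{pbk}\selectfont1}$ together with the commutativity of whiskerings recorded in \S\ref{J}. The composite becomes the pre-whiskering by $\mathop{-}\nolimits$ of $(\operatorname{op}\mathbin{\smallstar}\circledast^{\operatorname{co}}\mathbin{\smallstar}\mathop{-}\nolimits)\circ\circledast$, up to identifications; its source is $\mathcal{V}^{\operatorname{co}}\circ\mathop{-}\nolimits$. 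Again no interchange modification \eqref{L} appears, since only 2-functors are whiskered. Alternatively, one can avoid the second computation: a 1-morphism with a one-sided inverse that is itself invertible on one side is an equivalence. Concretely, from $\circledast\circ\circledast^{-1}\cong\textnormal{\fontfamily{pbk}\selectfont1}$ and the analogous identity obtained by replacing $\circledast$ by $\circledast^{-1}$ (which again satisfies the hypothesis of \thref{TA}), the standard argument shows that $\circledast^{-1}$ is both a left and a right quasi-inverse.

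The main obstacle is bookkeeping: getting the variance of $\operatorname{op}$, $\operatorname{co}$ and composition order right, so that each displayed identity is an honest equality of strong transformations rather than a mis-typed one. The "in particular" statement follows by evaluating at the object $\mu$. The components are $\circledast_\mu$ and $(\circledast^{-1})_\mu=\operatorname{op}(\circledast_\mu)=\circledast_\mu^{\operatorname{op}}$. An equivalence in $\mathrm{Bicat}_\Bbbk(\mathscr{U},\mathrm{Cat}_\Bbbk^{\operatorname{co}})$ has componentwise equivalences, witnessed by the isomorphisms $\gamma_\mu$ and their op-versions.
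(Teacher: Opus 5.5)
Your main argument is correct and is essentially the paper's. Post-whiskering \eqref{UA} by $\operatorname{op}$ gives $\circledast\circ\circledast^{-1}\cong 1$; no reversal of composition order occurs, since post-whiskering by a 2-functor preserves it, so that remark should go. Co-dualizing \eqref{UA} and pre-whiskering by the bar-involution gives $\circledast^{-1}\circ\circledast\cong 1$, which is the co-dual reading of the paper's pre-whiskering of \eqref{UA} by $\mathop{-}\nolimits^{\operatorname{co}}$.

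Drop the proposed shortcut, though. The claim that $\circledast^{-1}$ again satisfies the duality axiom unwinds to $\operatorname{op}^{\operatorname{co}}\mathbin{\smallstar}(\circledast^{-1}\circ\circledast)\cong 1$, which is exactly the second composite. So it is circular and does not let you avoid that computation.
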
 \begin{proof}Applying the 2-functor $(-^{\operatorname{co}})^\ast$ (see \S \ref{J}) to the invertible modification from \eqref{UA} to $\textnormal{\fontfamily{pbk}\selectfont1}_{\mathcal{V}}$ gives $(\operatorname{op}^{\operatorname{co}}\mathbin{\smallstar}\circledast\mathbin{\smallstar}\mathop{-}\nolimits^{\operatorname{co}})\circ\circledast^{\operatorname{co}}\cong1_{\mathcal{V}^{\operatorname{co}}}$. Applying the 2-functor $\operatorname{op}_{\ast}$ to \eqref{UA} gives $\circledast\circ(\operatorname{op}\mathbin{\smallstar}\circledast^{\operatorname{co}}\mathbin{\smallstar}\mathop{-}\nolimits)\cong1_{\mathcal{V}}$. The last assertion follows from $(\operatorname{op}\mathbin{\smallstar}\circledast^{\operatorname{co}}\mathbin{\smallstar}\mathop{-}\nolimits)_\mu=\circledast_\mu^{\operatorname{op}}$.\end{proof}\par For an abelian category $\mathcal{A}$ (resp.\ a small Krull--Schmidt category $\mathcal{K}$) with a duality functor, we denote by $\mathbf{B}^{\textup{o}}(\mathcal{A})\subseteq\mathbf{B}(\mathcal{A})$ (resp. $\mathbf{G}^{\textup{o}}(\mathcal{K})\subseteq\mathbf{G}(\mathcal{K})$) the subset of self-dual classes, with the duality functor understood implicitly from the context.\par\begin{proposition}\thlabel{WA} Let $\mathcal{C}\colon\mathscr{U}\to\mathrm{Ab}_\Bbbk$ be a $\Bbbk$-linear pseudofunctor equipped with a duality $\circledast$. Then the crystal structure on $\mathbf{B}(\mathcal{C}_\mu^{\operatorname{int}})$ of \thref{@} restricts to $\mathbf{B}^{\textup{o}}(\mathcal{C}_\mu^{\operatorname{int}})$.\end{proposition}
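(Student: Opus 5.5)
We have to show two things. First, if $M\in\mathbf{B}(\mathcal{C}_\mu^{\operatorname{int}})$ is self-dual, then $\widetilde{e}_i(M)$ and $\widetilde{f}_i(M)$, whenever nonzero, are self-dual. Second, the integer data $\varepsilon_i,\varphi_i$ need no separate treatment. The plan is to transport the definitions of \eqref{^} through the duality.

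The first step is to relate $\circledast$ to the functors. Unwinding \eqref{VA} for a 1-morphism $f$ gives a natural isomorphism $(\mathcal{V}(\overline{f})N)^\circledast\cong\mathcal{V}(f)(N^\circledast)$ (up to the direction of arrows). The bar involution fixes $E_i,F_i$ and sends $q\mapsto q^{-1}$, and $\circledast$ is contravariant. Hence for $M$ in $\mathcal{C}_\mu$ we get
\[(E_i^mM)^\circledast\cong E_i^m(M^\circledast),\qquad (q^dN)^\circledast\cong q^{-d}N^\circledast .\]
Since $\circledast_\mu$ is an equivalence $\mathcal{C}_\mu\to\mathcal{C}_\mu^{\operatorname{op}}$ (by the preceding lemma), it preserves vanishing. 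It also preserves simple objects, so $\mathbf{B}(\mathcal{C}_\mu)$ is stable under $\circledast$. It exchanges subobjects with quotients, so it sends essential socles to essential heads. From $(E_i^mM)^\circledast\cong E_i^mM^\circledast$ we get $\varepsilon_i(M^\circledast)=\varepsilon_i(M)$, and likewise $\varphi_i(M^\circledast)=\varphi_i(M)$.

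Next we check integrability. The 2-morphism $x$ is sent by the bar involution to $x$ itself, now read in the opposite direction. Nilpotency of $x^N$ on $E_iM$ therefore transfers to $E_iM^\circledast$. So $\circledast$ preserves $\mathcal{C}^{\operatorname{int}}_\mu$, and $\mathbf{B}^{\textup{o}}(\mathcal{C}_\mu^{\operatorname{int}})$ is a well-defined subset.

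The main step treats $\widetilde{e}_i$. By definition $\widetilde{e}_i(M)=q_i^{-\varepsilon_i(M)+1}\operatorname{soc}_eE_iM$. Dualizing gives
\[\widetilde{e}_i(M)^\circledast\cong q_i^{\varepsilon_i(M)-1}\operatorname{hd}_e\big(E_iM^\circledast\big).\]
We must compare the head and socle of $E_iM$ with the correct grading shift. Here we use \thref{8}\,\ref{:}, applied to $\mathcal{C}\circ\psi$ with $m=1$, through the reduction to $\mathcal{C}_i$ used in the proof of \thref{@}. It gives
\[q_i^{2(\varepsilon_i(M)-1)}\operatorname{hd}_e(E_iM)\cong\operatorname{soc}_e(E_iM),\]
where the role of $\varphi$ is now played by $\varepsilon_i(M)$, so $m(\varphi-m)=\varepsilon_i(M)-1$. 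Combining this with $M^\circledast\cong M$ yields
\[\widetilde{e}_i(M)^\circledast\cong q_i^{\varepsilon_i(M)-1}q_i^{-2(\varepsilon_i(M)-1)}\operatorname{soc}_eE_iM=\widetilde{e}_i(M).\]
The same computation with $F_i$ and $\varphi_i$ handles $\widetilde{f}_i$. The normalizations in \eqref{^} are exactly the midpoint shifts that make this work.

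The expected obstacle is bookkeeping. We must verify that the shift conventions ($q_i$ versus $q$, and the direction of $\circledast_f$ in \eqref{VA}) really produce $(q^dE_iM)^\circledast\cong q^{-d}E_iM^\circledast$, rather than a spurious extra shift coming from the adjunction degrees. We must also confirm that \thref{8}\,\ref{:} with the $\psi$-twist gives the exponent $2(\varepsilon_i-1)$ in the correct direction.

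Once self-duality of $\widetilde{e}_i(M)$ and $\widetilde{f}_i(M)$ is established, the restricted maps satisfy all the crystal axioms automatically, because these axioms are inherited from \thref{@}.
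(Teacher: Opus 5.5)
Your proposal is correct and follows the paper's argument. The paper likewise transports the functor through the duality's strong transformation (it works with $F_i^{(m)}$ where you use $E_i$), uses that $\circledast$ exchanges essential socle and head, and cancels the resulting shift against the socle--head comparison of \thref{8}~(b) with $m=1$. The differences are minor: since you only need $m=1$, you avoid the paper's extension $\dot{\circledast}$ of the duality to the additive Karoubi envelope via \thref{N}, and you also check that $\circledast$ preserves integrability, which the paper leaves implicit.
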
\par\begin{proof}Let $M\in\mathbf{B}^{\textup{o}}(\mathcal{C}_\mu^{\operatorname{int}})$. By \thref{N} (i)\ensuremath{^{\prime}}\ applied to $\circledast$, there exists a strong transformation $\dot{\circledast}\colon\dot{\mathcal{C}}^{\operatorname{co}}\circ-\to\operatorname{op}\circ\dot{\mathcal{C}}$ satisfying $\circledast=\dot{\circledast}\mathbin{\smallstar}(\iota_{\mathcal{C}})^{\operatorname{co}}$. Then we have an isomorphism $F_i^{(m)}M\cong F_i^{(m)}M^{\circledast}\cong(F_i^{(m)}M)^{\circledast}$, where the first isomorphism is from the self-duality of $M$, and the second isomorphism is from the natural isomorphism $\dot{\circledast}_{F_i^{(m)}}$. Hence, $F_i^{(m)}M$ is self-dual, and $(\operatorname{hd}F_i^{(m)}M)^{\circledast}\cong\operatorname{soc}F_i^{(m)}M\cong q_i^{2m(\varphi_i(M)-m)}\operatorname{hd}F_i^{(m)}M$ by \thref{@}. By putting $m=1$, $\widetilde{f}_iM$ is self-dual, and similarly $\widetilde{e}_iM$ is self-dual.\end{proof}\par For the rest of this section, we assume that $\Bbbk$ is a commutative Henselian local ring with maximal ideal $\mathfrak{m}$. Let $\Bbbk\operatorname{\!-\!}\operatorname{mod}_{\operatorname{fl}}$ be the category of finite length $\Bbbk$-modules. Let $\ast\colon\Bbbk\operatorname{\!-\!}\operatorname{mod}_{\operatorname{fl}}\allowbreak\to(\Bbbk\operatorname{\!-\!}\operatorname{mod}_{\operatorname{fl}})^{\operatorname{op}}$ ($V\mapsto V^\ast\coloneqq\operatorname{Hom}_\Bbbk(V,E)$) be the duality on $\Bbbk\operatorname{\!-\!}\operatorname{mod}_{\operatorname{fl}}$, where $E\in\Bbbk\operatorname{\!-\!}\operatorname{mod}$ is the injective envelope of $\Bbbk/\mathfrak{m}$.\par Let $\wedge_f\colon\mathrm{Cat}_\Bbbk\to\mathrm{Cat}_\Bbbk^{\operatorname{coop}}$ denote the 2-functor sending a category $\mathcal{K}$ to $\operatorname{mod}\hspace{-.35mm}\operatorname{_{lfl}-\hspace{-.8mm}}\mathcal{K}\coloneqq\mathrm{Cat}_\Bbbk(\mathcal{K}^{\operatorname{op}},\Bbbk\operatorname{\!-\!}\operatorname{mod}_{\operatorname{fl}})$. The duality functor $\ast$ induces a 2-natural transformation $\mathop{\lozenge}\nolimits\colon\operatorname{op}^{\operatorname{coop}}\circ\allowbreak\wedge_f\;\longrightarrow\;\wedge_f^{\operatorname{co}}\circ\operatorname{op}$ in $\mathrm{Bicat}_\Bbbk(\mathrm{Cat}_\Bbbk,\mathrm{Cat}_\Bbbk^{\operatorname{op}})$, whose component at $\mathcal{K}\in\mathrm{Cat}_\Bbbk$ is given by a functor $\mathop{\lozenge}\nolimits_\mathcal{K}\colon\mathrm{Cat}_\Bbbk(\mathcal{K},\Bbbk\operatorname{\!-\!}\operatorname{mod}_{\operatorname{fl}})\to\mathrm{Cat}_\Bbbk(\mathcal{K},\Bbbk\operatorname{\!-\!}\operatorname{mod}_{\operatorname{fl}}^{\operatorname{op}})\cong\mathrm{Cat}_\Bbbk(\mathcal{K}^{\operatorname{op}},\allowbreak\Bbbk\operatorname{\!-\!}\operatorname{mod}_{\operatorname{fl}})^{\operatorname{op}}$, where the first arrow is a postcomposition with $\ast$.\par The canonical isomorphisms $V\to V^{\ast\ast}$ for $V\in\Bbbk\operatorname{\!-\!}\operatorname{mod}_{\operatorname{fl}}$ induce an invertible modification \begin{equation}\label{XA}\textnormal{\fontfamily{pbk}\selectfont1}_{\wedge_f}\;\longrightarrow\;(\mathop{\lozenge}\nolimits^{\operatorname{co}}\mathbin{\smallstar}\operatorname{op})\circ(\operatorname{op}^{\operatorname{op}}\mathbin{\smallstar}\mathop{\mathop{\lozenge}\nolimits}),\end{equation} in $\mathrm{Bicat}_\Bbbk(\mathrm{Cat}_\Bbbk,\mathrm{Cat}_\Bbbk^{\operatorname{coop}})$: The component of this modification at $\mathcal{K}\in\mathrm{Cat}_\Bbbk$ is given by a natural isomorphism of functors $\textnormal{\fontfamily{pbk}\selectfont1}_{\operatorname{mod}\hspace{-.35mm}\operatorname{_{lfl}-\hspace{-.8mm}}\mathcal{K}}\to\mathop{\lozenge}\nolimits_\mathcal{K}^{\operatorname{op}}\circ\mathop{\lozenge}\nolimits_\mathcal{K}$, defined by $M(P)\to M(P)^{\ast\ast}$ for each $M\in\operatorname{mod}\hspace{-.35mm}\operatorname{_{lfl}-\hspace{-.8mm}}\mathcal{K}$ and $P\in\mathcal{K}$.\par Let $\mathcal{K}:\mathscr{U}\to\mathrm{Cat}_\Bbbk$ be a pseudofunctor. As with $\operatorname{mod}\!\operatorname{-}\hspace{-.8mm}\mathcal{K}$ \eqref{GA}, we define a pseudofunctor $\operatorname{mod}\hspace{-.35mm}\operatorname{_{lfl}-\hspace{-.8mm}}\mathcal{K}\colon\mathscr{U}\to\mathrm{Cat}_\Bbbk$ as $\operatorname{mod}\hspace{-.35mm}\operatorname{_{lfl}-\hspace{-.8mm}}\mathcal{K}\coloneqq\wedge_f^{\operatorname{coop}}\circ\mathcal{K}^{\operatorname{coop}}\circ\tau$.\par\begin{lemma}\thlabel{YA} If $\circledast$ is a duality of a pseudofunctor $\mathcal{K}\colon\mathscr{U}\to\mathrm{Cat}_\Bbbk$, then \begin{equation}\label{ZA}\big((\mathop{\lozenge}\nolimits^{\operatorname{coop}}\mathbin{\smallstar}\mathcal{K})\circ(\wedge_f^{\operatorname{op}}\mathbin{\smallstar}(\circledast^{-1})^{\operatorname{coop}})\big)\mathbin{\smallstar}\tau\ \colon\ (\operatorname{mod}\hspace{-.35mm}\operatorname{_{lfl}-\hspace{-.8mm}}\mathcal{K})^{\operatorname{co}}\circ-\ \longrightarrow\ \operatorname{op}\circ\:(\operatorname{mod}\hspace{-.35mm}\operatorname{_{lfl}-\hspace{-.8mm}}\mathcal{K})\end{equation} is a duality of $\operatorname{mod}\hspace{-.35mm}\operatorname{_{lfl}-\hspace{-.8mm}}\mathcal{K}$.\end{lemma}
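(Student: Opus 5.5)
We need to verify two things: that \eqref{ZA} is a well-typed strong transformation, and that the composite \eqref{UA} formed from it is isomorphic to the identity. The plan is to argue formally in $\mathrm{Bicat}_\Bbbk$, using the whiskering calculus of \S\ref{J} and the invertible modification \eqref{XA}.

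\textbf{Typing.} Write $\mathcal{M}\coloneqq\operatorname{mod}\hspace{-.35mm}\operatorname{_{lfl}-\hspace{-.8mm}}\mathcal{K}=\wedge_f^{\operatorname{coop}}\circ\mathcal{K}^{\operatorname{coop}}\circ\tau$. The quasi-inverse $\circledast^{-1}\colon\operatorname{op}\circ\mathcal{K}\to\mathcal{K}^{\operatorname{co}}\circ\mathop{-}\nolimits$ becomes, after applying $(-)^{\operatorname{coop}}$ and post-whiskering by $\wedge_f^{\operatorname{op}}$, a transformation
\[
\wedge_f^{\operatorname{op}}\circ\operatorname{op}^{\operatorname{coop}}\circ\mathcal{K}^{\operatorname{coop}}\;\longrightarrow\;\wedge_f^{\operatorname{op}}\circ\mathcal{K}^{\operatorname{op}}\circ\mathop{-}\nolimits^{\operatorname{coop}}
\]
(up to the direction reversal that $(-)^{\operatorname{op}}$ imposes on transformations). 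Next, $\lozenge^{\operatorname{coop}}\mathbin{\smallstar}\mathcal{K}$ identifies $\operatorname{op}\circ\wedge_f\circ\mathcal{K}$ with $\wedge_f^{\operatorname{co}}\circ\operatorname{op}\circ\mathcal{K}$, after the appropriate op/co decorations. Pre-whiskering by $\tau$ and using that $\tau\circ\mathop{-}\nolimits$ agrees with $\mathop{-}\nolimits^{\operatorname{coop}}\circ\tau$ (both the bar-involution and the $180^\circ$ rotation act diagrammatically, and they commute) identifies the source with $\mathcal{M}^{\operatorname{co}}\circ\mathop{-}\nolimits$ and the target with $\operatorname{op}\circ\mathcal{M}$. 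This step is pure bookkeeping of the op/co decorations. One further check: since $\wedge_f$ lands in finite-length-valued functors, $\lozenge$ is defined there, and $\circledast$ preserves this condition.

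\textbf{Involutivity.} Denote \eqref{ZA} by $\diamond$. I will expand $(\operatorname{op}^{\operatorname{co}}\mathbin{\smallstar}\diamond)\circ(\diamond^{\operatorname{co}}\mathbin{\smallstar}\mathop{-}\nolimits)$, distributing the whiskerings over composition via the identities $\mathcal{H}_\ast\mathcal{F}^\ast=\mathcal{F}^\ast\mathcal{H}_\ast$. This yields a composite of four pieces:
\[
\lozenge\text{-piece},\quad \circledast^{-1}\text{-piece},\quad \lozenge\text{-piece},\quad \circledast^{-1}\text{-piece}.
\]
I then reorder the middle two pieces, a $\circledast^{-1}$-piece and a $\lozenge$-piece. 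Because $\lozenge$ is a 2-natural transformation, the interchange modification $\gamma$ of \eqref{L} is the identity, so the two pieces commute strictly. After the reordering, the two $\lozenge$-pieces become adjacent and form $(\lozenge^{\operatorname{co}}\mathbin{\smallstar}\operatorname{op})\circ(\operatorname{op}^{\operatorname{op}}\mathbin{\smallstar}\lozenge)$, whiskered by $\mathcal{K}$ and $\tau$. Likewise, the two $\circledast^{-1}$-pieces form \eqref{UA} for $\circledast^{-1}$, whiskered by $\wedge_f$ and $\tau$. The first composite is isomorphic to the identity by \eqref{XA}. The second is isomorphic to the identity because $\circledast^{-1}$ is itself a duality: this follows from the preceding lemma together with the fact that its \eqref{UA}-composite is conjugate to that of $\circledast$. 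Combining the two invertible modifications via whiskering and horizontal composition of modifications gives the required isomorphism to $\textnormal{\fontfamily{pbk}\selectfont1}_{\mathcal{M}}$.

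\textbf{Main obstacle.} The real difficulty is the op/co bookkeeping in the reordering step: verifying that the interchange really is an identity in this twisted setting, and that the two middle pieces are whiskered along the same functors so that \eqref{L} applies. I will resolve this by checking componentwise. At $\mu\in X$ the composite functor is $M\mapsto\big(M\circ\circledast^{\operatorname{op}}\big)^{\ast}$ applied twice. This is naturally isomorphic to $M\circ(\circledast^{\operatorname{op}}\circledast)^{\operatorname{op}}$ composed with $(-)^{\ast\ast}$, using $V\cong V^{\ast\ast}$ and $\gamma_\mu$. I will then confirm that the components of $\circledast_f$ and $\lozenge$ are compatible with this isomorphism, which the 2-naturality of $\lozenge$ makes automatic.
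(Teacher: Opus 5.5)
Your proposal is correct and follows the paper's proof. The paper likewise expands the composite \eqref{UA} for \eqref{ZA}, interchanges the middle $\circledast^{-1}$- and $\lozenge$-pieces via \eqref{L} (trivially, since $\lozenge$ is 2-natural), and concludes with \eqref{XA} and the isomorphism of \eqref{UA} with the identity. One small simplification: after substituting $\circledast^{-1}=\operatorname{op}\mathbin{\smallstar}\circledast^{\operatorname{co}}\mathbin{\smallstar}\mathop{-}\nolimits$, the two grouped $\circledast^{-1}$-pieces are literally the $\operatorname{coop}$ of \eqref{UA} for $\circledast$ itself. So you need not assert that $\circledast^{-1}$ is a duality, which is not quite well-typed anyway, since $\circledast^{-1}$ points from $\operatorname{op}\circ\mathcal{K}$ to $\mathcal{K}^{\operatorname{co}}\circ\mathop{-}\nolimits$.
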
\par When a pseudofunctor $\mathcal{K}$ is equipped with a duality $\circledast$, we implicitly regard $\operatorname{mod}\hspace{-.35mm}\operatorname{_{lfl}-\hspace{-.8mm}}\mathcal{K}$ as equipped with the duality \eqref{ZA}. Note that the component of the strong transformation \eqref{ZA} at $\mu\in X$ is given by \[\operatorname{mod}\hspace{-.35mm}\operatorname{_{lfl}-\hspace{-.8mm}}\mathcal{K}_\mu\xrightarrow{(\circledast_\mu)^\ast}\operatorname{mod}\hspace{-.35mm}\operatorname{_{lfl}-\hspace{-.8mm}}(\mathcal{K}^{\operatorname{op}}_\mu)\xrightarrow{\mathop{\lozenge}\nolimits_{\mathcal{K}_\mu}}(\operatorname{mod}\hspace{-.35mm}\operatorname{_{lfl}-\hspace{-.8mm}}\mathcal{K}_\mu)^{\operatorname{op}}.\]\par\begin{proof}The following shows that the composition \eqref{UA} for the strong transformation in \eqref{ZA} is isomorphic to the identity. \begin{align*}&\big(\operatorname{op}^{\operatorname{co}}\mathbin{\smallstar}((\mathop{\lozenge}\nolimits^{\operatorname{coop}}\mathbin{\smallstar}\mathcal{K})\circ(\wedge_f^{\operatorname{op}}\mathbin{\smallstar}(\circledast^{-1})^{\operatorname{coop}}))\mathbin{\smallstar}\tau\big)\circ\big(((\mathop{\lozenge}\nolimits^{\operatorname{op}}\mathbin{\smallstar}\mathcal{K}^{\operatorname{op}})\circ(\wedge_f^{\operatorname{coop}}\mathbin{\smallstar}(\circledast^{-1})^{\operatorname{op}}))\mathbin{\smallstar}\tau^{\operatorname{co}}\mathbin{\smallstar}\mathop{-}\nolimits\big)\\=\hspace{.2em}&\big(((\operatorname{op}^{\operatorname{co}}\mathbin{\smallstar}\mathop{\lozenge}\nolimits^{\operatorname{coop}})\circ(\mathop{\lozenge}\nolimits^{\operatorname{op}}\mathbin{\smallstar}\operatorname{op}^{\operatorname{coop}}))\mathbin{\smallstar}\mathcal{K}\mathbin{\smallstar}\tau\big)\circ\big(\wedge_f^{\operatorname{coop}}\mathbin{\smallstar}((\operatorname{op}^{\operatorname{op}}\mathbin{\smallstar}(\circledast^{-1})^{\operatorname{coop}})\circ((\circledast^{-1})^{\operatorname{op}}\mathbin{\smallstar}\mathop{-}\nolimits^{\operatorname{coop}}))\mathbin{\smallstar}\tau\big)\\\cong\hspace{.2em}&\textnormal{\fontfamily{pbk}\selectfont1}_{\operatorname{mod}\hspace{-.35mm}\operatorname{_{lfl}-\hspace{-.8mm}}\mathcal{K}}\circ\textnormal{\fontfamily{pbk}\selectfont1}_{\operatorname{mod}\hspace{-.35mm}\operatorname{_{lfl}-\hspace{-.8mm}}\mathcal{K}}=\textnormal{\fontfamily{pbk}\selectfont1}_{\operatorname{mod}\hspace{-.35mm}\operatorname{_{lfl}-\hspace{-.8mm}}\mathcal{K}}\end{align*} For the first equality, we have used \eqref{L} to interchange $(\circledast^{-1})^{\operatorname{coop}}$ and $\mathop{\lozenge}\nolimits^{\operatorname{op}}$ after expanding the parentheses. For the second isomorphism, we have used \eqref{XA} and the isomorphism of \eqref{UA} with the identity.\end{proof}\par\begin{theorem}\thlabel{aA} Let $\mathcal{K}\colon\mathscr{U}\to\mathrm{Cat}_\Bbbk$ be a pseudofunctor with a duality such that $\mathcal{K}_\mu$ is small and locally Laurentian with respect to $q\textnormal{\fontfamily{pbk}\selectfont1}_\mu$ for all $\mu\in X$. Then $\mathbf{G}^{\textup{o}}(\dot{\mathcal{K}})$ carries a canonical crystal structure, uniquely determined by \thref{JA}. The map $\mathbf{G}^{\textup{o}}(\dot{\mathcal{K}})\to\mathbf{B}^{\textup{o}}(\operatorname{mod}\hspace{-.35mm}\operatorname{_{lfl}-\hspace{-.8mm}}\mathcal{K})$, $P\mapsto\operatorname{hd}\mathcal{K}_\mu(-,P)$ is an isomorphism of crystals.\end{theorem}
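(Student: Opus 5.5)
We will assemble the statement from the results of Section~\ref{E} applied to the pseudofunctor $\dot{\mathcal{K}}$. First, since $\mathcal{K}_\mu$ is small and locally Laurentian over the Henselian local ring $\Bbbk$, its additive Karoubi envelope $\dot{\mathcal{K}}_\mu$ is a small Krull--Schmidt category. It remains locally Laurentian with respect to $q$, because Hom-spaces of $\dot{\mathcal{K}}_\mu$ are direct summands of finite direct sums of Hom-spaces of $\mathcal{K}_\mu$. By \thref{R}, applied after composing with $\dot{(-)}$, the action extends to $\dot{\mathcal{K}}\colon\mathscr{U}\to\mathrm{Cat}_\Bbbk$. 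By \thref{N} (i)$'$, the duality $\circledast$ extends to a duality $\dot{\circledast}$ of $\dot{\mathcal{K}}$.

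Hypothesis \ref{HA} then holds, and \thref{RA} gives hypothesis \ref{IA}. Hence $\mathbf{G}(\dot{\mathcal{K}})$ carries the crystal structure transported from $\mathbf{B}(\operatorname{mod}\!\operatorname{-}\hspace{-.8mm}\dot{\mathcal{K}})$ via $P\mapsto\operatorname{hd}\dot{\mathcal{K}}_\mu(-,P)$. We will also use that $\operatorname{mod}\!\operatorname{-}\hspace{-.8mm}\dot{\mathcal{K}}_\mu\simeq\operatorname{mod}\!\operatorname{-}\hspace{-.8mm}\mathcal{K}_\mu$ by restriction along $\iota$, compatibly with the $\mathscr{U}$-actions.

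Next, we will replace $\operatorname{mod}$ by $\operatorname{mod}\hspace{-.35mm}\operatorname{_{lfl}-\hspace{-.8mm}}$. The simple head $L=\operatorname{hd}\mathcal{K}_\mu(-,P)$ takes values in finite length $\Bbbk$-modules: each $\mathcal{K}_\mu(Q,P)$ is finitely generated, and $L(Q)$ is killed by $\mathfrak{m}$, since $\operatorname{End}(P)/\operatorname{rad}$ is a division ring over $\Bbbk/\mathfrak{m}$. So every simple module lies in $\operatorname{mod}\hspace{-.35mm}\operatorname{_{lfl}-\hspace{-.8mm}}\mathcal{K}_\mu$. This is a Serre subcategory stable under $E_i,F_i$, because $E_i,F_i$ act by precomposition with $\mathcal{K}(F_i)$ and $\mathcal{K}(E_i)$. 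Thus $\mathbf{B}(\operatorname{mod}\hspace{-.35mm}\operatorname{_{lfl}-\hspace{-.8mm}}\mathcal{K})=\mathbf{B}(\operatorname{mod}\!\operatorname{-}\hspace{-.8mm}\mathcal{K})$. The crystal data $\varepsilon_i,\varphi_i,\widetilde{e}_i,\widetilde{f}_i$ of \thref{@} are computed inside this subcategory: socles and heads of $E_iM$ are the same in both categories, and integrability is inherited. So the bijection $\mathbf{G}(\dot{\mathcal{K}})\to\mathbf{B}(\operatorname{mod}\hspace{-.35mm}\operatorname{_{lfl}-\hspace{-.8mm}}\mathcal{K})$ is a crystal isomorphism.

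By \thref{YA}, $\operatorname{mod}\hspace{-.35mm}\operatorname{_{lfl}-\hspace{-.8mm}}\mathcal{K}$ carries a duality. By \thref{WA}, its crystal restricts to the self-dual simples $\mathbf{B}^{\textup{o}}$. The step I expect to be the main obstacle is showing that the bijection matches self-dual indecomposables with self-dual simples, i.e.
\[(\operatorname{hd}\mathcal{K}_\mu(-,P))^{\lozenge\circ\circledast^\ast}\cong\operatorname{hd}\mathcal{K}_\mu(-,P^{\circledast}).\]
To prove this, evaluate the dual module at $Q$: it is $L(Q^{\circledast})^\ast$. This is nonzero exactly when $Q^{\circledast}$ has $P$ as a summand, i.e.\ when $Q$ has $P^{\circledast}$ as a summand, using that $\circledast_\mu$ is an anti-equivalence. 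Both sides are simple, and simples are determined by the indecomposable on which they are nonzero, so the two agree up to isomorphism, with the grading matched by the same evaluation. Hence $P\cong P^{\circledast}$ if and only if $L\cong L^{\circledast}$. Consequently $\mathbf{G}^{\textup{o}}(\dot{\mathcal{K}})$ is a subcrystal and the map onto $\mathbf{B}^{\textup{o}}$ is an isomorphism.

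Uniqueness follows from \thref{JA}. Part \ref{KA} determines $\varepsilon_i$, and $\varphi_i=\varepsilon_i+\langle i,\mu\rangle$. Part \ref{LA} with $m=1$ characterizes $\widetilde{e}_i(P)$ as the unique summand of $E_iP$ with $d_i$ equal to $d_i(P)$, up to shift; within self-dual objects the shift is fixed by self-duality. Dually, $\widetilde{f}_i$ is determined using $\mathcal{K}\circ\psi$.
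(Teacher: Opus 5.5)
Your proposal is correct and follows the paper's proof essentially step by step. You extend to $\dot{\mathcal{K}}$ and invoke \thref{RA}, identify the simples of the locally finite length module category with those of $\operatorname{mod}\!\operatorname{-}\mathcal{K}$ by a Nakayama-type argument, apply \thref{WA}, and compare $L^\circledast(Q)=L(Q^\circledast)^\ast$ objectwise; your general identification $L^\circledast\cong\operatorname{hd}\mathcal{K}_\mu(-,P^\circledast)$ also makes the converse inclusion explicit. The one point to state is that fixing the shift of $\widetilde{e}_i(P)$ by self-duality requires $q^dQ\not\cong Q$ for $d\ne0$, which follows from local left-boundedness. The paper's use of part (ii) of \thref{JA} relies on the same fact.
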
 In particular, if each $\dot{\mathcal{K}}_\mu$ is a humorous category, the underlying set of this crystal is the orthodox basis. \begin{proof}The pseudofunctor $\dot{\mathcal{K}}$ given by extending $\mathcal{K}$ to additive Karoubi envelopes satisfies the conditions \ref{HA} (see \S\ref{`}) and \ref{IA} (by \thref{RA}) of \S\ref{FA}. \thref{JA} \ref{LA} uniquely determines $\widetilde{e}_i(P)$ for $P\in\mathbf{G}(\dot{\mathcal{K}})$, since if $Q\coloneqq\widetilde{e}_i(P)$ is nonzero, then $q^dQ\not\cong Q$ for $d\ne0$ by the locally left-bounded condition.\par Let $L\in\mathbf{B}(\operatorname{mod}\!\operatorname{-}\hspace{-.8mm}\mathcal{K}_\mu)$. Since $L$ is cyclic, $L(x)$ is a finitely generated $\Bbbk$-module for all $x\in\mathcal{K}_\mu$. By Nakayama's lemma, $\mathfrak{m}L=0$, and each $L(x)$ is a finite-dimensional vector space over $\Bbbk/\mathfrak{m}$. Hence $\mathbf{B}(\operatorname{mod}\hspace{-.35mm}\operatorname{_{lfl}-\hspace{-.8mm}}\mathcal{K}_\mu)=\mathbf{B}(\operatorname{mod}\!\operatorname{-}\hspace{-.8mm}\mathcal{K}_\mu)$. By \thref{WA}, $\mathbf{B}^{\textup{o}}(\operatorname{mod}\hspace{-.35mm}\operatorname{_{lfl}-\hspace{-.8mm}}\mathcal{K})$ is a subcrystal of $\mathbf{B}(\operatorname{mod}\hspace{-.35mm}\operatorname{_{lfl}-\hspace{-.8mm}}\mathcal{K})$.\par It remains to show that it is mapped to $\mathbf{G}^{\textup{o}}(\dot{\mathcal{K}})$ under the bijection $\mathbf{G}(\dot{\mathcal{K}})\to\mathbf{B}(\operatorname{mod}\!\operatorname{-}\hspace{-.8mm}\mathcal{K})$. Let $P\in\mathbf{G}^{\textup{o}}(\dot{\B}_\mu)$ and $L\coloneqq\operatorname{hd}\mathcal{K}_\mu(-,P)\in\operatorname{mod}\hspace{-.35mm}\operatorname{_{lfl}-\hspace{-.8mm}}\mathcal{K}_\mu$. Then for $Q\in\mathbf{G}(\dot{\B}_\mu)$, $L^\circledast(Q)=L(Q^\circledast)^\ast$ is nonzero only if $Q\cong P^\circledast\cong P$. Hence, $\mathcal{K}_\mu(-,P)$ is the projective cover of $L^\circledast$, and thus $L^\circledast\cong L\in\mathbf{B}^{\textup{o}}(\operatorname{mod}\hspace{-.35mm}\operatorname{_{lfl}-\hspace{-.8mm}}\mathcal{K}_\mu)$.\end{proof}\par We now construct two 2-representations with dualities, which are our main object of study in \S\ref{G}.\par\begin{example}\thlabel{EA} The bar-involution of $\mathscr{U}$ induces functors $\mathscr{U}(\lambda,\mu)\to\mathscr{U}(\lambda,\mu)^{\operatorname{op}}$, and for each $u\in\mathscr{U}(\mu_1,\mu_2)$, we have a commutative diagram \vspace{-.7em} \begin{equation}\label{bA}\begin{tikzpicture}[xscale=2,yscale=.7,baseline=-10pt]\node(A)at(-1,1){$\mathscr{U}(\lambda,\mu_1)$};\node(B)at(1,1){$\mathscr{U}(\lambda,\mu_2)$};\node(C)at(-1,-1){$\mathscr{U}(\lambda,\mu_1)^{\operatorname{op}}$};\node(D)at(1,-1){$\mathscr{U}(\lambda,\mu_2)^{\operatorname{op}}$};\draw[->](A)--(B)node[midway,above]{$\bar{u}\circ-$};\draw[->](A)--(C)node[midway,left]{$\mathop{-}\nolimits$};\draw[->](B)--(D)node[midway,right]{$\mathop{-}\nolimits$};\draw[->](C)--(D)node[midway,below]{$u^{\operatorname{op}}\circ-$};\end{tikzpicture}\vspace{-.4em}\end{equation} in $\mathrm{Cat}_\Bbbk$. Clearly, these data form a 2-natural transformation $\mathscr{U}(\lambda,-)^{\operatorname{co}}\circ-\to\operatorname{op}\circ\mathscr{U}(\lambda,-)$. Since $\mathop{-}\nolimits$ is an involution, this is a duality in the sense of \thref{TA}. The category $\mathscr{U}(\lambda,\mu)$ is humorous (cf.\ \cite[Thm.~5.17]{V} and \thref{CA}) with respect to a duality given by $\mathop{-}\nolimits$. Therefore, \thref{aA} applies to the 2-functor $\mathscr{U}(\lambda,-)$, providing a crystal structure on the set $\bigsqcup_{\mu\in X}\mathbf{G}^{\textup{o}}(\dot{\mathscr{U}}(\lambda,\mu))$.\end{example}\par\begin{example}\thlabel{cA} Similarly, for each $u\in\mathscr{U}(\lambda_1,\lambda_2)$, we have a commutative diagram \vspace{-.7em} \[\begin{tikzpicture}[xscale=2,yscale=.7,baseline=-10pt]\node(A)at(-1,1){$\mathscr{U}(-\lambda_1,\mu)$};\node(B)at(1,1){$\mathscr{U}(-\lambda_2,\mu)$};\node(C)at(-1,-1){$\mathscr{U}(-\lambda_1,\mu)^{\operatorname{op}}$};\node(D)at(1,-1){$\mathscr{U}(-\lambda_2,\mu)^{\operatorname{op}}$};\draw[->](A)--(B)node[midway,above]{$-\circ(\bar{u})^\ast$};\draw[->](A)--(C)node[midway,left]{$\mathop{-}\nolimits$};\draw[->](B)--(D)node[midway,right]{$\mathop{-}\nolimits$};\draw[->](C)--(D)node[midway,below]{$-\circ(u^\ast)^{\operatorname{op}}$};\end{tikzpicture}\vspace{-.4em}\] where the two paths of the diagram send $v\in\mathscr{U}(-\lambda_1,\mu)$ to $\overline{v(\bar{u})^\ast}=\bar{v}u^\ast$. These data provide a duality of the 2-representation $\mathscr{U}(-,\mu)\circ\ast\colon\mathscr{U}(c^\ast)\to\mathscr{U}(c)^{\operatorname{op}}\to\mathrm{Cat}_\Bbbk$. Therefore, \thref{aA} applies to the 2-functor $\mathscr{U}(-,\mu)\circ\ast$ to provide a crystal structure on $\bigsqcup_{\lambda\in X}\mathbf{G}^{\textup{o}}(\dot{\mathscr{U}}(\lambda,\mu))$, which we refer to as the \emph{star crystal} structure. We denote by $E_i^\ast$ and $F_i^\ast$ the functors $E_i$ and $F_i$ with respect to $\mathscr{U}(-,\mu)\circ\ast$ and $\operatorname{mod}\!\operatorname{-}\hspace{-.8mm}(\mathscr{U}(-,\mu)\circ\ast)$, and write $\widetilde{e}_i^\ast$ and $\widetilde{f}_i^\ast$ for the corresponding crystal operators. Note that the object-wise component of the duality functor on $\mathscr{U}(\lambda,\mu)$ is the same as the one in \thref{EA}, so the notation $\mathbf{G}^{\textup{o}}(\dot{\mathscr{U}}(\lambda,\mu))$ is unambiguous.\end{example}\par\section{Generalized tensor product categorifications}\label{F}\par For the rest of the paper, we assume that $X$ is an abelian group, $X\times\mathbb{Z}[I]\to X$ and $\langle i,-\rangle$ ($i\in I$) are group homomorphisms, and the action of $\mathbb{Z}[I]$ on $X$ is free. We identify $\mathbb{Z}[I]$ as a subset of $X$.\par\subsection{Tensor product of crystals}\label{dA}\par Our convention for the tensor product of crystals is opposite to the standard one in \cite[(1.5.13)--(1.5.16)]{O}. Under this convention, the tensor product of crystals $B_1$ and $B_2$ is the set $B_1\otimes B_2=\left\{\kern.07emb_1\otimes b_2\,\middle|\,b_1\in B_1,b_2\in B_2\kern.07em\right\}$ endowed with a crystal structure as follows: \begin{align}\operatorname{wt}(b_1\otimes b_2)&\coloneqq\operatorname{wt}(b_1)+\operatorname{wt}(b_2),\\\varepsilon_i(b_1\otimes b_2)&\coloneqq\max(\varepsilon_i(b_2),\varepsilon_i(b_1)-\langle i,\operatorname{wt}(b_2)\rangle),\\\varphi_i(b_1\otimes b_2)&\coloneqq\max(\varphi_i(b_1),\varphi_i(b_2)+\langle i,\operatorname{wt}(b_1)\rangle),\label{eA}\\\widetilde{e}_i(b_1\otimes b_2)&\coloneqq\begin{cases}\widetilde{e}_ib_1\otimes b_2,&\text{if }\varepsilon_i(b_1)>\varphi_i(b_2)\\b_1\otimes\widetilde{e}_ib_2,&\text{if }\varepsilon_i(b_1)\le\varphi_i(b_2),\end{cases}\label{fA}\\\widetilde{f}_i(b_1\otimes b_2)&\coloneqq\begin{cases}\widetilde{f}_ib_1\otimes b_2,&\text{if }\varepsilon_i(b_1)\ge\varphi_i(b_2)\\b_1\otimes\widetilde{f}_ib_2,&\text{if }\varepsilon_i(b_1)<\varphi_i(b_2).\end{cases}\end{align}\par Recall that a \emph{seminormal} crystal is a crystal in which $\varepsilon_i(b)=\max\{k\ge0\mid\widetilde{e}_i^kb\ne0\}$ and $\varphi_i(b)=\max\{k\ge0\mid\widetilde{f}_i^kb\ne0\}$ hold for all $b$ and $i$. Let $b_1,b_2,b_1'$ be elements of seminormal $\mathfrak{sl}_2$-crystals. Then \begin{equation}\label{gA}\widetilde{e}(b_1\otimes b_2)=b_1\otimes\widetilde{e}b_2\quad\Longrightarrow\quad\operatorname{wt}b_1'-\operatorname{wt}b_1\le\varphi(b_1'\otimes b_2)-\varphi(b_1\otimes b_2).\end{equation} To see this, assume the hypothesis of \eqref{gA}. Then the upper branch of \eqref{fA} cannot be taken, so $\varepsilon(b_1)\le\varphi(b_2)$. Applying \eqref{eA}, we obtain $\varphi(b_1\otimes b_2)=\varphi(b_2)+\operatorname{wt}b_1$ and $\varphi(b_1'\otimes b_2)\ge\varphi(b_2)+\operatorname{wt}b_1'$, yielding the desired inequality. Note that if $\widetilde{e}(b_1\otimes b_2)=0$, then $b_1\otimes\widetilde{e}b_2=0$, so the hypothesis of \eqref{gA} is fulfilled.\subsection{Upper finite fully stratified categories}\label{hA}\par Let $\mathcal{C}$ be a category equivalent to $\operatorname{mod}\!\operatorname{-}\hspace{-.8mm}\mathcal{K}$ for a small Krull--Schmidt category $\mathcal{K}$, or equivalently, a Grothendieck category with a generating set of projective objects $\left\{\kern.07emP(b)\,\middle|\,b\in\mathbf{B}\kern.07em\right\}$ indexed by a set $\mathbf{B}$ such that each $P(b)$ has a simple essential head $L(b)$. Following \cite{l}, we refer to such a category as a \emph{semiperfect Grothendieck category}.\par Let $(\Xi,\ge)$ be an upper finite poset, i.e., a poset with the set $\left\{\kern.07em\xi'\in\Xi\,\middle|\,\xi'\ge\xi\kern.07em\right\}$ finite for all $\xi\in\Xi$. Suppose that a \emph{stratification function} $\varrho\colon\mathbf{B}\to\Xi$ is given. For $\xi\in\Xi$, let $\mathcal{C}_{\le\xi}$ (resp. $\mathcal{C}_{<\xi}$) be the localizing subcategory of $\mathcal{C}$ consisting of objects $M$ such that if $L(b)$ is a simple subquotient of $M$, then $\varrho(b)\le\xi$ (resp. $\varrho(b)<\xi$). Let $\pi_\xi\colon\mathcal{C}_{\le\xi}\to\mathcal{C}_{\le\xi}/\mathcal{C}_{<\xi}\eqqcolon\mathcal{C}_\xi$ be the canonical quotient functor. Note that $\mathcal{C}_\xi$ is again a semiperfect Grothendieck category.\par If $\pi_\xi$ has an exact left adjoint functor $\Delta_\xi\colon\mathcal{C}_\xi\to\mathcal{C}_{\le\xi}$, it is called the \emph{standardization} functor. If standardization functors exist, we denote $\bar{\Delta}(b)\coloneqq\Delta_{\varrho(b)}(\bar{L}(b))$ and $\Delta(b)\coloneqq\Delta_{\varrho(b)}(\bar{P}(b))$, where $\bar{L}(b)\coloneqq\pi_{\varrho(b)}(L(b))$ and $\bar{P}(b)$ denotes the projective cover of $\bar{L}(b)$ in $\mathcal{C}_{\varrho(b)}$.\par\begin{definition}\thlabel{iA} $\mathcal{C}$ is called an \emph{upper finite fully stratified category} if the standardization functors $\Delta_\xi$ exist for all $\xi\in\Xi$, and for each $b\in\mathbf{B}$, there exists a projective object $P_b$ admitting a finite filtration with subquotients $\Delta(a)$ where $\varrho(a)\ge\varrho(b)$, and with top subquotient $\Delta(b)$. \end{definition}\par\begin{remark}The notion of an upper finite fully stratified category is due to Brundan and Stroppel \cite{W}, where it is an instance of their framework of $\varepsilon$-stratified categories. As in \cite{M}, we formulate the definition in terms of the exactness of the left adjoint of $\pi_\xi$. When each $\mathcal{C}_\xi$ is locally artinian (in the sense of \cite{m}), it is equivalent to the condition $(\widehat{P\Delta}_\varepsilon)$ in \cite{W} being satisfied for all sign functions $\varepsilon$ (cf. \cite[Thm.~3.33]{W}), which is the definition used in \cite[Def.~3.34]{W} in the Schurian setting. We do not assume that $\mathcal{C}$ is Schurian, nor that $\varrho$ has finite fibers, so that the ground ring $\Bbbk$ is an arbitrary commutative ring and the definition captures the graded lifts of the (generalized) tensor product categorifications (cf.\ \cite[\S1]{n}).\par\end{remark}\par The next lemma is a direct analogue of \cite[(2.1), (2.2)]{M}, and its proof is essentially contained in \cite[\S3.4]{W}. We include a proof for the reader's convenience.\par\begin{lemma}\thlabel{jA} Let $\xi,\xi'\in\Xi$. For $M\in\mathcal{C}_{\xi}$, $M'\in\mathcal{C}_{\xi'}$, and $i\ge0$, we have the following\kern.07em\kern.05em\textup{:}\begin{enumerate}[(i),font=\normalfont]\item $\operatorname{Ext}^i(\Delta_{\xi}(M),\Delta_{\xi'}(M'))=0$ if $\xi\not\le\xi'$. \label{kA}\item $\operatorname{Ext}^i(\Delta_\xi(M),\Delta_\xi(M'))\cong\operatorname{Ext}^i(M,M')$ if $\xi=\xi'$. \label{lA}\end{enumerate}\end{lemma}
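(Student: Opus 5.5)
The plan is to reduce both Ext computations to $\operatorname{Ext}$ groups out of $\Delta_\xi(M)$ into an object of $\mathcal{C}_{\le\xi}$, and then to use the adjunction $\Delta_\xi\dashv\pi_\xi$ together with the exactness of both functors. Throughout, $\operatorname{Ext}$ is computed in $\mathcal{C}$. Since $\mathcal{C}$ is a semiperfect Grothendieck category with enough projectives, I will compute Ext using projective resolutions. Alternatively, Yoneda Ext works equally well; I would fix one choice at the outset.

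\textbf{Step 1 (projectivity relative to the Serre piece).} Here $\Delta_\xi$ is left adjoint to the exact functor $\pi_\xi$, so $\Delta_\xi$ sends projectives of $\mathcal{C}_\xi$ to objects that are projective in $\mathcal{C}_{\le\xi}$. Moreover, $\Delta_\xi$ is exact by assumption. Take a projective resolution $Q_\bullet\to M$ in $\mathcal{C}_\xi$; this exists since $\mathcal{C}_\xi$ is semiperfect Grothendieck. Applying $\Delta_\xi$ then gives a resolution of $\Delta_\xi(M)$ by objects that are projective in $\mathcal{C}_{\le\xi}$.

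The main obstacle is that these objects need not be projective in $\mathcal{C}$. To handle it, I would show that $\operatorname{Ext}^i_{\mathcal{C}}(X,Y)\cong\operatorname{Ext}^i_{\mathcal{C}_{\le\xi}}(X,Y)$ for $X,Y\in\mathcal{C}_{\le\xi}$. Since $\mathcal{C}_{\le\xi}$ is a localizing subcategory defined by a coideal-closed condition on simples, it is closed under subobjects, quotients, extensions and coproducts. Its projective objects are of the form $P/\mathcal{O}^{\xi}(P)$, where $\mathcal{O}^\xi(P)$ is the trace of the $P(b)$ with $\varrho(b)\not\le\xi$.

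The key input is the fully stratified axiom. Each $P_b$ with $\varrho(b)\le\xi$ has a $\Delta$-filtration. Truncating it, $P_b/\mathcal{O}^\xi(P_b)$ has a finite filtration by $\Delta(a)$ with $\varrho(a)\le\xi$, and the kernel $\mathcal{O}^\xi(P_b)$ is filtered by $\Delta(a)$ with $\varrho(a)\not\le\xi$. Accordingly, I would prove (i) and the comparison simultaneously by a dimension-shift induction on $i$, in the style of \cite[\S3.4]{W}. The claims are:
\begin{enumerate}[(1)]
\item For $\Delta$-filtered $K$ with all $\varrho(a)\not\le\xi$ and $Y\in\mathcal{C}_{\le\xi}$, we have $\operatorname{Ext}^i(K,Y)=0$.
\item Consequently, the truncated projectives are acyclic for $\operatorname{Hom}_{\mathcal{C}}(-,Y)$.
\end{enumerate}
For $i=0$, claim (1) is immediate, since $\Delta(a)$ has head in stratum $\varrho(a)\not\le\xi$ and $Y$ has no such composition factors. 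The induction on $i$ would be run using the upper finiteness of $\Xi$, proceeding downward in the poset.

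\textbf{Step 2 (part (ii)).} With the comparison in hand, I compute
\[
\operatorname{Ext}^i(\Delta_\xi M,\Delta_\xi M')\cong H^i\operatorname{Hom}(\Delta_\xi Q_\bullet,\Delta_\xi M')\cong H^i\operatorname{Hom}_{\mathcal{C}_\xi}(Q_\bullet,\pi_\xi\Delta_\xi M').
\]
The first isomorphism uses the resolution from Step 1 and the comparison; the second is the adjunction. Since $\pi_\xi\Delta_\xi\cong\mathrm{id}$ (the unit is an isomorphism for a fully faithful left adjoint of a Serre quotient), the right-hand side is $\operatorname{Ext}^i_{\mathcal{C}_\xi}(M,M')$.

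\textbf{Step 3 (part (i)).} Suppose $\xi\not\le\xi'$. The same resolution reduces the claim to $H^i\operatorname{Hom}_{\mathcal{C}_\xi}(Q_\bullet,\pi_\xi N)$, where $N=\Delta_{\xi'}(M')$ is placed in $\mathcal{C}_{\le\xi''}$ for an upper bound $\xi''$ containing both strata. More simply, I work in the Serre subcategory generated by the strata $\le\xi$ or $\le\xi'$. Since $N\in\mathcal{C}_{\le\xi'}$ has no composition factors in stratum $\xi$, we get $\pi_\xi$-type vanishing, so $\operatorname{Hom}(\Delta_\xi Q_k,N)=0$ for every $k$. The higher terms vanish by the relative projectivity of Step 1, where the comparison of Ext is now taken in a localizing subcategory containing both objects. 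I expect Step 1, the agreement of Ext with the relative Ext via the $\Delta$-filtrations of the $P_b$, to be the only nontrivial point. Everything else is formal adjunction.
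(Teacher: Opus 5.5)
Your route differs from the paper's and can be made to work. However, one step is asserted without justification and Step 3 fails as written. Both are repaired by your own claim (1).

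\textbf{The truncation step.} The definition only gives a $\Delta$-flag of $P_b$ whose sections come in no prescribed order. A section $\Delta(c)$ with $\varrho(c)\le\xi$ may therefore lie below a section $\Delta(a)$ with $\varrho(a)\not\le\xi$, and then nothing says $\mathcal{O}^\xi(P_b)$ is $\Delta$-filtered. You must reorder the flag first. Swapping such adjacent sections requires $\operatorname{Ext}^1(\Delta(a),\Delta(c))=0$ for $\varrho(a)\not\le\varrho(c)$, which is the case $i=1$ of claim (1) with $\xi$ replaced by $\varrho(c)$. So claim (1) must be proved first and independently, not simultaneously with the comparison.

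That is easy. For $i\ge1$, the sequence $0\to Q_a\to P_a\to\Delta(a)\to0$ exhibits $\operatorname{Ext}^i(\Delta(a),Y)$ as a quotient of $\operatorname{Ext}^{i-1}(Q_a,Y)$. All sections of $Q_a$ have $\varrho\ge\varrho(a)$, hence $\varrho\not\le\xi$, so induction on $i$ alone suffices; no downward induction in $\Xi$ is needed. After reordering, the bottom piece $F$ is a quotient of a sum of $P(a)$ with $\varrho(a)\not\le\xi$, and $P_b/F$ lies in $\mathcal{C}_{\le\xi}$. Hence $F=\mathcal{O}^\xi(P_b)$, and your acyclicity argument for the truncated projectives, and with it Step 2, goes through.

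\textbf{Step 3.} As written it fails on three counts: an upper bound of $\xi,\xi'$ need not exist in $\Xi$; $\pi_\xi N$ is undefined since $N\notin\mathcal{C}_{\le\xi}$; and you have no Ext comparison for the localizing subcategory attached to $\{\le\xi\}\cup\{\le\xi'\}$. None of this is needed. Each term of $\Delta_\xi(Q_\bullet)$ is a summand of a direct sum of $\Delta(a)$ with $\varrho(a)=\xi\not\le\xi'$. Claim (1) with $\xi'$ in place of $\xi$ then gives $\operatorname{Ext}^j(-,N)=0$ on every term for all $j\ge0$, hence $\operatorname{Ext}^\bullet(\Delta_\xi(M),N)=0$.

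\textbf{Comparison with the paper.} This repaired proof of (i) is exactly the paper's argument, phrased there as $\mathbb{L}i^\ast_{\xi'}\Delta_\xi(M)=0$ for the left adjoint $i^\ast_{\xi'}$ of the inclusion $\mathcal{C}_{\le\xi'}\hookrightarrow\mathcal{C}$. For (ii), the paper needs neither the reordering nor a comparison on all of $\mathcal{C}_{\le\xi}$. It shows only $\mathbb{L}i^\ast_\xi\Delta_\xi(M)\cong\Delta_\xi(M)$. The key observation is that $\ker(P_b\to\Delta(b))$ contains $\ker(P_b\to i^\ast_\xi P_b)$, so $i^\ast_\xi Q_b\to i^\ast_\xi P_b$ is injective and $\mathbb{L}_1i^\ast_\xi\Delta(b)=0$; it then inducts on degree using (i). Your approach pays for the flag reordering but proves more: the inclusion $\mathcal{C}_{\le\xi}\hookrightarrow\mathcal{C}$ induces isomorphisms on all $\operatorname{Ext}$ groups between objects of $\mathcal{C}_{\le\xi}$.
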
\par\begin{proof}Let $i_\xi\colon\mathcal{C}_{\le\xi}\to\mathcal{C}$ be the inclusion, $i^\ast_\xi\colon\mathcal{C}\to\mathcal{C}_{\le\xi}$ its left adjoint, and $\mathbb{L}i_\xi^\ast\colon D^-(\mathcal{C})\to D^-(\mathcal{C}_{\le\xi})$ the left derived functor. If $\xi\not\le\xi'$, then $i_{\xi'}^\ast P(b)=0$ for $b\in\varrho^{-1}(\xi)$. Since $P_b$ is a direct sum of $P(a)$ with $\varrho(a)\ge\xi$, we have $i_{\xi'}^\ast P_b=0$. There exists a resolution of $\Delta(b)$ by direct sums of $P_a$ with $\varrho(a)\ge\xi$, which implies $\mathbb{L}i_{\xi'}^\ast\Delta(b)=0$. Let $P_\bullet\to M$ be a projective resolution of $M$. Then $\Delta_\xi(P_\bullet)\to\Delta_\xi(M)$ is a resolution, which gives $\mathbb{L}i_{\xi'}^\ast\Delta_\xi(M)=0$. This proves \ref{kA}.\par Now suppose $\xi=\xi'$. Let $Q_b$ be the kernel of $P_b\twoheadrightarrow\Delta(b)$. Since it contains the kernel of $P_b\twoheadrightarrow i^\ast_\xi P_b$, the map $i^\ast_\xi Q_b\to i^\ast_\xi P_b$ is a monomorphism, which gives $\mathbb{L}_1i^\ast_\xi\Delta(b)=0$. For $i\ge2$, $\mathbb{L}_ii^\ast_\xi\Delta(b)\cong\mathbb{L}_{i-1}i^\ast_\xi Q_b\cong0$ by \ref{kA} and induction on $i$. Since $\Delta_\xi(P_\bullet)\to\Delta_\xi(M)$ is a resolution by $i^\ast_\xi$-acyclic objects, we have $\mathbb{L}i^\ast_\xi\Delta_\xi(M)\cong\Delta_\xi(M)$. Hence $\operatorname{Ext}^i(\Delta_\xi(M),\Delta_\xi(M'))$ can be computed in $\mathcal{C}_{\le\xi}$. The exactness of $\Delta_\xi$ and $\pi_\xi\Delta_\xi\cong1_{\mathcal{C}_\xi}$ yield \ref{lA}. \end{proof}\par\subsection{Structure of highest weight $\mathscr{U}$-representations}Fix $\lambda\in X$ and denote $R\coloneqq\mathscr{U}(\lambda,\lambda)|_{q=1}(\textnormal{\fontfamily{pbk}\selectfont1}_\lambda,\textnormal{\fontfamily{pbk}\selectfont1}_\lambda)$. For a graded ring map $R\to S$, we apply base change to obtain a 2-functor $S\otimes_R\mathscr{U}(\lambda,-)|_{q=1}$, which sends $\mu\in X$ to a category that is enriched over graded $\Bbbk$-modules. By taking their degree-zero component, we obtain a 2-functor $\big(S\otimes_R\mathscr{U}(\lambda,-)|_{q=1}\big)_0$.\par Let $\mathcal{K}\colon\mathscr{U}\to\mathrm{Cat}_\Bbbk$ be a pseudofunctor. For an object $P\in\mathcal{K}_\lambda$ we have a strong transformation $\widetilde{\phi}\colon\big(\kern-.15em\operatorname{End}_q(P)\otimes_R\mathscr{U}(\lambda,-)|_{q=1}\big)_0\to\mathcal{K}$, whose component at $\mu\in X$ is the functor $\phi_\mu$ given by $\phi_\mu(u)=uP$ for $u\in\mathscr{U}(\lambda,\mu)$ and $\phi_\mu(f\otimes g)=gP\circ u(f)$ for $f\in\operatorname{End}_q(P)$ and $g\in\mathscr{U}(\lambda,\mu)$; whose component at $u\in\mathscr{U}(\mu,\nu)$ is the natural transformation, whose components are given by $\mathcal{K}^2_{\lambda,\mu,\nu}(P)$ (see \eqref{K}).\par\newcommand{\E}{\mathcal{V}} \newcommand{\F}{\p{\E}}\par For $\mu\in X$, let $\widetilde{\mathcal{V}}^\lambda_\mu$ be the quotient of $\mathscr{U}(\lambda,\mu)$ by the ideal generated by the identity morphisms of $uE_i\textnormal{\fontfamily{pbk}\selectfont1}_\lambda$ for all $i\in I$ and $u\in\mathscr{U}(\lambda+i,\mu)$. Since this ideal is stable under left multiplication by 1-morphisms, we have a 2-functor $\widetilde{\mathcal{V}}^\lambda\colon\mathscr{U}\to\mathrm{Cat}_\Bbbk$ and the 2-natural transformation $\mathscr{U}(\lambda,-)\to\widetilde{\mathcal{V}}^\lambda$.\par\begin{lemma}\thlabel{mA} If $E_iP=0$ for all $i\in I$, then $\widetilde{\phi}$ factors through $\phi\colon\big(\kern-.15em\operatorname{End}_q(P)\otimes_R\widetilde{\mathcal{V}}^\lambda|_{q=1}\big)_0\to\mathcal{K}$, and each $\phi_\mu$ is fully faithful.\end{lemma}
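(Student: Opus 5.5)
The plan has two parts. First I show that $\widetilde{\phi}$ kills the ideal defining $\widetilde{\mathcal{V}}^\lambda_\mu$. Then I prove full faithfulness by reducing every Hom-space, via adjunction and the commutator relations, to Hom-spaces out of $\textnormal{\fontfamily{pbk}\selectfont1}_\lambda$.

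\emph{Factorization.} The ideal defining $\widetilde{\mathcal{V}}^\lambda_\mu$ is generated by the identities of $uE_i\textnormal{\fontfamily{pbk}\selectfont1}_\lambda$. Its image under $\phi_\mu$ is the identity of $uE_iP$, which is zero because $E_iP=0$. Hence $\widetilde{\phi}_\mu$ descends to $\phi_\mu$. Since $\widetilde{\mathcal{V}}^\lambda$ is a quotient 2-functor, the components at 1-morphisms, given by $\mathcal{K}^2$, descend as well.

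\emph{Full faithfulness.} By strictification I may assume $\mathcal{K}$ is a 2-functor. Take $u\in\mathscr{U}(\lambda,\mu)$ and $v\in\mathscr{U}(\lambda,\mu)$. Using the biadjunction $\rho(\overline{u})\dashv u$ in $\mathscr{U}$, with unit $\eta_u$ and counit $\varepsilon_u$, I get a commutative square of adjunction isomorphisms
\[\widetilde{\mathcal{V}}^\lambda|_{q=1}(u,v)\cong\widetilde{\mathcal{V}}^\lambda|_{q=1}(\textnormal{\fontfamily{pbk}\selectfont1}_\lambda,\rho(\overline{u})v),\qquad\operatorname{Hom}_q(uP,vP)\cong\operatorname{Hom}_q(P,\rho(\overline{u})vP).\]
The square commutes because $\phi$ is compatible with horizontal composition. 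I also need the left-hand isomorphism to descend to the quotient, i.e.\ that the ideal is preserved by left multiplication by 1-morphisms; this holds by construction. So I reduce to the case $u=\textnormal{\fontfamily{pbk}\selectfont1}_\lambda$ with $w=\rho(\overline{u})v\in\mathscr{U}(\lambda,\lambda)$ arbitrary, after tensoring with $\operatorname{End}_q(P)$ over $R$.

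Next, a word $w$ in $E$'s and $F$'s lies in $\dot{\mathscr{U}}(\lambda,\lambda)$. Using the commutator relations \eqref{j}, \eqref{k}, \eqref{l}, together with the categorified relation \eqref{u}, I decompose $w$ in $\dot{\mathscr{U}}$ into a direct sum of terms $F\cdots F\,E\cdots E\textnormal{\fontfamily{pbk}\selectfont1}_\lambda$ ($F$'s to the left) with grading shifts. The point to settle is whether such a decomposition exists when $\langle i,\lambda\rangle$ has the wrong sign. If the iso only holds in one direction, I would instead use the split exact sequences from \eqref{k}: when $\langle i,\lambda\rangle<0$, $E_iF_i\textnormal{\fontfamily{pbk}\selectfont1}_\lambda\oplus(\ldots)\cong F_iE_i\textnormal{\fontfamily{pbk}\selectfont1}_\lambda$. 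Then I show that $w$ is isomorphic, modulo summands ending in $E_i\textnormal{\fontfamily{pbk}\selectfont1}_\lambda$, to a sum of shifts of $\textnormal{\fontfamily{pbk}\selectfont1}_\lambda$ and of words beginning with $F$. Summands ending in $E_i$ vanish on both sides: in $\widetilde{\mathcal{V}}^\lambda$ by definition, and on $P$ by hypothesis. For terms $F_j w'$ in weight $\lambda$, I apply adjunction again to move $F_j$ to the source, giving $\operatorname{Hom}(E_j\textnormal{\fontfamily{pbk}\selectfont1}_\lambda\cdot,\,\ldots)$, which is zero on both sides. An induction on word length therefore leaves only $\operatorname{Hom}(\textnormal{\fontfamily{pbk}\selectfont1}_\lambda,q^d\textnormal{\fontfamily{pbk}\selectfont1}_\lambda)$.

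\emph{Base case.} For $u=w=\textnormal{\fontfamily{pbk}\selectfont1}_\lambda$, the source is $\operatorname{End}_q(P)\otimes_R R=\operatorname{End}_q(P)$, provided the quotient does not affect $\operatorname{End}(\textnormal{\fontfamily{pbk}\selectfont1}_\lambda)$ beyond what $R$ sees. Whatever the quotient kills there factors through $E_i$ and hence also acts as zero on $P$, so it dies on both sides consistently. The target is $\operatorname{End}_q(P)$, and $\phi$ is the identity. Naturality of all reductions in the tensor factor $\operatorname{End}_q(P)$ then gives bijectivity in general.

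I expect the main obstacle to be the inductive reduction: making the commutator decompositions precise for both signs of $\langle i,\lambda\rangle$, and checking that they are compatible with the $\operatorname{End}_q(P)\otimes_R-$ base change, i.e.\ that bubbles act through $R$ consistently on both sides.
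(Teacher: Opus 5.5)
Your argument is correct and is essentially the standard proof behind the results the paper cites in place of a proof (Rouquier's Prop.~5.5, Webster's Cor.~3.27): kill the violating ideal, use adjunction to reduce to $\operatorname{Hom}(\textnormal{\fontfamily{pbk}\selectfont1}_\lambda,w)$, commute $E$'s to the right so that only shifts of $\textnormal{\fontfamily{pbk}\selectfont1}_\lambda$ survive, and treat $\operatorname{End}(\textnormal{\fontfamily{pbk}\selectfont1}_\lambda)$ directly, since whatever is killed there also acts by zero on $P$. The sign issue you flag is harmless: when $\langle i,\nu\rangle<0$ you only get $E_iF_i\textnormal{\fontfamily{pbk}\selectfont1}_\nu$ as a direct summand of $F_iE_i\textnormal{\fontfamily{pbk}\selectfont1}_\nu$, so $w$ is only a direct summand of (not isomorphic to) a sum of words that are shorter or have fewer $E$-before-$F$ pairs, and this suffices because bijectivity of $\phi$ on $\operatorname{Hom}(\textnormal{\fontfamily{pbk}\selectfont1}_\lambda,-)$ is preserved under finite direct sums and direct summands.
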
\par\begin{proof}It is a straightforward graded analogue of \cite[Prop.~5.5]{F} or \cite[Cor.~3.27]{K}.\end{proof}\par For a pseudofunctor $\mathcal{K}\colon\mathscr{U}\to\mathrm{Cat}_\Bbbk$, define a pseudofunctor $\mathcal{K}^\wedge$ by \begin{equation}\label{nA}\begin{tikzcd}\mathcal{K}^{\wedge}\coloneqq\wedge^{\operatorname{coop}}\circ\mathcal{K}^{\operatorname{coop}}\circ\rho^{\operatorname{co}}\circ\mathop{-}\nolimits\colon\mathscr{U}\arrow[r,"\rho^{\operatorname{co}}\circ\y"]&\mathscr{U}^{\operatorname{coop}}\arrow[r,"{\mathcal{K}^{\operatorname{coop}}}"]&\mathrm{Cat}_\Bbbk^{\operatorname{coop}}\arrow[r,"{\wedge^{\operatorname{coop}}}"]&\mathrm{Ab}_\Bbbk.\end{tikzcd}\end{equation}\par\begin{remark}\thlabel{oA} Let $n\colon X\to\mathbb{Z}$ be a function satisfying $n(\mu+i)-n(\mu)=\frac{i\cdot i}{2}(1+\langle i,\mu\rangle)$ for all $\mu\in X$ and $i\in I$. Such $n$ always exists and is determined by its value at a single point for each $\mathbb{Z}[I]$-orbit of $X$. There exists a 2-natural isomorphism $\psi\colon\operatorname{mod}\!\operatorname{-}\hspace{-.8mm}\mathcal{K}\to\mathcal{K}^\wedge$ given by $\psi_\mu(M)=M\circ q^{n(\mu)}\textnormal{\fontfamily{pbk}\selectfont1}_\mu$ for $\mu\in X$ and $M\in\operatorname{mod}\!\operatorname{-}\hspace{-.8mm}\mathcal{K}_\mu$. \end{remark}\par Let $\mathcal{C}\colon\mathscr{U}\to\mathrm{Ab}_\Bbbk$ be a pseudofunctor, such that each $\mathcal{C}_\mu$ is a semiperfect Grothendieck category. Let $\mathcal{C}_{\mu}\operatorname{\!-\!}\operatorname{fgproj}$ denote the full subcategory of finitely generated projective objects in $\mathcal{C}_{\mu}$.\par\begin{lemma}\thlabel{pA} The action of $\mathscr{U}$ restricts to $\{\mathcal{C}_{\mu}\operatorname{\!-\!}\operatorname{fgproj}\}_{\mu\in X}$. Let $\mathcal{C}\operatorname{\!-\!}\operatorname{fgproj}$ be the resulting pseudofunctor. Then we have an equivalence of 2-representations $\mathcal{C}\simeq(\mathcal{C}\operatorname{\!-\!}\operatorname{fgproj})^{\wedge}$.\end{lemma}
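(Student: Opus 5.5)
We have to prove two things: the action of $\mathscr{U}$ preserves finitely generated projectives, and $\mathcal{C}$ is equivalent to $(\mathcal{C}\operatorname{\!-\!}\operatorname{fgproj})^\wedge$. We reduce to the case where $\mathcal{V}=\mathcal{C}$ is a 2-functor, using the strictification remark in \S\ref{P}.

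\textbf{Step 1: preservation of finitely generated projectives.} Each functor $\mathcal{C}(u)$, for a 1-morphism $u\in\mathscr{U}(\mu,\nu)$, has an exact right adjoint $\mathcal{C}(\rho(\overline{u}))$ up to grading shift, by the adjunction $\rho(\overline{u})\dashv u$ and its mirror. Hence $\operatorname{Hom}(uP,-)\cong\operatorname{Hom}(P,\rho(\overline u)-)$ is exact for projective $P$, so $uP$ is projective. Since $\mathcal{C}(\rho(\overline u))$ is also a left adjoint, it commutes with filtered colimits. Therefore $\operatorname{Hom}(uP,-)$ commutes with filtered colimits of monomorphisms, which gives finite generation of $uP$. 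The action on 2-morphisms restricts automatically, because $\mathcal{C}\operatorname{\!-\!}\operatorname{fgproj}$ is a full subcategory. The same reasoning shows the action preserves direct summands.

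\textbf{Step 2: the comparison functors.} For each $\mu$, define the restricted Yoneda functor
\[
Y_\mu\colon\mathcal{C}_\mu\to\operatorname{mod}\!\operatorname{-}\hspace{-.8mm}(\mathcal{C}_\mu\operatorname{\!-\!}\operatorname{fgproj}),\qquad M\mapsto\operatorname{Hom}(-,M).
\]
Since $\mathcal{C}_\mu$ is a semiperfect Grothendieck category, the $P(b)$ form a generating set of finitely generated projectives. The Gabriel--Mitchell / Freyd theorem then says that $Y_\mu$ is an equivalence. One checks that $\mathcal{C}_\mu\operatorname{\!-\!}\operatorname{fgproj}$ is essentially small and Krull--Schmidt: its objects are finite sums of the $P(b)$, which have local endomorphism rings.

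\textbf{Step 3: compatibility with the $\mathscr{U}$-action.} We must show the $Y_\mu$ assemble into a strong transformation $\mathcal{C}\to(\mathcal{C}\operatorname{\!-\!}\operatorname{fgproj})^\wedge$. By \eqref{nA}, the target sends $u\in\mathscr{U}(\mu,\nu)$ to precomposition with $\mathcal{C}(\rho(\overline{u}))$ restricted to projectives. For $M\in\mathcal{C}_\mu$ and $Q\in\mathcal{C}_\nu\operatorname{\!-\!}\operatorname{fgproj}$, the adjunction $\rho(\overline u)\dashv u$ gives the natural isomorphism
\[
\operatorname{Hom}(Q,uM)\cong\operatorname{Hom}(\rho(\overline u)Q,M).
\]
This isomorphism, natural in $Q$ and $M$, is the component $\alpha_u$. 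We take units and counits to be $\eta_u,\varepsilon_u$, or the images of the orientation-reversed caps and cups under the bar-involution, so that the grading shifts in $\rho$ match those in \eqref{v}.

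\textbf{Main obstacle: coherence of the $\alpha_u$.} We need $\alpha_u$ to be natural in 2-morphisms $f\colon u\to u'$, which is the statement that the mate of $f$ under these adjunctions is $\rho(\overline{f})$. We also need compatibility with composition, namely $\alpha_{u_2u_1}=\alpha_{u_2}\circ\alpha_{u_1}$.

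For composition, we use that $\varepsilon_{u_2u_1},\eta_{u_2u_1}$ are defined by nesting, so the composite adjunction is the nested one.

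For naturality, we check the mate identity on generating 2-morphisms $x,\tau,\varepsilon,\eta,\dots$. Taking the mate amounts to rotating the diagram with caps and cups. Comparing with $\rho\circ\overline{\ \cdot\ }$, which reflects across both axes and preserves orientation, reduces the identity to the cyclicity relations \eqref{e} and \eqref{f}.

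An alternative I would try if the sign bookkeeping on crossings is troublesome is to use the isomorphism of \thref{oA} with $\operatorname{mod}\!\operatorname{-}\hspace{-.8mm}$ defined via $\tau$. There the mate is a $180^\circ$ rotation, which is exactly cyclicity.

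Once $Y$ is a strong transformation whose components are equivalences, it is an equivalence in $\mathrm{Bicat}_\Bbbk(\mathscr{U},\mathrm{Ab}_\Bbbk)$ by the standard fact that strong transformations with equivalence components are equivalences.
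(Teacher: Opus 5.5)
Your proposal is correct and takes essentially the same route as the paper: you show that finitely generated projectives are preserved because the functors have iterated adjoints, and you take the restricted Yoneda functors as the components, which are equivalences by the Gabriel--Mitchell-type description of semiperfect Grothendieck categories. Likewise, your $\mathcal{Y}_u$ comes from the adjunction $\rho(\overline{u})\dashv u$ with unit $\eta_u$, and you finish by object-wise invertibility of strong transformations. The only difference is that the paper omits the coherence verification, which you sketch correctly by identifying mates with $\rho(\overline{\,\cdot\,})$ through the cyclicity relations and the nesting of $\varepsilon_u,\eta_u$.
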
 \begin{proof}The functors $E_i$ and $F_i$ have right adjoints which themselves have right adjoints. It formally follows that they preserve the projective (resp. finitely generated) objects of $\mathcal{C}_\mu$.\par We construct a strong transformation $\mathcal{Y}\colon\mathcal{C}\to(\mathcal{C}\operatorname{\!-\!}\operatorname{fgproj})^{\wedge}$. For an object $\mu\in\mathscr{U}$, $\mathcal{Y}_\mu\colon\mathcal{C}_\mu\to\operatorname{mod}\!\operatorname{-}\hspace{-.8mm}(\mathcal{C}_\mu\operatorname{\!-\!}\operatorname{fgproj})$ is the Yoneda embedding; For a 1-morphism $u\in\mathscr{U}(\mu,\nu)$ and an object $M\in\mathcal{C}_\mu$, let $\mathcal{Y}_u(M)\colon u\mathcal{C}_\mu(-,M)\to\mathcal{C}_\nu(-,uM)$ be the natural transformation whose component at $P\in\mathcal{C}_\nu\operatorname{\!-\!}\operatorname{fgproj}$ is the map $(\rho(\overline{u})P,M)\to(P,uM)$, $f\mapsto u(f)\circ\eta_u(P)$ (see \S\ref{Y}). This defines a natural transformation $\mathcal{Y}_u$. We omit the verification of the coherence conditions for $\mathcal{Y}_\mu$ and $\mathcal{Y}_u$.\par Each $\mathcal{Y}_\mu$ is an equivalence of categories, since $\mathcal{C}_\mu$ is a semiperfect Grothendieck category \cite[Thm.~A]{l}. This implies that $\mathcal{Y}$ is an equivalence, since the invertibility of a strong transformation can be checked object-wise \cite[Prop.~6.2.16]{S}.\end{proof}\par Let $P\in\mathcal{C}_\lambda\operatorname{\!-\!}\operatorname{fgproj}$ be such that $E_iP=0$ for all $i\in I$ and $\left\{\kern.07emuP\,\middle|\,u\in\mathscr{U}(\lambda,\mu)\kern.07em\right\}$ generates $\mathcal{C}_\mu$ for all $\mu\in X$.\par\begin{corollary}\thlabel{qA} We have an equivalence of 2-representations $\mathcal{C}\cong\mathcal{L}^\wedge$, where $\mathcal{L}\coloneqq\big(\kern-.15em\operatorname{End}_q(P)\otimes_R\widetilde{\mathcal{V}}^\lambda|_{q=1}\big)_0$.\end{corollary}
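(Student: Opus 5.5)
The plan is to combine \thref{pA} with \thref{mA}. By \thref{pA}, $\mathcal{C}\simeq(\mathcal{C}\operatorname{\!-\!}\operatorname{fgproj})^{\wedge}$. So it suffices to produce an equivalence of 2-representations between $\mathcal{L}$ and a suitable 2-subrepresentation of $\mathcal{C}\operatorname{\!-\!}\operatorname{fgproj}$. We then check that applying $(-)^\wedge$ to this subrepresentation does not change the result.

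First, apply \thref{mA} to $\mathcal{K}\coloneqq\mathcal{C}\operatorname{\!-\!}\operatorname{fgproj}$ and the given $P$. Since $E_iP=0$ for all $i$, we obtain a strong transformation $\phi\colon\mathcal{L}\to\mathcal{C}\operatorname{\!-\!}\operatorname{fgproj}$ with each $\phi_\mu$ fully faithful. Its essential image in degree $\mu$ is the full subcategory $\mathcal{P}_\mu$ spanned by the objects $q^d uP$ for $u\in\mathscr{U}(\lambda,\mu)$. These subcategories are stable under the $\mathscr{U}$-action, so they form a sub-2-representation $\mathcal{P}$, and $\phi\colon\mathcal{L}\to\mathcal{P}$ is an equivalence of 2-representations. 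Invertibility of a strong transformation can be checked objectwise \cite[Prop.~6.2.16]{S}, and each $\phi_\mu$ is fully faithful and essentially surjective onto $\mathcal{P}_\mu$. Since $\wedge$ is a 2-functor, it follows that $\mathcal{L}^\wedge\simeq\mathcal{P}^\wedge$.

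Second, I compare $\mathcal{P}^\wedge$ with $(\mathcal{C}\operatorname{\!-\!}\operatorname{fgproj})^\wedge$. Restriction along the inclusion $\mathcal{P}\hookrightarrow\mathcal{C}\operatorname{\!-\!}\operatorname{fgproj}$ gives a strong transformation $(\mathcal{C}\operatorname{\!-\!}\operatorname{fgproj})^\wedge\to\mathcal{P}^\wedge$; compatibility with the $\mathscr{U}$-action is immediate, since the action on modules is by precomposition with $\rho(\overline{u})$, which preserves $\mathcal{P}$. It remains to see that each component $\operatorname{mod}\!\operatorname{-}\hspace{-.8mm}(\mathcal{C}_\mu\operatorname{\!-\!}\operatorname{fgproj})\to\operatorname{mod}\!\operatorname{-}\hspace{-.8mm}\mathcal{P}_\mu$ is an equivalence.

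By hypothesis, the objects $uP$ generate $\mathcal{C}_\mu$. Hence every finitely generated projective is a quotient of a finite direct sum of shifts of objects $uP$, and by projectivity it is a direct summand of such a sum. So $\mathcal{C}_\mu\operatorname{\!-\!}\operatorname{fgproj}$ is contained in the additive Karoubi envelope $\dot{\mathcal{P}}_\mu$, and conversely every object of $\dot{\mathcal{P}}_\mu$ lies in $\mathcal{C}_\mu\operatorname{\!-\!}\operatorname{fgproj}$. Restriction of additive functors from an additive Karoubi envelope to the original category is an equivalence on module categories; this is the statement recalled in \S\ref{M} (cf.\ \cite[Prop.~6.5.9]{Y}), since $\Bbbk\operatorname{\!-\!}\operatorname{mod}$ is additive and idempotent complete. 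Hence restriction is an equivalence in each degree, and again by \cite[Prop.~6.2.16]{S} it is an equivalence of 2-representations.

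Composing, we get $\mathcal{C}\simeq(\mathcal{C}\operatorname{\!-\!}\operatorname{fgproj})^\wedge\simeq\mathcal{P}^\wedge\simeq\mathcal{L}^\wedge$. The main point requiring care is the first step: matching the graded structure of $\mathcal{L}$, which is built using $|_{q=1}$, the degree-zero part and base change along $R\to\operatorname{End}_q(P)$, with the honest graded category $\mathcal{P}_\mu$, so that full faithfulness from \thref{mA} really yields an equivalence onto $\mathcal{P}_\mu$ including degree shifts. Alongside this, the coherence data of the strong transformations must be tracked through $\wedge$. I would handle the latter by strictifying $\mathcal{C}$ first, as in the Remark of \S\ref{P}.
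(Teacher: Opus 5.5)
Your proposal is correct and is essentially the paper's argument. The paper likewise takes $\phi\colon\mathcal{L}\to\mathcal{C}\operatorname{\!-\!}\operatorname{fgproj}$ from \thref{mA}, uses the generation hypothesis on $P$ to see that restriction along each $\phi_\mu$ is an equivalence of module categories, whiskers to get $(\mathcal{C}\operatorname{\!-\!}\operatorname{fgproj})^\wedge\simeq\mathcal{L}^\wedge$, and concludes with \thref{pA}. Your factorization through the essential image $\mathcal{P}$ and the additive Karoubi envelope argument just spells out the step the paper compresses into ``by the assumption on $P$''. The graded bookkeeping you flag as the delicate point is already covered by the full faithfulness statement of \thref{mA}, so no further care is needed there.
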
 \begin{proof}We have a strong transformation $\phi\colon\mathcal{L}\to\mathcal{C}\operatorname{\!-\!}\operatorname{fgproj}$ by \thref{mA}. By the assumption on $P$, each $\operatorname{mod}\!\operatorname{-}\hspace{-.8mm}(\mathcal{C}_\mu\operatorname{\!-\!}\operatorname{fgproj})\to\operatorname{mod}\!\operatorname{-}\hspace{-.8mm}\mathcal{L}_\mu$ is essentially surjective, and hence is an equivalence of categories. Therefore, $\wedge^{\operatorname{coop}}\mathbin{\smallstar}\phi^{\operatorname{coop}}\mathbin{\smallstar}\rho^{\operatorname{co}}\mathbin{\smallstar}\mathop{-}\nolimits\colon(\mathcal{C}\operatorname{\!-\!}\operatorname{fgproj})^\wedge\to\mathcal{L}^\wedge$ is an equivalence of 2-representations. The desired equivalence follows from \thref{pA}.\end{proof}\par Let ${\vphantom{\mathcal{R}}\smash{\widetilde{\mathcal{R}}}}^\lambda_\mu$ be the full subcategory of $\widetilde{\mathcal{V}}^\lambda_\mu$ consisting of 1-morphisms $q^dF_{i_l}\cdots F_{i_1}\textnormal{\fontfamily{pbk}\selectfont1}_\lambda$ for $i_1,\ldots,i_l\in I$. This is the `universal categorification' of \cite{K}: ${\vphantom{\mathcal{R}}\smash{\widetilde{\mathcal{R}}}}^\lambda_\mu\hookrightarrow\widetilde{\mathcal{V}}^\lambda_\mu$ induces an equivalence of categories after passing to additive Karoubi envelopes \cite[Prop.~3.25]{K}, ${\vphantom{\mathcal{R}}\smash{\widetilde{\mathcal{R}}}}^\lambda_\mu|_{q=1}$ is a free $\operatorname{End}_q(\textnormal{\fontfamily{pbk}\selectfont1}_\lambda)$-module, and satisfies ${\vphantom{\mathcal{R}}\smash{\widetilde{\mathcal{R}}}}^\lambda_\mu|_{q=1}\otimes_R\Bbbk\cong\mathcal{R}^\lambda_\mu|_{q=1}$ \cite[Cor.~3.26]{K}.\par\begin{lemma}\thlabel{rA} Suppose that $M\coloneqq\operatorname{hd}P$ is simple and integrable with respect to $\mathcal{C}$. Then we have a strict embedding $L\colon B(\lambda)\hookrightarrow\mathbf{B}(\mathcal{C})$ of crystals with $L(u_\lambda)=M$, and every object of $\mathbf{B}(\mathcal{C}_\mu)$ is isomorphic to $q^nL(b)$ for a unique $b\in B(\lambda)_\mu$ and some $n\in\mathbb{Z}$.\end{lemma}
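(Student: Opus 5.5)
The plan is to reduce to the cyclotomic quiver Hecke algebras $\mathcal{R}^\lambda_\mu$ and invoke the Lauda--Vazirani identification of their crystal with $B(\lambda)$ \cite{E}.

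\emph{Step 1 (model).} By \thref{qA} we have an equivalence $\mathcal{C}\simeq\mathcal{L}^\wedge$ with $\mathcal{L}=\big(\operatorname{End}_q(P)\otimes_R\widetilde{\mathcal{V}}^\lambda|_{q=1}\big)_0$. By \cite[Prop.~3.25]{K}, after passing to additive Karoubi envelopes we may replace $\widetilde{\mathcal{V}}^\lambda_\mu$ by ${\vphantom{\mathcal{R}}\smash{\widetilde{\mathcal{R}}}}^\lambda_\mu$. Hence every simple object of $\mathcal{C}_\mu$ is the head of a summand of some $uP$ with $u=F_{i_l}\cdots F_{i_1}\textnormal{\fontfamily{pbk}\selectfont1}_\lambda$. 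In particular:
\begin{itemize}
\item the weights of $\mathcal{C}$ lie in $\lambda-\mathbb{N}[I]$;
\item $\mathcal{C}_\lambda$ is governed by $\operatorname{End}_q(P)$, so since $\operatorname{hd}P=M$ is simple, $\mathbf{B}(\mathcal{C}_\lambda)=\{q^nM\}$.
\end{itemize}

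\emph{Step 2 (integrability of all simples).} Let $L'\in\mathbf{B}(\mathcal{C}_\mu)$. I would show that $L'$ is a composition factor of $uM$ for a suitable $F$-word $u$. Once this holds, \thref{>} gives that $L'$ is integrable, since $M$ is integrable and $\mathcal{C}^{\operatorname{int}}$ is a Serre subcategory stable under the $\mathscr{U}$-action.

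The argument: $\mathcal{C}_\mu(uP,L')\ne0$, and $uP\to uM$ is surjective with kernel $u(\operatorname{rad}P)$. The radical of $P$ is generated by the image of $\operatorname{rad}\operatorname{End}_q(P)$ together with positive-degree elements of $R$. By \thref{*}, the latter act nilpotently on $M$. I would like to conclude that $u(\operatorname{rad}P)$ cannot surject onto $L'$; I am not yet sure this inference holds as stated.

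\emph{Step 3 (connectivity).} Apply \thref{@} to obtain a seminormal crystal on $\mathbf{B}(\mathcal{C})$. If $\mu\ne\lambda$, then $L'$ is a quotient of $F_iu'P$ for some $i$ and some $F$-word $u'$. By adjunction $E_iL'\ne0$, so $\widetilde{e}_iL'\ne0$. Since the weights are bounded above by $\lambda$, iterating reaches $\mathcal{C}_\lambda$, that is, some $q^nM$. Thus $\mathbf{B}(\mathcal{C})$ modulo shifts is a connected seminormal crystal with unique highest weight element $M$. Moreover $E_iM=0$ for all $i$, because $E_iP=0$.

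\emph{Step 4 (identification with $B(\lambda)$).} Connectedness alone does not pin down the crystal, so I would compare with the cyclotomic case. Using ${\vphantom{\mathcal{R}}\smash{\widetilde{\mathcal{R}}}}^\lambda_\mu|_{q=1}\otimes_R\Bbbk\cong\mathcal{R}^\lambda_\mu|_{q=1}$ (\cite[Cor.~3.26]{K}), together with the fact that positive-degree elements of $R$ and the radical of $\operatorname{End}_q(P)$ act by zero on simples (\cite[Prop.~9]{k}), the simple $\mathcal{L}_\mu$-modules are the simple modules over $D\otimes\mathcal{R}^\lambda_\mu$, where $D\coloneqq\operatorname{End}_q(P)/\operatorname{rad}$ is a graded division ring. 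I would then check that this bijection intertwines $E_i,F_i$ and hence the operators $\widetilde{e}_i,\widetilde{f}_i$ of \thref{@}, with the heads and socles of \thref{8} matching. This transports the Lauda--Vazirani isomorphism $\mathbf{B}(\mathcal{R}^\lambda)\cong B(\lambda)$ \cite{E}. Defining $L(b)$ accordingly gives a strict embedding with $L(u_\lambda)=M$, and uniqueness of $b$ follows from $q^dL(b)\not\cong L(b')$ for $b\ne b'$.

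\emph{Expected obstacles.} The main obstacle is Step 4, the base change from $D\otimes_R(-)$ to $\mathcal{R}^\lambda$ when $D$ is a possibly noncommutative graded division ring rather than $\Bbbk$. I would first try to show that the $\mathscr{U}$-action commutes with $D\otimes-$, so that the crystal is insensitive to $D$. If that fails, I would instead reprove the Lauda--Vazirani argument directly from \thref{8}, using the character of $K_0$ equal to that of $V(\lambda)$ and Kashiwara's uniqueness of highest weight crystals with that character. Step 2 is the second delicate point, since the argument sketched there is not yet complete.
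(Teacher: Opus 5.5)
The overall strategy is the paper's own: view simples of $\mathcal{C}_\mu$ as simple $\mathcal{L}_\mu$-modules, kill $\operatorname{rad}\operatorname{End}_q(P)$ and the positive-degree part of $R$, and import the Lauda--Vazirani crystal. But the two steps you leave open are where the proof lives, so as written the proposal does not establish the lemma. First, you cite \cite[Prop.~9]{k} for the vanishing of $I\coloneqq\big((\operatorname{rad}\operatorname{End}_q(P))\otimes_R\widetilde{\mathcal{R}}^\lambda_\mu|_{q=1}\big)_0$ on simple $\mathcal{L}_\mu$-modules. That result presupposes these morphisms lie in $\operatorname{rad}\mathcal{L}_\mu$, which has not been shown. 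The missing argument is graded Nakayama. For a simple $N$, $IN$ is a submodule. Since the Hom-spaces of $\widetilde{\mathcal{R}}^\lambda_\mu|_{q=1}$ are finitely generated $R$-modules and $N$ is cyclic, each $\bigoplus_{d}q^dN(x)$ is a finitely generated $\operatorname{End}_q(P)$-module. Hence $IN\ne N$, so $IN=0$. By \thref{*}, positive-degree elements of $R$ are nilpotent on $M$, hence zero in the graded division ring $\operatorname{End}_q(M)$. So the action factors through $\big(\operatorname{End}_q(M)\otimes_\Bbbk\mathcal{R}^\lambda_\mu|_{q=1}\big)_0\cong D\otimes_\Bbbk\mathcal{R}^\lambda_\mu|_{q^d=1}$. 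Here $D=\operatorname{End}(M)$ is the degree-zero division ring and $d$ is the minimal period of $q$ on $M$; tracking $d$ is necessary, since it is the source of the $q^n$ in the statement. The same argument closes your Step~2. The subobject $u(\operatorname{rad}P)$ is the sum of the images of $u(f)$ for $f\in\operatorname{rad}\operatorname{End}_q(P)$, and $u(f)$ corresponds to $f\otimes 1_u\in I$. So every $uP\twoheadrightarrow L'$ factors through $uM$, and $L'$ is integrable by \thref{>}.

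The more serious gap is Step~4: you give no mechanism for transporting the crystal along $\Bbbk\to D$. Your fallback also fails. In this generality there is no $K_0$ on which $E_i,F_i$ act; this is exactly what the paper's framework avoids. Moreover, a connected seminormal crystal is not determined by its character; Kashiwara's uniqueness results concern crystal bases of integrable modules, or need extra structure. The paper supplies three ideas you are missing.

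(a) Only the $\widetilde e_i$ need to be matched, since a seminormal crystal is determined by them. On $\mathcal{L}^\wedge$ the functors $E_i$ are precomposition with (a shift of) $F_i$. So they use only the KLR action of $\mathscr{U}^-$ on $\widetilde{\mathcal{R}}^\lambda$, which is compatible with all the base changes $R\to\operatorname{End}_q(P)\to\operatorname{End}_q(M)$ and $\Bbbk'\subset D$. Matching the $F_i$ as well, as you plan, would require transporting $E_i$ acting on objects, which exists on $\widetilde{\mathcal{R}}^\lambda$ only after Karoubi completion.

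(b) The possible noncommutativity of $D$ is handled by passing to the subfield $\Bbbk'\subset D$ generated by the image of $\Bbbk$. The finite-dimensional graded $\Bbbk'$-algebra assembled from $\Bbbk'\otimes_\Bbbk\mathcal{R}^\lambda_\mu$ has absolutely simple, gradable simple modules. Hence its semisimple quotient is a product of matrix algebras over $\Bbbk'$, and $D\otimes_{\Bbbk'}-$ is a bijection on simples.

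(c) To compare graded simples with the ungraded operators $\tilde e_i^\lambda$ of \cite{E}, one uses three facts. Forgetting the grading is surjective on simples. Each fiber contains a unique self-dual object for the duality induced by the bar-involution of $\widetilde{\mathcal{V}}^\lambda$ (\thref{YA}). Self-dual simples are stable under $\widetilde e_i$ (\thref{WA}, \thref{aA}).

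Together these give an $\widetilde e_i$-equivariant bijection with $B(\lambda)\times\mathbb{Z}$, hence $\mathbf{B}(\mathcal{C})\cong B(\lambda)\times\mathbb{Z}/d$. This yields both the strict embedding and the ``unique $b$, some $n$'' clause; your Step~3 is not needed.
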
\par\begin{proof}Let $N$ be a simple right $\mathcal{L}_\mu$-module. The two-sided ideal $I\coloneqq\big((\operatorname{rad}\operatorname{End}_q(P))\otimes_R{\vphantom{\mathcal{R}}\smash{\widetilde{\mathcal{R}}}}^\lambda_\mu|_{q=1}\big)_0$ of $\mathcal{L}_\mu$ generates a $\mathcal{L}_\mu$-submodule $IN$ of $N$. Since the hom spaces of ${\vphantom{\mathcal{R}}\smash{\widetilde{\mathcal{R}}}}^\lambda_\mu|_{q=1}$ are finitely generated $R$-modules, $\bigoplus_{d\in\mathbb{Z}}q^dN(x)$ is a finitely generated $\operatorname{End}_q(P)$-module for each $x\in{\vphantom{\mathcal{R}}\smash{\widetilde{\mathcal{R}}}}^\lambda_\mu$. By Nakayama's lemma, $IN=0$. Note that $\operatorname{End}_q(P)/\operatorname{rad}\operatorname{End}_q(P)\cong\operatorname{End}_q(M)$ and the map $R\to\operatorname{End}_q(M)$ factors through $\Bbbk$ by \thref{sA}\ref{tA} and \thref{*}. Hence, the action of $\mathcal{L}_\mu$ on $N$ factors through $\big(\kern-.15em\operatorname{End}_q(M)\otimes_\Bbbk\mathcal{R}^\lambda_\mu|_{q=1}\big)_0$. Let $D\coloneqq\operatorname{End}(M)$ be a division ring, and $d$ be the minimal positive integer such that $q^dM\cong M$, with $d=0$ if no such integer exists. Then $\big(\kern-.15em\operatorname{End}_q(M)\otimes_\Bbbk\mathcal{R}^\lambda_\mu|_{q=1}\big)_0\cong D\otimes_\Bbbk\mathcal{R}^\lambda_\mu|_{q^d=1}$.\par Let $\Bbbk'$ be the field inside $D$ generated by the image of $\Bbbk$. Then $\bigoplus(\Bbbk'\otimes_\Bbbk\mathcal{R}^\lambda_\mu|_{q=1})(F_{i_l}\cdots\allowbreak F_{i_1}\textnormal{\fontfamily{pbk}\selectfont1}_\lambda,F_{j_l}\cdots F_{j_1}\textnormal{\fontfamily{pbk}\selectfont1}_\lambda)\allowbreak\eqqcolon R'_\mu$, where the summation runs over all $i_1,\ldots,i_l,j_1,\ldots,j_l\in I$ with $\mu=\lambda-\sum_{k=1}^li_k=\lambda-\sum_{k=1}^lj_k$, is a graded finite-dimensional $\Bbbk'$-algebra. As its simple modules are absolutely simple and have graded lifts, $R'_\mu/\operatorname{rad}R'_\mu$ is isomorphic to a product of graded matrix algebras over $\Bbbk'$. Thus, base change induces a bijection $\mathbf{B}(\operatorname{mod}\!\operatorname{-}\hspace{-.8mm}\Bbbk'\otimes_\Bbbk\mathcal{R}^\lambda_\mu|_{q^d=1})\to\mathbf{B}(\operatorname{mod}\!\operatorname{-}\hspace{-.8mm}D\otimes_\Bbbk\mathcal{R}^\lambda_\mu|_{q^d=1})$, and forgetting the grading induces a surjective map $\mathbf{B}\big(\kern-.15em\operatorname{mod}\!\operatorname{-}\hspace{-.8mm}\Bbbk'\otimes_\Bbbk\mathcal{R}^\lambda_\mu\big)\to\mathbf{B}(\operatorname{mod}\!\operatorname{-}\hspace{-.8mm}\Bbbk'\otimes_\Bbbk\mathcal{R}^\lambda_\mu|_{q^d=1})$.\par Let $\mathscr{U}^-$ be the 2-category with the set of objects $X$, 1-morphisms generated by $F_i\textnormal{\fontfamily{pbk}\selectfont1}_\lambda$'s and their degree shifts, and 2-morphisms generated by $x$ and $\tau$ of \thref{Q}, subject only to the KLR relations. Define $\mathscr{U}^+$ analogously, and let $\jmath^\pm\colon\mathscr{U}^\pm\to\mathscr{U}$ be the canonical 2-functor. For a pseudofunctor $\mathcal{K}\colon\mathscr{U}^-\to\mathrm{Cat}_\Bbbk$, we define $\operatorname{mod}\!\operatorname{-}\hspace{-.8mm}\mathcal{K}$ and $\mathcal{K}^\wedge\colon\mathscr{U}^+\to\mathrm{Ab}_\Bbbk$ by \eqref{GA} and \eqref{nA} restricted to $\mathscr{U}^+$. Clearly, we have 2-functors $\mathcal{R}^\lambda,{\vphantom{\mathcal{R}}\smash{\widetilde{\mathcal{R}}}}^\lambda\colon\mathscr{U}^-\to\mathrm{Cat}_\Bbbk$, and inclusions ${\vphantom{\mathcal{R}}\smash{\widetilde{\mathcal{R}}}}^\lambda_\mu\hookrightarrow\widetilde{\mathcal{V}}^\lambda_\mu$ assemble into a 2-natural transformation ${\vphantom{\mathcal{R}}\smash{\widetilde{\mathcal{R}}}}^\lambda\to\widetilde{\mathcal{V}}^\lambda\circ\jmath^-$. Hence, we obtain strong transformations $\mathcal{C}\circ\jmath^+\xrightarrow{\sim}\big(\kern-.15em\operatorname{End}_q(P)\otimes_\Bbbk\widetilde{\mathcal{V}}^\lambda|_{q^d=1}\big)_0^\wedge\circ\jmath^+\leftarrow\big(D\otimes_\Bbbk\mathcal{R}^\lambda|_{q^d=1}\big)_0^\wedge\leftarrow\big(\Bbbk'\otimes_\Bbbk\mathcal{R}^\lambda|_{q^d=1}\big)_0^\wedge$, whose components induce bijections $\mathbf{B}(\mathcal{C}_\mu)\leftarrow\mathbf{B}(\operatorname{mod}\!\operatorname{-}\hspace{-.8mm}\Bbbk'\otimes_\Bbbk\mathcal{R}^\lambda_\mu|_{q^d=1})$ that commute with the operators $\widetilde{e}_i$ defined by \eqref{^} using $\mathscr{U}^+$-actions.\par The bar-involution induces a duality on $\widetilde{\mathcal{V}}^\lambda$, and hence on $\operatorname{mod}\!\operatorname{-}\hspace{-.8mm}\widetilde{\mathcal{V}}^\lambda$ by \thref{YA}. The induced object-wise duality functors $\operatorname{mod}\!\operatorname{-}\hspace{-.8mm}\widetilde{\mathcal{V}}^\lambda_\mu\to(\operatorname{mod}\!\operatorname{-}\hspace{-.8mm}\widetilde{\mathcal{V}}^\lambda_\mu)^{\operatorname{op}}$ restrict to $\operatorname{mod}\!\operatorname{-}\hspace{-.8mm}\mathcal{R}^\lambda_\mu\to(\operatorname{mod}\!\operatorname{-}\hspace{-.8mm}\mathcal{R}^\lambda_\mu)^{\operatorname{op}}$. It follows from \thref{aA} that the self-dual objects of $\mathbf{B}(\operatorname{mod}\!\operatorname{-}\hspace{-.8mm}\Bbbk'\otimes_\Bbbk\mathcal{R}^\lambda)$ are stable under the crystal operators $\widetilde{e}_i$. The map $\bigsqcup\mathbf{B}(\operatorname{mod}\!\operatorname{-}\hspace{-.8mm}\Bbbk'\otimes_\Bbbk\mathcal{R}^\lambda_\mu)\to\bigsqcup_{\mu\in X}\mathbf{B}(\operatorname{mod}\!\operatorname{-}\hspace{-.8mm}\Bbbk'\otimes_\Bbbk R'_\mu)$ is surjective, each fiber has a unique self-dual object, and the crystal operator $\widetilde{e}_i$ on the source commutes with $\tilde{e}_i^\lambda$ on the domain given by \cite[(5.9)]{E}. Thus, we have a bijection $\mathbf{B}(\operatorname{mod}\!\operatorname{-}\hspace{-.8mm}\Bbbk'\otimes_\Bbbk\mathcal{R}^\lambda)\to B(\lambda)\times\mathbb{Z}$ that commutes with $\widetilde{e}_i$'s. From the existence of a 2-natural isomorphism $\operatorname{mod}\!\operatorname{-}\hspace{-.8mm}\Bbbk'\otimes_\Bbbk\mathcal{R}^\lambda\cong\big(\Bbbk'\otimes_\Bbbk\mathcal{R}^\lambda\big)^\wedge$ (cf.\ \thref{oA}), we have a bijection $\mathbf{B}\big((\Bbbk'\otimes_\Bbbk\mathcal{R}^\lambda)^\wedge\big)\to B(\lambda)\times\mathbb{Z}$, and hence $\mathbf{B}\big((\Bbbk'\otimes_\Bbbk\mathcal{R}^\lambda|_{q^d=1})^\wedge\big)\to B(\lambda)\times\mathbb{Z}/d$. Since seminormal crystals are completely determined by $\widetilde{e}_i$'s, we conclude $\mathbf{B}(\mathcal{C})\cong B(\lambda)\times\mathbb{Z}/d$ as crystals.\end{proof}\subsection{Axioms}\label{uA}\par Fix ${\underline{{\lambda}}}=(\lambda_1,\ldots,\lambda_m)\in(X^+)^m$ and ${\underline{{\varepsilon}}}=(\varepsilon_1,\ldots,\varepsilon_m)\in\{-,+\}^m$, and set ${{\underline{{\varepsilon}}}\underline{{\lambda}}}\coloneqq(\varepsilon_1\lambda_1,\ldots,\allowbreak\varepsilon_m\lambda_m)\allowbreak\in X^m$. We define a notion of a categorification of the tensor product of $\widetilde{U}$-modules $V(\varepsilon_i\lambda_i)$ that extends the tensor product categorification of \cite{M}.\par Define a partial order on $X$ by $\mu\ge\mu'\Leftrightarrow\mu-\mu'\in\mathbb{N}[I]$, and a partial order on $X^m$ by ${\underline{{\mu}}}=(\mu_1,\ldots,\mu_m)\ge{\underline{{\mu}}}'=(\mu_1',\ldots,\mu_m')$ if and only if \begin{equation}\label{vA}\sum_{i=k}^m\varepsilon_i\mu_i\ge\sum_{i=k}^m\varepsilon_i\mu_i'\quad\text{for all }k=1,\ldots,m.\end{equation} For $\mu\in X$, define an upper finite poset $\Xi_\mu$ by \begin{equation}\label{wA}\Xi_\mu\coloneqq\left\{\kern.07em{\underline{{\mu}}}=(\mu_1,\ldots,\mu_m)\in X^m\,\middle|\,|{\underline{{\mu}}}|=\mu,\ \varepsilon_i\mu_i\le\lambda_i\ \text{ for all }i\kern.07em\right\},\end{equation} where $|{\underline{{\mu}}}|\coloneqq\sum_{i=1}^m\mu_i$, with the partial order induced by that of $X^m$.\par Consider the Cartan datum $(\coprod_{j=1}^mI,\cdot)$ (${}_ji\cdot{}_{j'}i'=\delta_{jj'}i\cdot i'$), the weight datum $(X^m,\{\langle{}_ji,\cdot\rangle\})$ ($\langle{}_ji,{\underline{{\lambda}}}\rangle=\langle i,\lambda_j\rangle$), the quiver Hecke datum $Q_{{}_ji,{}_{j'}i'}=\delta_{jj'}Q_{i,i'}+(1-\delta_{jj'})$, and the bubble parameters $c_{{}_ji}({\underline{{\lambda}}})=c_i(\lambda_j)$. Here ${}_ji$ for $i\in I$ and $j=1,\ldots,m$ denotes the element of $\coprod_{j=1}^mI$ corresponding to $i$ in the $j$-th component. Set $\mathscr{U}(\mathfrak{g})\coloneqq\mathscr{U}$, and let $\mathscr{U}(\mathfrak{g}^{\oplus m})$ be the 2-category $\mathscr{U}$ associated with the above data, with 1-morphisms $E_{{}_ji}$ and $F_{{}_ji}$ denoted by ${}_jE_i$ and ${}_jF_i$. We also write $\widetilde{U}(\mathfrak{g})$ and $\widetilde{U}(\mathfrak{g}^{\oplus m})$ for the algebra $\widetilde{U}$ with the respective data, and similarly for $\mathfrak{g}$- and $\mathfrak{g}^{\oplus m}$-crystals.\par\begin{definition}\thlabel{sA} A \emph{generalized tensor product categorification} of type ${{\underline{{\varepsilon}}}\underline{{\lambda}}}$ is a family $\{\mathcal{C}_\mu\}_{\mu\in X}$ of upper finite fully stratified categories with stratification functions $\varrho_\mu\colon\mathbf{B}_\mu\to\Xi_\mu$, equipped with \begin{enumerate}[(a)]\item an action of $\mathscr{U}(\mathfrak{g})$ on $\{\mathcal{C}_\mu\}_{\mu\in X}$, and \label{xA}\item an action of $\mathscr{U}(\mathfrak{g}^{\oplus m})$ on $\{\mathcal{C}_{\underline{{\mu}}}\}_{{\underline{{\mu}}}\in X^m}$, where $\mathcal{C}_{\underline{{\mu}}}\coloneqq(\mathcal{C}_{|{\underline{{\mu}}}|})_{\underline{{\mu}}}=(\mathcal{C}_{|{\underline{{\mu}}}|})_{\leq{\underline{{\mu}}}}/(\mathcal{C}_{|{\underline{{\mu}}}|})_{<{\underline{{\mu}}}}$,\label{yA}\end{enumerate} satisfying the following conditions. Let $\mathbf{B}_{\underline{{\mu}}}\coloneqq\varrho_{|{\underline{{\mu}}}|}^{-1}({\underline{{\mu}}})$. \begin{enumerate}[(i)]\item All simple objects of $\mathcal{C}_\mu$ (resp. $\mathcal{C}_{\underline{{\mu}}}$) are integrable with respect to the action \ref{xA} (resp. \ref{yA}). \label{tA}\item There exists $b_0\in\mathbf{B}_{{{\underline{{\varepsilon}}}\underline{{\lambda}}}}$ such that $\left\{\kern.07emu\bar{P}(b_0)\,\middle|\,u\in\mathscr{U}(\mathfrak{g}^{\oplus m})({{\underline{{\varepsilon}}}\underline{{\lambda}}},{\underline{{\mu}}})\kern.07em\right\}$ generates $\mathcal{C}_{\underline{{\mu}}}$ for all ${\underline{{\mu}}}\in X^m$. \label{zA}\item For each $M\in\mathcal{C}_{\underline{{\mu}}}$, the object $E_i\Delta_{\underline{{\mu}}}(M)$ admits a filtration with subquotients $q_i^{c_j}\Delta_{{\underline{{\mu}}}+{}_ji}({}_jE_iM)$ for $j=1,\ldots,m$, where $c_j=\sum_{k,\varepsilon_kk>\varepsilon_jj}\langle i,\mu_k\rangle$. Similarly, $F_i\Delta_{\underline{{\mu}}}(M)$ admits a filtration with subquotients $q_i^{d_j}\Delta_{{\underline{{\mu}}}-{}_ji}({}_jF_iM)$ for $j=1,\ldots,m$, where $d_j=-\sum_{k,\varepsilon_kk<\varepsilon_jj}\langle i,\mu_k\rangle$. \label{!A}\end{enumerate}\end{definition}\par Let us denote the pseudofunctor from the action \ref{yA} by $\mathcal{C}^{\mathop{\textup{gr}}}$.\par\begin{remark}Let $\psi^{\underline{{\varepsilon}}}\colon\mathscr{U}(\mathfrak{g}^{\oplus m})\to\mathscr{U}(\mathfrak{g}^{\oplus m})$ be the 2-functor defined on objects by ${\underline{{\mu}}}\mapsto{\underline{{\varepsilon}}}{\underline{{\mu}}}\coloneqq(\varepsilon_1\mu_1,\ldots,\allowbreak\varepsilon_m\mu_m)$, on 1-morphisms by $q\mapsto q$ and ${}_jE^\pm_i\mapsto{}_jE^{\pm\varepsilon_j}_i$, and on 2-morphisms by reversing the orientation of all strands labeled by ${}_ji$ with $\varepsilon_j=-1$, and then multiplying by $(-1)^{\text{\# of ${}_ji$-${}_{j'}i'$ crossings with $\varepsilon_j=-1$ and $\varepsilon_{j'}=-1$}}$. \bytes
$\lambda\in X$ and $\mu\in X^+$. Its 2-morphisms are generated by those of $\mathscr{U}$, and by the following additional 2-morphisms: \begin{gather*}\tikz[baseline,scale=1.5]{\draw[preaction={draw, black, line width=2.2pt},draw=gray,line width=1.5pt,-](.2,.3)--(-.2,-.1)node[at end,below,text height=1.62ex,text depth=.5ex,inner ysep=2pt,scale=.8]{$\mu$};\draw[very thick,baseline,->](-.2,.3)--(.2,-.1)node[at end,below,text height=1.62ex,text depth=.5ex,inner ysep=2pt,scale=.8]{$i$};}\colon q_i^{\langle i,\mu\rangle}F_i\mathfrak{I}_{\mu}\to\mathfrak{I}_{\mu}F_i,\quad\tikz[baseline,scale=1.5]{\draw[very thick,baseline,->](.2,.3)--(-.2,-.1)node[at end,below,text height=1.62ex,text depth=.5ex,inner ysep=2pt,scale=.8]{$i$};\draw[preaction={draw, black, line width=2.2pt},draw=gray,line width=1.5pt,-](-.2,.3)--(.2,-.1)node[at end,below,text height=1.62ex,text depth=.5ex,inner ysep=2pt,scale=.8]{$\mu$};}\colon q_i^{\langle i,\mu\rangle}\mathfrak{I}_{\mu}F_i\to F_i\mathfrak{I}_{\mu}\label{8A}\\\tikz[baseline,scale=1.5]{\draw[preaction={draw, black, line width=2.2pt},draw=gray,line width=1.5pt,decorate,decoration={snake, amplitude=1pt, segment length=10pt},-](.2,.3)--(-.2,-.1)node[at end,below,text height=1.62ex,text depth=.5ex,inner ysep=2pt,scale=.8]{$-\mu$};\draw[very thick,baseline,<-](-.2,.3)--(.2,-.1)node[at end,below,text height=1.62ex,text depth=.5ex,inner ysep=2pt,scale=.8]{$i$};}\colon q_i^{\langle i,\mu\rangle}E_i\mathfrak{I}_{-\mu}\to\mathfrak{I}_{-\mu}E_i,\quad\tikz[baseline,scale=1.5]{\draw[very thick,baseline,<-](.2,.3)--(-.2,-.1)node[at end,below,text height=1.62ex,text depth=.5ex,inner ysep=2pt,scale=.8]{$i$};\draw[preaction={draw, black, line width=2.2pt},draw=gray,line width=1.5pt,decorate,decoration={snake, amplitude=1pt, segment length=10pt},-](-.2,.3)--(.2,-.1)node[at end,below,text height=1.62ex,text depth=.5ex,inner ysep=2pt,scale=.8]{$-\mu$};}\colon q_i^{\langle i,\mu\rangle}\mathfrak{I}_{-\mu}E_i\to E_i\mathfrak{I}_{-\mu}.\label{9A}\end{gather*} The identity 2-morphism of $\mathfrak{I}_{\mu}$ (resp.\ $\mathfrak{I}_{-\mu}$) is depicted as an unoriented thick (resp.\ wavy) strand labelled $\mu$ (resp.\ $-\mu$). 2-morphisms satisfy all the relations \eqref{c}--\eqref{l} of $\mathscr{U}$, plus the relations involving $\mathfrak{I}_{\mu}$ below. \begin{enumerate}\item KLR relations: \begin{equation}\label{:A}\begin{tikzpicture}[scale=.5,very thick,baseline,baseline=-2pt]\draw[preaction={draw, black, line width=2.2pt},draw=gray,line width=1.5pt](-4,0)+(-1,-1)--+(1,1)node[below,text height=1.62ex,text depth=.5ex,inner ysep=2pt,at start]{$\mu$};\draw[postaction={ decorate, decoration={ markings, mark=at position .2 with {{\arrow[scale=.8]{<}}} } }][postaction={ decorate, decoration={ markings, mark=at position .75 with {\fill(0,0)circle(2.5pt);} } }](-4,0)+(1,-1)--+(-1,1)node[below,text height=1.62ex,text depth=.5ex,inner ysep=2pt,at start]{$i$};\node at(-2,0){$=$};\draw[preaction={draw, black, line width=2.2pt},draw=gray,line width=1.5pt](0,0)+(-1,-1)--+(1,1)node[below,text height=1.62ex,text depth=.5ex,inner ysep=2pt,at start]{$\mu$};\draw[postaction={ decorate, decoration={ markings, mark=at position .8 with {{\arrow[scale=.8]{<}}} } }][postaction={ decorate, decoration={ markings, mark=at position .25 with {\fill(0,0)circle(2.5pt);} } }](0,0)+(1,-1)--+(-1,1)node[below,text height=1.62ex,text depth=.5ex,inner ysep=2pt,at start]{$i$};\end{tikzpicture}\ ,\quad\begin{tikzpicture}[very thick,baseline,scale=.7,baseline=-1.5pt]\draw[postaction={ decorate, decoration={ markings, mark=at position .5 with {{\arrow[scale=.8]{<}}} } }](-2.8,-1)..controls(-1.2,0)..(-2.8,1)node[below,text height=1.62ex,text depth=.5ex,inner ysep=2pt,at start]{$i$};\draw[preaction={draw, black, line width=2.2pt},draw=gray,line width=1.5pt](-1.2,-1)..controls(-2.8,0)..(-1.2,1)node[below,text height=1.62ex,text depth=.5ex,inner ysep=2pt,at start]{$\mu$};\node at(-.5,0){$=$};\draw[postaction={ decorate, decoration={ markings, mark=at position .75 with {{\arrow[scale=.8]{<}}} } }][postaction={ decorate, decoration={ markings, mark=at position .5 with {\fill(0,0)circle(2.5pt);\node[left,xshift=-5pt,scale=.8]at(0,0){$\langle i,\mu\rangle$};} } }](1.4,0)+(0,-1)--+(0,1)node[below,text height=1.62ex,text depth=.5ex,inner ysep=2pt,at start]{$i$};\draw[preaction={draw, black, line width=2.2pt},draw=gray,line width=1.5pt](2.2,0)+(0,-1)--+(0,1)node[below,text height=1.62ex,text depth=.5ex,inner ysep=2pt,at start]{$\mu$};\end{tikzpicture}\ ,\quad\begin{tikzpicture}[very thick,baseline,scale=.7,baseline=-4pt]\draw[postaction={ decorate, decoration={ markings, mark=at position .8 with {{\arrow[scale=.8]{<}}} } }](0,0)+(-1,-1)--+(1,1)node[below,text height=1.62ex,text depth=.5ex,inner ysep=2pt,at start]{$i$};\draw[preaction={draw, black, line width=2.2pt},draw=gray,line width=1.5pt](0,0)+(1,-1)--+(-1,1)node[below,text height=1.62ex,text depth=.5ex,inner ysep=2pt,at start]{$\mu$};\draw[postaction={ decorate, decoration={ markings, mark=at position .5 with {{\arrow[scale=.8]{<}}} } }](0,-1)..controls(-1,0)..(0,1)node[below,text height=1.62ex,text depth=.5ex,inner ysep=2pt,at start]{$j$};\end{tikzpicture}\ =\ \begin{tikzpicture}[very thick,baseline,scale=.7,baseline=-4pt]\draw[postaction={ decorate, decoration={ markings, mark=at position .2 with {{\arrow[scale=.8]{<}}} } }](0,0)+(-1,-1)--+(1,1)node[below,text height=1.62ex,text depth=.5ex,inner ysep=2pt,at start]{$i$};\draw[preaction={draw, black, line width=2.2pt},draw=gray,line width=1.5pt](0,0)+(1,-1)--+(-1,1)node[below,text height=1.62ex,text depth=.5ex,inner ysep=2pt,at start]{$\mu$};\draw[postaction={ decorate, decoration={ markings, mark=at position .5 with {{\arrow[scale=.8]{<}}} } }](0,-1)..controls(1,0)..(0,1)node[below,text height=1.62ex,text depth=.5ex,inner ysep=2pt,at start]{$j$};\end{tikzpicture}\end{equation} together with their mirror images obtained by flipping each diagram left and right, and \begin{equation}\label{;A}\begin{tikzpicture}[very thick,baseline,scale=.7,baseline=-4pt]\draw[postaction={ decorate, decoration={ markings, mark=at position .8 with {{\arrow[scale=.8]{<}}} } }](0,0)+(-1,-1)--+(1,1)node[below,text height=1.62ex,text depth=.5ex,inner ysep=2pt,at start]{$i$};\draw[postaction={ decorate, decoration={ markings, mark=at position .2 with {{\arrow[scale=.8]{<}}} } }](0,0)+(1,-1)--+(-1,1)node[below,text height=1.62ex,text depth=.5ex,inner ysep=2pt,at start]{$j$};\draw[preaction={draw, black, line width=2.2pt},draw=gray,line width=1.5pt](0,-1)..controls(-1,0)..(0,1)node[below,text height=1.62ex,text depth=.5ex,inner ysep=2pt,at start]{$\mu$};\end{tikzpicture}\ -\ \begin{tikzpicture}[very thick,baseline,scale=.7,baseline=-4pt]\draw[postaction={ decorate, decoration={ markings, mark=at position .2 with {{\arrow[scale=.8]{<}}} } }](0,0)+(-1,-1)--+(1,1)node[below,text height=1.62ex,text depth=.5ex,inner ysep=2pt,at start]{$i$};\draw[postaction={ decorate, decoration={ markings, mark=at position .8 with {{\arrow[scale=.8]{<}}} } }](0,0)+(1,-1)--+(-1,1)node[below,text height=1.62ex,text depth=.5ex,inner ysep=2pt,at start]{$j$};\draw[preaction={draw, black, line width=2.2pt},draw=gray,line width=1.5pt](0,-1)..controls(1,0)..(0,1)node[below,text height=1.62ex,text depth=.5ex,inner ysep=2pt,at start]{$\mu$};\end{tikzpicture}\ =\ \delta_{ij}\sum_{a+b=\langle i,\mu\rangle-1}\begin{tikzpicture}[very thick,baseline,scale=.7,baseline=-4pt]\draw[postaction={ decorate, decoration={ markings, mark=at position .75 with {{\arrow[scale=.8]{<}}} } }][postaction={ decorate, decoration={ markings, mark=at position .5 with {\fill(0,0)circle(2.5pt);\node[left,xshift=-5pt,scale=.8]at(0,0){$b$};} } }](0,0)+(1,-1)--+(1,1)node[below,text height=1.62ex,text depth=.5ex,inner ysep=2pt,at start]{$j$};\draw[postaction={ decorate, decoration={ markings, mark=at position .75 with {{\arrow[scale=.8]{<}}} } }][postaction={ decorate, decoration={ markings, mark=at position .5 with {\fill(0,0)circle(2.5pt);\node[left,xshift=-5pt,scale=.8]at(0,0){$a$};} } }](0,0)+(-1,-1)--+(-1,1)node[below,text height=1.62ex,text depth=.5ex,inner ysep=2pt,at start]{$i$};\draw[preaction={draw, black, line width=2.2pt},draw=gray,line width=1.5pt](0,0)+(0,-1)--+(0,1)node[below,text height=1.62ex,text depth=.5ex,inner ysep=2pt,at start]{$\mu$};\end{tikzpicture}\ .\end{equation}\item Interaction with upward strands: Define upward crossings as follows. \begin{equation}\tikz[baseline,scale=1.5]{\draw[very thick,baseline,<-](.2,.3)--(-.2,-.1)node[at end,below,text height=1.62ex,text depth=.5ex,inner ysep=2pt,scale=.8]{$i$};\draw[preaction={draw, black, line width=2.2pt},draw=gray,line width=1.5pt](-.2,.3)--(.2,-.1)node[at end,below,text height=1.62ex,text depth=.5ex,inner ysep=2pt,scale=.8]{$\mu$};}\ \coloneqq\ {\tikzstyle{every picture}=[very thick,baseline,scale=.3]\begin{tikzpicture}\node[inner sep=0mm](0)at(-1,1){};\node[inner sep=0mm](1)at(1,1){};\node[inner sep=0mm](2)at(-1,-1){};\node[inner sep=0mm](3)at(1,-1){};\node[inner sep=0mm](4)at(-3,1){};\node[inner sep=0mm](5)at(3,-1){};\node[inner sep=0mm](6)at(-1,-2.5){};\node[inner sep=0mm](7)at(-3,-2.5){};\node[inner sep=0mm](8)at(1,2.5){};\node[inner sep=0mm](9)at(3,2.5){};\draw[postaction={ decorate, decoration={ markings, mark=at position .6 with {{\arrow[scale=.8]{>}}} } }](7.center)to(4.center);\draw[postaction={ decorate, decoration={ markings, mark=at position .4 with {{\arrow[scale=.8]{>}}} } }](5.center)to(9.center);\draw[preaction={draw, black, line width=2.2pt},draw=gray,line width=1.5pt](6.center)to(2.center);\draw[preaction={draw, black, line width=2.2pt},draw=gray,line width=1.5pt](1.center)to(8.center);\draw[bend left=90,looseness=1.50](4.center)to(0.center);\draw[bend right=90,looseness=1.50](3.center)to(5.center);\draw[in=90,out=-90,looseness=0.75](0.center)to(3.center);\draw[in=-90,out=90,looseness=0.75,preaction={draw, black, line width=2.2pt},draw=gray,line width=1.5pt](2.center)to(1.center);\end{tikzpicture}}\ ,\qquad\tikz[baseline,scale=1.5]{\draw[preaction={draw, black, line width=2.2pt},draw=gray,line width=1.5pt](.2,.3)--(-.2,-.1)node[at end,below,text height=1.62ex,text depth=.5ex,inner ysep=2pt,scale=.8]{$\mu$};\draw[very thick,baseline,<-](-.2,.3)--(.2,-.1)node[at end,below,text height=1.62ex,text depth=.5ex,inner ysep=2pt,scale=.8]{$i$};}\ \coloneqq\ {\tikzstyle{every picture}=[very thick,baseline,scale=.3,xscale=-1]\begin{tikzpicture}\node[inner sep=0mm](0)at(-1,1){};\node[inner sep=0mm](1)at(1,1){};\node[inner sep=0mm](2)at(-1,-1){};\node[inner sep=0mm](3)at(1,-1){};\node[inner sep=0mm](4)at(-3,1){};\node[inner sep=0mm](5)at(3,-1){};\node[inner sep=0mm](6)at(-1,-2.5){};\node[inner sep=0mm](7)at(-3,-2.5){};\node[inner sep=0mm](8)at(1,2.5){};\node[inner sep=0mm](9)at(3,2.5){};\draw[postaction={ decorate, decoration={ markings, mark=at position .6 with {{\arrow[scale=.8]{>}}} } }](7.center)to(4.center);\draw[postaction={ decorate, decoration={ markings, mark=at position .4 with {{\arrow[scale=.8]{>}}} } }](5.center)to(9.center);\draw[preaction={draw, black, line width=2.2pt},draw=gray,line width=1.5pt](6.center)to(2.center);\draw[preaction={draw, black, line width=2.2pt},draw=gray,line width=1.5pt](1.center)to(8.center);\draw[bend left=90,looseness=1.50](4.center)to(0.center);\draw[bend right=90,looseness=1.50](3.center)to(5.center);\draw[in=90,out=-90,looseness=0.75](0.center)to(3.center);\draw[in=-90,out=90,looseness=0.75,preaction={draw, black, line width=2.2pt},draw=gray,line width=1.5pt](2.center)to(1.center);\end{tikzpicture}}.\label{<A}\end{equation} Then we have \begin{equation}\label{=A}\begin{tikzpicture}[very thick,baseline,scale=.7,baseline=-3pt]\draw[postaction={ decorate, decoration={ markings, mark=at position .5 with {{\arrow[scale=.8]{>}}} } }](-2.8,-1)..controls(-1.2,0)..(-2.8,1)node[below,text height=1.62ex,text depth=.5ex,inner ysep=2pt,at start]{$i$};\draw[preaction={draw, black, line width=2.2pt},draw=gray,line width=1.5pt](-1.2,-1)..controls(-2.8,0)..(-1.2,1)node[below,text height=1.62ex,text depth=.5ex,inner ysep=2pt,at start]{$\mu$};\end{tikzpicture}\ =\ \ \begin{tikzpicture}[very thick,baseline,scale=.7,baseline=-3pt]\draw[postaction={ decorate, decoration={ markings, mark=at position .5 with {{\arrow[scale=.8]{>}}} } }](0.8,0)+(0,-1)--+(0,1)node[below,text height=1.62ex,text depth=.5ex,inner ysep=2pt,at start]{$i$};\draw[preaction={draw, black, line width=2.2pt},draw=gray,line width=1.5pt](1.6,0)+(0,-1)--+(0,1)node[below,text height=1.62ex,text depth=.5ex,inner ysep=2pt,at start]{$\mu$};\end{tikzpicture}\ ,\quad\begin{tikzpicture}[very thick,baseline,scale=.7,baseline=-3pt]\draw[postaction={ decorate, decoration={ markings, mark=at position .8 with {{\arrow[scale=.8]{>}}} } }](0,0)+(-1,-1)--+(1,1)node[below,text height=1.62ex,text depth=.5ex,inner ysep=2pt,at start]{$i$};\draw[postaction={ decorate, decoration={ markings, mark=at position .25 with {{\arrow[scale=.8]{>}}} } }](0,0)+(1,-1)--+(-1,1)node[below,text height=1.62ex,text depth=.5ex,inner ysep=2pt,at start]{$j$};\draw[preaction={draw, black, line width=2.2pt},draw=gray,line width=1.5pt](0,-1)..controls(-1,0)..(0,1)node[below,text height=1.62ex,text depth=.5ex,inner ysep=2pt,at start]{$\mu$};\end{tikzpicture}\ =\ \begin{tikzpicture}[very thick,baseline,scale=.7,baseline=-3pt]\draw[postaction={ decorate, decoration={ markings, mark=at position .25 with {{\arrow[scale=.8]{>}}} } }](0,0)+(-1,-1)--+(1,1)node[below,text height=1.62ex,text depth=.5ex,inner ysep=2pt,at start]{$i$};\draw[postaction={ decorate, decoration={ markings, mark=at position .8 with {{\arrow[scale=.8]{>}}} } }](0,0)+(1,-1)--+(-1,1)node[below,text height=1.62ex,text depth=.5ex,inner ysep=2pt,at start]{$j$};\draw[preaction={draw, black, line width=2.2pt},draw=gray,line width=1.5pt](0,-1)..controls(1,0)..(0,1)node[below,text height=1.62ex,text depth=.5ex,inner ysep=2pt,at start]{$\mu$};\end{tikzpicture}\end{equation}\end{enumerate} 2-morphisms involving $\mathfrak{I}_{-\mu}$ satisfy wavy counterparts of \eqref{:A}\ensuremath{^{\prime}}--\eqref{=A}\ensuremath{^{\prime}}, obtained by rotating the corresponding diagrams by $180^\circ$ and replacing thick strands labelled $\mu$ with wavy strands labelled $-\mu$.\end{definition}\par As with $\mathscr{U}$, we write $\mathscr{T}(c)$ to emphasize the dependence of $\mathscr{T}$ on the choice of bubble parameters $c$.\par\begin{definition}\thlabel{>A} \leavevmode \begin{enumerate}[(a)]\item The Chevalley involution $\psi\colon\mathscr{T}(c)\to\mathscr{T}(c^\ast)$ is the 2-functor defined on objects by $\lambda\mapsto-\lambda$; on generating 1-morphisms by $q\mapsto q$, $E_i\mapsto F_i$, $F_i\mapsto E_i$, and $\mathfrak{I}_{\pm\mu}\mapsto\mathfrak{I}_{\mp\mu}$; and on 2-morphisms by reversing the orientation of all strands, changing thick strands labelled $\mu$ to wavy strands labelled $-\mu$ and vice versa, and multiplying by $(-1)^{\#\text{crossings}}$.\label{@A}\item The bar-involution $\mathop{-}\nolimits\colon\mathscr{T}(c)\to\mathscr{T}(c)^{\operatorname{co}}$ is the 2-functor defined on objects by $\lambda\mapsto\lambda$; on generating 1-morphisms by $q\mapsto q^{-1}$, $E_i\mapsto E_i$, $F_i\mapsto F_i$, and $\mathfrak{I}_{\pm\mu}\mapsto\mathfrak{I}_{\pm\mu}$; and on 2-morphisms by reflecting the diagram across the horizontal axis and reversing the orientation of all thin strands.\label{^A}\end{enumerate}\end{definition}\par The relations allow one to perform Reidemeister moves with an arbitrary combination of strands at the cost of introducing terms with strictly smaller number of crossings, so $\mathscr{T}(\mu,\nu)$ is locally Laurentian. Note that no additional terms appear in any type III move involving a thick strand and two upward strands.\par\begin{lemma}Let $\lambda,\mu\in X$ and $\nu\in\mathbb{N}[I]$. There exists a natural isomorphism $\alpha\colon(\mathfrak{I}_{\pm\mu}\textnormal{\fontfamily{pbk}\selectfont1}_{\lambda\pm\nu})_\ast\circ\iota^{\pm,\lambda}_\nu\to(\mathfrak{I}_{\pm\mu}\textnormal{\fontfamily{pbk}\selectfont1}_{\lambda})^\ast\circ\iota^{\pm,\lambda\pm\mu}_\nu$ of functors $\mathcal{R}^\pm(\nu)\to\mathscr{T}(\lambda,\lambda\pm\mu\pm\nu)$.\end{lemma}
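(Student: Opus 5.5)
The plan is to build $\alpha$ by ``dragging'' the thick strand (resp.\ wavy strand) across all $\nu$-colored strands, and then to check that this is natural and invertible.

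Take the $+$ case with $\mathfrak{I}_{\mu}$ first; the wavy case is parallel. The source functor sends $u=q^d i_1\otimes\cdots\otimes i_l$ to $\mathfrak{I}_{\mu}E_{i_l}\cdots E_{i_1}\textnormal{\fontfamily{pbk}\selectfont1}_\lambda$. The target functor sends it to $E_{i_l}\cdots E_{i_1}\mathfrak{I}_{\mu}\textnormal{\fontfamily{pbk}\selectfont1}_\lambda$, up to a grading shift. Define the component $\alpha_u$ as the 2-morphism in which the thick strand labelled $\mu$ crosses all $l$ upward strands, using the upward crossings of \eqref{<A}. Each elementary crossing shifts the degree by $q_i^{\langle i,\mu\rangle}$. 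These shifts add up to $q^{(\nu,\mu)}$ for a suitable pairing, which is absorbed into the chosen grading on the target.

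Naturality has to be checked only on the generating morphisms $x_{i_k}$ and $\tau_{i_{k+1}i_k}$ of $\mathcal{R}^+(\nu)$. For dots, this means sliding a dot through an upward--thick crossing. For a crossing, it means a type III move with one thick strand and two upward strands. The text notes that no correction terms appear in such a type III move, and the second relation of \eqref{=A} expresses exactly this. The dot-slide for upward crossings follows from the dot-slide in \eqref{:A}, combined with the definition \eqref{<A} as a rotation of the downward crossings and with cyclicity \eqref{e}.

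The main obstacle is invertibility. The first relation of \eqref{=A} says that the upward crossing, composed with the opposite one, is the identity in one order. I would also need the other composite, so that each elementary upward--thick crossing is an isomorphism rather than only split. My first attempt is to obtain the other order by rotating that relation. This rotation uses cyclicity, the definitions in \eqref{<A}, and the mirror-image version of \eqref{=A}, which the definition includes. If that works, $\alpha_u$ is a composite of isomorphisms with inverse given by the opposite crossings. Naturality of the inverse then follows automatically.

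If the rotation argument fails, I would instead use the degree shifts together with the local Laurentian property. The degree shifts show that both composites are endomorphisms of degree $0$. Modulo diagrams with fewer crossings, these degree-$0$ endomorphisms are the identity. Induction on the number of crossings would then give invertibility.

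For the $-$ case, apply the Chevalley involution $\psi$ of \thref{>A}. It exchanges $E_i$ with $F_i$ and thick strands with wavy strands, and it carries $\iota^{+}$ to $\iota^{-}$ up to the sign twist on $\tau$. Transporting the construction along $\psi$ then yields $\alpha$ in this case as well.
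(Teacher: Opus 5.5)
Your outline matches the paper's proof. You take $\alpha_u$ to be the thick strand crossing all the upward strands via \eqref{<A}, you check naturality by sliding dots and crossings through the thick strand, and you use the reflected diagram as the inverse. Three of your justifications need correcting.

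\emph{Grading.} There is no shift: each upward crossing of \eqref{<A} has degree $0$. Take $\mathfrak{I}_\mu E_i1_\lambda\to E_i\mathfrak{I}_\mu1_\lambda$. The cup sits in the region $\lambda+\mu+i$ and the cap in the region $\lambda$. The $q_i$-exponents contributed by the cup, the downward crossing and the cap are $-1-\langle i,\lambda+\mu\rangle$, $\langle i,\mu\rangle$ and $1+\langle i,\lambda\rangle$, which sum to $0$. This matters: both functors in the statement are fixed, so a nonzero shift could not be ``absorbed''.

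\emph{Sliding a crossing.} The second relation of \eqref{=A} is not literally the move you need. In that relation the thick strand is the middle strand of the type III move: it begins and ends between the two thin strands. Sliding an $E$--$E$ crossing through $\alpha_u$ is instead the type III move whose middle strand is thin. Deducing the latter from the former requires the upward--thick bigon relation in both orders. The paper avoids this by taking mates, via cyclicity, of the third relation of \eqref{:A}. That relation is exactly the downward version of the move with a thin middle strand, and it has no correction term, so no bigon relation is needed.

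\emph{Invertibility.} The first relation of \eqref{=A} gives only one composite, namely the identity on $E_i\mathfrak{I}_\mu$. The other composite is precisely the mirror image of that relation. The paper's one-line appeal to \eqref{=A} implicitly needs this too. If the mirror relation is available, as you assume, you are done and no rotation is needed.

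If it were not available, rotation could not produce it. Thick strands have no caps or cups, so cyclicity only bends thin strands around the outer edges of a diagram. It therefore cannot turn a relation whose boundary has the thin strand to the left of the thick strand into one with it on the right.

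Your fallback also fails. Saying the composite is the identity ``modulo fewer crossings'' already presupposes this Reidemeister II relation. Moreover, degree-$0$ correction terms need not be nilpotent: local Laurentianness only forces endomorphisms of large positive degree to vanish.

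Your handling of the $-$ case, transporting along $\psi$ combined with the isomorphism $\mathcal{R}^+\cong\mathcal{R}^-$ given by $\tau\mapsto-\tau$, is a fine substitute for the paper's ``similarly''.
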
 \begin{proof}We only prove the case of $+$. Define the component of $\alpha$ at $i_1\otimes\cdots\otimes i_m\in\mathcal{R}^+(\nu)$ as the 2-morphism $\mathfrak{I}_{\mu}E_{i_1}\cdots E_{i_m}\textnormal{\fontfamily{pbk}\selectfont1}_{\lambda}\to E_{i_1}\cdots E_{i_m}\mathfrak{I}_{\mu}\textnormal{\fontfamily{pbk}\selectfont1}_\lambda$ represented by the following diagram: \begin{equation}\label{`A}\begin{tikzpicture}[very thick,baseline,scale=.6,baseline=-4pt]\draw[preaction={draw, black, line width=2.2pt},draw=gray,line width=1.5pt,in=-105,out=75,looseness=0.75](-2,-1)to node[below,text height=1.62ex,text depth=.5ex,inner ysep=2pt,at start]{$\mu$}(2,1);\draw[postaction={ decorate, decoration={ markings, mark=at position .5 with {{\arrow[scale=.8]{<}}} } }](-2,1)--(-1,-1)node[below,text height=1.62ex,text depth=.5ex,inner ysep=2pt,at end]{$i_1$};\draw[postaction={ decorate, decoration={ markings, mark=at position .5 with {{\arrow[scale=.8]{<}}} } }](-1,1)--(0,-1)node[below,text height=1.62ex,text depth=.5ex,inner ysep=2pt,at end]{$i_2$};\draw[postaction={ decorate, decoration={ markings, mark=at position .5 with {{\arrow[scale=.8]{<}}} } }](1,1)--(2,-1)node[below,text height=1.62ex,text depth=.5ex,inner ysep=2pt,at end]{$i_m$};\node at(.7,-.4){$\cdots$};\end{tikzpicture}\vspace{-.7em}\end{equation} Unpacking the coherence conditions for $\alpha$, we are reduced to checking that dots and crossings attached at the bottom right of the diagram \eqref{`A} can be slid past the thick strand. For dots, this follows from the first relation of \eqref{:A}. For crossings, this follows from the third relation of \eqref{:A} together with the cyclicity relation. The inverse of $\alpha$ is given by a diagram obtained by reflecting \eqref{`A} across the horizontal axis, by the first relation of \eqref{=A}.\end{proof} By the universal property of additive Karoubi envelopes (\S\ref{M}), $\alpha$ extends to a natural isomorphism $\dot{\alpha}\colon(\mathfrak{I}_{\pm\mu}\textnormal{\fontfamily{pbk}\selectfont1}_{\lambda\pm\nu})_\ast\circ\dot{\iota}^{\pm,\lambda}_\nu\to(\mathfrak{I}_{\pm\mu}\textnormal{\fontfamily{pbk}\selectfont1}_{\lambda})^\ast\circ\dot{\iota}^{\pm,\lambda\pm\mu}_\nu$ of functors $\dot{\mathcal{R}}^\pm(\nu)\to\dot{\mathscr{T}}(\lambda,\lambda\pm\mu\pm\nu)$. In particular, we have \begin{corollary}\thlabel{|A} For $u\in\dot{\mathcal{R}}^\pm(\nu)$, we have $\mathfrak{I}_{\pm\mu}u\textnormal{\fontfamily{pbk}\selectfont1}_\lambda\cong u\mathfrak{I}_{\pm\mu}\textnormal{\fontfamily{pbk}\selectfont1}_\lambda$ in $\dot{\mathscr{T}}(\lambda,\lambda\pm\mu\pm\nu)$.\end{corollary}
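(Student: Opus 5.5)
The corollary follows by specializing the natural isomorphism $\dot{\alpha}$ constructed just before it to a single object. So the plan is simply to evaluate $\dot{\alpha}$ at $u\in\dot{\mathcal{R}}^\pm(\nu)$, and then unwind what its two sides mean.

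By definition, the source functor $(\mathfrak{I}_{\pm\mu}\textnormal{\fontfamily{pbk}\selectfont1}_{\lambda\pm\nu})_\ast\circ\dot{\iota}^{\pm,\lambda}_\nu$ sends $u$ to the postcomposition of $u\textnormal{\fontfamily{pbk}\selectfont1}_\lambda=\dot{\iota}^{\pm,\lambda}_\nu(u)$ with $\mathfrak{I}_{\pm\mu}$. This is the 1-morphism $\mathfrak{I}_{\pm\mu}u\textnormal{\fontfamily{pbk}\selectfont1}_\lambda$. The target functor $(\mathfrak{I}_{\pm\mu}\textnormal{\fontfamily{pbk}\selectfont1}_{\lambda})^\ast\circ\dot{\iota}^{\pm,\lambda\pm\mu}_\nu$ sends $u$ to the precomposition of $u\textnormal{\fontfamily{pbk}\selectfont1}_{\lambda\pm\mu}$ with $\mathfrak{I}_{\pm\mu}\textnormal{\fontfamily{pbk}\selectfont1}_\lambda$. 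This is the 1-morphism $u\mathfrak{I}_{\pm\mu}\textnormal{\fontfamily{pbk}\selectfont1}_\lambda$. Both lie in $\dot{\mathscr{T}}(\lambda,\lambda\pm\mu\pm\nu)$.

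The component $\dot{\alpha}_u$ is therefore an invertible 2-morphism between these two 1-morphisms, which is exactly the claimed isomorphism.

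The only point that needs justification is that $\dot{\alpha}$ exists as a natural isomorphism on all of $\dot{\mathcal{R}}^\pm(\nu)$, not only on the image of $\mathcal{R}^\pm(\nu)$. This comes from the universal property of additive Karoubi envelopes in \S\ref{M}, applied to the target category $\dot{\mathscr{T}}(\lambda,\lambda\pm\mu\pm\nu)$, which is additive and idempotent-complete:

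1. Both functors $\dot{\mathcal{R}}^\pm(\nu)\to\dot{\mathscr{T}}(\lambda,\lambda\pm\mu\pm\nu)$ are $\Bbbk$-linear extensions of the corresponding functors on $\mathcal{R}^\pm(\nu)$.
2. Precomposition with $\iota_{\mathcal{R}^\pm(\nu)}$ is fully faithful (item (i) of \S\ref{M}).
3. Hence the natural isomorphism $\alpha$ lifts uniquely to $\dot{\alpha}$, and its inverse lifts as well, so the lift is still invertible.

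Explicitly, for $u=(\bigoplus_k u_k,(e_{kl}))$, the component $\dot{\alpha}_u$ is the matrix $\mathrm{diag}(\alpha_{u_k})$, restricted to the summands cut out by $\mathfrak{I}_{\pm\mu}\cdot(e_{kl})$ and $(e_{kl})\cdot\mathfrak{I}_{\pm\mu}$. The naturality of $\alpha$ ensures that this matrix intertwines the two idempotents.

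I expect no real obstacle here.
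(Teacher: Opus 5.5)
Your proposal is correct and follows the same route as the paper: the corollary is just the component at $u$ of the extension $\dot{\alpha}$ of the natural isomorphism $\alpha$ from the preceding lemma. The paper obtains $\dot{\alpha}$ exactly as you do, from the universal property of additive Karoubi envelopes in \S\ref{M}, with the preceding lemma supplying $\alpha$ itself.
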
\par\begin{remark}\thlabel{_B} The non-degeneracy of $\mathscr{T}$, i.e. that the Hom-spaces of $\mathscr{T}$ have the expected size, is due to Webster \cite[Thm.~5.11]{J}. Although loc.\ cit.\ imposes conditions on the base field, it can be applied to the algebraic closure of the fraction field of the ring of indeterminates (denoted $\mathbf{k}_C$ in \cite[\S3.2.3]{F}) to deduce the non-degeneracy over an arbitrary commutative ring $\Bbbk$. \end{remark}\subsection{Construction}\label{+A}\par By definition, there exists an obvious 2-functor $\mathscr{U}\to\mathscr{T}$. By 2-categorical Yoneda embedding, we have a 2-functor $\mathscr{T}(0,-)\colon\mathscr{T}\to\mathrm{Cat}_\Bbbk$ and thus a functor $\mathscr{T}(0,-)|_\mathscr{U}\colon\mathscr{U}\to\mathrm{Cat}_\Bbbk$.\par Recall ${\underline{{\lambda}}}=(\lambda_1,\ldots,\lambda_m)\in(X^+)^m$ and ${\underline{{\varepsilon}}}=(\varepsilon_1,\ldots,\varepsilon_m)\in\{-,+\}^m$ from \S\ref{uA}. For $\mu\in X$, let $\mathscr{T}(0,\mu)^{{\underline{{\varepsilon}}}\underline{{\lambda}}}$ be the full subcategory of $\mathscr{T}(0,\mu)$ consisting of 1-morphisms of the form \begin{equation}\label{AB}q^du_m\mathfrak{I}_{\varepsilon_m\lambda_m}u_{m-1}\mathfrak{I}_{\varepsilon_{m-1}\lambda_{m-1}}\cdots u_1\mathfrak{I}_{\varepsilon_1\lambda_1}u_0\end{equation} for some 1-morphisms $u_i$ in $\mathscr{U}$ for $0\le i\le m$. For a 1-morphism $u\in\mathscr{T}(\mu,\nu)$, the functor $\mathscr{T}(0,u)\colon\mathscr{T}(0,\mu)\to\mathscr{T}(0,\nu)$ is given by left multiplication by $u$, and hence if $u\in\mathscr{U}(\mu,\nu)$, it restricts to a functor $\mathscr{T}(0,u)^{{\underline{{\varepsilon}}}\underline{{\lambda}}}\colon\allowbreak\mathscr{T}(0,\mu)^{{\underline{{\varepsilon}}}\underline{{\lambda}}}\allowbreak\to\mathscr{T}(0,\nu)^{{\underline{{\varepsilon}}}\underline{{\lambda}}}$, and we have a 2-functor \[\mathscr{T}(0,-)^{{\underline{{\varepsilon}}}{\underline{{\lambda}}}}|_\mathscr{U}\colon\mathscr{U}\longrightarrow\mathrm{Cat}_\Bbbk.\] Let $I(\mu)$ be the two-sided ideal of a category $\mathscr{T}(0,\mu)^{{\underline{{\varepsilon}}}\underline{{\lambda}}}$ generated by identity morphisms of objects of the form \eqref{AB} with $u_0\ne q^d\textnormal{\fontfamily{pbk}\selectfont1}_0$.\footnote{Such a 2-morphism is called \emph{violating} in \cite{K}.} The ideal $I(\mu)$ is preserved by $\mathscr{T}(0,u)$ for any $u\in\mathscr{U}(\mu,\nu)$, and thus we have a 2-functor \[\mathcal{X}^{{\underline{{\varepsilon}}}{\underline{{\lambda}}}}\coloneqq\left.\frac{\mathscr{T}(0,-)}{I(-)}^{{\underline{{\varepsilon}}}{\underline{{\lambda}}}}\right|_\mathscr{U}\colon\mathscr{U}\longrightarrow\mathrm{Cat}_\Bbbk.\]\par Let ${\underline{{\nu}}}=(\nu_1,\ldots,\nu_m)\in\mathbb{N}[I]^m$. Define a graded $\Bbbk$-linear functor \[\tilde{\iota}^{{\underline{{\varepsilon}}}\underline{{\lambda}}}_{\underline{{\nu}}}\colon\mathcal{R}^{-\varepsilon_m}(\nu_m)\mathbin{\BeginAccSupp{method=hex,unicode,ActualText=22A0}\mathpalette{\m}{\otimes}\EndAccSupp{}}\cdots\mathbin{\BeginAccSupp{method=hex,unicode,ActualText=22A0}\mathpalette{\m}{\otimes}\EndAccSupp{}}\mathcal{R}^{-\varepsilon_1}(\nu_1)\longrightarrow\mathcal{X}^{{{\underline{{\varepsilon}}}\underline{{\lambda}}}}_\mu\] by concatenating diagrams horizontally with $\mathfrak{I}_{\varepsilon_j\lambda_j}$'s interleaved: that is, by $\tilde{\iota}^{{\underline{{\varepsilon}}}\underline{{\lambda}}}_{\underline{{\nu}}}\big((u_m,\ldots,u_1)\big)=\iota^{-\varepsilon_m,\xi_m}_{\nu_m}(u_m)\allowbreak\mathfrak{I}_{\varepsilon_m\lambda_m}\cdots\iota^{-\varepsilon_1,\xi_1}_{\nu_1}(u_1)\mathfrak{I}_{\varepsilon_1\lambda_1}$ on objects, and $\tilde{\iota}^{{\underline{{\varepsilon}}}\underline{{\lambda}}}_{\underline{{\nu}}}(\gamma_m\otimes\cdots\otimes\gamma_1)=\iota^{-\varepsilon_m,\xi_m}_{\nu_m}(\gamma_m)\mathbin{\boldsymbol{\cdot}}1_{\mathfrak{I}_{\varepsilon_m\lambda_m}}\mathbin{\boldsymbol{\cdot}}\cdots\mathbin{\boldsymbol{\cdot}}\iota^{-\varepsilon_1,\xi_1}_{\nu_1}(\gamma_1)\mathbin{\boldsymbol{\cdot}}1_{\mathfrak{I}_{\varepsilon_1\lambda_1}}$ for morphisms $\gamma_j\in\mathcal{R}^{-\varepsilon_j}(\nu_j)(u_j,u_j')$, where ${\underline{{\mu}}}\coloneqq{\underline{{\varepsilon}}}({\underline{{\lambda}}}-{\underline{{\nu}}})$, $\mu\coloneqq|{\underline{{\mu}}}|$, and $\xi_j\coloneqq\varepsilon_j\lambda_j+\sum_{k=1}^{j-1}\mu_k$.\par Let $\mathcal{X}^{{\underline{{\varepsilon}}}\underline{{\lambda}}}_{\le{\underline{{\mu}}}}$ (resp. $\mathcal{X}^{{\underline{{\varepsilon}}}\underline{{\lambda}}}_{<{\underline{{\mu}}}}$) be the quotient of $\mathcal{X}^{{\underline{{\varepsilon}}}\underline{{\lambda}}}_\mu$ by the two-sided ideal generated by identity morphisms of objects lying in the image of $\tilde{\iota}^{{\underline{{\varepsilon}}}\underline{{\lambda}}}_{{\underline{{\nu}}}'}$ for ${\underline{{\nu}}}'\in\mathbb{N}[I]^m$ with ${\underline{{\varepsilon}}}({\underline{{\lambda}}}-{\underline{{\nu}}}')\not\le{\underline{{\mu}}}$ (resp. $\not<{\underline{{\mu}}}$). Let $\mathcal{X}^{{\underline{{\varepsilon}}}\underline{{\lambda}}}_{\underline{{\mu}}}$ be the full subcategory of $\mathcal{X}^{{\underline{{\varepsilon}}}\underline{{\lambda}}}_{\le{\underline{{\mu}}}}$ consisting of objects lying in the image of $\tilde{\iota}^{{\underline{{\varepsilon}}}\underline{{\lambda}}}_{{\underline{{\nu}}}}$.\par\begin{lemma}The functor $\tilde{\iota}^{{\underline{{\varepsilon}}}\underline{{\lambda}}}_{\underline{{\nu}}}$ post-composed with the projection to $\mathcal{X}^{{\underline{{\varepsilon}}}\underline{{\lambda}}}_{\le{\underline{{\mu}}}}$ factors through \[\iota^{{\underline{{\varepsilon}}}\underline{{\lambda}}}_{\underline{{\mu}}}\colon\mathcal{R}^{\varepsilon_m\lambda_m}_{\mu_m}\mathbin{\BeginAccSupp{method=hex,unicode,ActualText=22A0}\mathpalette{\m}{\otimes}\EndAccSupp{}}\cdots\mathbin{\BeginAccSupp{method=hex,unicode,ActualText=22A0}\mathpalette{\m}{\otimes}\EndAccSupp{}}\mathcal{R}^{\varepsilon_1\lambda_1}_{\mu_1}\longrightarrow\mathcal{X}^{{\underline{{\varepsilon}}}\underline{{\lambda}}}_{\le{\underline{{\mu}}}}\,.\]\end{lemma}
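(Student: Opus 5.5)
The plan is to show that the two-sided ideal defining each cyclotomic quotient $\mathcal{R}^{\varepsilon_j\lambda_j}_{\mu_j}$ is killed by $\tilde{\iota}^{{\underline{{\varepsilon}}}\underline{{\lambda}}}_{\underline{{\nu}}}$ followed by the projection to $\mathcal{X}^{{\underline{{\varepsilon}}}\underline{{\lambda}}}_{\le{\underline{{\mu}}}}$. Since the source is the $\boxtimes$-product of the $\mathcal{R}^{-\varepsilon_j}(\nu_j)$, and $\tilde{\iota}$ is built by horizontal concatenation, it suffices to check each tensor factor separately. By \thref{X}, the ideal of $\mathcal{R}^{\varepsilon_j\lambda_j}_{\mu_j}$ is generated by elements $x_i^{\langle i,\lambda_j\rangle}\otimes1_u$ on $i\otimes u$. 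Under $\iota^{-\varepsilon_j,\xi_j}_{\nu_j}$, such an element becomes a diagram in which the strand labelled $i$ sits immediately to the left of (i.e.\ adjacent to) the strand $\mathfrak{I}_{\varepsilon_j\lambda_j}$ and carries $\langle i,\lambda_j\rangle$ dots. So the claim reduces to showing that this dotted diagram vanishes in $\mathcal{X}^{{\underline{{\varepsilon}}}\underline{{\lambda}}}_{\le{\underline{{\mu}}}}$. Here I read the convention that composition goes right to left, so "first" in $i\otimes u$ is the strand nearest $\mathfrak{I}_{\varepsilon_j\lambda_j}$.

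For $\varepsilon_j=+$ (downward $F_i$ strands next to a thick strand $\mathfrak{I}_{\lambda_j}$), I will use the second relation of \eqref{:A}. It says that the double crossing of the $i$-strand with the thick strand equals $x^{\langle i,\lambda_j\rangle}$ on the $i$-strand. Reading it backwards, the dotted diagram factors through the object obtained by pulling that $F_i$ to the right of $\mathfrak{I}_{\lambda_j}$. That object is $\cdots F_i\,\mathfrak{I}_{\lambda_j}\cdots$ with one fewer $F_i$ on the left of $\mathfrak{I}_{\lambda_j}$ and one more to its right. I then push this $F_i$ further right through the remaining factors $u_{j-1}\mathfrak{I}_{\varepsilon_{j-1}\lambda_{j-1}}\cdots$, using \thref{|A} (and its Karoubi version) to commute $F_i$ past thick and wavy strands up to isomorphism.

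Two outcomes are possible. In the first, the strand reaches the far right, so $u_0\ne\textnormal{\fontfamily{pbk}\selectfont1}_0$ and the object lies in the violating ideal $I(\mu)$, hence is zero. In the second, the object lies in the image of $\tilde{\iota}^{{\underline{{\varepsilon}}}\underline{{\lambda}}}_{{\underline{{\nu}}}'}$ with ${\underline{{\nu}}}'={\underline{{\nu}}}-i\,\mathrm{e}_j+i\,\mathrm{e}_{j'}$ for some $j'<j$. More precisely, an $F_i$ that appears among the $\mathscr{U}$-letters $u_{j'}$ with $\varepsilon_{j'}=-$ must be reordered using the commutator relations \eqref{k}/\eqref{l}, which produces extra terms that again factor through objects of this same shape. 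For these objects one has to show ${\underline{{\varepsilon}}}({\underline{{\lambda}}}-{\underline{{\nu}}}')\not\le{\underline{{\mu}}}$. Write ${\underline{{\mu}}}'={\underline{{\varepsilon}}}({\underline{{\lambda}}}-{\underline{{\nu}}}')$. Moving an $F_i$ from position $j$ to a smaller position $j'$ changes the tail sums $\sum_{k\ge l}\varepsilon_k\mu_k$ in \eqref{vA} by $+i$ for $j'<l\le j$, so ${\underline{{\mu}}}'>{\underline{{\mu}}}$ and in particular ${\underline{{\mu}}}'\not\le{\underline{{\mu}}}$. These objects are therefore killed in $\mathcal{X}^{{\underline{{\varepsilon}}}\underline{{\lambda}}}_{\le{\underline{{\mu}}}}$. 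The case $\varepsilon_j=-$ is symmetric: it uses the wavy counterpart of \eqref{:A}, moves $E_i$ strands, and the sign bookkeeping is the same.

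The step I expect to be the main obstacle is the second one. The difficulty is making precise that every term produced by moving the strand rightward, including the bubble and cup–cap correction terms from \eqref{k}, \eqref{l} and from relations among wavy strands, factors through an object lying either in $I(\mu)$ or in the image of some $\tilde{\iota}^{{\underline{{\varepsilon}}}\underline{{\lambda}}}_{{\underline{{\nu}}}'}$ with ${\underline{{\mu}}}'\not\le{\underline{{\mu}}}$. I would first try an induction on the number of $\mathscr{U}$-letters to the right of the moving strand, rewriting each $u_k$ into PBW-type form (all $F$'s to the left of all $E$'s or vice versa, according to $\varepsilon_k$) so that its weight content in the poset is tracked. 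The key point to check is that this rewriting never decreases the weight vector in the order \eqref{vA}.
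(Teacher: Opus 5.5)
Your overall route is the paper's, and your tail-sum computation for the main term is correct. You factor the dotted identity through the object in which the offending $F_i$ has crossed $\mathfrak{I}_{\lambda_j}$, using the second relation of \eqref{:A}. You then push that strand to the right and show that every resulting term is either violating or factors through an object killed in $\mathcal{X}^{\underline{\varepsilon}\underline{\lambda}}_{\le\underline{\mu}}$.

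One step as written is false: \thref{|A} does not let you commute $F_i$ past thick strands. It gives $\mathfrak{I}_{\mu}u\cong u\mathfrak{I}_{\mu}$ only for $u$ built from $E$'s, and $\mathfrak{I}_{-\mu}u\cong u\mathfrak{I}_{-\mu}$ only for $u$ built from $F$'s. A downward $F_i$ and a thick strand do not commute; their bigon is $x^{\langle i,\mu\rangle}$, which is the whole content of the lemma. You never need that step. Stop the strand as soon as it enters the first region $j'<j$ with $\varepsilon_{j'}=+$, where it simply joins the $F$'s of $u_{j'}$. On the way it meets only $E$'s in blocks with $\varepsilon=-$, handled by the commutator relations, and wavy strands, handled by the wavy counterpart of the first relation of \eqref{=A}, which is an honest isomorphism. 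If no such $j'$ exists, it ends up to the right of $\mathfrak{I}_{\varepsilon_1\lambda_1}$ and the object is violating. Also, the correction terms from \eqref{k}/\eqref{l} are not of the same shape as the main term. They factor through objects in which the moving $F_i$ and one $E_i$ of some block $u_{j''}$ with $\varepsilon_{j''}=-$ have both been deleted.

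The step you flag as the main obstacle has a one-line resolution, and needs no induction or PBW rewriting. In every term, main or correction, nothing at or to the left of $\mathfrak{I}_{\lambda_j}$ changes, except that one $F_i$ has left $u_j$. So $\nu'_j=\nu_j-i$ and $\nu'_l=\nu_l$ for all $l>j$, whatever happens further right. Since $\varepsilon_l\mu'_l=\lambda_l-\nu'_l$, this gives $\sum_{l\ge j}\varepsilon_l\mu'_l=\sum_{l\ge j}\varepsilon_l\mu_l+i$. Hence the inequality \eqref{vA} required for $\underline{\mu}'\le\underline{\mu}$ already fails at $k=j$. This is exactly how the paper finishes. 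It covers the cup--cap terms, where $\underline{\nu}'=\underline{\nu}-i\,\mathrm{e}_j-i\,\mathrm{e}_{j''}$. It also covers the case $\varepsilon_j=-$: there an $E_i$ passes thick strands freely, stops in the first region with $\varepsilon_{j'}=-$, and again $\varepsilon_j\mu'_j=\varepsilon_j\mu_j+i$.
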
 \begin{proof}For simplicity, let us omit $\iota$'s. Let $1\le k\le m$ and suppose $\varepsilon_k=+$. We show that $\gamma\coloneqq1_v\mathbin{\boldsymbol{\cdot}}x^{\langle i,\lambda_k\rangle}\mathbin{\boldsymbol{\cdot}}1_{\mathfrak{I}_{\lambda_k}u}\in\operatorname{End}_q(\allowbreak{}vF_i\mathfrak{I}_{\lambda_k}u)$ vanishes in $\mathcal{X}^{{{\underline{{\varepsilon}}}\underline{{\lambda}}}}_{\le{\underline{{\mu}}}}$ for any $u\in\mathcal{R}^{-\varepsilon_{k-1}}(\nu_{k-1})\mathbin{\BeginAccSupp{method=hex,unicode,ActualText=22A0}\mathpalette{\m}{\otimes}\EndAccSupp{}}\cdots\mathbin{\BeginAccSupp{method=hex,unicode,ActualText=22A0}\mathpalette{\m}{\otimes}\EndAccSupp{}}\mathcal{R}^{-\varepsilon_1}(\nu_1)$ and $v\in\mathcal{R}^{-\varepsilon_m}(\nu_m)\mathbin{\BeginAccSupp{method=hex,unicode,ActualText=22A0}\mathpalette{\m}{\otimes}\EndAccSupp{}}\cdots\mathbin{\BeginAccSupp{method=hex,unicode,ActualText=22A0}\mathpalette{\m}{\otimes}\EndAccSupp{}}\mathcal{R}^{-\varepsilon_k}(\nu_k-i)$. By the middle relation of \eqref{:A}, $\gamma$ factors through $v\mathfrak{I}_{\lambda_k}F_iu$. By the commutator relations of \thref{Q} and the first relation of \eqref{=A}\ensuremath{^{\prime}}, we may pull the strand $F_i$ in the identity morphism of $v\mathfrak{I}_{\lambda_k}F_iu$ to the left until it meets the region $k'<k$ with $\varepsilon_{k'}=+$ or reaches the leftmost area, at the cost of introducing additional 2-morphisms that factor through 1-morphisms with the $F_i$ removed. Thus, $1_{v\mathfrak{I}_{\lambda_k}F_iu}$ is a linear combination of morphisms that factor through an object in the image of $\tilde{\iota}^{{\underline{{\varepsilon}}}\underline{{\lambda}}}_{{\underline{{\nu}}}'}$ for some ${\underline{{\nu}}}'\in\mathbb{N}[I]^m$ with $\nu'_k=\nu_k-i$ and $\nu'_j=\nu_j$ for $j>k$. Since ${\underline{{\varepsilon}}}({\underline{{\lambda}}}-{\underline{{\nu}}}')\not\le{\underline{{\varepsilon}}}({\underline{{\lambda}}}-{\underline{{\nu}}})={\underline{{\mu}}}$, $\gamma$ vanishes in $\mathcal{X}^{{\underline{{\varepsilon}}}\underline{{\lambda}}}_{\le{\underline{{\mu}}}}$ as desired. The case $\varepsilon_k=-$ can be proved similarly, and the conclusion follows.\end{proof}\par Let $\mathcal{X}^{\mathbin{\BeginAccSupp{method=hex,unicode,ActualText=22A0}\mathpalette{\m}{\otimes}\EndAccSupp{}}{{\underline{{\varepsilon}}}\underline{{\lambda}}}}\colon\mathscr{U}(\mathfrak{g}^{\oplus m})\to\mathrm{Cat}_\Bbbk$ be the 2-functor such that $\mathcal{X}^{\mathbin{\BeginAccSupp{method=hex,unicode,ActualText=22A0}\mathpalette{\m}{\otimes}\EndAccSupp{}}{{\underline{{\varepsilon}}}\underline{{\lambda}}}}_{\underline{{\mu}}}\coloneqq\mathcal{X}^{\varepsilon_m\lambda_m}_{\mu_m}\mathbin{\BeginAccSupp{method=hex,unicode,ActualText=22A0}\mathpalette{\m}{\otimes}\EndAccSupp{}}\cdots\mathbin{\BeginAccSupp{method=hex,unicode,ActualText=22A0}\mathpalette{\m}{\otimes}\EndAccSupp{}}\mathcal{X}^{\varepsilon_1\lambda_1}_{\mu_1}$; generating 1-morphisms ${}_jE_i$ or ${}_jF_i$ act on the component $\mathcal{X}^{\varepsilon_j\lambda_j}_{\mu_j}$ by $E_i$ or $F_i$; generating 2-morphisms involving strands in a single component of $\coprod_{j=1}^mI$ act on the corresponding component; and the 2-morphism $\tau\colon{}_{j'}F_{i'}{}_{j}F_i\textnormal{\fontfamily{pbk}\selectfont1}_{\underline{{\mu}}}\to{}_{j}F_i{}_{j'}F_{i'}\textnormal{\fontfamily{pbk}\selectfont1}_{\underline{{\mu}}}$ for $j\ne j'$ acts by the identity. The canonical functors $\mathcal{R}^{\varepsilon_j\lambda_j}_{\mu_j}\to\mathcal{X}^{\varepsilon_j\lambda_j}_{\mu_j}$ are Morita equivalences \cite[Cor.~3.20]{K}. By transferring $\operatorname{mod}\!\operatorname{-}\hspace{-.8mm}\mathcal{X}^{\mathbin{\BeginAccSupp{method=hex,unicode,ActualText=22A0}\mathpalette{\m}{\otimes}\EndAccSupp{}}{{\underline{{\varepsilon}}}\underline{{\lambda}}}}$ along the equivalence of categories $\operatorname{mod}\!\operatorname{-}\hspace{-.8mm}\mathcal{X}^{\mathbin{\BeginAccSupp{method=hex,unicode,ActualText=22A0}\mathpalette{\m}{\otimes}\EndAccSupp{}}{{\underline{{\varepsilon}}}\underline{{\lambda}}}}_{\underline{{\mu}}}\xrightarrow{\sim}\operatorname{mod}\!\operatorname{-}\hspace{-.8mm}\mathcal{R}^{\mathbin{\BeginAccSupp{method=hex,unicode,ActualText=22A0}\mathpalette{\m}{\otimes}\EndAccSupp{}}{{\underline{{\varepsilon}}}\underline{{\lambda}}}}_{\underline{{\mu}}}$ where $\mathcal{R}^{\mathbin{\BeginAccSupp{method=hex,unicode,ActualText=22A0}\mathpalette{\m}{\otimes}\EndAccSupp{}}{{\underline{{\varepsilon}}}\underline{{\lambda}}}}_{\underline{{\mu}}}\coloneqq\mathcal{R}^{\varepsilon_m\lambda_m}_{\mu_m}\mathbin{\BeginAccSupp{method=hex,unicode,ActualText=22A0}\mathpalette{\m}{\otimes}\EndAccSupp{}}\cdots\mathbin{\BeginAccSupp{method=hex,unicode,ActualText=22A0}\mathpalette{\m}{\otimes}\EndAccSupp{}}\mathcal{R}^{\varepsilon_1\lambda_1}_{\mu_1}$, we obtain a pseudofunctor $\operatorname{mod}\!\operatorname{-}\hspace{-.8mm}\mathcal{R}^{\mathbin{\BeginAccSupp{method=hex,unicode,ActualText=22A0}\mathpalette{\m}{\otimes}\EndAccSupp{}}{{\underline{{\varepsilon}}}\underline{{\lambda}}}}\colon\mathscr{U}(\mathfrak{g}^{\oplus m})\to\mathrm{Ab}_\Bbbk$.\par\begin{remark}\thlabel{BB} Alternatively, one can construct $\operatorname{mod}\!\operatorname{-}\hspace{-.8mm}\mathcal{R}^{\mathbin{\BeginAccSupp{method=hex,unicode,ActualText=22A0}\mathpalette{\m}{\otimes}\EndAccSupp{}}{{\underline{{\varepsilon}}}\underline{{\lambda}}}}$ by applying \cite[\S5]{L} and \cite{H}, noting the equivalence of categories $\mathcal{R}^{\lambda_m}_{\mu_m}\mathbin{\BeginAccSupp{method=hex,unicode,ActualText=22A0}\mathpalette{\m}{\otimes}\EndAccSupp{}}\cdots\mathbin{\BeginAccSupp{method=hex,unicode,ActualText=22A0}\mathpalette{\m}{\otimes}\EndAccSupp{}}\mathcal{R}^{\lambda_1}_{\mu_1}\cong\mathcal{R}^{\underline{{\lambda}}}_{\underline{{\mu}}}$---see the proof of \thref{3A} for over a field, and note that $\mathcal{R}^{\underline{{\lambda}}}_{\underline{{\mu}}}$ is a projective $\Bbbk$-module by \cite[Thm.~4.5]{L}.\end{remark}\par\begin{theorem}\thlabel{.A} Let $\Bbbk$ be a commutative Henselian local ring. \begin{enumerate}[(i),font=\normalfont]\item The category $\mathcal{C}_\mu\coloneqq\operatorname{mod}\!\operatorname{-}\hspace{-.8mm}\mathcal{X}^{{\underline{{\varepsilon}}}\underline{{\lambda}}}_\mu$ has the structure of an upper finite fully stratified category with the poset $\Xi_\mu$, such that $\mathcal{C}_{\le{\underline{{\mu}}}}$ \textnormal{(resp. $\mathcal{C}_{<{\underline{{\mu}}}}$)} coincides with $\operatorname{mod}\!\operatorname{-}\hspace{-.8mm}\mathcal{X}^{{\underline{{\varepsilon}}}\underline{{\lambda}}}_{\le{\underline{{\mu}}}}$ \textnormal{(resp. $\operatorname{mod}\!\operatorname{-}\hspace{-.8mm}\mathcal{X}^{{\underline{{\varepsilon}}}\underline{{\lambda}}}_{<{\underline{{\mu}}}}$)} regarded as a full subcategory of $\operatorname{mod}\!\operatorname{-}\hspace{-.8mm}\mathcal{X}^{{\underline{{\varepsilon}}}\underline{{\lambda}}}_\mu$.\label{CB}\item $\iota^{{\underline{{\varepsilon}}}\underline{{\lambda}}}_{\underline{{\mu}}}$ is fully faithful, and hence induces an equivalence of categories \label{DB} $\mathcal{C}_{\underline{{\mu}}}\cong\operatorname{mod}\!\operatorname{-}\hspace{-.8mm}\allowbreak\mathcal{R}^{\varepsilon_m\lambda_m}_{\mu_m}\mathbin{\BeginAccSupp{method=hex,unicode,ActualText=22A0}\mathpalette{\m}{\otimes}\EndAccSupp{}}\cdots\mathbin{\BeginAccSupp{method=hex,unicode,ActualText=22A0}\mathpalette{\m}{\otimes}\EndAccSupp{}}\mathcal{R}^{\varepsilon_1\lambda_1}_{\mu_1}$.\item The $\mathscr{U}(\mathfrak{g})$-action $\operatorname{mod}\!\operatorname{-}\hspace{-.8mm}\mathcal{X}^{{\underline{{\varepsilon}}}\underline{{\lambda}}}$ and the $\mathscr{U}(\mathfrak{g}^{\oplus m})$-action $\operatorname{mod}\!\operatorname{-}\hspace{-.8mm}\mathcal{R}^{\mathbin{\BeginAccSupp{method=hex,unicode,ActualText=22A0}\mathpalette{\m}{\otimes}\EndAccSupp{}}{{\underline{{\varepsilon}}}\underline{{\lambda}}}}$ make $\{\operatorname{mod}\!\operatorname{-}\hspace{-.8mm}\mathcal{X}^{{\underline{{\varepsilon}}}\underline{{\lambda}}}_\mu\}_{\mu\in X}$ a generalized tensor product categorification of type ${{\underline{{\varepsilon}}}\underline{{\lambda}}}$. \label{EB}\end{enumerate}\end{theorem}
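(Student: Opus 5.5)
The plan is to prove \ref{DB} first, since both the stratification in \ref{CB} and the standard filtrations in \ref{EB} depend on an explicit description of Hom-spaces in the quotients $\mathcal{X}^{{\underline{{\varepsilon}}}\underline{{\lambda}}}_{\le{\underline{{\mu}}}}$.

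For \ref{DB}, I would begin with a spanning set for $\mathcal{X}^{{\underline{{\varepsilon}}}\underline{{\lambda}}}_\mu(x,y)$ with $x,y$ of the form \eqref{AB}. The spanning set is built by straightening diagrams with the relations of $\mathscr{T}$, as in the remark that $\mathscr{T}(\mu,\nu)$ is locally Laurentian. Violating diagrams are discarded via the ideal $I(\mu)$, and each thin strand is pushed past thick and wavy strands using \eqref{:A}, \eqref{=A} and their wavy counterparts. Ordering the resulting basis elements by the weight ${\underline{{\varepsilon}}}({\underline{{\lambda}}}-{\underline{{\nu}}}')$ of the object they factor through gives a triangular spanning set. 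Its image in $\mathcal{X}^{{\underline{{\varepsilon}}}\underline{{\lambda}}}_{\le{\underline{{\mu}}}}$ between objects of $\mathcal{X}^{{\underline{{\varepsilon}}}\underline{{\lambda}}}_{\underline{{\mu}}}$ is then spanned by products of KLR diagrams in the separate blocks. This shows that $\iota^{{\underline{{\varepsilon}}}\underline{{\lambda}}}_{\underline{{\mu}}}$ is full.

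Faithfulness is where nondegeneracy enters. I would invoke Webster's non-degeneracy of $\mathscr{T}$ (\thref{_B}) over the universal coefficient ring and compare ranks with the basis of the cyclotomic quotients $\mathcal{R}^{\varepsilon_j\lambda_j}_{\mu_j}$ from \cite[Thm.~4.5]{L}. Rank equality over a fraction field, together with freeness, then descends to an arbitrary commutative Henselian local $\Bbbk$ by base change. I expect this step to be the main obstacle. The difficulty is that the cellular-type basis must be shown to be compatible with the ideals defining $\mathcal{X}_{\le{\underline{{\mu}}}}$ and $\mathcal{X}_{<{\underline{{\mu}}}}$. Concretely, the span of basis elements factoring through strata $\not\le{\underline{{\mu}}}$ must be exactly the ideal, which needs the ordering \eqref{vA} to match the direction in which strands can be moved past thick and wavy strands.

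For \ref{CB}, I would define $\Delta_{\underline{{\mu}}}$ as tensoring with the bimodule $\mathcal{X}^{{\underline{{\varepsilon}}}\underline{{\lambda}}}_{\le{\underline{{\mu}}}}(-,\iota(-))$ over $\mathcal{R}^{\varepsilon_m\lambda_m}_{\mu_m}\mathbin{\BeginAccSupp{method=hex,unicode,ActualText=22A0}\mathpalette{\m}{\otimes}\EndAccSupp{}}\cdots$, which is left adjoint to $\pi_{\underline{{\mu}}}$ by \ref{DB}. Exactness should follow from the basis above, which shows this bimodule is projective (indeed free) as a right module. The $\Delta$-flags of \thref{iA} come from the same triangular spanning set. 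For an object of the form \eqref{AB} with $u_0=\textnormal{\fontfamily{pbk}\selectfont1}_0$, I would rewrite its identity by pulling strands across $\mathfrak{I}_{\pm\lambda_j}$ as in the proof of the preceding lemma. Filtering the resulting representable functor by strata then gives finitely many subquotients, each of the form $\Delta(a)$ with $\varrho(a)$ at least the top stratum. Finiteness holds because $\Xi_\mu$ is upper finite, and the Krull--Schmidt property over a Henselian $\Bbbk$ lets one pass to indecomposable summands.

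For \ref{EB}, I would check the three axioms of \thref{sA} in turn.
\begin{itemize}
\item Integrability \ref{tA} for $\mathcal{C}_\mu$ follows from \thref{RA}, since $\mathcal{X}^{{\underline{{\varepsilon}}}\underline{{\lambda}}}_\mu$ is locally left-bounded. For $\mathcal{C}_{\underline{{\mu}}}$ it holds because the cyclotomic relation makes dots nilpotent.
\item Generation \ref{zA} holds with $b_0$ the class of $\mathfrak{I}_{\varepsilon_m\lambda_m}\cdots\mathfrak{I}_{\varepsilon_1\lambda_1}$.
\item For \ref{!A}, $E_i\Delta_{\underline{{\mu}}}(M)$ is computed by moving the new upward strand rightward past each wavy or thick strand, using the commutator relations, the first relation of \eqref{=A}, and the degree of the crossings \eqref{8A}. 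Stopping the strand in region $j$ gives the subquotient $\Delta_{{\underline{{\mu}}}+{}_ji}({}_jE_iM)$, with the shift $c_j$ produced by the crossings it has passed. The case of $F_i$ is symmetric, via the Chevalley involution of \thref{>A}.
\end{itemize}
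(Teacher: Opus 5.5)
\textbf{There is a genuine gap.} Everything in your plan beyond the spanning-set argument rests on one statement that you never prove. That statement is an explicit description of $\mathcal{X}^{\underline{\varepsilon}\underline{\lambda}}_{\le\underline{\mu}}(y,\iota(-))$ as a module over the cyclotomic tensor product $\mathcal{R}^{\varepsilon_m\lambda_m}_{\mu_m}\boxtimes\cdots\boxtimes\mathcal{R}^{\varepsilon_1\lambda_1}_{\mu_1}$: namely, that it is free (or at least projective), with the stratum ideals spanned by the part of a basis that factors through the corresponding strata.
\begin{itemize}
\item For $y$ in the stratum $\underline{\mu}$, this is the full faithfulness of $\iota^{\underline{\varepsilon}\underline{\lambda}}_{\underline{\mu}}$.
\item For general $y$, it is what makes tensoring with this bimodule exact, so that $\Delta_{\underline{\mu}}$ is exact.
\item It is also what identifies the stratum subquotients of a representable functor with direct sums of $\Delta(a)$'s, which you need for the flags of \thref{iA}.
\end{itemize}
You correctly flag this as the main obstacle. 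But you then invoke ``the basis above'' for exactness and for the flags, although only a spanning set has been produced.

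Your proposed mechanism does not close the gap either. \thref{_B} gives bases of Hom-spaces of $\mathscr{T}$. The graded ranks of the quotients by the violating ideal $I(\mu)$ and by the stratum ideals are exactly what is unknown. So over the fraction field of the universal ring there is nothing to compare the rank of the cyclotomic tensor product against, unless you already know the graded dimensions of $\mathcal{X}^{\underline{\varepsilon}\underline{\lambda}}_\mu$ — which is essentially the categorification statement being proved. (Also, \cite[Thm.~4.5]{L} gives projectivity over $\Bbbk$, not a basis.)

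The fullness step has a similar soft spot. Morphisms in $\mathcal{X}^{\underline{\varepsilon}\underline{\lambda}}_{\le\underline{\mu}}$ between objects of stratum $\underline{\mu}$ that factor through strictly lower strata are not killed. You have to show they reduce to blockwise KLR diagrams; ordering a spanning set by strata does not do this by itself.

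Two further ingredients are missing.
\begin{itemize}
\item You never construct the stratification function $\varrho_\mu\colon\mathbf{B}_\mu\to\Xi_\mu$. The claim that $\mathcal{C}_{\le\underline{\mu}}$, defined through $\varrho_\mu$, coincides with $\operatorname{mod}\text{-}\mathcal{X}^{\underline{\varepsilon}\underline{\lambda}}_{\le\underline{\mu}}$ requires assigning a well-defined stratum to each indecomposable object of $\dot{\mathcal{X}}^{\underline{\varepsilon}\underline{\lambda}}_\mu$. The paper takes this from \cite[Lem.~5.9]{V}.
\item In condition (iii) of \thref{sA}, moving the new strand rightward and stopping it in block $j$ only produces a comparison map between $q_i^{c_j}\Delta_{\underline{\mu}+{}_ji}({}_jE_iM)$ and the $j$-th subquotient of $E_i\Delta_{\underline{\mu}}(M)$. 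Proving that this map is well defined and an isomorphism is exactly the point the paper singles out as not worked out in \cite{V,K}. Injectivity again needs the freeness statement above.
\end{itemize}

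For comparison, the paper does not prove the theorem from scratch. It obtains (i) and (ii) by extending \cite[Cor.~5.22 and Prop.~5.26]{K} to arbitrary $\underline{\varepsilon}$, and takes the stratification function from \cite[Lem.~5.9]{V}. It takes condition (iii) from the modification \cite[Lem.~6.5]{V} of \cite[Prop.~5.5]{K}, and defers a self-contained treatment to \cite{r}.

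Your integrability check (via \thref{RA} and nilpotency of dots in cyclotomic quotients) and your generation check (via the class of $\mathfrak{I}_{\varepsilon_m\lambda_m}\cdots\mathfrak{I}_{\varepsilon_1\lambda_1}$) are fine. The gap is in the structural core that the paper imports from Webster.
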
\par\begin{remark}The construction of the stratification function in \ref{CB} follows from \cite[Lem.~5.9]{V}. Lemma 6.5 of \cite{V} outlines a modification of the proof of \cite[Prop.~5.5]{K} needed to establish the condition \ref{!A} of \thref{sA}. Parts \ref{CB} and \ref{DB} follow by extending \cite[Cor.~5.22 and Prop.~5.26]{K} to the case of arbitrary ${\underline{{\varepsilon}}}$. We will give a separate self-contained account of \thref{.A} in a paper in preparation \cite{r}, as some details are not given in \cite{V,K}, including the construction and the well-definedness of the isomorphism between $\Delta_{{\underline{{\mu}}}+{}_ji}({}_jE_iM)$ and a subquotient of $E_i\Delta_{\underline{{\mu}}}(M)$. The results in the rest of this paper depend on \thref{.A}.\end{remark}\par The bar-involution of $\mathscr{T}$ (\thref{>A} \ref{^A}) induces functors $\mathscr{T}(0,\mu)\to\mathscr{T}(0,\mu)^{\operatorname{op}}$, which restrict to $\mathscr{T}^{{{\underline{{\varepsilon}}}\underline{{\lambda}}}}(0,\mu)\to\mathscr{T}^{{{\underline{{\varepsilon}}}\underline{{\lambda}}}}(0,\mu)^{\operatorname{op}}$, and preserve the ideals $I(\mu)$. We have a diagram analogous to \eqref{bA}, and hence a duality of the $\mathscr{U}$-representation $\mathcal{X}^{{{\underline{{\varepsilon}}}\underline{{\lambda}}}}$. By \cite[Thm.~5.17]{V}, $\mathcal{X}^{{\underline{{\varepsilon}}}\underline{{\lambda}}}_\mu$ is a humorous category with respect to this duality (see also \thref{CA}). Hence, by \thref{aA}, we have a crystal $\mathbf{G}^{\textup{o}}(\dot{\mathcal{X}}^{{{\underline{{\varepsilon}}}\underline{{\lambda}}}})$. \thref{-A} and \thref{.A} give the following.\par\begin{corollary}\thlabel{FB} We have an isomorphism of crystals $G\colon B({{\underline{{\varepsilon}}}\underline{{\lambda}}})\to\mathbf{G}^{\textup{o}}(\dot{\mathcal{X}}^{{{\underline{{\varepsilon}}}\underline{{\lambda}}}})$.\end{corollary}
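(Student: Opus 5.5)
The plan is to compose three identifications already available. First, \thref{aA} applied to the 2-representation $\mathcal{X}^{{\underline{{\varepsilon}}}\underline{{\lambda}}}$ with its duality gives an isomorphism of crystals
\[\mathbf{G}^{\textup{o}}(\dot{\mathcal{X}}^{{\underline{{\varepsilon}}}\underline{{\lambda}}})\xrightarrow{\sim}\mathbf{B}^{\textup{o}}(\operatorname{mod}\hspace{-.35mm}\operatorname{_{lfl}-\hspace{-.8mm}}\mathcal{X}^{{\underline{{\varepsilon}}}\underline{{\lambda}}}),\qquad P\mapsto\operatorname{hd}\mathcal{X}^{{\underline{{\varepsilon}}}\underline{{\lambda}}}_\mu(-,P).\]
Its hypotheses are local Laurentianity, which follows from that of $\mathscr{T}(0,\mu)$ passing to a full subcategory and a quotient, and the existence of the duality constructed just before the statement. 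Second, as in the proof of \thref{aA}, simple objects of $\operatorname{mod}\!\operatorname{-}\hspace{-.8mm}\mathcal{X}^{{\underline{{\varepsilon}}}\underline{{\lambda}}}_\mu$ are killed by $\mathfrak{m}$ and have finite-dimensional values. So $\mathbf{B}(\operatorname{mod}\hspace{-.35mm}\operatorname{_{lfl}-\hspace{-.8mm}}\mathcal{X}^{{\underline{{\varepsilon}}}\underline{{\lambda}}})=\mathbf{B}(\operatorname{mod}\!\operatorname{-}\hspace{-.8mm}\mathcal{X}^{{\underline{{\varepsilon}}}\underline{{\lambda}}})$ as crystals, where the right side carries the structure of \thref{@} for the generalized tensor product categorification $\mathcal{C}_\mu=\operatorname{mod}\!\operatorname{-}\hspace{-.8mm}\mathcal{X}^{{\underline{{\varepsilon}}}\underline{{\lambda}}}_\mu$ of \thref{.A}\ref{EB}.

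Third, I invoke \thref{-A}, the result of the omitted part of \S\ref{F}, which I take to identify the crystal $\mathbf{B}(\mathcal{C})$ of any generalized tensor product categorification of type ${\underline{{\varepsilon}}}\underline{{\lambda}}$ with $B({\underline{{\varepsilon}}}\underline{{\lambda}})\times\mathbb{Z}$. The factor $\mathbb{Z}$ records grading shifts, possibly reduced to $\mathbb{Z}/d$ as in \thref{rA}. Under this identification, each simple object is $q^nL(b)$ for a unique $b\in B({\underline{{\varepsilon}}}\underline{{\lambda}})=B(\varepsilon_1\lambda_1)\otimes\cdots\otimes B(\varepsilon_m\lambda_m)$, and the crystal operators act on $b$ by the tensor product rule \eqref{fA}, shifting $n$ suitably.

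It then remains to show that each graded-shift orbit $\{q^nL(b)\}$ contains exactly one self-dual member. Existence comes from the humorous axiom \ref{BA} applied to $\mathcal{X}^{{\underline{{\varepsilon}}}\underline{{\lambda}}}_\mu$: each indecomposable is graded isomorphic to a self-dual one, and this transfers to heads by the last paragraph of the proof of \thref{aA}. Uniqueness holds because $q^dP\not\cong P$ for $d\ne0$, by local left-boundedness, while $(q^dL)^\circledast\cong q^{-d}L^\circledast$. Hence $b\mapsto$ the self-dual lift of $L(b)$ defines a bijection $B({\underline{{\varepsilon}}}\underline{{\lambda}})\to\mathbf{B}^{\textup{o}}$. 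It commutes with $\widetilde{e}_i,\widetilde{f}_i,\varepsilon_i,\varphi_i,\operatorname{wt}$ because \thref{WA} shows that the operators preserve self-duality, so the shifts introduced in \eqref{^} are absorbed. Composing with the inverse of the first isomorphism yields $G$.

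The main obstacle is the precise form of \thref{-A}: whether it gives an isomorphism onto $B({\underline{{\varepsilon}}}\underline{{\lambda}})\times\mathbb{Z}$ compatible with the normalized operators \eqref{^}, or only a bijection up to shifts. If only the latter is available, I would argue as at the end of \thref{rA}. Seminormal crystals are determined by their $\widetilde{e}_i$'s, so it suffices to compare $\widetilde{e}_i$ on self-dual representatives, and shifts are then forced to vanish by self-duality together with uniqueness of the self-dual lift.
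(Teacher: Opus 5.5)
Your proposal is correct and follows the paper's argument. The paper equips $\mathcal{X}^{{\underline{{\varepsilon}}}\underline{{\lambda}}}$ with the duality induced by the bar-involution of $\mathscr{T}$ and uses the humorous property from \cite{V} to apply \thref{aA}; it then concludes directly from \thref{-A} and \thref{.A}. Your explicit passage from all simples up to grading shift to the self-dual ones only spells out a step the paper leaves implicit: existence comes from the humorous axiom, uniqueness from $q^{2d}P\not\cong P$ together with $\circledast\circ q\cong q^{-1}\circ\circledast$, and compatibility with $\widetilde{e}_i,\widetilde{f}_i$ from \thref{WA}.
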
\section{The bicrystal of $\mathscr{U}$}\label{G}\par\subsection{Radical of $\mathscr{U}(\lambda,\mu)$}Note that $\operatorname{End}_q(\textnormal{\fontfamily{pbk}\selectfont1}_\lambda)$ is a polynomial algebra generated by positive degree clockwise bubbles. For a 1-morphism $u$ of $\mathscr{U}(\lambda,\mu)$, $\gamma\in\operatorname{End}_q(\textnormal{\fontfamily{pbk}\selectfont1}_\lambda)$ induces a 2-morphism $\textnormal{\fontfamily{pbk}\selectfont1}_u\mathbin{\boldsymbol{\cdot}}\gamma\colon u\to u$ (resp.\ $\gamma\mathbin{\boldsymbol{\cdot}}\textnormal{\fontfamily{pbk}\selectfont1}_u\colon u\to u$) which corresponds to placing a bubble to the left (resp. right) of the identity diagram of $u$.\par\begin{lemma}\thlabel{GB} Let $\gamma\in\operatorname{Hom}(q^d\textnormal{\fontfamily{pbk}\selectfont1}_\lambda,\textnormal{\fontfamily{pbk}\selectfont1}_\lambda)$ with $d>0$. Then $1_u\mathbin{\boldsymbol{\cdot}}\gamma\in\big(\kern-.15em\operatorname{rad}\mathscr{U}(\lambda,\mu)\big)(q^du,u)$ for any $u\in\mathscr{U}(\lambda,\mu)$.\end{lemma}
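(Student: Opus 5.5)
The plan is to verify the definition of the radical directly. Write $f\coloneqq1_u\mathbin{\boldsymbol{\cdot}}\gamma\colon q^du\to u$. I will show that $1_{q^du}-g\circ f$ is invertible for every $g\in\mathscr{U}(\lambda,\mu)(u,q^du)$ by proving that $g\circ f$ is nilpotent; then $1-gf$ has inverse $\sum_{k\ge0}(gf)^k$, a finite sum. The argument uses two ingredients: a commutation property of bubbles in the rightmost region, and the local left-boundedness of $\mathscr{U}(\lambda,\mu)$, which is locally Laurentian by \cite[Thm.~5.17]{V} and \thref{CA}.

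First I will show that $1_u\mathbin{\boldsymbol{\cdot}}\gamma$ commutes with every 2-morphism between 1-morphisms in $\mathscr{U}(\lambda,\mu)$, up to the suppressed degree shifts. For $h\colon v\to w$ in $\mathscr{U}(\lambda,\mu)$, the interchange law for horizontal composition gives
\[(1_w\mathbin{\boldsymbol{\cdot}}\gamma)\circ h=(1_w\circ h)\mathbin{\boldsymbol{\cdot}}(\gamma\circ1_{\textnormal{\fontfamily{pbk}\selectfont1}_\lambda})=h\mathbin{\boldsymbol{\cdot}}\gamma=(h\circ1_v)\mathbin{\boldsymbol{\cdot}}(1_{\textnormal{\fontfamily{pbk}\selectfont1}_\lambda}\circ\gamma)=h\circ(1_v\mathbin{\boldsymbol{\cdot}}\gamma).\]
Diagrammatically, this says that a bubble in the rightmost region $\lambda$ slides freely past any diagram. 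Applying this repeatedly, and recalling that $q$ acts on morphisms without visible change, I get
\[(g\circ f)^N=g^N\circ(1_u\mathbin{\boldsymbol{\cdot}}\gamma^N).\]
Here $g^N$ denotes the $N$-fold composite of the appropriate shifts of $g$, so $g^N\in\mathscr{U}(\lambda,\mu)(u,q^{Nd}u)$, and $\gamma^N\colon q^{Nd}\textnormal{\fontfamily{pbk}\selectfont1}_\lambda\to\textnormal{\fontfamily{pbk}\selectfont1}_\lambda$. In other words, $(gf)^N$ factors through a morphism $u\to q^{Nd}u$.

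Second, since $d>0$ and $\mathscr{U}(\lambda,\mu)$ is locally left-bounded, we have $\mathscr{U}(\lambda,\mu)(u,q^{Nd}u)=0$ for $N\gg0$. Hence $g^N=0$, so $(gf)^N=0$. This shows $1-gf$ is invertible for every $g$, which is exactly the statement $f\in\operatorname{rad}\mathscr{U}(\lambda,\mu)(q^du,u)$.

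The step needing the most care is the bookkeeping of degree shifts in the identity $(gf)^N=g^N\circ(1_u\mathbin{\boldsymbol{\cdot}}\gamma^N)$. Concretely, I must check that after moving all copies of $1\mathbin{\boldsymbol{\cdot}}\gamma$ to the right, the remaining factor genuinely lies in a Hom-space of the form $\mathscr{U}(\lambda,\mu)(u,q^{Nd}u)$ with $Nd\to\infty$. The commutation itself is only the interchange law. If local left-boundedness is not available for this exact Hom-space, I would instead use that positive-degree elements of $\operatorname{End}_q(\textnormal{\fontfamily{pbk}\selectfont1}_\lambda)$ together with the finite generation of Hom-spaces force $g^N$ to vanish.
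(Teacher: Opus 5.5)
Your proof is correct, but it takes a different route from the paper.

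The paper argues through simple modules. For a simple right $\mathscr{U}(\lambda,\mu)$-module $L$, the maps $L(1_u\mathbin{\boldsymbol{\cdot}}\gamma)$ assemble into a module homomorphism $L\to q^dL$, using the same interchange-law naturality you use. Because $L$ is supported in only finitely many degrees, this homomorphism has a nonzero kernel, which by simplicity is all of $L$. Hence $1_u\mathbin{\boldsymbol{\cdot}}\gamma$ acts by zero on every simple module, and membership in the radical follows from the characterization in \cite[Prop.~9]{k}.

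You instead verify the defining condition of the radical directly. Centrality gives $(gf)^N=g^N\circ(1_u\mathbin{\boldsymbol{\cdot}}\gamma^N)$ with $g^N\in\mathscr{U}(\lambda,\mu)(u,q^{Nd}u)$. This Hom-space is zero for $N\gg0$, so $gf$ is nilpotent and $1-gf$ is invertible.

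Both arguments rest on the same two inputs:
\begin{itemize}
\item a bubble in the region labelled by the source $\lambda$ commutes with all 2-morphisms;
\item a boundedness of degrees.
\end{itemize}
The difference is where the boundedness is imposed: you impose it on Hom-spaces of $\mathscr{U}(\lambda,\mu)$, the paper on the degree support of a simple module.

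Your version is the more elementary one. It avoids the simple-module characterization of the radical and the finiteness of degree supports of simple modules. The local left-boundedness it needs already follows from the spanning sets of \cite[\S3.2.3]{G} over any $\Bbbk$, so the fallback in your last paragraph is not needed. The paper's version is written in the same module-theoretic style as its argument that simple modules are integrable.

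One cosmetic point: with this paper's conventions, $1_u\mathbin{\boldsymbol{\cdot}}\gamma$ places the bubble in the \emph{leftmost} region (the one labelled $\lambda$), not the rightmost. This does not affect your computation.
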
 \begin{proof}Let $L$ be a simple right $\mathscr{U}(\lambda,\mu)$-module. $L(1_u\mathbin{\boldsymbol{\cdot}}\gamma)\colon L(u)\to L(q^du)$ for all $u$ forms a morphism $L\to q^dL$ of right $\mathscr{U}(\lambda,\mu)$-modules, and since $L(u)$ is concentrated in finitely many degrees, its kernel is a nontrivial submodule of $L$. Hence, it equals $L$, and $L(1_u\mathbin{\boldsymbol{\cdot}}\gamma)=0$. Since this holds for all simple $L$, it follows that $1_u\mathbin{\boldsymbol{\cdot}}\gamma\in\big(\kern-.15em\operatorname{rad}\mathscr{U}(\lambda,\mu)\big)(q^du,u)$ \cite[Prop.~9]{k}.\end{proof}\par For $N\in\mathbb{Z}_{\ge0}$, let $I^{\pm i,N}\subset\mathscr{U}(\lambda,\mu)$ be a two-sided ideal generated by morphisms $(\textnormal{\fontfamily{pbk}\selectfont1}_u\mathbin{\boldsymbol{\cdot}}x^N)\colon q_i^{2N}uE^\pm_i\to uE^\pm_i$ for all $u\in\mathscr{U}(\lambda\pm i,\mu)$. This agrees with $\mathcal{I}^{\pm i,N}$ in \S \ref{MA} associated to a 2-representation $\mathscr{U}(-,\mu)\circ\ast$.\par\begin{lemma}\thlabel{HB} Let $u\in\mathscr{U}(\lambda,\mu)$ and $i\in I$ be given. \begin{enumerate}[(a),font=\normalfont]\item For all sufficiently large $N$, we have $I^{\pm i,N}(v,u)\subset\big(\kern-.15em\operatorname{rad}\mathscr{U}(\lambda,\mu)\big)(v,u)$ for all $v\in\mathscr{U}(\lambda,\mu)$. \label{IB}\item If $\langle i,\lambda\rangle=0$ and $E^\pm_i$ does not appear in an expression $u=q^dE^{\pm}_{i_1}\cdots E^{\pm}_{i_k}\textnormal{\fontfamily{pbk}\selectfont1}_\lambda$, then $N$ in \textnormal{\ref{IB}} can be taken to be $0$. \label{JB}\end{enumerate}\end{lemma}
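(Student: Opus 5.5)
Part (a) is Lemma \thref{NA}\ref{PA} applied to the 2-representation $\mathcal{K}\coloneqq\mathscr{U}(-,\mu)\circ\ast$. As noted before the statement, $I^{\pm i,N}$ coincides with the ideal $\mathcal{I}^{\pm i,N}$ of \S\ref{MA} attached to this 2-representation. The weight shift in $\mathscr{U}(c^\ast)$ is $-\lambda$, and the neighbouring weight categories are $\mathscr{U}(\lambda\pm i,\mu)$ up to sign conventions. These are locally left-bounded, since $\mathscr{U}(\lambda',\mu)$ is locally Laurentian (humorous, \thref{EA}). So \thref{NA}\ref{OA} gives $\mathcal{I}^{\pm i,N}(u,u)=0$ for $N\gg0$, depending on $u$. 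Then the last paragraph of the proof of \thref{NA} gives \ref{IB}: for $f\in I^{\pm i,N}(v,u)$ and any $g\colon u\to v$, the composite $fg$ lies in $I^{\pm i,N}(u,u)=0$. The only thing to check is that the sign and orientation conventions of $\ast$ match, and that fixing $u$ (not $v$) is what \thref{NA} needs. It is: the bound is uniform in $v$ because only $\operatorname{End}(u)$ enters.

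For \ref{JB} we use the map $\psi^{\pm i,0}$ from the proof of \thref{NA}, in the $\ast$-twisted form. By that proof, $I^{\pm i,0}(u,u)$ is the image of the composite
\[
u\to uE^{\pm}_iE^{\mp}_i\to uE^{\pm}_iE^{\mp}_i\to u,
\]
a cup, then some $f\in\operatorname{Hom}(uE^\pm_i,uE^\pm_i)$ with appropriate shift, then a cap, with the dots placed on the right. Equivalently, we want the identity of $uE^{\pm}_iE^{\mp}_i\textnormal{\fontfamily{pbk}\selectfont1}_\lambda$, restricted through cup and cap, to give only radical endomorphisms. The alternative is to show directly that $1_u$ does not factor through $uE^\pm_iE^\mp_i$. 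With $\langle i,\lambda\rangle=0$, the categorified commutator \eqref{u} gives $E^{\pm}_iE^{\mp}_i\textnormal{\fontfamily{pbk}\selectfont1}_\lambda\cong E^{\mp}_iE^{\pm}_i\textnormal{\fontfamily{pbk}\selectfont1}_\lambda$, with no $\textnormal{\fontfamily{pbk}\selectfont1}_\lambda$ summands. So $uE^\pm_iE^\mp_i\cong uE^\mp_iE^\pm_i$, and we must show $1_u$ does not factor through $uE^\mp_iE^\pm_i$.

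Next we push the $E^\mp_i$ leftward through $u=E^\pm_{i_1}\cdots E^\pm_{i_k}$. Each $i_j\ne i$ whenever $E^\pm_{i_j}$ equals $E^\pm_i$ in sign, so by \eqref{j} $E^\mp_i$ commutes with each factor $E^\pm_{i_j}$ ($i_j\ne i$) up to isomorphism. Hence $uE^\mp_i\cong E^\mp_iu$ in $\mathscr{U}(\lambda\pm i,\cdot)$ up to grading. Then $uE^\mp_iE^\pm_i\cong E^\mp_i u'$ with $u'=uE^\pm_i$, whose leftmost factor is $E^\mp_i$.

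The last step is the main obstacle: show that an indecomposable summand $P$ of $u$ cannot be a summand of $E^\mp_i w$ for any $w$. By \thref{JA}\ref{KA} applied to the star-crystal 2-representation, this means $\varepsilon^\ast$ or $\varphi^\ast$ of $P$ vanishes in the relevant direction. I would get this from \thref{JA}\ref{KA} together with the observation that $E^{\ast}$-type functors annihilate the simple heads of summands of $u$. Concretely, $\operatorname{Hom}(E^\mp_iw,u)\cong\operatorname{Hom}(w,E^\pm_i\text{-adjoint}\ u)$, and every morphism obtained by adjunction factors through a cap whose strand must pass through $u$. By \eqref{j} such a strand can be slid entirely through $u$, which leaves a bubble-free cap-cup at weight with $\langle i,\cdot\rangle=0$. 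By \eqref{g}, a dotless bubble of degree $0$ equals $c_i^{\pm1}$, but it carries a degree-positive factor whenever it is nonzero in the radical count. I expect this degree bookkeeping to require care. If it resists, the fallback is to show that $\psi^{\pm i,0}(f)$ always contains a positive-degree bubble to the right of $u$ after sliding, and then conclude by \thref{GB}.
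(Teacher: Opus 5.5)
Part (a) is fine and is exactly the paper's argument: apply \thref{NA} to $\mathscr{U}(-,\mu)\circ\ast$. Part (b) has a genuine gap, because your main route reduces to a statement that is false.

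First, \eqref{j} only lets you move $E^\mp_i$ past strands of the opposite orientation. Past a same-orientation strand of colour $j$ with $i\cdot j\neq0$, the two crossings compose to $Q_{ij}(y_1,y_2)$, a positive-degree non-unit. So $uE^\mp_i\cong E^\mp_iu$ fails for a general word $u$.

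More seriously, your final claim that no summand of $u$ is a summand of $E^\mp_iw$ for any $w$ is false, even when the pushing works. Take $u=F_j\textnormal{\fontfamily{pbk}\selectfont1}_\lambda$ with $\langle i,\lambda\rangle=0$ and $i\cdot j<0$. Then $\langle i,\lambda-j\rangle>0$, so by \eqref{u} $u$ is a summand of $E_iF_iF_j\textnormal{\fontfamily{pbk}\selectfont1}_\lambda$. Symmetrically, $E_j\textnormal{\fontfamily{pbk}\selectfont1}_\lambda$ is a summand of $F_iE_iE_j\textnormal{\fontfamily{pbk}\selectfont1}_\lambda$. Moving the strand to the far left puts it at weight $\mu$, where $\langle i,\mu\rangle$ is uncontrolled, so the hypothesis $\langle i,\lambda\rangle=0$ is lost.

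Relatedly, summands of $E^\mp_iw$ are governed by the crystal of $\mathscr{U}(\lambda,-)$, not the star crystal. So \thref{JA}\ref{KA} for the star crystal says nothing about them.

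Your degree remark is also backwards. At $\langle i,\lambda\rangle=0$, an $i$-bubble with $n\ge0$ dots has degree $(n+1)(i\cdot i)>0$. The degree-zero bubbles in \eqref{g} are the fake ones with $-1$ dots. This strict positivity is exactly what makes (b) true.

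Your fallback is the paper's proof, but the step that makes it work is missing. By \cite[\S3.2.3]{G}, the domain $\mathscr{U}(\lambda-i,\mu)(q_i^{-1}uE_i,q_iuE_i)$ of $\psi$ is spanned, over bubbles in the rightmost region, by bubble-free diagrams in which any two strands cross at most once. Suppose $u$ contains no strand of colour $i$ of either orientation. Then the only $i$-endpoints are the two on the right, so they are joined by a strand that crosses nothing. Hence $f$ is a combination of $g\mathbin{\boldsymbol{\cdot}}x^n\mathbin{\boldsymbol{\cdot}}\gamma$ with $g\in\operatorname{End}_q(u)$. Then $\psi(f)$ is a combination of $g\mathbin{\boldsymbol{\cdot}}\gamma'$, where $\gamma'\in\operatorname{End}_q(\textnormal{\fontfamily{pbk}\selectfont1}_\lambda)$ is a genuine bubble of positive degree, hence radical by \thref{GB}. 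The passage from $(u,u)$ to $(v,u)$ is as in \thref{NA}. No commutation or crystal argument is needed.

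Note also that the hypothesis must exclude both $E_i$ and $F_i$, not only the sign of the ideal as you read it. For $u=F_i\textnormal{\fontfamily{pbk}\selectfont1}_\lambda$ with $\langle i,\lambda\rangle=0$ one has $F_i^{(2)}E_i\textnormal{\fontfamily{pbk}\selectfont1}_\lambda\cong E_iF_i^{(2)}\textnormal{\fontfamily{pbk}\selectfont1}_\lambda\oplus F_i\textnormal{\fontfamily{pbk}\selectfont1}_\lambda$. So $1_u\in I^{+i,0}(u,u)$ is not radical, and (b) would fail.
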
 \begin{proof}\ref{IB} follows from \thref{NA} applied to $\mathscr{U}(-,\mu)\circ\ast$. Let us assume the conditions in \ref{JB}. The proof of \thref{NA} applied to $\mathscr{U}(-,\mu)\circ\ast$ shows that the map \begin{equation}\label{KB}\psi\colon\mathscr{U}(\lambda-i,\mu)(q_i^{-1}uE_i,q_iuE_i)\longrightarrow I^{-,0}(u,u)\end{equation} that sends $f$ to the composition $(1_u\mathbin{\boldsymbol{\cdot}}\varepsilon')\circ(f\mathbin{\boldsymbol{\cdot}}1_{F_i})\circ(1_u\mathbin{\boldsymbol{\cdot}}\eta)$ is surjective.\par As an $\operatorname{End}_q(\textnormal{\fontfamily{pbk}\selectfont1}_{\lambda-i})$-module, the domain of $\psi$ is generated by 2-morphisms represented by a single tangle diagram without bubbles and such that every two strands intersect at most once \cite[\S3.2.3]{G}. Any such diagram must connect its unique top and bottom endpoints labeled $i$ by a strand that does not intersect other strands. Thus, as a $\Bbbk$-module, the domain of $\psi$ is generated by 2-morphisms of the form $g\mathbin{\boldsymbol{\cdot}}x^n\mathbin{\boldsymbol{\cdot}}\gamma$ for $g\in\operatorname{End}_q(u)$, $n\in\mathbb{Z}_{\ge0}$ and $\gamma\in\operatorname{End}_q(\textnormal{\fontfamily{pbk}\selectfont1}_{\lambda-i})$, where $x^n$ denotes the 2-morphism $x^n\colon q_i^{2n}E_i\textnormal{\fontfamily{pbk}\selectfont1}_{\lambda-i}\to E_i\textnormal{\fontfamily{pbk}\selectfont1}_{\lambda-i}$.\par We have $\psi(g\mathbin{\boldsymbol{\cdot}}x^n\mathbin{\boldsymbol{\cdot}}\gamma)=g\mathbin{\boldsymbol{\cdot}}\gamma'$ where $\gamma'\coloneqq\varepsilon'\circ(x^n\mathbin{\boldsymbol{\cdot}}\gamma\mathbin{\boldsymbol{\cdot}}1_{F_i\textnormal{\fontfamily{pbk}\selectfont1}_\lambda})\circ\eta\in\operatorname{End}_q(\textnormal{\fontfamily{pbk}\selectfont1}_\lambda)$, i.e., $\gamma'$ is obtained by enclosing $\gamma$ in a counter-clockwise bubble labeled $i$ with $n$ dots, whose degree is $(n+1)(i\cdot i)>0$ by the assumption $\langle i,\lambda\rangle=0$. \thref{GB} implies $g\mathbin{\boldsymbol{\cdot}}\gamma\in\big(\kern-.15em\operatorname{rad}\mathscr{U}(\lambda,\mu)\big)(u,u)$. Therefore, $I^{-,0}(u,u)=\operatorname{im}\psi(-\mathbin{\boldsymbol{\cdot}}1_{E_i})\subset\big(\kern-.15em\operatorname{rad}\mathscr{U}(\lambda,\mu)\big)(u,u)$, and we conclude $I^{-,0}(v,u)\subset\big(\kern-.15em\operatorname{rad}\mathscr{U}(\lambda,\mu)\big)(v,u)$ as in the proof of \thref{NA}. The case of $+$ sign can be proved similarly. \end{proof}\par For $\lambda_1,\lambda_2\in X^+$ with $-\lambda_1+\lambda_2=\lambda$, let $\mathcal{DR}^{-\lambda_1,\lambda_2}_\mu$ be the quotient of $\mathscr{U}(\lambda,\mu)$ by the two-sided ideal generated by positive degree bubbles placed on the leftmost region (i.e., $\textnormal{\fontfamily{pbk}\selectfont1}_u\mathbin{\boldsymbol{\cdot}}\gamma$ in \thref{GB}), plus $I^{+i,\langle i,\lambda_1\rangle}$ and $I^{-i,\langle i,\lambda_2\rangle}$ for all $i\in I$. Clearly, we have a 2-functor $\mathcal{DR}^{-\lambda_1,\lambda_2}\colon\mathscr{U}\to\mathrm{Cat}_\Bbbk$ and 2-natural transformations \begin{equation}\label{LB}\mathscr{U}(\lambda,-)\longrightarrow\mathcal{DR}^{-\xi',\lambda+\xi'}\longrightarrow\mathcal{DR}^{-\xi,\lambda+\xi}\end{equation} for $\xi,\xi'\in X^+$ with $\lambda+\xi,\xi'-\xi\in X^+$. We may identify $\operatorname{mod}\!\operatorname{-}\hspace{-.8mm}\mathcal{DR}^{-\xi,\lambda+\xi}_\mu$ as a full subcategory of $\operatorname{mod}\!\operatorname{-}\hspace{-.8mm}\mathscr{U}(\lambda,\mu)$ closed under taking subobjects and quotients. The duality of $\mathscr{U}(\lambda,-)$ (\thref{EA}) descends to $\mathcal{DR}^{-\xi,\lambda+\xi}$. By this we obtain crystals and their embeddings \[\mathbf{B}^{\textup{o}}(\operatorname{mod}\!\operatorname{-}\hspace{-.8mm}\mathcal{DR}^{-\xi,\lambda+\xi})\lhook\joinrel\longrightarrow\mathbf{B}^{\textup{o}}(\operatorname{mod}\!\operatorname{-}\hspace{-.8mm}\mathcal{DR}^{-\xi',\lambda+\xi'})\lhook\joinrel\longrightarrow\mathbf{B}^{\textup{o}}(\operatorname{mod}\!\operatorname{-}\hspace{-.8mm}\mathscr{U}(\lambda,-)).\]\par For the rest of the paper, we assume that there exists $\Lambda_i\in X$ for each $i\in I$ such that $\langle j,\Lambda_i\rangle=\delta_{ji}$ for all $j\in I$. Let us regard $X^+$ as a poset by $\xi\le\xi'$ if and only if $\xi'-\xi\in X^+$.\par\begin{corollary}\thlabel{MB} The canonical embedding below is an isomorphism of crystals. \begin{equation}\label{NB}\varinjlim_{\xi\in X^+}\mathbf{B}^{\textup{o}}(\operatorname{mod}\!\operatorname{-}\hspace{-.8mm}\mathcal{DR}^{-\xi,\lambda+\xi})\lhook\joinrel\longrightarrow\mathbf{B}^{\textup{o}}(\operatorname{mod}\!\operatorname{-}\hspace{-.8mm}\mathscr{U}(\lambda,-))\end{equation}\end{corollary}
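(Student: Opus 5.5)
The embedding \eqref{NB} is already a morphism of crystals, compatible along the directed system \eqref{LB}: the crystal structures on the $\mathbf{B}^{\textup{o}}(\operatorname{mod}\!\operatorname{-}\hspace{-.8mm}\mathcal{DR}^{-\xi,\lambda+\xi})$ are restrictions of the one on $\mathbf{B}^{\textup{o}}(\operatorname{mod}\!\operatorname{-}\hspace{-.8mm}\mathscr{U}(\lambda,-))$, and self-duality is a property of the object in the ambient category. So it remains to show surjectivity. Since $\operatorname{mod}\!\operatorname{-}\hspace{-.8mm}\mathcal{DR}^{-\xi,\lambda+\xi}_\mu$ is the full subcategory of modules annihilated by the defining ideal, the plan is: given a simple right $\mathscr{U}(\lambda,\mu)$-module $L$, find $\xi\in X^+$ with $\lambda+\xi\in X^+$ such that $L$ is annihilated by
\begin{itemize}
\item the positive degree bubbles on the leftmost region,
\item $I^{+i,\langle i,\xi\rangle}$ for all $i\in I$,
\item $I^{-i,\langle i,\lambda+\xi\rangle}$ for all $i\in I$.
\end{itemize}
The directedness of $X^+$ under $\le$ is clear, so $L$ then lies in the image of the colimit.

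\emph{Reduction to the radical.} Pick $u\in\mathscr{U}(\lambda,\mu)$ with $L(u)\ne0$ and a nonzero $l\in L(u)$. Since $L$ is simple, $L$ is generated by $l$, so every element of $L(v)$ has the form $L(g)(l)$ for some $g\colon v\to u$. Now let $f\in J(w,v)$ for a two-sided ideal $J$. Then $L(f)L(g)(l)=L(gf)(l)$ with $gf\in J(w,u)$. Hence it suffices to show that $J(w,u)\subset\operatorname{rad}\mathscr{U}(\lambda,\mu)(w,u)$ for all $w$, because the radical acts trivially on simple modules \cite[Prop.~9]{k}.

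\emph{The three types of generator.} For the bubbles $1_w\mathbin{\boldsymbol{\cdot}}\gamma$, this holds by \thref{GB}. For $I^{\pm i,N}$, \thref{HB} \ref{IB} gives some $N_u$, depending on $u$ and $i$, with $I^{\pm i,N}(w,u)\subset\operatorname{rad}$ for $N\ge N_u$. Here I use that $I^{\pm i,N'}\subset I^{\pm i,N}$ for $N'\ge N$, since $x^{N'}$ factors through $x^N$.

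The main obstacle is uniformity over the possibly infinite set $I$. Let $S\subset I$ be the finite set of $i$ that either appear in a fixed expression of $u$ as a word in the $E_j^\pm$, or satisfy $\langle i,\lambda\rangle\ne0$. For $i\notin S$, \thref{HB} \ref{JB} shows that $N=0$ already suffices for both signs. So take
\[\xi\coloneqq\sum_{i\in S}N_i\Lambda_i,\qquad N_i\ \text{large}.\]
Concretely, choose each $N_i$ so that $N_i\ge N_u$ for both signs and $N_i\ge N_u-\langle i,\lambda\rangle$. Then:
\begin{itemize}
\item $\xi\in X^+$;
\item $\langle i,\lambda+\xi\rangle=\langle i,\lambda\rangle+N_i\ge N_u\ge0$ for $i\in S$;
\item for $i\notin S$, $\langle i,\xi\rangle=\langle i,\lambda+\xi\rangle=0$.
\end{itemize}
Thus $\lambda+\xi\in X^+$, and all generators of the defining ideal of $\mathcal{DR}^{-\xi,\lambda+\xi}_\mu$ land in the radical at $u$. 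Consequently $L$ descends to $\mathcal{DR}^{-\xi,\lambda+\xi}_\mu$, which gives surjectivity of \eqref{NB}.
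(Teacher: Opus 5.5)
Your proof is correct and follows essentially the same route as the paper's. Both arguments reduce the claim to showing that the defining ideal of $\mathcal{DR}^{-\xi,\lambda+\xi}_\mu$ lies in the radical at an object $u$ with $L(u)\neq0$. Both then use \thref{GB} for the bubbles, \thref{HB}(a) for the finitely many relevant $i$, and \thref{HB}(b) to take $N=0$ for all other $i$; your explicit choice $\xi=\sum_{i\in S}N_i\Lambda_i$ simply spells out the paper's ``$\xi\gg0$''.
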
 \begin{proof}Let $L\in\mathbf{B}^{\textup{o}}(\operatorname{mod}\!\operatorname{-}\hspace{-.8mm}\mathscr{U}(\lambda,\mu))$ and $P\in\mathscr{U}(\lambda,\mu)$ be such that $L(P)\ne0$. We show that $\big(\kern-.15em\operatorname{rad}\mathscr{U}(\lambda,\mu)\big)(-,P)$ contains the kernel of $\mathscr{U}(\lambda,\mu)(-,P)\to\mathcal{DR}^{-\xi,\lambda+\xi}_\mu(-,P)$ for $\xi\gg0$. By \thref{GB}, positive degree bubbles on the leftmost region lie in $\big(\kern-.15em\operatorname{rad}\mathscr{U}(\lambda,\mu)\big)(-,P)$. By \thref{HB}, there exist $N_i^+,N_i^-\in\mathbb{Z}_{\ge0}$ for each $i\in I$, all but finitely many of which are zero, such that $I^{\pm i,N_i^{\pm}}(-,P)\subset\big(\kern-.15em\operatorname{rad}\mathscr{U}(\lambda,\mu)\big)(-,P)$. We have $\langle i,\xi\rangle\ge N_i^+$ and $\langle i,\lambda+\xi\rangle\ge N_i^-$ for all $i\in I$ for $\xi\gg0$, and hence the claim. This shows that the embedding \eqref{NB} is surjective, and thus an isomorphism of crystals.\end{proof}\subsection{Stratification of $\mathscr{U}(\lambda,\mu)$}\label{OB}\par In the rest of the paper, $\Bbbk$ is a commutative Henselian local ring.\par For $\lambda\in X^+$ and $\mu\in X$, we choose $G(b)\in\mathbf{G}^{\textup{o}}(\dot{\mathcal{X}}^\lambda_\mu)$ for $b\in B(\lambda)_\mu$ (\thref{FB}) as an object of $\dot{\mathcal{R}}^\lambda_\mu$ via the equivalence $\dot{\iota}^\lambda_\mu\colon\dot{\mathcal{R}}^\lambda_\mu\xrightarrow{\sim}\dot{\mathcal{X}}^\lambda_\mu$. Let $\pi_\lambda\colon\mathcal{R}^-(\nu)\twoheadrightarrow\mathcal{R}^\lambda_{\lambda-\nu}$ be the canonical projection functor. The directed system $\{\mathbf{B}^{\textup{o}}(\operatorname{mod}\!\operatorname{-}\hspace{-.8mm}\mathcal{R}^\lambda_{\lambda-\nu})\}_{\lambda\in X^+}$ induced by canonical projection functors is isomorphic to $\{B(\lambda)_{\lambda-\nu}\}_{\lambda\in X^+}$ with transition maps $B(\lambda)\to B(\lambda')$ that commute with crystal operators $\widetilde{e}_i$'s for $\lambda<\lambda'$ in $X^+$ (see the proof of \thref{rA}). Since $\varinjlim_{\lambda\in X^+}B(\lambda)_{\lambda-\nu}\cong B(\infty)_{-\nu}$ and $\varinjlim_{\lambda\in X^+}\mathbf{B}^{\textup{o}}(\operatorname{mod}\!\operatorname{-}\hspace{-.8mm}\mathcal{R}^\lambda_{\lambda-\nu})\cong\mathbf{B}^{\textup{o}}(\operatorname{mod}\!\operatorname{-}\hspace{-.8mm}\mathcal{R}^-(\nu))$ (e.g.\ \cite[Prop.~2.4]{E}), we obtain a bijection $G\colon B(\infty)_{-\nu}\to\mathbf{G}^{\textup{o}}(\dot{\mathcal{R}}^-(\nu))$ satisfying \begin{equation}\label{PB}\dot{\pi}_\lambda(G(b))\cong G(\bar{\pi}_\lambda(b))\quad\text{for all }b\in B(\infty)\text{ with }\bar{\pi}_\lambda(b)\ne0,\end{equation} where $\dot{\pi}_\lambda$ is the functor induced from $\pi_\lambda$ by taking the additive Karoubi envelope, and $\bar{\pi}_\lambda\colon B(\infty)\to B(\lambda)$ is the canonical map. Similarly, we obtain a bijection $G\colon B(-\infty)_{\nu}\to\mathbf{G}^{\textup{o}}(\dot{\mathcal{R}}^+(\nu))$.\par Let $\lambda,\mu\in X$ with $\lambda-\mu\in\mathbb{Z}[I]$ be given, as $\mathscr{U}(\lambda,\mu)$ is otherwise empty. Let $\nu\in\mathbb{N}[I]$ be such that $\lambda-\mu+\nu\in\mathbb{N}[I]$.\par\begin{lemma}\thlabel{QB} Let $b_1\in B(-\infty)_\nu$ and $b_2\in B(\infty)_{\mu-\lambda-\nu}$. Let $\xi\in X^+$ be such that $\lambda+\xi\in X^+$, $\bar{\pi}_{-\xi}(b_1)\ne0$, and $\bar{\pi}_{\lambda+\xi}(b_2)\ne0$. Then we have an isomorphism \begin{equation}\label{RB}G(\bar{\pi}_{-\xi}(b_1)\otimes\bar{\pi}_{\lambda+\xi}(b_2))\cong G(b_2)G(b_1)\mathfrak{I}_{\lambda+\xi}\mathfrak{I}_{-\xi}\quad\text{ in }\dot{\mathcal{X}}^{-\xi,\lambda+\xi}_{\le(\nu-\xi,\:\mu+\xi-\nu)}.\end{equation}\end{lemma}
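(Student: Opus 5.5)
The plan is to identify the right-hand side of \eqref{RB} as the image of a standard-type object in the top stratum. Then I check that it is indecomposable and self-dual, and that its head is the simple module attached to $\bar{\pi}_{-\xi}(b_1)\otimes\bar{\pi}_{\lambda+\xi}(b_2)$ under \thref{FB}. Throughout, set $m=2$, ${\underline{{\varepsilon}}}=(-,+)$ and ${\underline{{\lambda}}}=(\xi,\lambda+\xi)$, and put ${\underline{{\mu}}}\coloneqq(\nu-\xi,\mu+\xi-\nu)$.

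First, by \thref{|A} the objects $G(b_1)\in\dot{\mathcal{R}}^+(\nu)$ and $\mathfrak{I}_{\lambda+\xi}$ commute up to isomorphism. So $G(b_2)G(b_1)\mathfrak{I}_{\lambda+\xi}\mathfrak{I}_{-\xi}\cong G(b_2)\mathfrak{I}_{\lambda+\xi}G(b_1)\mathfrak{I}_{-\xi}$, and the latter lies in the image of $\dot{\tilde\iota}^{{\underline{{\varepsilon}}}\underline{{\lambda}}}_{\underline{{\nu}}}$ for ${\underline{{\nu}}}=(\nu,\lambda-\mu+\nu)$.

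Second, I pass to $\dot{\mathcal{X}}^{{\underline{{\varepsilon}}}\underline{{\lambda}}}_{\le{\underline{{\mu}}}}$. The object belongs to the top stratum $\mathcal{X}_{\underline{{\mu}}}$. By the preceding lemma together with \thref{.A}\ref{DB}, its image is $\dot{\iota}^{{\underline{{\varepsilon}}}\underline{{\lambda}}}_{\underline{{\mu}}}\big(\dot{\pi}_{\lambda+\xi}G(b_2)\boxtimes\dot{\pi}_{-\xi}G(b_1)\big)$. By \eqref{PB} and its analogue for $B(-\infty)$, this is $\dot{\iota}^{{\underline{{\varepsilon}}}\underline{{\lambda}}}_{\underline{{\mu}}}\big(G(\bar\pi_{\lambda+\xi}b_2)\boxtimes G(\bar\pi_{-\xi}b_1)\big)$. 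The two factors are indecomposable and self-dual, and by humorousness they are absolutely indecomposable, so their exterior tensor product is again indecomposable and self-dual. Because $\iota^{{\underline{{\varepsilon}}}\underline{{\lambda}}}_{\underline{{\mu}}}$ is fully faithful, the object is therefore a self-dual indecomposable in $\dot{\mathcal{X}}_{\le{\underline{{\mu}}}}$. In module terms, its representable is $\Delta$ of the projective cover in $\mathcal{C}_{\underline{{\mu}}}$ of $L(\bar\pi b_2)\boxtimes L(\bar\pi b_1)$, with head the simple $L$ in the top stratum.

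Third, I identify the crystal label. Since ${\underline{{\mu}}}$ is maximal among strata in which $\mathcal{C}_{\le{\underline{{\mu}}}}$ lives, the indecomposable projective $P_b$ of $\mathcal{C}_\mu$ with $\varrho(b)={\underline{{\mu}}}$ maps onto $\Delta(b)$ under $i^\ast_{\underline{{\mu}}}$. So the indecomposable $G(b)\in\dot{\mathcal{X}}^{{\underline{{\varepsilon}}}\underline{{\lambda}}}_\mu$ has as its image in $\dot{\mathcal{X}}_{\le{\underline{{\mu}}}}$ exactly our object. It remains to check that the bijection of \thref{FB} sends the simple with $\varrho$-value ${\underline{{\mu}}}$ and top-stratum label $(\bar\pi b_1,\bar\pi b_2)$ to $\bar\pi_{-\xi}b_1\otimes\bar\pi_{\lambda+\xi}b_2$. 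This should be built into the construction of $G$ in \thref{FB} (via \thref{-A}), which matches simples of $\mathcal{C}_{\underline{{\mu}}}$ with $B(-\xi)\times B(\lambda+\xi)$ via \thref{rA} on each factor.

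The main obstacle is this last compatibility. \thref{FB} is stated only as an abstract crystal isomorphism, so I must make sure its proof matches labels stratum-wise rather than merely up to a crystal automorphism. Tensor products of a lowest and a highest weight crystal have no nontrivial automorphisms on a connected component, but they may have several components. I would therefore first try to extract from the proof of \thref{-A} that $G$ sends $b_1'\otimes b_2'$ to the simple whose stratum is $(\operatorname{wt}b_1',\operatorname{wt}b_2')$ with top-stratum simple labelled by $(b_1',b_2')$ through \thref{rA}. Failing that, I would argue by induction on the order of $\Xi_\mu$ using \thref{sA}\ref{!A}.
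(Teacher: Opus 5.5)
Your proposal is correct and follows essentially the paper's route. You commute $G(b_1)$ past $\mathfrak{I}_{\lambda+\xi}$ by \thref{|A}, use \eqref{PB} and absolute simplicity to see that $G(b_2)\mathfrak{I}_{\lambda+\xi}G(b_1)\mathfrak{I}_{-\xi}$ represents $\Delta(\bar{\pi}_{-\xi}(b_1)\otimes\bar{\pi}_{\lambda+\xi}(b_2))$ in $\dot{\mathcal{X}}^{-\xi,\lambda+\xi}_{\le(\nu-\xi,\:\mu+\xi-\nu)}$, and conclude by indecomposability there. The only differences are that the paper cites \cite[Lem.~5.10]{V} for indecomposability where you use full faithfulness of $\iota_{\underline{\mu}}$, and it ends with ``$G(\bar{\pi}_{-\xi}(b_1)\otimes\bar{\pi}_{\lambda+\xi}(b_2))$ is a summand in $\dot{\mathcal{X}}^{-\xi,\lambda+\xi}_\mu$ and the other summands die in the quotient'' rather than $i^\ast P(b)\cong\Delta(b)$.

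The label compatibility you flag as the main obstacle is not proved separately in the paper either. It is taken from the construction behind \thref{FB}: the paper simply asserts (cf.\ \thref{BB}) that $\bar{L}(\bar{\pi}_{-\xi}(b_1)\otimes\bar{\pi}_{\lambda+\xi}(b_2))$ corresponds to $L(\bar{\pi}_{-\xi}(b_1))\boxtimes L(\bar{\pi}_{\lambda+\xi}(b_2))$, which is your first option.
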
 \begin{proof}{} Under the equivalence $\iota^{(-\xi,\lambda+\xi)}_{(\nu-\xi,\:\mu+\xi-\nu)}\colon\mathcal{R}^{-\xi}_{\nu-\xi}\mathbin{\BeginAccSupp{method=hex,unicode,ActualText=22A0}\mathpalette{\m}{\otimes}\EndAccSupp{}}\mathcal{R}^{\lambda+\xi}_{\mu+\xi-\nu}\to\mathcal{X}^{-\xi,\lambda+\xi}_{(\nu-\xi,\:\mu+\xi-\nu)}$, the simple object $\bar{L}(\bar{\pi}_{-\xi}(b_1)\otimes\bar{\pi}_{\lambda+\xi}(b_2))\in\operatorname{mod}\!\operatorname{-}\hspace{-.8mm}\mathcal{X}^{-\xi,\lambda+\xi}_{(\nu-\xi,\:\mu+\xi-\nu)}$ corresponds to $L(\bar{\pi}_{-\xi}(b_1))\mathbin{\BeginAccSupp{method=hex,unicode,ActualText=22A0}\mathpalette{\m}{\otimes}\EndAccSupp{}}L(\bar{\pi}_{\lambda+\xi}(b_2))\in\operatorname{mod}\!\operatorname{-}\hspace{-.8mm}\mathcal{R}^{-\xi}_{\nu-\xi}\mathbin{\BeginAccSupp{method=hex,unicode,ActualText=22A0}\mathpalette{\m}{\otimes}\EndAccSupp{}}\mathcal{R}^{\lambda+\xi}_{\mu+\xi-\nu}$ (cf.\ \thref{BB}). Since $L(\bar{\pi}_{-\xi}(b_1))$ and $L(\bar{\pi}_{\lambda+\xi}(b_2))$ are absolutely simple, $\bar{P}(\bar{\pi}_{-\xi}(b_1)\otimes\bar{\pi}_{\lambda+\xi}(b_2))\in\operatorname{mod}\!\operatorname{-}\hspace{-.8mm}\mathcal{X}^{-\xi,\lambda+\xi}_{(\nu-\xi,\:\mu+\xi-\nu)}$ is represented by the object \begin{equation}\begin{split}G(b_2)\mathfrak{I}_{\lambda+\xi}G(b_1)\mathfrak{I}_{-\xi}&\cong\dot{\iota}^{(-\xi,\lambda+\xi)}_{(\nu-\xi,\:\mu+\xi-\nu)}\big(\dot{\pi}_{-\xi}(G(b_1))\mathbin{\BeginAccSupp{method=hex,unicode,ActualText=22A0}\mathpalette{\m}{\otimes}\EndAccSupp{}}\dot{\pi}_{\lambda+\xi}(G(b_2))\big)\\&\cong\dot{\iota}^{(-\xi,\lambda+\xi)}_{(\nu-\xi,\:\mu+\xi-\nu)}\big(G(\bar{\pi}_{-\xi}(b_1))\mathbin{\BeginAccSupp{method=hex,unicode,ActualText=22A0}\mathpalette{\m}{\otimes}\EndAccSupp{}}G(\bar{\pi}_{\lambda+\xi}(b_2))\big)\end{split}\end{equation} in $\dot{\mathcal{X}}^{-\xi,\lambda+\xi}_{(\nu-\xi,\:\mu+\xi-\nu)}$, where the second isomorphism uses \eqref{PB}. It follows that \begin{equation}\begin{split}\Delta(\bar{\pi}_{-\xi}(b_1)\otimes\bar{\pi}_{\lambda+\xi}(b_2))&\cong\bar{P}(\bar{\pi}_{-\xi}(b_1)\otimes\bar{\pi}_{\lambda+\xi}(b_2))\otimes_{\mathcal{X}^{-\xi,\lambda+\xi}_{(\nu-\xi,\:\mu+\xi-\nu)}}\dot{\mathcal{X}}^{-\xi,\lambda+\xi}_{\le(\nu-\xi,\:\mu+\xi-\nu)}\\&\cong\dot{\mathcal{X}}^{-\xi,\lambda+\xi}_{\le(\nu-\xi,\:\mu+\xi-\nu)}(-,G(b_2)\mathfrak{I}_{\lambda+\xi}G(b_1)\mathfrak{I}_{-\xi}).\end{split}\end{equation}\par Therefore, the projective $\mathcal{X}^{-\xi,\lambda+\xi}_\mu$-module represented by $G(b_2)\mathfrak{I}_{\lambda+\xi}G(b_1)\mathfrak{I}_{-\xi}$ admits a surjection onto $\Delta(\bar{\pi}_{-\xi}(b_1)\otimes\bar{\pi}_{\lambda+\xi}(b_2))$, which in turn surjects onto $L(\bar{\pi}_{-\xi}(b_1)\otimes\bar{\pi}_{\lambda+\xi}(b_2))$. Hence, $G(\bar{\pi}_{-\xi}(b_1)\otimes\bar{\pi}_{\lambda+\xi}(b_2))$ is a direct summand of $G(b_2)\mathfrak{I}_{\lambda+\xi}G(b_1)\mathfrak{I}_{-\xi}$ in $\dot{\mathcal{X}}^{-\xi,\lambda+\xi}_\mu$. Now, by \cite[Lem.~5.10]{V}, $G(b_2)\mathfrak{I}_{\lambda+\xi}G(b_1)\mathfrak{I}_{-\xi}$ is indecomposable in $\dot{\mathcal{X}}^{-\xi,\lambda+\xi}_{\le(\nu-\xi,\:\mu+\xi-\nu)}$, so its summands other than $G(\bar{\pi}_{-\xi}(b_1)\otimes\bar{\pi}_{\lambda+\xi}(b_2))$ vanish. The isomorphism \eqref{RB} follows from this and $G(b_2)\mathfrak{I}_{\lambda+\xi}G(b_1)\mathfrak{I}_{-\xi}\cong G(b_2)G(b_1)\mathfrak{I}_{\lambda+\xi}\mathfrak{I}_{-\xi}$, which holds by \thref{|A}.\end{proof}\par Let $\varepsilon\in\{-,+\}$ and $\nu_1,\nu_2\in\mathbb{N}[I]$ be such that $\lambda-\mu=\varepsilon(\nu_2-\nu_1)$. Let \[\iota^{\varepsilon,\lambda}_{\nu_1,\nu_2}\coloneqq c_{\lambda,\lambda+\nu_1,\mu}\circ(\iota^{\varepsilon,\lambda}_{\nu_1}\mathbin{\BeginAccSupp{method=hex,unicode,ActualText=22A0}\mathpalette{\m}{\otimes}\EndAccSupp{}}\iota^{-\varepsilon,\mu+\varepsilon\nu_2}_{\nu_2})\colon\mathcal{R}^\varepsilon(\nu_1)\mathbin{\BeginAccSupp{method=hex,unicode,ActualText=22A0}\mathpalette{\m}{\otimes}\EndAccSupp{}}\mathcal{R}^{-\varepsilon}(\nu_2)\longrightarrow\mathscr{U}(\lambda,\mu)\] be a graded $\Bbbk$-linear functor (so $\iota^{\varepsilon,\lambda}_{\nu_1,\nu_2}(u_1\mathbin{\BeginAccSupp{method=hex,unicode,ActualText=22A0}\mathpalette{\m}{\otimes}\EndAccSupp{}}u_2)=u_2u_1\textnormal{\fontfamily{pbk}\selectfont1}_\lambda$).\par Let $\mathscr{U}(\lambda,\mu)_{\le\nu}$ be the quotient of $\mathscr{U}(\lambda,\mu)$ by the two-sided ideal generated by identity morphisms of objects lying in the image of $\iota^{+,\lambda}_{\nu',\lambda-\mu+\nu'}$ for $\nu'\in\mathbb{N}[I]$ with $\lambda-\mu+\nu'\in\mathbb{N}[I]$ and $\nu'-\nu\not\in\mathbb{N}[I]$.\par Consider the functor $\mathscr{U}(\lambda,\mu)\to\mathcal{X}^{-\xi,\lambda+\xi}_\mu$ given by $u\mapsto u\mathfrak{I}_{\lambda+\xi}\mathfrak{I}_{-\xi}$. By \cite[Prop.~5.6]{V}, it factors through $\mathcal{DR}^{-\xi,\lambda+\xi}_\mu$, and the induced functor $\mathcal{DR}^{-\xi,\lambda+\xi}_\mu\to\mathcal{X}^{-\xi,\lambda+\xi}_\mu$ induces an equivalence of categories between their additive Karoubi envelopes. On the other hand, the image of the defining ideal of $\mathscr{U}(\lambda,\mu)_{\le\nu}$ is precisely the defining ideal of $\mathcal{X}^{-\xi,\lambda+\xi}_{\le(\nu-\xi,\:\mu+\xi-\nu)}$ (cf.\ \S\ref{+A}), since we have $u_2u_1\mathfrak{I}_{\lambda+\xi}\mathfrak{I}_{-\xi}\cong u_2\mathfrak{I}_{\lambda+\xi}u_1\mathfrak{I}_{-\xi}$ for $u_1\in\mathcal{R}^+(\nu')$ and $u_2\in\mathcal{R}^-(\lambda-\mu+\nu')$. By letting $(\mathcal{DR}^{-\xi,\lambda+\xi}_\mu)_{\le\nu}$ be the quotient of $\mathcal{DR}^{-\xi,\lambda+\xi}_\mu$ by the image of this two-sided ideal, we have a commutative diagram \begin{equation}\label{SB}\begin{tikzcd}\mathcal{DR}^{-\xi,\lambda+\xi}_\mu\arrow[r,hook,"-\cdot\x_{\lambda+\xi}\x_{-\xi}"]\arrow[d,two heads]&\mathcal{X}^{-\xi,\lambda+\xi}_\mu\arrow[d,two heads]\\(\mathcal{DR}^{-\xi,\lambda+\xi}_\mu)_{\le\nu}\arrow[r,hook]&\mathcal{X}^{-\xi,\lambda+\xi}_{\le(\nu-\xi,\:\mu+\xi-\nu)}\end{tikzcd}\end{equation} such that the two horizontal arrows become equivalences of categories after taking additive Karoubi envelopes.\par Let $T_\lambda=\{t_\lambda\}$ be the crystal with $\operatorname{wt}(t_\lambda)=\lambda$ and $\varepsilon_i(t_\lambda)=-\infty$ for all $i\in I$ (cf.\ \cite[Ex.~1.5.3]{O}).\par\begin{proposition}\thlabel{DA} There exists an isomorphism of crystals \[G\colon B(-\infty)\otimes T_\lambda\otimes B(\infty)\to\bigsqcup_{\mu\in X}\mathbf{G}^{\textup{o}}(\dot{\mathscr{U}}(\lambda,\mu))\vspace{-.2em}\] such that for all $b_1\in B(-\infty)_\nu$ and $b_2\in B(\infty)_{\mu-\lambda-\nu}$, we have an isomorphism \begin{equation}\label{TB}G(b_1\otimes t_\lambda\otimes b_2)\cong G(b_2)G(b_1)\textnormal{\fontfamily{pbk}\selectfont1}_\lambda\quad\text{ in }\dot{\w}(\lambda,\mu)_{\le\nu}.\end{equation} \end{proposition}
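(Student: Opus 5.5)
The plan is to assemble $G$ as a direct limit over $\xi\in X^+$ of the crystal isomorphisms of \thref{FB}, transported to $\mathscr{U}(\lambda,-)$ through $\mathcal{DR}^{-\xi,\lambda+\xi}$, and then to control the limit with \thref{MB}.

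\textbf{Step 1: a compatible family of isomorphisms.} Fix $\xi\in X^+$ with $\lambda+\xi\in X^+$. By \thref{FB} with $m=2$ and type $(-\xi,\lambda+\xi)$, we have an isomorphism of crystals
\[
B(-\xi)\otimes B(\lambda+\xi)\xrightarrow{\ \sim\ }\mathbf{G}^{\textup{o}}(\dot{\mathcal{X}}^{-\xi,\lambda+\xi}).
\]
By the equivalence in the top row of \eqref{SB} (from \cite[Prop.~5.6]{V}), the right-hand side is identified with $\mathbf{G}^{\textup{o}}(\dot{\mathcal{DR}}^{-\xi,\lambda+\xi})$. This identification respects the crystal structures, because the functor $u\mapsto u\mathfrak{I}_{\lambda+\xi}\mathfrak{I}_{-\xi}$ commutes with left multiplication by $E_i,F_i$ and with the dualities. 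By \thref{aA}, $\mathbf{G}^{\textup{o}}(\dot{\mathcal{DR}}^{-\xi,\lambda+\xi})$ is in turn isomorphic to $\mathbf{B}^{\textup{o}}(\operatorname{mod}\!\operatorname{-}\hspace{-.8mm}\mathcal{DR}^{-\xi,\lambda+\xi})$.

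On the other side, Kashiwara's construction \cite[\S8]{O} gives embeddings $B(-\xi)\otimes B(\lambda+\xi)\hookrightarrow B(-\infty)\otimes T_\lambda\otimes B(\infty)$, namely $\bar\pi_{-\xi}(b_1)\otimes\bar\pi_{\lambda+\xi}(b_2)\mapsto b_1\otimes t_\lambda\otimes b_2$. These are strict crystal morphisms on their image, and their union is everything. Two points need checking here: that the tensor product rule for $B(-\xi)\otimes B(\lambda+\xi)$ matches that of $B(-\infty)\otimes T_\lambda\otimes B(\infty)$ under the paper's reversed convention in \S\ref{dA}, and the behaviour of $\varepsilon_i=-\infty$ on $T_\lambda$.

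\textbf{Step 2: compatibility of the two direct systems.} I must show that the isomorphisms of Step 1 are compatible with the transition maps \eqref{LB}, whose images under \thref{aA} are the embeddings $\mathbf{B}^{\textup{o}}(\operatorname{mod}\!\operatorname{-}\hspace{-.8mm}\mathcal{DR}^{-\xi,\lambda+\xi})\hookrightarrow\mathbf{B}^{\textup{o}}(\operatorname{mod}\!\operatorname{-}\hspace{-.8mm}\mathcal{DR}^{-\xi',\lambda+\xi'})$. I intend to characterize the image of $\bar\pi_{-\xi}(b_1)\otimes\bar\pi_{\lambda+\xi}(b_2)$ intrinsically, independently of $\xi$. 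Take $\nu=\operatorname{wt}(b_1)$. \thref{QB} says that in $\dot{\mathcal{X}}^{-\xi,\lambda+\xi}_{\le(\nu-\xi,\mu+\xi-\nu)}$ this element is isomorphic to $G(b_2)G(b_1)\mathfrak{I}_{\lambda+\xi}\mathfrak{I}_{-\xi}$. Transported through the bottom row of \eqref{SB}, this becomes the statement that the corresponding indecomposable $P_\xi\in\dot{\mathscr{U}}(\lambda,\mu)$ satisfies
\[
P_\xi\cong G(b_2)G(b_1)\textnormal{\fontfamily{pbk}\selectfont1}_\lambda\quad\text{in } \dot{\mathscr{U}}(\lambda,\mu)_{\le\nu}.
\]
Here $P_\xi$ is lifted from $\dot{\mathcal{DR}}$ to $\dot{\mathscr{U}}(\lambda,\mu)$ via \thref{MB} and the correspondence between heads of projectives and indecomposables. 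I would then argue that an indecomposable self-dual object of $\dot{\mathscr{U}}(\lambda,\mu)$ whose image in $\dot{\mathscr{U}}(\lambda,\mu)_{\le\nu}$ is the nonzero indecomposable $G(b_2)G(b_1)\textnormal{\fontfamily{pbk}\selectfont1}_\lambda$ is unique up to isomorphism. This uniqueness is the main obstacle: it requires lifting idempotents modulo the ideal defining $\dot{\mathscr{U}}(\lambda,\mu)_{\le\nu}$ in a Krull--Schmidt category, together with showing that distinct indecomposables with nonzero images remain distinct. I would try to use the upper-finite stratification, in which $G(b_2)G(b_1)\textnormal{\fontfamily{pbk}\selectfont1}_\lambda$ is the top of a standard filtration, and invoke \thref{jA} to separate the strata. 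Once uniqueness holds, the resulting $G(b_1\otimes t_\lambda\otimes b_2)$ is independent of $\xi$, which gives the compatibility, and \eqref{TB} holds by construction.

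\textbf{Step 3: conclusion.} Passing to the direct limit, the isomorphisms of Step 1, made compatible in Step 2, assemble into
\[
B(-\infty)\otimes T_\lambda\otimes B(\infty)\cong\varinjlim_{\xi}\mathbf{B}^{\textup{o}}(\operatorname{mod}\!\operatorname{-}\hspace{-.8mm}\mathcal{DR}^{-\xi,\lambda+\xi}).
\]
By \thref{MB} the right-hand side equals $\mathbf{B}^{\textup{o}}(\operatorname{mod}\!\operatorname{-}\hspace{-.8mm}\mathscr{U}(\lambda,-))$, and \thref{aA} applied to \thref{EA} identifies this with $\bigsqcup_\mu\mathbf{G}^{\textup{o}}(\dot{\mathscr{U}}(\lambda,\mu))$ as crystals. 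That gives the isomorphism $G$, and \eqref{TB} is inherited from Step 2.
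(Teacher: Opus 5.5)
Your overall architecture (the level-$\xi$ isomorphisms of \thref{FB}, transported along \eqref{SB} and glued by \thref{MB}) is the same as the paper's. However, Step~2 has a genuine gap, and it is not the one you flag.

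\thref{QB} together with the bottom row of \eqref{SB} gives $P_\xi\cong G(b_2)G(b_1)\textnormal{\fontfamily{pbk}\selectfont1}_\lambda$ only in the Karoubi envelope of $(\mathcal{DR}^{-\xi,\lambda+\xi}_\mu)_{\le\nu}$. That category is a further quotient of $\dot{\mathscr{U}}(\lambda,\mu)_{\le\nu}$, by positive-degree bubbles in the leftmost region and by the ideals $I^{+i,\langle i,\xi\rangle}$, $I^{-i,\langle i,\lambda+\xi\rangle}$. For a fixed $\xi$, nothing you have said prevents these extra relations from killing indecomposable summands of $G(b_2)G(b_1)\textnormal{\fontfamily{pbk}\selectfont1}_\lambda$ that survive in $\dot{\mathscr{U}}(\lambda,\mu)_{\le\nu}$. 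So you have not shown either of the following:
\begin{itemize}
\item that $G(b_2)G(b_1)\textnormal{\fontfamily{pbk}\selectfont1}_\lambda$ is a nonzero indecomposable in $\dot{\mathscr{U}}(\lambda,\mu)_{\le\nu}$, which your uniqueness step presupposes;
\item that $P_\xi$ is isomorphic to it there.
\end{itemize}
Consequently \eqref{TB}, which is part of the statement, is unproved, and so is the $\xi$-independence you deduce from it.

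The missing idea is to let $\xi$ depend on $(b_1,b_2)$. Decompose $G(b_2)G(b_1)\textnormal{\fontfamily{pbk}\selectfont1}_\lambda\cong\bigoplus_{k=1}^nP_k$ in $\dot{\mathscr{U}}(\lambda,\mu)$. Then use \thref{MB} to choose $\xi$ so that every head $L_k=\operatorname{hd}\dot{\mathscr{U}}(\lambda,\mu)(-,P_k)$ factors through $\mathcal{DR}^{-\xi,\lambda+\xi}_\mu$. With this choice, $P_k$ survives in $\dot{\mathscr{U}}(\lambda,\mu)_{\le\nu}$ if and only if it survives in $(\mathcal{DR}^{-\xi,\lambda+\xi}_\mu)_{\le\nu}$. \thref{QB} then forces exactly one such $k$; that summand is $G(b_1\otimes t_\lambda\otimes b_2)$, and it satisfies \eqref{TB}. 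For an arbitrary admissible $\xi$, push \eqref{TB} into $\dot{\mathcal{X}}^{-\xi,\lambda+\xi}_{\le(\nu-\xi,\,\mu+\xi-\nu)}$ and compare with \thref{QB}. This identifies its image with $G(\bar\pi_{-\xi}(b_1)\otimes\bar\pi_{\lambda+\xi}(b_2))$, which is exactly the compatibility the direct limit needs.

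By contrast, the uniqueness you call the main obstacle is formal. It needs neither idempotent lifting nor a stratification argument via \thref{jA}. Let $J$ be an ideal of a Krull--Schmidt category and $P$ an indecomposable with $1_P\notin J$. Then $J(P,P)$ is a proper ideal of the local ring $\operatorname{End}(P)$, so it lies in the radical. If $f\colon P\to P'$ and $g\colon P'\to P$ are mutually inverse modulo $J$, then $gf\in 1_P+\operatorname{rad}\operatorname{End}(P)$ is invertible. Hence $P$ is a summand of $P'$, and if $P'$ is indecomposable, $P\cong P'$.

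Applied in $(\mathcal{DR}^{-\xi,\lambda+\xi}_\mu)_{\le\nu}$, this argument even gives $P_\xi\cong P_{\xi'}$ for $\xi\le\xi'$ directly, since both have nonzero images there isomorphic to that of $G(b_2)G(b_1)\textnormal{\fontfamily{pbk}\selectfont1}_\lambda$. So the compatibility of your direct system can be rescued at the level of $\mathcal{DR}$. But \eqref{TB} itself still requires the large-$\xi$ choice described above.
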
 \begin{proof}Let $G(b_2)G(b_1)\textnormal{\fontfamily{pbk}\selectfont1}_\lambda\cong\bigoplus_{k=1}^nP_k$ be a decomposition into indecomposable objects in $\dot{\mathscr{U}}(\lambda,\mu)$, and let $L_k\coloneqq\operatorname{hd}\dot{\mathscr{U}}(\lambda,\mu)(-,P_k)\in\mathbf{B}(\operatorname{mod}\!\operatorname{-}\hspace{-.8mm}\mathscr{U}(\lambda,\mu))$. By \thref{MB}, there exists $\xi\in X^+$ such that every $L_k$ factors through $\mathcal{DR}^{-\xi,\lambda+\xi}_\mu$, $\bar{\pi}_{-\xi}(b_1)\ne0$, and $\bar{\pi}_{\lambda+\xi}(b_2)\ne0$. By \thref{QB}, the image of $G(b_2)G(b_1)\textnormal{\fontfamily{pbk}\selectfont1}_\lambda$ in $(\mathcal{DR}^{-\xi,\lambda+\xi}_\mu)_{\le\nu}$ is isomorphic to $G(\bar{\pi}_{-\xi}(b_1)\otimes\bar{\pi}_{\lambda+\xi}(b_2))\in\dot{\mathcal{X}}^{-\xi,\lambda+\xi}_{\le(\nu-\xi,\:\mu+\xi-\nu)}$, which is indecomposable. This implies that there exists a unique $k$ such that $L_k$ factors through $(\mathcal{DR}^{-\xi,\lambda+\xi}_\mu)_{\le\nu}$ (cf.\ \eqref{SB}), or equivalently, factors through $\mathscr{U}(\lambda,\mu)_{\le\nu}$. Hence, we have $P_l\cong0$ in $\dot{\mathscr{U}}(\lambda,\mu)_{\le\nu}$ for $l\ne k$, and $G(b_1\otimes t_\lambda\otimes b_2)$ satisfying \eqref{TB} is the unique indecomposable summand of $G(b_2)G(b_1)\textnormal{\fontfamily{pbk}\selectfont1}_\lambda$ that does not vanish in $\dot{\w}(\lambda,\mu)_{\le\nu}$.\par Let $\xi\in X^+$ be any element with $\lambda+\xi\in X^+$. Let $b_1$ and $b_2$ be such that $\bar{\pi}_{-\xi}(b_1)\ne0$ and $\bar{\pi}_{\lambda+\xi}(b_2)\ne0$. The image of $G(b_1\otimes t_\lambda\otimes b_2)$ in $\dot{\mathcal{X}}^{-\xi,\lambda+\xi}_{\le(\nu-\xi,\:\mu+\xi-\nu)}$ is isomorphic to $G(\bar{\pi}_{-\xi}(b_1)\otimes\bar{\pi}_{\lambda+\xi}(b_2))$, and both are indecomposable in $\dot{\mathcal{X}}^{-\xi,\lambda+\xi}_\mu$, so they are isomorphic in $\dot{\mathcal{X}}^{-\xi,\lambda+\xi}_\mu$. Thus, we have a commutative diagram \[\begin{tikzcd}B(-\infty)\otimes T_\lambda\otimes B(\infty)\arrow[r,"G"]&\bigsqcup_{\mu\in X}\mathbf{G}^{\textup{o}}(\dot{\mathscr{U}}(\lambda,\mu))\\B(-\xi)\otimes B(\lambda+\xi)\arrow[r,"G"]\arrow[u,hook]&\bigsqcup_{\mu\in X}\mathbf{G}^{\textup{o}}(\dot{\mathcal{X}}^{-\xi,\lambda+\xi}_\mu)\arrow[u,hook]\end{tikzcd}\] of sets, where the left (resp.\ right) vertical arrows are the embeddings of crystals given by $\bar{\pi}_{-\xi}(b_1)\otimes\bar{\pi}_{\lambda+\xi}(b_2)\mapsto b_1\otimes t_\lambda\otimes b_2$ (resp.\ induced by $\mathscr{U}(\lambda,-)\twoheadrightarrow\mathcal{DR}^{-\xi,\lambda+\xi}\hookrightarrow\mathcal{X}^{-\xi,\lambda+\xi}$). Since the bottom arrow is an isomorphism of crystals and the top arrow is their direct limit over $\xi\in X^+$, the same holds for the top arrow.\end{proof}\par\begin{remark}The isomorphisms \eqref{RB} and \eqref{TB} can be viewed as analogues of Lemma 2.2.3 and equation (3.1.1) of \cite{O}, respectively.\end{remark}\par\subsection{The star crystal}Let $\mathscr{U}(\lambda,\mu)_{\not\ge\nu}$ be the quotient of $\mathscr{U}(\lambda,\mu)$ by the two-sided ideal generated by identity morphisms of objects lying in the image of $\iota^{+,\lambda}_{\nu',\lambda-\mu+\nu'}$ for $\nu'\in\mathbb{N}[I]$ with $\lambda-\mu+\nu'\in\mathbb{N}[I]$ and $\nu-\nu'\in\mathbb{N}[I]\setminus\{0\}$. Let $\pi_{\not\ge\nu}\colon\mathscr{U}(\lambda,\mu)\to\mathscr{U}(\lambda,\mu)_{\not\ge\nu}$ be the canonical projection.\par\begin{lemma}\thlabel{UB} Let $u=E_{i_l}^{\epsilon_l}\cdots E_{i_1}^{\epsilon_1}\textnormal{\fontfamily{pbk}\selectfont1}_\lambda\in\mathscr{U}(\lambda,\mu)$ $(\epsilon_k\in\{-,+\})$ and $\nu=\sum_{k,\epsilon_k=+}i_k$. \begin{enumerate}[(i),font=\normalfont]\item $\pi_{\not\ge\nu}(u)\cong\pi_{\not\ge\nu}(E_{i_l}^{\epsilon_l}\cdots E_{i_{k}}^{\epsilon_{k}}E_{i_{k+1}}^{\epsilon_{k+1}}\cdots E_{i_1}^{\epsilon_1}\textnormal{\fontfamily{pbk}\selectfont1}_\lambda)$ for all $1\le k<l$ with $\epsilon_k\ne\epsilon_{k+1}$. \label{VB}\item $\pi_{\not\ge\nu'}(u)\cong0$ for all $\nu'\in\mathbb{N}[I]$ with $\nu'-\nu\in\mathbb{N}[I]\setminus\{0\}$. \label{WB}\end{enumerate}\end{lemma}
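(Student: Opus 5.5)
The plan is to prove one auxiliary claim first and derive both parts from it. The claim: for every word $w=E_{j_r}^{\delta_r}\cdots E_{j_1}^{\delta_1}\textnormal{\fontfamily{pbk}\selectfont1}_\lambda\in\mathscr{U}(\lambda,\mu)$ whose $E$-content $\nu_w\coloneqq\sum_{k,\delta_k=+}j_k$ satisfies $\nu'-\nu_w\in\mathbb{N}[I]\setminus\{0\}$, we have $\pi_{\not\ge\nu'}(w)\cong0$ (i.e.\ $1_w$ lies in the defining ideal). I would prove this by induction on the pair (length of $w$, number of inversions), ordered lexicographically. An inversion is a pair of positions where an $F$ stands to the right of (is applied before) an $E$.

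\emph{Base case.} If $w$ has no inversions, then all $E$'s are applied first. Hence $w=u_2u_1\textnormal{\fontfamily{pbk}\selectfont1}_\lambda$ with $u_1\in\mathcal{R}^+(\nu_w)$ and $u_2\in\mathcal{R}^-(\lambda-\mu+\nu_w)$, so $w$ lies in the image of $\iota^{+,\lambda}_{\nu_w,\lambda-\mu+\nu_w}$. Since $\nu'-\nu_w\in\mathbb{N}[I]\setminus\{0\}$, the identity $1_w$ is a generator of the ideal defining $\mathscr{U}(\lambda,\mu)_{\not\ge\nu'}$.

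\emph{Inductive step.} Otherwise there is an adjacent pair $E_iF_j$ or $F_jE_i$ forming an inversion, say $w=aF_jE_ib$ with the $F$ to the right. If $i\ne j$, the commutator relations \eqref{j} give an honest isomorphism $F_jE_i\textnormal{\fontfamily{pbk}\selectfont1}\cong E_iF_j\textnormal{\fontfamily{pbk}\selectfont1}$ in $\mathscr{U}$. If $i=j$, relations \eqref{k}--\eqref{l} (the case $a=b=1$ of \eqref{u} in $\dot{\mathscr{U}}$) show that either $w\oplus(\text{shifted copies of }ab)\cong aE_iF_ib$, or $w\cong aE_iF_ib\oplus(\text{shifted copies of }ab)$, depending on the sign of the relevant weight pairing. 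In every case, $w$ is a direct summand of a direct sum of $aE_iF_ib$ (one fewer inversion, same length, same $E$-content) and copies of $ab$ (shorter, with $E$-content $\nu_w-i$, which still satisfies $\nu'-(\nu_w-i)\in\mathbb{N}[I]\setminus\{0\}$). By induction, all of these vanish in the quotient, so $w$, being a summand of a zero object in $\dot{\mathscr{U}}(\lambda,\mu)_{\not\ge\nu'}$, vanishes too. The reverse arrangement, needed for (i), is handled symmetrically.

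\emph{Part (ii).} It is exactly the claim applied to $w=u$, with $\nu_u=\nu$ and $\nu'-\nu\in\mathbb{N}[I]\setminus\{0\}$.

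\emph{Part (i).} Swapping an adjacent pair with $\epsilon_k\ne\epsilon_{k+1}$ changes $u$, as computed above, by an isomorphism up to direct summands that are shifted copies of the word with both letters deleted. That word has $E$-content $\nu-i_k$, and $\nu-(\nu-i_k)=i_k\in\mathbb{N}[I]\setminus\{0\}$. Applying the claim with $\nu'=\nu$ shows these summands vanish under $\pi_{\not\ge\nu}$. The Krull--Schmidt property is not even needed, since we only cancel zero summands. Hence the two words become isomorphic in $\dot{\mathscr{U}}(\lambda,\mu)_{\not\ge\nu}$.

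The only delicate point I expect is bookkeeping. First, the direction of the isomorphism \eqref{u} depends on the sign of $\langle i,\cdot\rangle$ at the relevant weight, so both cases (extra summands on either side) must be treated. Second, the induction must stay inside $\mathscr{U}$ rather than needing divided powers, which holds since only the $a=b=1$ case of the commutator relations is used. Finally, zero-ness must be checked in the quotient of the Karoubi envelope, where a direct summand of a zero object is zero.
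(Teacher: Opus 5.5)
Your proof is correct and follows essentially the paper's route: adjacent $E$/$F$ letters are commuted via the commutator relations, and the correction terms, which factor through words with one fewer $E$, are killed by (ii) at smaller $E$-content. The paper phrases this as the sideways crossing becoming invertible modulo the ideal, inducting on $|\nu|$ and deducing (ii) from (i); you instead use the split form of the commutator isomorphism and prove (ii) first by induction on length and inversions.

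One small slip: under your stated right-to-left convention, the inverted word in the inductive step should be written $aE_iF_jb$ and the swapped word $aF_jE_ib$, not the other way around as you have it.
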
 \begin{proof}The statement \ref{VB} implies \ref{WB} since one can move all $E^+_i$'s to the right and all $E^-_i$'s to the left. For \ref{VB}, the sideways crossings \eqref{i} provide a map $u\to E_{i_l}^{\epsilon_l}\cdots E_{i_{k}}^{\epsilon_{k}}E_{i_{k+1}}^{\epsilon_{k+1}}\cdots E_{i_1}^{\epsilon_1}\allowbreak\textnormal{\fontfamily{pbk}\selectfont1}_\lambda$. By \eqref{j}--\eqref{l}, the composition of two sideways crossings at the $k$-th position equals $\pm1_u$ modulo the morphisms factoring through objects with one fewer $E^+_i$ than $u$, and applying \ref{WB} to $(\nu,\nu')=(\nu-i,\nu)$ shows that such morphisms vanish in $\mathscr{U}(\lambda,\mu)_{\not\ge\nu}$. Therefore, an induction on $|\nu|$ completes the proof.\end{proof}\par\begin{lemma}There exists a natural isomorphism $\alpha\colon\pi_{\not\ge\nu}\circ\iota^{+,\lambda}_{\nu,\lambda-\mu+\nu}\to\pi_{\not\ge\nu}\circ\iota^{-,\lambda}_{\lambda-\mu+\nu,\nu}\circ\sigma$ of functors $\mathcal{R}^+(\nu)\mathbin{\BeginAccSupp{method=hex,unicode,ActualText=22A0}\mathpalette{\m}{\otimes}\EndAccSupp{}}\mathcal{R}^-(\lambda-\mu+\nu)\to\mathscr{U}(\lambda,\mu)_{\not\ge\nu}$, where $\sigma(x\mathbin{\BeginAccSupp{method=hex,unicode,ActualText=22A0}\mathpalette{\m}{\otimes}\EndAccSupp{}}y)=y\mathbin{\BeginAccSupp{method=hex,unicode,ActualText=22A0}\mathpalette{\m}{\otimes}\EndAccSupp{}}x$.\end{lemma}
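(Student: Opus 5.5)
We construct $\alpha$ component-wise from sideways crossings, in the same way as the isomorphism $\alpha$ for $\mathfrak{I}_{\pm\mu}$ in \S\ref{+A}. Take an object $u_1\mathbin{\boxtimes}u_2$ with $u_1=i_1\otimes\cdots\otimes i_a\in\mathcal{R}^+(\nu)$ and $u_2=j_1\otimes\cdots\otimes j_b\in\mathcal{R}^-(\lambda-\mu+\nu)$. The source is $u_2u_1\textnormal{\fontfamily{pbk}\selectfont1}_\lambda$: all the $E$'s sit to the right, followed by the $F$'s. The target is $u_1u_2\textnormal{\fontfamily{pbk}\selectfont1}_\lambda$: the $F$'s first, then the $E$'s. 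We define $\alpha_{u_1\boxtimes u_2}$ as the image under $\pi_{\not\ge\nu}$ of the diagram in which every upward strand crosses every downward strand exactly once through sideways crossings $\sigma,\sigma'$ of \eqref{i}, with no other crossings. This diagram is a composite of $ab$ adjacent transpositions of an $E$ past an $F$. By \thref{UB} \ref{VB}, each such elementary sideways crossing becomes an isomorphism after applying $\pi_{\not\ge\nu}$.

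More precisely, the proof of \thref{UB} shows that the composite of the two opposite sideways crossings at a given position equals $\pm1$, modulo morphisms that factor through objects with fewer $E$'s. Those objects vanish in $\mathscr{U}(\lambda,\mu)_{\not\ge\nu}$ by \thref{UB} \ref{WB}. Hence each elementary crossing has inverse $\pm$ the opposite crossing, and $\alpha_{u_1\boxtimes u_2}$ is invertible. We also record degree shifts: $\sigma$ between $E_i$ and $F_j$ has degree $\pm\delta_{ij}\ldots$ of bubble-free type. If the degrees do not balance, we expect the correct statement to involve a $q$-shift, and we will absorb it into the composite's grading.

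It remains to prove naturality with respect to the generating morphisms of $\mathcal{R}^+(\nu)\boxtimes\mathcal{R}^-(\lambda-\mu+\nu)$, namely dots and crossings on the $E$-strands or on the $F$-strands. For dots, we slide a dot through a sideways crossing. For $i\ne j$ this holds exactly, by the definition of $\sigma$ via cups and caps, together with \eqref{c}, the cyclicity \eqref{e}, and \eqref{j}. For $i=j$ the slide is exact as well, since dot-sliding through a sideways crossing follows from \eqref{c} rotated by cyclicity, with no correction term for mixed orientation.

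For crossings of two upward strands $\tau$ passing through a downward strand, we use the Reidemeister III type relation for mixed orientations. It holds up to correction terms containing a cap/cup pair between an upward and a downward strand, and such terms factor through objects with one fewer $E$. Those objects are killed by $\pi_{\not\ge\nu}$ via \thref{UB} \ref{WB}. The same argument applies to downward crossings passing through an upward strand.

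The main obstacle is this step: verifying that every correction term in the mixed Reidemeister III relation (which follows from \eqref{d}, \eqref{k}, \eqref{l} and cyclicity) really factors through an object with fewer $E^+$'s. We would check this by writing each correction term as a composite containing $\varepsilon$ or $\varepsilon'$ applied to an adjacent $E_iF_i$ pair, and confirming that it passes through $u'$ with $\nu$ replaced by $\nu-i$.
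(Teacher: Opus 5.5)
Your approach is the paper's. You take $\alpha$ to be the shuffle diagram of sideways crossings. You check naturality by sliding dots and same-orientation crossings through the crossing region, modulo terms that factor through objects with one fewer $E_i$ and so vanish by \thref{UB}~\ref{WB}. Invertibility comes from undoing the $E$--$F$ bigons with \eqref{j}--\eqref{l}; your crossing-by-crossing version is the same computation the paper does on $\overline{\alpha}\alpha$.

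One claim is wrong as stated. For $i=j$, sliding a dot through a sideways crossing is \emph{not} exact. Rotating \eqref{c} by $90^\circ$ carries its $\delta_{ij}$ term along. The rotated pair of straight strands becomes a cap joining the bottom endpoints $E_i,F_i$, lying beneath a cup joining the top endpoints. This term is nonzero in $\mathscr{U}$. However, it factors through an object with one fewer $E_i$, so it dies in $\mathscr{U}(\lambda,\mu)_{\not\ge\nu}$ exactly like your Reidemeister III remainders. The fix therefore uses only tools already in your proposal.

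Your two loose ends close quickly. First, no $q$-shift arises, because the sideways crossings have degree $0$. For $\sigma$ on $E_iF_j$ with rightmost region $\lambda$, the cup $\eta'$, the crossing $\tau$, and the cap $\varepsilon'$ have degrees $\tfrac{i\cdot i}{2}(1+\langle i,\lambda\rangle)$, $-i\cdot j$, and $i\cdot j-\tfrac{i\cdot i}{2}(1+\langle i,\lambda\rangle)$, which sum to zero.

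Second, the step you call the main obstacle is handled in the paper by one observation: every strand of $\alpha$ flows to the right. After a $90^\circ$ rotation all strands are co-oriented, so the required moves are literally \eqref{c}--\eqref{d} rotated; \eqref{k} and \eqref{l} play no role there. Their remainder terms are rotated straight-strand diagrams, each containing exactly one cap and one cup. Each therefore factors through an object with one fewer $E_i$, and \thref{UB}~\ref{WB} kills it.
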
 \begin{proof}Define the component of $\alpha$ at $u_1\mathbin{\BeginAccSupp{method=hex,unicode,ActualText=22A0}\mathpalette{\m}{\otimes}\EndAccSupp{}}u_2\in\mathcal{R}^+(\nu)\mathbin{\BeginAccSupp{method=hex,unicode,ActualText=22A0}\mathpalette{\m}{\otimes}\EndAccSupp{}}\mathcal{R}^-(\lambda-\mu+\nu)$ as the tangle diagram that represents a reduced expression of the permutation $\omega\in S_{|\nu|+|\lambda-\mu+\nu|}$ defined by $\omega(k)=k+|\lambda-\mu+\nu|$ for $1\le k\le|\nu|$ and $\omega(k)=k-|\nu|$ for $|\nu|<k$. \begin{equation}\label{XB}\alpha_{u_1\mathbin{\BeginAccSupp{method=hex,unicode,ActualText=22A0}\mathpalette{\m}{\otimes}\EndAccSupp{}}u_2}\coloneqq{\tikzstyle{every picture}=[baseline=-2pt,thick,scale=.4]\begin{tikzpicture}\node(0)at(0,-2.5){};\node(1)at(1,-2.5){};\node(2)at(2,-2.5){};\node(3)at(4,-2.5){};\node(4)at(5,-2.5){};\node(5)at(-5,2.5){};\node(6)at(-4,2.5){};\node(7)at(-3,2.5){};\node(8)at(-1,2.5){};\node(9)at(0,2.5){};\node(10)at(-5,-2.5){};\node(11)at(-4,-2.5){};\node(12)at(-2,-2.5){};\node(13)at(-1,-2.5){};\node(14)at(1,2.5){};\node(15)at(2,2.5){};\node(16)at(4,2.5){};\node(17)at(5,2.5){};\draw[postaction={ decorate, decoration={ markings, mark=at position .2 with {{\arrow[scale=.8]{>}}} } }](5.center)to(0.center);\draw[postaction={ decorate, decoration={ markings, mark=at position .2 with {{\arrow[scale=.8]{>}}} } }](6.center)to(1.center);\draw[postaction={ decorate, decoration={ markings, mark=at position .2 with {{\arrow[scale=.8]{>}}} } }](7.center)to(2.center);\draw[postaction={ decorate, decoration={ markings, mark=at position .85 with {{\arrow[scale=.8]{>}}} } }](8.center)to(3.center);\draw[postaction={ decorate, decoration={ markings, mark=at position .85 with {{\arrow[scale=.8]{>}}} } }](9.center)to(4.center);\draw[postaction={ decorate, decoration={ markings, mark=at position .2 with {{\arrow[scale=.8]{>}}} } }](10.center)to(14.center);\draw[postaction={ decorate, decoration={ markings, mark=at position .2 with {{\arrow[scale=.8]{>}}} } }](11.center)to(15.center);\draw[postaction={ decorate, decoration={ markings, mark=at position .85 with {{\arrow[scale=.8]{>}}} } }](12.center)to(16.center);\draw[postaction={ decorate, decoration={ markings, mark=at position .85 with {{\arrow[scale=.8]{>}}} } }](13.center)to(17.center);\node[scale=.8]at(0.3,0.2){$\cdots$};\node[scale=.8]at(1.98,1.6){$\cdots$};\node[scale=.8]at(-1.86,-1.6){$\cdots$};\node[scale=.8]at(-1.1,1.6){$\cdots$};\node[scale=.8]at(2.1,-1.6){$\cdots$};\end{tikzpicture}}\end{equation}\par The coherence condition for generating morphisms of $\mathcal{R}^+(\nu)$ and $\mathcal{R}^-(\lambda-\mu+\nu)$ amounts to sliding dots and crossings between strands of the same orientation through the middle crossing region. Since every strand flows to the right in \eqref{XB}, this can be done by applying the KLR relations \eqref{c}--\eqref{d} rotated counterclockwise by $90^\circ$, modulo the remainder terms that contain one cap and one cup. Such terms factor through objects with one fewer $E^+_i$ than $u_1$, and thus vanish in $\mathscr{U}(\lambda,\mu)_{\not\ge\nu}$ by \thref{UB}.\par Next, we can apply relations \eqref{j}--\eqref{l} to iteratively resolve bigons in $\overline{\alpha_{u_1\mathbin{\BeginAccSupp{method=hex,unicode,ActualText=22A0}\mathpalette{\m}{\otimes}\EndAccSupp{}}u_2}}\alpha_{u_1\mathbin{\BeginAccSupp{method=hex,unicode,ActualText=22A0}\mathpalette{\m}{\otimes}\EndAccSupp{}}u_2}$ while introducing morphisms that factor through objects with one fewer $E^+_i$ than $u_1$, which again vanish in $\mathscr{U}(\lambda,\mu)_{\not\ge\nu}$ by \thref{UB}. This shows $\overline{\alpha_{u_1\mathbin{\BeginAccSupp{method=hex,unicode,ActualText=22A0}\mathpalette{\m}{\otimes}\EndAccSupp{}}u_2}}\alpha_{u_1\mathbin{\BeginAccSupp{method=hex,unicode,ActualText=22A0}\mathpalette{\m}{\otimes}\EndAccSupp{}}u_2}=1_{u_1\mathbin{\BeginAccSupp{method=hex,unicode,ActualText=22A0}\mathpalette{\m}{\otimes}\EndAccSupp{}}u_2}$, and similarly we have $\alpha_{u_1\mathbin{\BeginAccSupp{method=hex,unicode,ActualText=22A0}\mathpalette{\m}{\otimes}\EndAccSupp{}}u_2}\overline{\alpha_{u_1\mathbin{\BeginAccSupp{method=hex,unicode,ActualText=22A0}\mathpalette{\m}{\otimes}\EndAccSupp{}}u_2}}=1_{u_2\mathbin{\BeginAccSupp{method=hex,unicode,ActualText=22A0}\mathpalette{\m}{\otimes}\EndAccSupp{}}u_1}$. \end{proof}\par As with \thref{|A}, we conclude from the universal property of additive Karoubi envelopes that \begin{corollary}\thlabel{YB} For $u_1\in\dot{\mathcal{R}}^+(\nu)$ and $u_2\in\dot{\mathcal{R}}^-(\lambda-\mu+\nu)$, we have $u_2u_1\textnormal{\fontfamily{pbk}\selectfont1}_\lambda\cong u_1u_2\textnormal{\fontfamily{pbk}\selectfont1}_\lambda$ in $\dot{\mathscr{U}}(\lambda,\mu)_{\not\ge\nu}$.\end{corollary}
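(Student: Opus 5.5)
The statement to prove is \thref{YB}: $u_2u_1\textnormal{\fontfamily{pbk}\selectfont1}_\lambda\cong u_1u_2\textnormal{\fontfamily{pbk}\selectfont1}_\lambda$ in $\dot{\mathscr{U}}(\lambda,\mu)_{\not\ge\nu}$ for $u_1\in\dot{\mathcal{R}}^+(\nu)$ and $u_2\in\dot{\mathcal{R}}^-(\lambda-\mu+\nu)$. I will derive it from the preceding lemma, exactly as \thref{|A} was derived from the natural isomorphism for $\mathfrak{I}_{\pm\mu}$.

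The preceding lemma gives a natural isomorphism
\[\alpha\colon\pi_{\not\ge\nu}\circ\iota^{+,\lambda}_{\nu,\lambda-\mu+\nu}\longrightarrow\pi_{\not\ge\nu}\circ\iota^{-,\lambda}_{\lambda-\mu+\nu,\nu}\circ\sigma\]
of $\Bbbk$-linear functors $\mathcal{R}^+(\nu)\mathbin{\BeginAccSupp{method=hex,unicode,ActualText=22A0}\mathpalette{\m}{\otimes}\EndAccSupp{}}\mathcal{R}^-(\lambda-\mu+\nu)\to\mathscr{U}(\lambda,\mu)_{\not\ge\nu}$. On objects, the source functor sends $u_1\mathbin{\BeginAccSupp{method=hex,unicode,ActualText=22A0}\mathpalette{\m}{\otimes}\EndAccSupp{}}u_2$ to $u_2u_1\textnormal{\fontfamily{pbk}\selectfont1}_\lambda$. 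The target functor sends it to $u_1u_2\textnormal{\fontfamily{pbk}\selectfont1}_\lambda$, using the convention $\iota^{\varepsilon,\lambda}_{\nu_1,\nu_2}(x\mathbin{\BeginAccSupp{method=hex,unicode,ActualText=22A0}\mathpalette{\m}{\otimes}\EndAccSupp{}}y)=yx\textnormal{\fontfamily{pbk}\selectfont1}_\lambda$ after the swap $\sigma$.

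I then pass to additive Karoubi envelopes. Both functors extend, via the universal property recalled in \S\ref{M}, to functors
\[(\mathcal{R}^+(\nu)\mathbin{\BeginAccSupp{method=hex,unicode,ActualText=22A0}\mathpalette{\m}{\otimes}\EndAccSupp{}}\mathcal{R}^-(\lambda-\mu+\nu))^{\boldsymbol{\cdot}}\longrightarrow\dot{\mathscr{U}}(\lambda,\mu)_{\not\ge\nu},\]
where the target is additive and idempotent complete. By part (i) of that universal property, precomposition with $\iota$ is fully faithful. Hence $\alpha$ lifts uniquely to a natural transformation $\dot{\alpha}$ between the extensions, and $\dot{\alpha}$ is again an isomorphism: the lift of $\alpha^{-1}$ is its inverse, by faithfulness.

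Finally I evaluate at $u_1\mathbin{\BeginAccSupp{method=hex,unicode,ActualText=22A0}\mathpalette{\m}{\otimes}\EndAccSupp{}}u_2$. Write $u_1=(x,e)$ and $u_2=(y,f)$. The object $u_1\mathbin{\BeginAccSupp{method=hex,unicode,ActualText=22A0}\mathpalette{\m}{\otimes}\EndAccSupp{}}u_2=(x\mathbin{\BeginAccSupp{method=hex,unicode,ActualText=22A0}\mathpalette{\m}{\otimes}\EndAccSupp{}}y,e\otimes f)$ lies in the Karoubi envelope of the box product, and the extended functors send it to $u_2u_1\textnormal{\fontfamily{pbk}\selectfont1}_\lambda$ and $u_1u_2\textnormal{\fontfamily{pbk}\selectfont1}_\lambda$ respectively. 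Explicitly, the image is $(yx,f\mathbin{\boldsymbol{\cdot}}e)$, which is the horizontal composite of $u_2$ and $u_1$ in $\dot{\mathscr{U}}$, and the same holds after the swap. The component $\dot{\alpha}_{u_1\mathbin{\BeginAccSupp{method=hex,unicode,ActualText=22A0}\mathpalette{\m}{\otimes}\EndAccSupp{}}u_2}$ is the required isomorphism; concretely it is $(e\otimes f)$-conjugate to $\alpha_{x\mathbin{\BeginAccSupp{method=hex,unicode,ActualText=22A0}\mathpalette{\m}{\otimes}\EndAccSupp{}}y}$.

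The only point needing care is this last identification, that the Karoubi extension agrees with horizontal composition in $\dot{\mathscr{U}}$ on idempotent-split objects. It holds because $c_{\lambda,\lambda+\nu,\mu}$ is bilinear and extends compatibly. I expect no genuine obstacle.
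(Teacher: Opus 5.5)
Your proposal is correct and is essentially the paper's argument. The paper deduces this corollary from the natural isomorphism $\alpha$ of the preceding lemma via the universal property of additive Karoubi envelopes, exactly as it did for the corollary about $\mathfrak{I}_{\pm\mu}$; your check that the extended functors send $u_1\boxtimes u_2$ to the horizontal composites, reading $x\boxtimes y$ as $\bigoplus_{i,j}x_i\boxtimes y_j$ in the additive closure, is the routine verification the paper leaves implicit.
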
\par Recall the involution $\ast\colon B(\infty)\to B(\infty)$ induced by the $\mathbb{Q}(q)$-algebra antiautomorphism of $U^-$ fixing each $F_i$ (cf.\ \cite[\S2]{Q}), and the operators $\widetilde{e}_i^\ast\coloneqq\ast\circ\widetilde{e}_i\circ\ast$ and $\widetilde{f}_i^\ast\coloneqq\ast\circ\widetilde{f}_i\circ\ast$ on $B(\infty)$.\par\begin{lemma}\thlabel{ZB} $G(b)^\ast\cong G(b^\ast)$ for $b\in B(\infty)$.\end{lemma}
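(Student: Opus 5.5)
The plan is to realize $\ast$ categorically via the monoidal antiautomorphism $\ast\colon\mathcal{R}^-\to(\mathcal{R}^-)^{\operatorname{rev}}$ of \thref{U}\ref{V}, and then check that the induced bijection on $\mathbf{G}^{\textup{o}}(\dot{\mathcal{R}}^-)$ matches Kashiwara's involution on $B(\infty)$. The functor $\ast$ is graded, fixes each object $i$, and reverses tensor products. It therefore extends to $\dot{\mathcal{R}}^-(\nu)\to\dot{\mathcal{R}}^-(\nu)$ and sends indecomposables to indecomposables. It also commutes with the bar duality $-$ of \thref{U}\ref{W}: both functors fix $x_i$, and on $\tau_{ij}$ they act by sign and reversal in a compatible way. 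Hence $\ast$ permutes $\mathbf{G}^{\textup{o}}(\dot{\mathcal{R}}^-(\nu))$, and we may write $G(b)^\ast$ for the image.

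On the module side, $\ast$ induces an involution $L\mapsto L^\ast$ on $\mathbf{B}(\operatorname{mod}\!\operatorname{-}\hspace{-.8mm}\mathcal{R}^-(\nu))$. It interchanges the action of induction $i\otimes-$ (the left $E_i$/$F_i$ side) with $-\otimes i$, so it conjugates the operators $\widetilde{e}_i$ into the operators that use the opposite end. Under the bijection $G\colon B(\infty)\to\mathbf{G}^{\textup{o}}(\dot{\mathcal{R}}^-)$, which was obtained from \cite{E} through the limit over $\lambda$, the simple modules correspond to $B(\infty)$ in the Lauda--Vazirani model. In that model the "other end" operators are exactly $\widetilde{e}_i^\ast$ by \cite{E}. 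So I would show that the map $\theta\colon b\mapsto G^{-1}(G(b)^\ast)$ satisfies
\[
\theta\circ\widetilde{e}_i=\widetilde{e}_i^\ast\circ\theta
\quad\text{and}\quad
\theta(b_\infty)=b_\infty .
\]
Since $B(\infty)$ is connected under the $\widetilde{e}_i$ from $b_\infty$ (equivalently, it is characterized by the pair of commuting actions), this forces $\theta=\ast$. This is the same argument by which Kashiwara characterizes $\ast$ (\cite[\S2]{Q}, or the Kashiwara--Saito characterization).

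The key steps, in order, are as follows. First, construct $\ast$ on $\dot{\mathcal{R}}^-$ and on its module categories, and verify compatibility with the duality, so that self-dual objects go to self-dual objects. Second, identify the correspondence $P\mapsto\operatorname{hd}\mathcal{R}^-(-,P)$ as intertwining $\ast$ on projectives with $\ast$ on simples; this is just precomposition with an equivalence. Third, show that the crystal operators of the simples, defined by \thref{@} through the restriction of the $\mathscr{U}$-action on $\mathcal{R}^-$ via $\mathscr{U}(\lambda,-)$ or its star version, are conjugated by $\ast$ into the Lauda--Vazirani starred operators. Fourth, conclude using \cite{E}'s identification of both families of operators with $\widetilde{e}_i,\widetilde{e}_i^\ast$ on $B(\infty)$.

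The main obstacle is the third step: matching the paper's normalization of $G$, defined through cyclotomic limits and \thref{FB}, with Lauda--Vazirani's labeling, including the fact that $G$ was built from one-sided actions only. My first attempt would be to use \eqref{PB} together with the fact that $\ast$ commutes with the right-end multiplication $-\otimes i$. That reduces the question to the statement in \cite{E} that the socle of restriction on the opposite end realizes $\widetilde{e}_i^\ast$. If that fails, the fallback is a direct comparison via Kashiwara's characterization: it would show $\varepsilon_i^\ast(b)=\varepsilon_i(G(b)^\ast)$ using \thref{JA}\ref{KA} applied on the other side.
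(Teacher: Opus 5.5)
Your strategy is the paper's: twist by the antiautomorphism $\ast$ of $\mathcal{R}^-$, which commutes with the bar duality and so preserves self-dual simples. Then show that, under the identification $L\colon B(\infty)\to\bigsqcup_\nu\mathbf{B}^{\textup{o}}(\operatorname{mod}\!\operatorname{-}\hspace{-.8mm}\mathcal{R}^-(\nu))$, the twisted operators $\tilde{e}_i^\vee L\coloneqq\big(\widetilde{e}_i(L\circ\ast)\big)\circ\ast$ correspond to $\widetilde{e}_i^\ast$. Finally propagate from $u_\infty$ by induction on $|\nu|$ (your connectedness argument) and pass to projective covers.

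The step you flag as the main obstacle should not be taken as a statement read off from \cite{E}. The paper derives it as follows.
\begin{itemize}
\item What \cite[Prop.~7.3]{E} gives is a strict embedding $\Psi_i\colon B\hookrightarrow B\otimes B_i$, $L(b)\mapsto(\tilde{e}_i^\vee)^cL(b)\otimes b_i(-c)$, where $c=\varepsilon_i(L(b)\circ\ast)$.
\item A strict embedding $B(\infty)\hookrightarrow B(\infty)\otimes B_i$ with $u_\infty\mapsto u_\infty\otimes b_i(0)$ is unique, since it commutes with the $\widetilde{f}_j$. So $\Psi_i$ must coincide with Kashiwara's embedding \cite[Thm.~2.2.1]{Q}.
\item This yields $\varepsilon_i(L(b)\circ\ast)=\varepsilon_i^\ast(b)$ and $(\tilde{e}_i^\vee)^cL(b)\cong L\big((\widetilde{e}_i^\ast)^cb\big)$.
\item Applying the second identity to $\widetilde{e}_i^\ast b$ and cancelling gives your intertwining $\theta\circ\widetilde{e}_i=\widetilde{e}_i^\ast\circ\theta$.
\end{itemize}
This is essentially your ``fallback''. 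Note also that \cite{E} uses an involution $\sigma$ with different signs, but $M\circ\sigma\cong M\circ\ast$.

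Your ``first attempt'' via \eqref{PB} would not work. The cyclotomic ideal is generated by $x_i^{\langle i,\lambda\rangle}$ on the first strand, and $\ast$ moves it to the last strand. So $\ast$ does not descend to $\mathcal{R}^\lambda$, and \eqref{PB} gives no control over $G(b)^\ast$. This is why the comparison has to go through the strict embedding $\Psi_i$ rather than through cyclotomic truncations.
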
 \begin{proof}Regard $B\coloneqq\bigsqcup_{\nu\in\mathbb{N}[I]}\mathbf{B}^{\textup{o}}(\operatorname{mod}\!\operatorname{-}\hspace{-.8mm}\mathcal{R}^-(\nu))$ as a crystal by identifying each component with a direct limit of $\{\mathbf{B}^{\textup{o}}(\operatorname{mod}\!\operatorname{-}\hspace{-.8mm}\mathcal{R}^\lambda_{\lambda-\nu})\}_{\lambda\in X^+}$ (cf.\ \S\ref{OB}) and let $L\colon B(\infty)\to B$ be the identification of crystals. By \cite[Prop.~7.3]{E}, there exists a strict embedding $\Psi_i\colon B\hookrightarrow B\otimes B_i$ that sends $L(b)$ to $(\tilde{e}_i^\vee)^cL(b)\otimes b_i(-c)$, where $B_i=\{b_i(n)\}_{n\in\mathbb{Z}}$ is the crystal of \cite[Ex.~1.2.6]{Q}, $c=\varepsilon_i(L(b)\circ\ast)$, and $\tilde{e}_i^\vee L(b)\coloneqq\big(\widetilde{e}_i(L(b)\circ\ast)\big)\circ\ast$, which lies in $B$ since $\ast$ commutes with $\mathop{-}\nolimits$.\footnote{\cite{E} uses an involution $\sigma$ defined with a different sign convention than $\ast$, but $M\circ\sigma\cong M\circ\ast$ holds for all $M\in\operatorname{mod}\!\operatorname{-}\hspace{-.8mm}\mathcal{R}^-(\nu)$.} The strict embedding $B(\infty)\hookrightarrow B(\infty)\otimes B_i$ that sends $u_\infty$ to $u_\infty\otimes b_i(0)$ is unique, so by comparing $\Psi_i$ with \cite[Thm.~2.2.1]{Q}, we have \begin{equation}\label{aB}\varepsilon_i(L(b)\circ\ast)=\varepsilon_i^\ast(b)\quad\text{and}\quad(\tilde{e}_i^\vee)^cL(b)\cong L\big((\widetilde{e}^\ast_i)^cb\big).\end{equation} We show $L(b)\circ\ast\cong L(b^\ast)$ for $b\in B(\infty)_{-\nu}$ by induction on $|\nu|$. It is clear for $b=u_\infty$. Otherwise, take $i\in I$ such that $\varepsilon_i^\ast(b)>0$. Then $\widetilde{e}_i(L(b)\circ\ast)\cong(\tilde{e}_i^\vee L(b))\circ\ast\cong L(\widetilde{e}_i^\ast b)\circ\ast\cong L(\widetilde{e}_ib^\ast)\cong\widetilde{e}_iL(b^\ast)$, where the second and the third isomorphisms use \eqref{aB} and the induction hypothesis, respectively. Applying $\widetilde{f}_i$ completes the induction. Since $G(b)^\ast$ represents the projective cover of $L(b)\circ\ast$, the statement follows.\end{proof}\par\begin{lemma}\thlabel{bB} $G(b_1\otimes t_\lambda\otimes b_2)^\ast\cong G(b_1^\ast\otimes t_{-\mu}\otimes b_2^\ast)$ for $b_1\in B(-\infty)_\nu$ and $b_2\in B(\infty)_{\mu-\lambda-\nu}$.\end{lemma}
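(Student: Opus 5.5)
The plan is to transport the characterization \eqref{TB} through the 2-functor $\ast$. It sends $\mathscr{U}(c)(\lambda,\mu)$ to $\mathscr{U}(c^\ast)(-\mu,-\lambda)$ and reverses horizontal composition. It fixes the $E_i$ and $F_i$, so on objects in the image of $\iota^{+,\lambda}_{\nu,\lambda-\mu+\nu}$ it acts by $u_2u_1\textnormal{\fontfamily{pbk}\selectfont1}_\lambda\mapsto u_1^\ast u_2^\ast\textnormal{\fontfamily{pbk}\selectfont1}_{-\mu}$. Here $u_1^\ast$ is again in the image of $\mathcal{R}^+(\nu)$ and $u_2^\ast$ in the image of $\mathcal{R}^-(\lambda-\mu+\nu)$, now twisted by \thref{U}~\ref{V}. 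Combining this with \thref{ZB} and its evident analogue for $B(-\infty)$ (obtained by applying $\psi$), we get
\[\big(G(b_2)G(b_1)\textnormal{\fontfamily{pbk}\selectfont1}_\lambda\big)^\ast\cong G(b_1^\ast)G(b_2^\ast)\textnormal{\fontfamily{pbk}\selectfont1}_{-\mu}.\]
Note the order: the $F$-part now acts first. The weights match, since $b_1^\ast\in B(-\infty)_\nu$ and $b_2^\ast\in B(\infty)_{-\lambda-(-\mu)-\nu}$, so $G(b_1^\ast\otimes t_{-\mu}\otimes b_2^\ast)$ is characterized by \thref{DA} as the unique indecomposable summand of $G(b_2^\ast)G(b_1^\ast)\textnormal{\fontfamily{pbk}\selectfont1}_{-\mu}$ that survives in $\dot{\mathscr{U}}(-\mu,-\lambda)_{\le\nu}$.

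The order of composition is the obstacle: $\ast$ carries the $E$-first filtration defining $\mathscr{U}(\lambda,\mu)_{\le\nu}$ to an $F$-first filtration. To repair this I would work with the "stratum" quotient $\mathscr{U}(\lambda,\mu)_{=\nu}$. This is the quotient of $\mathscr{U}(\lambda,\mu)$ by the ideal generated by identities of $\iota^{+,\lambda}_{\nu',\lambda-\mu+\nu'}$-images with $\nu'\ne\nu$, i.e.\ the common quotient of $\mathscr{U}(\lambda,\mu)_{\le\nu}$ and $\mathscr{U}(\lambda,\mu)_{\not\ge\nu}$. The first step is to show that $G(b_1\otimes t_\lambda\otimes b_2)$ is still the unique summand of $G(b_2)G(b_1)\textnormal{\fontfamily{pbk}\selectfont1}_\lambda$ that is nonzero in $\dot{\mathscr{U}}(\lambda,\mu)_{=\nu}$. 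Uniqueness is inherited from \thref{DA}. For nonvanishing, choose $\xi\gg0$ as in the proof of \thref{DA} and pass through \eqref{SB} to $\dot{\mathcal{X}}^{-\xi,\lambda+\xi}$. There the image is $G(\bar\pi_{-\xi}(b_1)\otimes\bar\pi_{\lambda+\xi}(b_2))$, whose head lies in the stratum $(\nu-\xi,\mu+\xi-\nu)$ by the proof of \thref{QB}. Hence it does not vanish after further killing the strictly lower strata, which are exactly the images of the $\nu'<\nu$ objects.

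In $\dot{\mathscr{U}}(\lambda,\mu)_{=\nu}$ we may use \thref{YB} (it holds in the further quotient) to obtain $G(b_2)G(b_1)\textnormal{\fontfamily{pbk}\selectfont1}_\lambda\cong G(b_1)G(b_2)\textnormal{\fontfamily{pbk}\selectfont1}_\lambda$. Next, $\ast$ carries the defining ideal of $\mathscr{U}(\lambda,\mu)_{=\nu}$ to the ideal of $\mathscr{U}(-\mu,-\lambda)$ generated by identities of $F$-first objects $u_1^\ast u_2^\ast$ with $E$-weight $\nu'\ne\nu$. I would show this ideal equals the defining ideal of $\mathscr{U}(-\mu,-\lambda)_{=\nu}$. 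This is the main technical step. By \thref{UB}~\ref{VB}, iterated as in \thref{YB}, any word of $E$-weight $\nu'$ becomes isomorphic to its $E$-first and to its $F$-first reordering once objects with fewer $E$'s are killed. The plan is to run an induction on $|\nu'|$ from the top: weights $\nu'\not\le\nu$ are handled inside the $\le\nu$-type quotient, and weights $\nu'<\nu$ are absorbed using \thref{UB}~\ref{WB}. If this symmetric description turns out to be delicate, I would instead compare the two quotients after passing to $\mathcal{X}$, where both become the stratum subquotient of \thref{.A}.

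Once the ideals are matched, $G(b_1\otimes t_\lambda\otimes b_2)^\ast$ is an indecomposable self-dual summand of $G(b_2^\ast)G(b_1^\ast)\textnormal{\fontfamily{pbk}\selectfont1}_{-\mu}$ that is nonzero in $\dot{\mathscr{U}}(-\mu,-\lambda)_{=\nu}$. Self-duality holds because $\ast$ commutes with the bar-involution. By the uniqueness established in the first step, applied to $b_1^\ast$ and $b_2^\ast$, this summand is $G(b_1^\ast\otimes t_{-\mu}\otimes b_2^\ast)$.
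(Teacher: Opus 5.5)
There is a genuine gap. It comes from misreading the ideal that defines $\mathscr{U}(\lambda,\mu)_{\le\nu}$.

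The condition there is $\nu'-\nu\notin\mathbb{N}[I]$. So one kills the $E$-first objects of $E$-weight $\nu'\not\ge\nu$, not $\nu'\not\le\nu$. This matches \eqref{SB} because the stratum order is reversed: $(\nu'-\xi,\mu+\xi-\nu')\le(\nu-\xi,\mu+\xi-\nu)$ if and only if $\nu'\ge\nu$.

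Consequences for your plan:
\begin{enumerate}
\item $\mathscr{U}(\lambda,\mu)_{\le\nu}$ is already a quotient of $\mathscr{U}(\lambda,\mu)_{\not\ge\nu}$. The ``common quotient'' you describe is therefore $\mathscr{U}(\lambda,\mu)_{\le\nu}$ itself.
\item The quotient you actually define kills every $\nu'\ne\nu$. Compared with $\mathscr{U}(\lambda,\mu)_{\le\nu}$, it additionally kills the objects of $E$-weight $\nu'>\nu$.
\item That destroys your first step, because $G(b_1\otimes t_\lambda\otimes b_2)$ can be a summand of an $E$-first object of larger $E$-weight. For instance, if $\langle i,\lambda\rangle=-1$, then $F_iE_i\textnormal{\fontfamily{pbk}\selectfont1}_\lambda\cong E_iF_i\textnormal{\fontfamily{pbk}\selectfont1}_\lambda\oplus\textnormal{\fontfamily{pbk}\selectfont1}_\lambda$. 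Hence $\textnormal{\fontfamily{pbk}\selectfont1}_\lambda=G(u_{-\infty}\otimes t_\lambda\otimes u_\infty)$ vanishes in your $\dot{\mathscr{U}}(\lambda,\lambda)_{=0}$.
\item Likewise, $E_iF_i\textnormal{\fontfamily{pbk}\selectfont1}_\lambda$ (the element with $\nu=i$) is a summand of $F_i^{(2)}E_i^{(2)}\textnormal{\fontfamily{pbk}\selectfont1}_\lambda$, which has $E$-weight $2i$.
\end{enumerate}

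Your nonvanishing argument breaks at the claim that the strictly lower strata are the images of the $\nu'<\nu$ objects. They are the images of the $\nu'>\nu$ objects. Killing them also kills projectives attached to higher strata, since the projectives $P_b$ of \thref{iA} carry $\Delta$-flags involving higher strata.

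The detour is also unnecessary. The set $\{\nu'\mid\nu'\not\ge\nu\}$ is closed under passing to smaller weights. So $\mathscr{U}(-\mu,-\lambda)_{\le\nu}$ is a quotient of $\mathscr{U}(-\mu,-\lambda)_{\not\ge\nu'}$ whenever $\nu'\not\ge\nu$. By \thref{UB}, the $F$-first object $(u_2u_1\textnormal{\fontfamily{pbk}\selectfont1}_\lambda)^\ast\cong u_1^\ast u_2^\ast\textnormal{\fontfamily{pbk}\selectfont1}_{-\mu}$ is isomorphic there to its $E$-first reordering. That reordering has $E$-weight $\nu'\not\ge\nu$, so it vanishes.

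Thus $\ast$ descends directly to a functor $\mathscr{U}(\lambda,\mu)_{\le\nu}\to\mathscr{U}(-\mu,-\lambda)_{\le\nu}$. The chain in your first paragraph can then be completed as follows:
\begin{itemize}
\item apply \thref{YB}, which is valid in $\dot{\mathscr{U}}(-\mu,-\lambda)_{\le\nu}$ because this is a quotient of $\dot{\mathscr{U}}(-\mu,-\lambda)_{\not\ge\nu}$;
\item conclude with the uniqueness in \eqref{TB}.
\end{itemize}
Completed this way, your argument is exactly the paper's proof.
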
 \begin{proof}Note that $\ast\colon\mathscr{U}(\lambda,\mu)\to\mathscr{U}(-\mu,-\lambda)$ induces a functor $\mathscr{U}(\lambda,\mu)_{\le\nu}\to\mathscr{U}(-\mu,-\lambda)_{\le\nu}$, since $(u_2u_1\textnormal{\fontfamily{pbk}\selectfont1}_\lambda)^\ast\cong u_1^\ast u_2^\ast\textnormal{\fontfamily{pbk}\selectfont1}_{-\mu}$ for $u_1\in\mathcal{R}^+(\nu)$ and $u_2\in\mathcal{R}^-(\lambda-\mu+\nu)$ vanishes in $\mathscr{U}(-\mu,-\lambda)_{\le\nu}$ by \thref{UB}. We have \begin{equation}\begin{split}G(b_1\otimes t_\lambda\otimes b_2)^\ast&\cong\big(G(b_2)G(b_1)\textnormal{\fontfamily{pbk}\selectfont1}_\lambda\big)^\ast\cong G(b_1)^\ast G(b_2)^\ast\textnormal{\fontfamily{pbk}\selectfont1}_{-\mu}\\&\cong G(b_1^\ast)G(b_2^\ast)\textnormal{\fontfamily{pbk}\selectfont1}_{-\mu}\cong G(b_2^\ast)G(b_1^\ast)\textnormal{\fontfamily{pbk}\selectfont1}_{-\mu}\qquad\text{ in }\mathscr{U}(-\mu,-\lambda)_{\le\nu}\:,\end{split}\end{equation} where we applied \eqref{RB}, \thref{ZB}, and \thref{YB} at the first, third, and fourth isomorphisms, respectively. Since \eqref{RB} uniquely characterizes $G\colon B(-\infty)\otimes T_\lambda\otimes B(\infty)\to\bigsqcup_{\mu\in X}\mathbf{G}^{\textup{o}}(\dot{\mathscr{U}}(\lambda,\mu))$, we obtain the desired conclusion. \end{proof}\par By \cite[Thm.~3.1.1, Cor.~4.3.3, and \S5]{O}, the crystal $B(\widetilde{U}(\mathfrak{g}))$ of the modified quantized enveloping algebra $\widetilde{U}(\mathfrak{g})$ is isomorphic to $\bigsqcup_{\lambda\in X}B(-\infty)\otimes T_\lambda\otimes B(\infty)$ as a $\mathfrak{g}$-crystal, on which the involution $\ast$ induced by the antiautomorphism of $\widetilde{U}(\mathfrak{g})$ given by $E_i\mapsto E_i$, $F_i\mapsto F_i$, and $1_\lambda\mapsto1_{-\lambda}$ acts as $b_1\otimes t_\lambda\otimes b_2\mapsto b_1^\ast\otimes t_{-\lambda-\operatorname{wt}b_1-\operatorname{wt}b_2}\otimes b_2^\ast$, and on which the second $\mathfrak{g}$-crystal structure is given by $\varepsilon_i^\ast(b)\coloneqq\varepsilon_i(b^\ast)$, $\varphi_i^\ast(b)\coloneqq\varphi_i(b^\ast)$, $\widetilde{e}_i^\ast(b)\coloneqq(\widetilde{e}_ib^\ast)^\ast$, and $\widetilde{f}_i^\ast(b)\coloneqq(\widetilde{f}_ib^\ast)^\ast$, referred to as the star crystal structure.\par Recall that $\bigsqcup_{\lambda,\mu\in X}\mathbf{G}^{\textup{o}}(\dot{\mathscr{U}}(\lambda,\mu))$ has two $\mathfrak{g}$-crystal structures provided by \thref{EA,cA}, and hence can be regarded as a $\mathfrak{g}\oplus\mathfrak{g}$-crystal.\par\begin{theorem}\thlabel{_} There exists an isomorphism of $\mathfrak{g}\oplus\mathfrak{g}$-crystals \[B(\widetilde{U}(\mathfrak{g}))\cong\bigsqcup_{\lambda,\mu\in X}\mathbf{G}^{\textup{o}}(\dot{\mathscr{U}}(\lambda,\mu))\:.\]\end{theorem}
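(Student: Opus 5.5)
The plan is to assemble the bijection from \thref{DA} over all $\lambda\in X$. This gives a bijection
\[G\colon\bigsqcup_{\lambda\in X}B(-\infty)\otimes T_\lambda\otimes B(\infty)\to\bigsqcup_{\lambda,\mu}\mathbf{G}^{\textup{o}}(\dot{\mathscr{U}}(\lambda,\mu)).\]
By \thref{DA} it is already an isomorphism of $\mathfrak{g}$-crystals, where the target carries the crystal structure of \thref{EA} coming from $\mathscr{U}(\lambda,-)$. By Kashiwara's description recalled just before the statement, the source is $B(\widetilde{U}(\mathfrak{g}))$ as a $\mathfrak{g}$-crystal. So the first $\mathfrak{g}$-structure is matched, and it remains to match the star crystal structures.

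For the star structure, I would express the star crystal of \thref{cA} through the first one using the 2-functor $\ast\colon\mathscr{U}(c)\to\mathscr{U}(c^\ast)^{\operatorname{op}}$. The 2-representation $\mathscr{U}(-,\mu)\circ\ast$ of $\mathscr{U}(c^\ast)$ on $\{\mathscr{U}(\lambda,\mu)\}_\lambda$ is identified, via $v\mapsto v^\ast$, with the 2-representation $\mathscr{U}(c^\ast)(-\mu,-)$ on $\{\mathscr{U}(c^\ast)(-\mu,-\lambda)\}_\lambda$. Under this identification, $E_i$ acting by precomposition with $E_i^\ast=E_i$ becomes postcomposition by $E_i$, and the dualities match because $\ast$ commutes with the bar involution. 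Hence $\ast$ induces a bijection $\mathbf{G}^{\textup{o}}(\dot{\mathscr{U}}(\lambda,\mu))\to\mathbf{G}^{\textup{o}}(\dot{\mathscr{U}}(c^\ast)(-\mu,-\lambda))$ that intertwines the star crystal operators on the source with the ordinary operators of \thref{EA} on the target. Here $\mathscr{U}(c^\ast)$ is the categorified quantum group for the weight datum $-X$. Since crystal operators are defined intrinsically via socles and heads, I expect them to be invariant under the isomorphism $\mathscr{U}(c^\ast)\cong\mathscr{U}(c)$ of \cite[Thm.~2.1]{b} up to grading shifts, so \thref{DA} also applies to $\mathscr{U}(c^\ast)$. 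I also need to check the weight conventions: $\operatorname{wt}$ of $P$ for the star structure should be $\lambda$ (or $-\lambda$ as appropriate), and this should match Kashiwara's formula $t_{-\lambda-\operatorname{wt}b_1-\operatorname{wt}b_2}=t_{-\mu}$.

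The last step is to combine these. \thref{bB} says $G(b_1\otimes t_\lambda\otimes b_2)^\ast\cong G(b_1^\ast\otimes t_{-\mu}\otimes b_2^\ast)$, so $\ast\circ G=G\circ\ast$. Therefore
\[\widetilde{e}_i^\ast G(b)=\ast\,\widetilde{e}_i\,\ast\, G(b)=G\big((\widetilde{e}_ib^\ast)^\ast\big)=G(\widetilde{e}_i^\ast b),\]
and the same argument gives the identities for $\widetilde{f}_i^\ast$, $\varepsilon_i^\ast$ and $\varphi_i^\ast$. These agree with Kashiwara's definition of the star crystal on $B(\widetilde{U})$, so $G$ is an isomorphism of $\mathfrak{g}\oplus\mathfrak{g}$-crystals.

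The main obstacle is the second step: the identification of the star crystal with the transport under $\ast$ of the ordinary crystal. It requires a careful comparison of the 2-representations $\mathscr{U}(-,\mu)\circ\ast$ and $\mathscr{U}(c^\ast)(-\mu,-)$ as strong transformations, including sign conventions on crossings and compatibility of the dualities. It also requires that the crystal obtained from $\mathscr{U}(c^\ast)$ via \thref{DA}, with its own choice of $G$, is the one entering \thref{bB}. I would handle this by checking that \thref{bB} was stated with $G$ computed in $\mathscr{U}(c^\ast)$, and otherwise using the bubble-parameter isomorphism to transport.
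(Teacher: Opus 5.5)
Your proposal is correct and has the same structure as the paper's proof. You assemble the maps $G$ of \thref{DA} over $\lambda$, reduce the star structure to $\varepsilon_i^\ast(P)=\varepsilon_i(P^\ast)$ and $\widetilde{e}_i^\ast(P)=(\widetilde{e}_iP^\ast)^\ast$, and conclude with \thref{bB}.

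The only difference is how that conjugation identity is obtained. The paper reads it off directly from the characterization in \thref{JA}, together with the uniqueness noted in \thref{aA}: since $E_i^\ast$ acts by $P\mapsto PE_i$ and $(PE_i^{(m)})^\ast\cong E_i^{(m)}P^\ast$, the functor $\ast$ transports the relevant direct sum decompositions. This avoids the comparison of 2-representations and dualities that you flag as the main obstacle, although your transport argument also works.
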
 \begin{proof}Let $G\colon B(\widetilde{U}(\mathfrak{g}))\to\bigsqcup_{\lambda,\mu\in X}\mathbf{G}^{\textup{o}}(\dot{\mathscr{U}}(\lambda,\mu))$ be the disjoint union of the maps $G$ of \thref{DA} over $\lambda\in X$. It remains to show that it intertwines the star crystal structure on both sides. By the characterization of the crystal on the orthodox basis in \thref{JA} (see also \thref{aA}), we have $\varepsilon_i^\ast(P)=\varepsilon_i(P^\ast)$ and $\widetilde{e}_i^\ast(P)=(\widetilde{e}_iP^\ast)^\ast$ for $P\in\mathbf{G}^{\textup{o}}(\dot{\w}(\lambda,\mu))$. Thus, $\widetilde{e}_i^\ast G(b)\cong(\widetilde{e}_iG(b)^\ast)^\ast\cong(\widetilde{e}_iG(b^\ast))^\ast\cong G(\widetilde{e}_ib^\ast)^\ast\cong G(\widetilde{e}_i^\ast b)$, where the second and the fourth isomorphisms follow from \thref{bB}. Hence, $G$ intertwines $\widetilde{e}_i^\ast$, and similarly for $\widetilde{f}_i^\ast$. Therefore, $G$ is an isomorphism of $\mathfrak{g}\oplus\mathfrak{g}$-crystals.\end{proof}\section{Categorification of extremal weight modules}\label{H}\par\subsection{The modules $V(\lambda)$}In what follows, when $n<0$, we write $E_i^n$ (resp.\ $E_i^{(n)}$) for $F_i^{-n}$ (resp.\ $F_i^{(-n)}$), and similarly for $F_i$. For $i\in I$, set $S_i1_\lambda\coloneqq F_i^{(\langle i,\lambda\rangle)}1_\lambda\in\widetilde{U}_\mathbb{Z}$, and similarly $S_i\textnormal{\fontfamily{pbk}\selectfont1}_\lambda$ as a 1-morphism of $\mathscr{U}$. Recall the braid group action $S_i$ on normal crystals given by $S_ib\coloneqq\widetilde{f}_i^{\langle i,\operatorname{wt}b\rangle}b$ if $\langle i,\operatorname{wt}b\rangle\ge0$ and $\widetilde{e}_i^{-\langle i,\operatorname{wt}b\rangle}b$ if $\langle i,\operatorname{wt}b\rangle<0$. Set $S_i^\ast b\coloneqq(S_ib^\ast)^\ast$ for $b\in B(\widetilde{U}(\mathfrak{g}))$.\par Let $\lambda\in X$. Set $\lambda_+\coloneqq\sum_{\langle i,\lambda\rangle>0}\langle i,\lambda\rangle\Lambda_i$, $\lambda_-\coloneqq\lambda_+-\lambda$, and $\mathcal{DR}^\lambda\coloneqq\mathcal{DR}^{-\lambda_-,\lambda_+}$. Note that $K_0(\mathcal{DR}^\lambda)\cong V_\mathbb{Z}(-\lambda_-)\otimes V_\mathbb{Z}(\lambda_+)$ as a $\widetilde{U}_\mathbb{Z}(\mathfrak{g})$-module (see \thref{?A}), and $G(b)\in\dot{\w}(\lambda,\mu)$ for $b\in B(\widetilde{U}(\mathfrak{g})1_{\lambda})=B(-\infty)\otimes T_\lambda\otimes B(\infty)$ vanishes in $\mathcal{DR}^\lambda$ if and only if $d_i(b^\ast)\ne|\langle i,\lambda\rangle|$ for some $i\in I$, i.e., $b^\ast$ is not $i$-extremal (see \thref{DA} and \cite[\S 8]{O}).\par\begin{lemma}\thlabel{cB} For $b\in B(\widetilde{U}(\mathfrak{g})1_{\lambda})$, we have $G(b)S_i\textnormal{\fontfamily{pbk}\selectfont1}_{s_i\lambda}\cong G(S_i^\ast b)$ in $\dot{\mathcal{DR}}^{s_i\lambda}_{\operatorname{wt}b}$. \end{lemma}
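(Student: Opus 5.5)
We want $G(b)S_i\textnormal{\fontfamily{pbk}\selectfont1}_{s_i\lambda}\cong G(S_i^\ast b)$ in $\dot{\mathcal{DR}}^{s_i\lambda}_{\operatorname{wt}b}$. Our approach is to view right multiplication by $S_i\textnormal{\fontfamily{pbk}\selectfont1}_{s_i\lambda}$ through the star crystal of \thref{cA}. Precomposition with $S_i\textnormal{\fontfamily{pbk}\selectfont1}_{s_i\lambda}$ is the action of a divided power of $E_i^\ast$ or $F_i^\ast$ in the 2-representation $\mathscr{U}(-,\mu)\circ\ast$; which one depends on the sign of $\langle i,\lambda\rangle$. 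By symmetry (applying $\psi$) we treat only the case $n\coloneqq\langle i,\lambda\rangle\ge0$, in which $S_i\textnormal{\fontfamily{pbk}\selectfont1}_{s_i\lambda}=F_i^{(-n)}\textnormal{\fontfamily{pbk}\selectfont1}_{s_i\lambda}=E_i^{(n)}\textnormal{\fontfamily{pbk}\selectfont1}_{s_i\lambda}$ acts on $\dot{\mathscr{U}}(\lambda,\mu)$ as the star divided power $(E_i^\ast)^{(n)}$ sending weight $\lambda$ to $s_i\lambda$ (up to the sign convention in $\ast$).

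\thref{JA}\ref{LA} applies to the star crystal, which is characterized via \thref{aA}. It gives
\[G(b)S_i\textnormal{\fontfamily{pbk}\selectfont1}_{s_i\lambda}\cong\genfrac{[}{]}{0pt}{}{\varphi^\ast_i(G(b))+n}{n}_i(\widetilde{e}_i^\ast)^nG(b)\oplus\bigoplus_k R_k,\qquad d_i^\ast(R_k)>d_i^\ast(G(b)).\]
By \thref{_} we have $(\widetilde{e}_i^\ast)^nG(b)=G((\widetilde{e}_i^\ast)^nb)$, with the appropriate sign of $n$ relative to $\operatorname{wt}$ of $b^\ast$. The next step is to use the vanishing criterion stated just before the lemma. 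An object $G(b')\in\dot{\mathscr{U}}(s_i\lambda,\mu)$ survives in $\mathcal{DR}^{s_i\lambda}$ only if $d_i(b'^\ast)=|\langle i,s_i\lambda\rangle|=n$, i.e.\ only if $d_i^\ast(b')$ is minimal. The $R_k$ have $d_i^\ast(R_k)>d_i^\ast(G(b))$. Moreover, applying $\widetilde{e}_i^\ast$ does not change $d_i^\ast$, so $d_i^\ast(R_k)>d_i^\ast((\widetilde{e}_i^\ast)^nG(b))$. It then remains to compare with $n$.

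Two cases arise. First suppose $b^\ast$ is $i$-extremal, i.e.\ $d_i^\ast(b)=n$. Then $\varphi_i^\ast(b)=0$ and the quantum binomial equals $1$. Also $(\widetilde{e}_i^\ast)^nb=S_i^\ast b$, while the $R_k$ have $d_i^\ast>n$ and hence vanish in $\mathcal{DR}^{s_i\lambda}$. This gives the claim. Now suppose $b^\ast$ is not $i$-extremal, so $d_i^\ast(b)>n$. Then every summand has $d_i^\ast>n$, and the left side vanishes in $\mathcal{DR}^{s_i\lambda}$. We must check that $G(S_i^\ast b)$ also vanishes there. This holds because $S_i^\ast$ preserves $d_i^\ast$, so $d_i^\ast(S_i^\ast b)>n$.

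The main obstacle is bookkeeping rather than mathematics. We must verify that right multiplication by $E_i^{(n)}$ really is the divided power of the star action, with the correct degree shift and orientation, via the equality $\iota^{\pm}$ with $\ast$ and \eqref{v}. We must also match the signs so that $(\widetilde{e}_i^\ast)^n$ rather than $(\widetilde{f}_i^\ast)^n$ appears and agrees with $S_i^\ast$. A further subtlety is that the vanishing criterion is stated for $\mathcal{DR}^\lambda$ with $\lambda$ fixed. We apply it at weight $s_i\lambda$, and the identity $(s_i\lambda)_\pm$ versus $\lambda_\pm$ must be checked to ensure that $\mathcal{DR}^{s_i\lambda}$ is the relevant quotient. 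Finally, for the quotient statement we need the fact that a quotient functor preserves the direct sum decomposition.
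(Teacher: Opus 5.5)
Your proposal is correct and follows essentially the same route as the paper. Both arguments apply \thref{JA}\ref{LA} to the star action $\mathscr{U}(-,\operatorname{wt}b)\circ\ast$; your binomial $\genfrac{[}{]}{0pt}{}{\varphi_i^\ast(G(b))+n}{n}_i$ equals the paper's $\genfrac{[}{]}{0pt}{}{\varepsilon_i(b^\ast)}{n}_i$. Both then identify the main summand with $G(S_i^\ast b)$, kill every summand with $d_i(b'^\ast)>n$ in $\dot{\mathcal{DR}}^{s_i\lambda}$, and split on whether $b^\ast$ is $i$-extremal. Your worry about comparing $(s_i\lambda)_\pm$ with $\lambda_\pm$ is unnecessary: the vanishing criterion for $\mathcal{DR}^\lambda$ is stated for an arbitrary weight, so it applies directly at $s_i\lambda$.
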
 \begin{proof}Assume $\langle i,\lambda\rangle\ge0$. \thref{JA} applied to $\mathscr{U}(-,\operatorname{wt}b)\circ\ast$ gives $G(b)E_i^{(\langle i,\lambda\rangle)}\cong\genfrac{[}{]}{0pt}{}{\varepsilon_i(b^\ast)}{\langle i,\lambda\rangle}_iG(S_i^\ast b)\oplus\bigoplus_{b'}c_{b'}G(b')$, where $b'\in B(\widetilde{U}(\mathfrak{g})1_{s_i\lambda})$ in the summation satisfies $d_i(b'^\ast)>d_i(b^\ast)$. Any such $G(b')$ vanishes in $\dot{\mathcal{DR}}^{s_i\lambda}$, and so does $G(S_i^\ast b)$ if $b^\ast$ is not $i$-extremal. Otherwise, the coefficient of $G(S_i^\ast b)$ is 1. The case for $\langle i,\lambda\rangle<0$ is similar. \end{proof}\par Set $I^{\ast}\coloneqq\bigsqcup_{k=0}^\infty I^k$. For $\underline{i}=(i_1,\ldots,i_l)\in I^{\ast}$, write $S_{\underline{i}}\coloneqq S_{i_l}\cdots S_{i_1}$. An element $b\in B(\widetilde{U}(\mathfrak{g}))$ is called extremal if $S_{\underline{i}}b$ is $j$-extremal for all $\underline{i}\in I^{\ast}$ and $j\in I$. The crystal of the extremal weight module $V(\lambda)$ is given by \cite[\S 8]{O} \begin{equation}\label{dB}B(\lambda)\coloneqq\big\{\mkern1mub\in B(\widetilde{U}(\mathfrak{g})1_{\lambda})\,|\,b^\ast\text{ is extremal}\mkern1mu\big\}.\end{equation}\par There exists an isomorphism of $\mathbb{Z}[q^{\pm}]$-algebras (cf.\ \cite[Thm.~1.2]{G}, \cite[Thm.~A]{J}, \cite[Thm.~6.18]{I}) \[\widetilde{U}_\mathbb{Z}(\mathfrak{g})\xrightarrow{\kern.5em\sim\kern.5em}\bigoplus_{\lambda,\mu\in X}K_0\big(\dot{\w}(\lambda,\mu)\big)\] that sends $E^{(n)}_i1_\lambda$ to $[E^{(n)}_i\textnormal{\fontfamily{pbk}\selectfont1}_\lambda]$ ($i\in I$, $n\in\mathbb{Z}$). Let \[I_\lambda\coloneqq\sum_{b\in B(\widetilde{U}(\mathfrak{g})1_{\lambda})\setminus B(\lambda)}\mathbb{Z}[q^\pm][G(b)]\subset\bigoplus_{\mu\in X}K_0(\dot{\w}(\lambda,\mu)).\]\par\begin{lemma}\thlabel{eB} \begin{enumerate*}[(i),font=\normalfont]\item $I_\lambda S_i\subset I_{s_i\lambda}$. \label{fB}\item $I_\lambda$ is a $\widetilde{U}_\mathbb{Z}(\mathfrak{g})$-submodule of $\widetilde{U}_\mathbb{Z}(\mathfrak{g})1_\lambda$.\end{enumerate*}\end{lemma}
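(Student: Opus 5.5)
The plan is to prove \ref{fB} first at the level of the orthodox basis, using the decomposition of Proposition~\thref{JA}. Then I would deduce (ii) by characterizing $I_\lambda$ as the common kernel of a family of maps, each of which is compatible with left multiplication by $\widetilde{U}_\mathbb{Z}(\mathfrak{g})$.

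For \ref{fB}, fix $b\in B(\widetilde{U}(\mathfrak{g})1_\lambda)\setminus B(\lambda)$ and assume $\langle i,\lambda\rangle\ge0$; the other sign is symmetric. As in the proof of \thref{cB}, \thref{JA} applied to $\mathscr{U}(-,\operatorname{wt}b)\circ\ast$ gives
\[[G(b)S_i]=\genfrac{[}{]}{0pt}{}{\varepsilon_i(b^\ast)}{\langle i,\lambda\rangle}_i[G(S_i^\ast b)]+\sum_{b'}c_{b'}[G(b')],\]
where every $b'$ in the sum satisfies $d_i(b'^\ast)>d_i(b^\ast)\ge|\langle i,\lambda\rangle|=|\langle i,s_i\lambda\rangle|$. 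Hence each such $b'^\ast$ fails to be $i$-extremal, so $b'\notin B(s_i\lambda)$. It remains to show $S_i^\ast b\notin B(s_i\lambda)$. The crystal operator $S_i$ is a bijection on the normal crystal $B(\widetilde{U}(\mathfrak{g}))$, and $S_i^{-1}$ agrees with $S_i$ up to the weight bookkeeping. From this I expect the set of extremal elements to be stable under $S_i$ and $S_i^{-1}$: if $S_ib^\ast$ were extremal, then every $S_{\underline{j}}S_i^{-1}S_ib^\ast$ would be $j$-extremal, forcing $b^\ast$ to be extremal. So $(S_i^\ast b)^\ast=S_ib^\ast$ is not extremal, which gives \ref{fB}. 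The one point to check carefully is precisely this invertibility and stability of extremality under $S_i$, which I would take from \cite[\S8]{O}.

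For (ii), I would define, for each $\underline{i}\in I^\ast$, the map
\[\Theta_{\underline{i}}\colon\bigoplus_\mu K_0(\dot{\w}(\lambda,\mu))\to\bigoplus_\mu K_0(\dot{\mathcal{DR}}^{s_{\underline{i}}\lambda}_\mu),\qquad x\mapsto\text{image of }xS_{\underline{i}}.\]
Left multiplication by $1$-morphisms commutes with right multiplication by $S_{\underline{i}}$, and the projection $\mathscr{U}(s_{\underline{i}}\lambda,-)\to\mathcal{DR}^{s_{\underline{i}}\lambda}$ is $2$-natural by \eqref{LB}. So each $\Theta_{\underline{i}}$ is a left $\widetilde{U}_\mathbb{Z}(\mathfrak{g})$-module map, and it suffices to show $I_\lambda=\bigcap_{\underline{i}}\ker\Theta_{\underline{i}}$.

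For ``$\subset$'': by \ref{fB} iterated, $I_\lambda S_{\underline{i}}\subset I_{s_{\underline{i}}\lambda}$. The latter is spanned by classes $G(b)$ with $b^\ast$ not extremal; in particular such a $b^\ast$ is not $j$-extremal for some $j$ (taking $\underline{i}$ empty). Hence $G(b)$ vanishes in $\mathcal{DR}^{s_{\underline{i}}\lambda}$ by the criterion stated before \thref{cB}.

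For ``$\supset$'': write $x=\sum_b a_b[G(b)]$, and note that the part supported on $B(\widetilde{U}(\mathfrak{g})1_\lambda)\setminus B(\lambda)$ already lies in every kernel. So I may assume $x$ is supported on $B(\lambda)$ and must show $\Theta_\emptyset(x)=0$ forces $x=0$. For $b\in B(\lambda)$, $b^\ast$ is $j$-extremal for all $j$, so $G(b)$ survives in $\mathcal{DR}^\lambda$. By \thref{MB} and the diagram \eqref{SB}, $\dot{\mathcal{DR}}^\lambda$ is a quotient of $\dot{\w}(\lambda,\mu)$ whose kernel lies in the radical on the relevant objects. Hence the surviving $G(b)$ remain pairwise non-isomorphic indecomposables, and their classes are linearly independent in $K_0$. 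This gives $x=0$.

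I expect the main obstacle to be this last independence statement: that distinct surviving orthodox basis elements stay distinct and indecomposable in $\dot{\mathcal{DR}}^\lambda$. I would deduce it from the fact that $\dot{\mathcal{DR}}^\lambda\simeq\dot{\mathcal{X}}^{-\lambda_-,\lambda_+}$ is humorous, combined with the commutative diagram in the proof of \thref{DA}, which identifies the images of the $G(b)$ with the orthodox basis $G(B(-\lambda_-)\otimes B(\lambda_+))$ of \thref{FB}.
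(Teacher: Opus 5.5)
\textbf{Part (i).} Your argument is essentially the paper's. You use the decomposition of $G(b)S_i$ from \thref{JA} (as in \thref{cB}), and the bound $d_i(b'^\ast)>d_i(b^\ast)\ge|\langle i,s_i\lambda\rangle|$ for the extra summands. You also use that $(S_i^\ast b)^\ast=S_ib^\ast$ is not extremal, which does follow from $S_i^2=\mathrm{id}$ on the normal crystal.

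\textbf{Part (ii) has a genuine gap.} The equality $I_\lambda=\bigcap_{\underline{i}}\ker\Theta_{\underline{i}}$ is false; already $I_\lambda\not\subset\ker\Theta_\emptyset$ in general.

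The faulty step is ``such a $b^\ast$ is not $j$-extremal for some $j$ (taking $\underline{i}$ empty)''. This reverses the implication. Extremality of $b^\ast$ implies $j$-extremality for all $j$. Non-extremality only says that $S_{\underline{i}}b^\ast$ fails to be $j$-extremal for some $\underline{i}$ and $j$, possibly with $\underline{i}$ nonempty.

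By the criterion stated before \thref{cB}, $G(b)$ survives in $\dot{\mathcal{DR}}^\lambda$ exactly when $b^\ast$ is $j$-extremal for every $j$. Hence $\Theta_\emptyset([G(b)])\neq0$ for every $b$ such that $b^\ast$ is $j$-extremal for all $j$ but not extremal, even though $[G(b)]\in I_\lambda$.

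Such $b$ exist whenever $V(\lambda)$ is a proper quotient of $V(-\lambda_-)\otimes V(\lambda_+)$. For example, take $\mathfrak{sl}_3$ and $\lambda=\Lambda_1-\Lambda_2$. Then $\lambda_+=\Lambda_1$ and $\lambda_-=\Lambda_2$, so $K_0(\dot{\mathcal{DR}}^\lambda)$ has rank $9$, while $V(\lambda)\cong V(\Lambda_2)$ has dimension $3$. Thus six orthodox basis elements lie in $I_\lambda$ but survive in $\mathcal{DR}^\lambda$.

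Conceptually, membership of $G(b)$ in $I_\lambda$ is an existential condition over $\underline{i}$. It cannot be cut out as the common kernel of the maps $\Theta_{\underline{i}}$. Your module-map argument only shows that $\bigcap_{\underline{i}}\ker\Theta_{\underline{i}}$ is a submodule, and this lies inside $\ker\Theta_\emptyset$, a proper part of $I_\lambda$.

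\textbf{What is missing} is a way to propagate along the witness $\underline{i}$ while multiplying on the left. The paper does this by induction on the length $l$ of a minimal witness $\underline{i}=(i_1,\ldots,i_l)$.

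For $l=1$, $G(b)\cong0$ in $\dot{\mathcal{DR}}^\lambda$. Since $\mathcal{DR}^\lambda$ is a 2-functor, $uG(b)\cong0$ there too. So every summand of $uG(b)$ is some $G(b')$ with $b'\notin B(\lambda)$.

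For $l>1$, proceed as follows.
\begin{enumerate}
\item $\textnormal{\fontfamily{pbk}\selectfont1}_\lambda$ is a retract of $S_{i_1}S_{i_1}\textnormal{\fontfamily{pbk}\selectfont1}_\lambda$ by \eqref{u}, so $uG(b)$ is a summand of $uG(b)S_{i_1}S_{i_1}$.
\item By \thref{cB}, the summands of $G(b)S_{i_1}$ are $G(S_{i_1}^\ast b)$, whose witness has length $l-1$, and objects vanishing in $\dot{\mathcal{DR}}^{s_{i_1}\lambda}$, which have witness length $1$.
\item Induction then gives $[uG(b)S_{i_1}]\in I_{s_{i_1}\lambda}$, and part (i) gives $[uG(b)S_{i_1}S_{i_1}]\in I_\lambda$.
\item Since $I_\lambda$ is spanned by a subset of the basis, the summand $uG(b)$ also has class in $I_\lambda$.
\end{enumerate}
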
\par\begin{proof}Let $b\in B(\widetilde{U}(\mathfrak{g})1_{\lambda})\setminus B(\lambda)$, and let $q^cG(b')$ be a summand of $G(b)S_i$. Since $S_ib^\ast$ is not extremal, \thref{cB} implies $b'\not\in B(s_i\lambda)$, proving \ref{fB}. Let $\underline{i}=(i_1,\ldots,i_l)\in I^{\ast}$ be such that $S_{i_{k-1}}\cdots S_{i_1}b^\ast$ is $i_{k}$-extremal for $1\le k<l$, but $S_{i_{l-1}}\cdots S_{i_1}b^\ast$ is not $i_l$-extremal. We prove $[uG(b)]\in I_\lambda$ for any 1-morphism $u$ of $\mathscr{U}$ by induction on $l$. If $l=1$, then $G(b)\cong0$ in $\dot{\mathcal{DR}}^\lambda$. Since $\dot{\z}^\lambda$ is a 2-functor, $uG(b)\cong0$ in $\dot{\z}^\lambda$, and hence $[uG(b)]\in I_\lambda$. Assume $l>1$, and let $q^cG(b')$ be a summand of $G(b)S_{i_1}$. By \thref{cB}, either $b'=S_{i_1}^\ast b$ or ${b'}^\ast$ is not $j$-extremal for some $j\in I$. In each case, $[uG(b')]\in I_{s_{i_1}\lambda}$ by the induction hypothesis for $l=l-1$ or $l=1$, respectively. Hence $[uG(b)S_{i_1}]\in I_{s_{i_1}\lambda}$, and $[uG(b)S_{i_1}S_{i_1}]\in I_{s_{i_1}\lambda}S_{i_1}\subset I_\lambda$ by \ref{fB}. Since $\textnormal{\fontfamily{pbk}\selectfont1}_\lambda$ is a retract of $S_{i_1}S_{i_1}\textnormal{\fontfamily{pbk}\selectfont1}_\lambda$ \eqref{u}, $uG(b)$ is a direct summand of $uG(b)S_{i_1}S_{i_1}$, and so $[uG(b)]\in I_\lambda$.\end{proof}\par For $\epsilon\in\{-,+\}$ and $\lambda\in X$, let $I^{\ast,\epsilon}_\lambda\coloneqq\left\{\kern.07em(\underline{i},j)\in I^{\ast}\times I\,\middle|\,\epsilon\langle j,w_{\underline{i}}(\lambda)\rangle\ge0\kern.07em\right\}$, where $w_{\underline{i}}\coloneqq s_{i_l}\cdots s_{i_1}\in W$ for $\underline{i}=(i_1,\ldots,i_l)$. We have a presentation $V(\lambda)\cong\widetilde{U}(\mathfrak{g})1_\lambda/I'_\lambda$, where \[I'_\lambda\coloneqq\sum_{(\underline{i},j)\in I^{\ast,+}_\lambda}\widetilde{U}(\mathfrak{g})E_jS_{\underline{i}}1_\lambda+\sum_{(\underline{i},j)\in I^{\ast,-}_\lambda}\widetilde{U}(\mathfrak{g})F_jS_{\underline{i}}1_\lambda.\]\par\begin{lemma}\thlabel{gB} $I'_\lambda=\mathbb{Q}(q)I_\lambda$.\end{lemma}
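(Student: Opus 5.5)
The plan is to prove the two inclusions separately, working inside $\widetilde{U}(\mathfrak{g})1_\lambda\cong\mathbb{Q}(q)\otimes\bigoplus_{\mu}K_0(\dot{\w}(\lambda,\mu))$. The inclusion $I'_\lambda\subset\mathbb{Q}(q)I_\lambda$ will come from \thref{eB}, and the reverse inclusion from a dimension count.

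\emph{Step 1: $I'_\lambda\subset\mathbb{Q}(q)I_\lambda$.} By \thref{eB}(ii), $I_\lambda$ is a left $\widetilde{U}_\mathbb{Z}(\mathfrak{g})$-submodule, so it suffices to show that each generator $E_jS_{\underline{i}}1_\lambda$ with $(\underline{i},j)\in I^{\ast,+}_\lambda$ lies in $I_\lambda$, and similarly for $F_jS_{\underline{i}}1_\lambda$. Put $\mu\coloneqq w_{\underline{i}}(\lambda)$ and write $E_jS_{\underline{i}}1_\lambda=(E_j1_\mu)S_{i_l}\cdots S_{i_1}1_\lambda$. Applying \thref{eB}(i) $l$ times, this reduces to showing $[E_j\textnormal{\fontfamily{pbk}\selectfont1}_\mu]\in I_\mu$ whenever $\langle j,\mu\rangle\ge0$. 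Here $E_j\textnormal{\fontfamily{pbk}\selectfont1}_\mu$ is indecomposable and self-dual, so it equals $G(b)$ for a unique $b\in B(\widetilde{U}(\mathfrak{g})1_\mu)$. It remains to see that $b\notin B(\mu)$, i.e.\ that $G(b)$ vanishes in $\dot{\mathcal{DR}}^\mu$; by the discussion preceding \thref{cB}, this means $b^\ast$ is not $j$-extremal. This vanishing is immediate: $\langle j,\mu_-\rangle=0$, so $\mathcal{DR}^{-\mu_-,\mu_+}$ kills the ideal $I^{+j,0}$, which contains $1_{E_j\textnormal{\fontfamily{pbk}\selectfont1}_\mu}$. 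The $F_j$ case is symmetric, using $I^{-j,\langle j,\mu_+\rangle}$ with $\langle j,\mu_+\rangle=0$ when $\langle j,\mu\rangle\le0$.

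\emph{Step 2: equality.} Step 1 gives a surjection $V(\lambda)=\widetilde{U}1_\lambda/I'_\lambda\twoheadrightarrow\widetilde{U}1_\lambda/\mathbb{Q}(q)I_\lambda$. Since $\{G(b)\}$ is a basis of $\widetilde{U}_\mathbb{Z}1_\lambda$ and $I_\lambda$ is spanned by a subset of it, the target has basis the images of $G(b)$ for $b\in B(\lambda)$. By Kashiwara \cite[\S8]{O}, $V(\lambda)$ has a basis indexed by the same set $B(\lambda)$ (\ref{dB}). If the weight spaces were finite-dimensional, comparing dimensions would force the surjection to be an isomorphism.

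\emph{Step 3: the main obstacle.} Extremal weight spaces can be infinite-dimensional, for instance at level zero in affine type, so the count in Step 2 must be done on finite pieces. I would first try to filter by the strata $\widetilde{U}(\mathfrak{g})1_\lambda$ restricted to weight $\mu$ and $\le\nu$, as in \S\ref{OB}; each such piece is finite-dimensional, spanned by $U^-_{\lambda-\mu+\nu}U^+_{\nu}1_\lambda$. By (\ref{TB}), the orthodox elements $G(b_1\otimes t_\lambda\otimes b_2)$ agree with $G(b_2)G(b_1)\textnormal{\fontfamily{pbk}\selectfont1}_\lambda$ modulo lower strata. Kashiwara's global basis elements satisfy the analogous statement \cite[(3.1.1)]{O}, and his kernel of $\widetilde{U}1_\lambda\to V(\lambda)$ is spanned by global basis elements indexed by the complement of $B(\lambda)$. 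The step I expect to fail most easily is showing that both kernels meet each finite stratum in subspaces of the same dimension $\#\{b\notin B(\lambda)\text{ in that stratum}\}$. I would try to do this by passing through $K_0(\dot{\mathcal{DR}}^\lambda)\cong V_\mathbb{Z}(-\lambda_-)\otimes V_\mathbb{Z}(\lambda_+)$, where both kernels become compatible with the finite-dimensional tensor product weight spaces. Once the dimensions match on each stratum, the inclusion from Step 1 is an equality stratum by stratum, and hence globally.
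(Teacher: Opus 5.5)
\emph{Verdict: there is a genuine gap, in the reverse inclusion $\mathbb{Q}(q)I_\lambda\subset I'_\lambda$.}

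\emph{Step~1 is fine.} It is essentially the paper's argument for $I'_\lambda\subset\mathbb{Q}(q)I_\lambda$. The object $uE^\epsilon_j1_\mu$ dies in $\mathcal{DR}^\mu$ when $\epsilon\langle j,\mu\rangle\ge0$, and \thref{eB} transports this along right multiplication by the $S_{i_k}$.

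\emph{Steps~2 and~3 do not close the reverse inclusion.} Step~2 works only when every $B(\lambda)_\mu$ is finite. In that case it replaces the paper's argument by Kashiwara's theorem that $V(\lambda)$ has a basis indexed by $B(\lambda)$. You correctly note that finiteness fails in general.

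Step~3 does not fill the hole. Its key step is left open, and even granting it, ``equality stratum by stratum, hence globally'' does not follow. For fixed $\mu$, the pieces $(\widetilde U1_\lambda)_{\le\nu}$ form an infinite system of finite-dimensional quotients. Nested subspaces can have the same image in every such quotient without being equal. For example, in $\bigoplus_{n\ge0}\mathbb{Q}e_n$ the span of the $e_n-e_{n+1}$ surjects onto every quotient by $\operatorname{span}\{e_m\}_{m\ge n}$, yet misses $e_0$. You would need a separatedness property of $I'_\lambda$, and nothing in your plan provides it.

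Passing through $K_0(\dot{\mathcal{DR}}^\lambda)\cong V_\mathbb{Z}(-\lambda_-)\otimes V_\mathbb{Z}(\lambda_+)$ does not help by itself either. That module surjects onto $V(\lambda)$, but with nonzero kernel in general. It only detects those $b$ for which $b^\ast$ already fails to be $j$-extremal for some $j$.

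\emph{The missing idea: no counting is needed.} Prove $[G(b)]\in I'_\lambda$ directly for each $b\in B(\widetilde U(\mathfrak g)1_\lambda)\setminus B(\lambda)$. Induct on the length $l$ of a minimal $\underline i=(i_1,\ldots,i_l)$ witnessing that $b^\ast$ is not extremal, simultaneously for all $\lambda$.

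If $l=1$, then $G(b)$ vanishes in $\mathcal{DR}^\lambda$. So $[G(b)]$ lies in the kernel of $\widetilde U(\mathfrak g)1_\lambda\to V(-\lambda_-)\otimes V(\lambda_+)$. That kernel is the left ideal generated by the $F_j^{1+\langle j,\lambda_+\rangle}1_\lambda$ and $E_j^{1+\langle j,\lambda_-\rangle}1_\lambda$. Up to nonzero scalars, these are $F_j1_\lambda$ or $F_jS_j1_\lambda$ (resp.\ $E_j1_\lambda$ or $E_jS_j1_\lambda$), according to the sign of $\langle j,\lambda\rangle$. All of them lie in $I'_\lambda$.

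If $l>1$, induction gives $[G(S_{i_1}^\ast b)]\in I'_{s_{i_1}\lambda}$. Hence $[G(S_{i_1}^\ast b)S_{i_1}]\in I'_{s_{i_1}\lambda}S_{i_1}\subset I'_\lambda$, which is immediate from the definition of $I'$. On the other hand, \thref{cB} applied to $S_{i_1}^\ast b$ gives $G(S_{i_1}^\ast b)S_{i_1}\cong G(b)$ in $\dot{\mathcal{DR}}^\lambda$. So the difference of the two classes lies in $I'_\lambda$ by the case $l=1$, and therefore $[G(b)]\in I'_\lambda$.

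This is the paper's proof. It does not depend on the size of weight spaces and needs no comparison with Kashiwara's global basis of $V(\lambda)$.
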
 \begin{proof}Let $(\underline{i},j)\in I^{\ast,\epsilon}_\lambda$ and $u$ be any 1-morphism of $\mathscr{U}$. If $\underline{i}$ is of length 0, $uE^\epsilon_j\textnormal{\fontfamily{pbk}\selectfont1}_\lambda$ vanishes in $\mathcal{DR}^\lambda$ by the defining relations. This implies $[uE^\epsilon_j\textnormal{\fontfamily{pbk}\selectfont1}_\lambda]\in I_\lambda$. For general $\underline{i}$, we have $[uE^\epsilon_jS_{\underline{i}}\textnormal{\fontfamily{pbk}\selectfont1}_\lambda]\in I_\lambda$ by the case $l=0$ and \thref{eB} \ref{fB}. This shows $I'_\lambda\subset\mathbb{Q}(q)I_\lambda$. Let $b\in B(\widetilde{U}(\mathfrak{g})1_{\lambda})\setminus B(\lambda)$. For the opposite inclusion, we prove $[G(b)]\in I'_\lambda$ by induction on the length of $\underline{i}$ (for all $\lambda\in X$), where $\underline{i}=(i_1,\ldots,i_l)\in I^{\ast}$ is chosen as in the proof of \thref{eB}. If $l=1$, $G(b)$ vanishes in $\mathcal{DR}^\lambda$, so $[G(b)]\in K_0(\dot{\mathscr{U}}(\lambda,\mu))\cong1_\mu\widetilde{U}_\mathbb{Z}(\mathfrak{g})1_\lambda$ vanishes in $V_\mathbb{Z}(-\lambda_-)\otimes V_\mathbb{Z}(\lambda_+)$. Since \[V(-\lambda_-)\otimes V(\lambda_+)\cong\widetilde{U}(\mathfrak{g})1_\lambda\Big/\sum_{j\in I}\big(\widetilde{U}(\mathfrak{g})F_j^{1+\langle j,\lambda_+\rangle}1_\lambda+\widetilde{U}(\mathfrak{g})E_j^{1+\langle j,\lambda_-\rangle}1_\lambda\big)\,,\] $[G(b)]$ lies in the denominator, which is a subspace of $I'_\lambda$. Assume $l>1$. By the induction hypothesis, $[G(S_{i_1}^\ast b)]\in I'_{s_{i_1}(\lambda)}$, so $[G(S_{i_1}^\ast b)S_{i_1}]\in I'_{s_{i_1}(\lambda)}S_{i_1}\subset I'_\lambda$. By \thref{cB} and the case $l=1$, we have $[G(S_{i_1}^\ast b)S_{i_1}]-[G(b)]\in I'_\lambda$. Hence $[G(b)]\in I'_\lambda$.\end{proof}\par Let $\widetilde{\mathcal{V}}^\lambda_\mu$ be the quotient of $\mathscr{U}(\lambda,\mu)$ by the two-sided ideal generated by identity morphisms of objects \begin{equation}\label{hB}uE_j^\epsilon F_{i_l}^{\langle i_l,s_{i_{l-1}}\cdots s_{i_1}(\lambda)\rangle}\cdots F_{i_1}^{\langle i_1,\lambda\rangle}\textnormal{\fontfamily{pbk}\selectfont1}_\lambda\quad\text{ for }\epsilon\in\{-,+\}\text{ and }(\underline{i},j)\in I^{\ast,\epsilon}_\lambda,\end{equation} where $u$ is any 1-morphism of $\mathscr{U}$. Let $\mathcal{V}^\lambda_\mu$ be the quotient of $\widetilde{\mathcal{V}}^\lambda_\mu$ obtained by killing far-left positive degree bubbles. Clearly, $\mathscr{U}(\lambda,-)\colon\mathscr{U}\to\mathrm{Cat}_\Bbbk$ induces 2-representations $\widetilde{\mathcal{V}}^\lambda,\mathcal{V}^\lambda\colon\mathscr{U}\to\mathrm{Cat}_\Bbbk$.\par Let $V_\mathbb{Z}(\lambda)$ be the $\widetilde{U}_\mathbb{Z}(\mathfrak{g})$-submodule of $V(\lambda)$ generated by the extremal weight vector $u_\lambda$.\par\begin{theorem}\thlabel{A} There exist isomorphisms of $\widetilde{U}_\mathbb{Z}(\mathfrak{g})$-modules \[\bigoplus_{\mu\in X}K_0(\smash{\dot{\F}}\vphantom{\dot{\mathcal{V}}}^\lambda_\mu)\cong\bigoplus_{\mu\in X}K_0(\dot{\E}^\lambda_\mu)\cong V_\mathbb{Z}(\lambda)\ \] and isomorphisms of crystals $\mathbf{G}^{\textup{o}}(\smash{\dot{\F}}\vphantom{\dot{\mathcal{V}}}^\lambda)\cong\mathbf{G}^{\textup{o}}(\dot{\E}^\lambda)\cong B(\lambda)$. \end{theorem}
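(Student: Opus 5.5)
The plan is to identify both quotients of $\dot{\mathscr{U}}(\lambda,-)$ with the quotient of $\widetilde{U}_\mathbb{Z}(\mathfrak{g})1_\lambda$ by $I_\lambda$, using the orthodox basis throughout. Begin with the crystal statement. Let $J_\lambda\subset\bigsqcup_\mu\mathbf{G}^{\textup{o}}(\dot{\mathscr{U}}(\lambda,\mu))$ be the set of $G(b)$ with $b\in B(\widetilde{U}(\mathfrak{g})1_\lambda)\setminus B(\lambda)$.

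The first and main step is to show that the indecomposables of $\dot{\widetilde{\mathcal{V}}}{}^\lambda_\mu$ are exactly the images of $G(b)$ for $b\in B(\lambda)$, and that these images are pairwise non-isomorphic. Equivalently, $G(b)\cong0$ in $\dot{\widetilde{\mathcal{V}}}{}^\lambda$ if and only if $b\notin B(\lambda)$. Since $\widetilde{\mathcal{V}}^\lambda_\mu$ is a quotient of $\mathscr{U}(\lambda,\mu)$ by an ideal generated by identity morphisms of certain objects $w$, an indecomposable $P$ vanishes in the quotient exactly when it is a summand of some $uw$. By the proof of \thref{gB}, the classes $[uE_j^\epsilon S_{\underline{i}}\textnormal{\fontfamily{pbk}\selectfont1}_\lambda]$ lie in $I_\lambda$. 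Positivity of orthodox-basis expansions then shows that every summand of such an object is $G(b)$ with $b\notin B(\lambda)$. One subtlety is that \eqref{hB} uses the ordinary powers $F^{\langle\cdot\rangle}$ rather than divided powers, but these differ only by the multiplicity $[n]!$, which does not change which summands occur.

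For the converse, I would rerun the induction of \thref{eB} at the level of objects. When $l=1$, $G(b)$ already vanishes in $\mathcal{DR}^\lambda$, so by \eqref{LB} it is a summand of some $uE_j^{\epsilon}\textnormal{\fontfamily{pbk}\selectfont1}_\lambda$, or it is killed by $I^{\pm j,N}$. The second possibility is the \emph{obstacle}: the ideals $I^{\pm j,N}$ and the bubbles are not generated by identity morphisms. I would handle it through the Grothendieck-group statement \thref{gB}: $[G(b)]\in I'_\lambda$, and the lattice spanned by classes of summands of the objects \eqref{hB} is the $\mathbb{Z}[q^\pm]$-span of a subset of the orthodox basis. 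For $l>1$, use \thref{cB} together with the fact that $\textnormal{\fontfamily{pbk}\selectfont1}_\lambda$ is a retract of $S_iS_i\textnormal{\fontfamily{pbk}\selectfont1}_\lambda$ by \eqref{u}. The same vanishing argument applies after also killing positive-degree bubbles, since those lie in the radical by \thref{GB}. The two quotients therefore have the same indecomposables up to isomorphism, and \thref{aA} together with \thref{DA} (via \thref{JA}) identify $\mathbf{G}^{\textup{o}}$ of each with the subcrystal $B(\lambda)$.

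Once this is established, the $K_0$ statements follow. Both Krull--Schmidt quotients have split Grothendieck groups free on the images of $G(b)$ for $b\in B(\lambda)$, so each equals $\bigoplus_\mu K_0(\dot{\mathscr{U}}(\lambda,\mu))/I_\lambda$. This quotient is a $\widetilde{U}_\mathbb{Z}$-module by \thref{eB}, and $I_\lambda$ is saturated because it is spanned by part of a basis. By \thref{gB} and the presentation $V(\lambda)\cong\widetilde{U}1_\lambda/I'_\lambda$, the quotient embeds into $V(\lambda)$. Its image is generated by the image of $1_\lambda$, namely $u_\lambda$, so it equals $V_\mathbb{Z}(\lambda)$.
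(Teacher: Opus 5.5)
Your proposal is correct and follows the paper's argument. The vanishing criterion ($G(b)$ vanishes in $\dot{\widetilde{\mathcal{V}}}{}^\lambda$ exactly when $b\notin B(\lambda)$) comes from $I'_\lambda=\mathbb{Q}(q)I_\lambda$ together with positivity and linear independence of the orthodox basis, positive-degree bubbles are harmless because they lie in the radical, and the $K_0$ and crystal statements then follow as you describe. The only difference is that your object-level induction on $l$, including the retract step, is unnecessary: the span argument you invoke for $l=1$ applies verbatim to every $b\notin B(\lambda)$ at once, and that is exactly the paper's one-line appeal to $I'_\lambda=\mathbb{Q}(q)I_\lambda$.
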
 \begin{proof}By \thref{gB}, $G(b)\in\mathbf{G}^{\textup{o}}(\dot{\w}(\lambda,\mu))$ vanishes in $\smash{\dot{\F}}\vphantom{\dot{\mathcal{V}}}^\lambda_\mu$ if and only if $[G(b)]$ vanishes in $V(\lambda)$, and thus $\mathbb{Q}(q)\otimes_{\mathbb{Z}[q^\pm]}\bigoplus_{\mu\in X}K_0(\smash{\dot{\F}}\vphantom{\dot{\mathcal{V}}}^\lambda_\mu)\cong V(\lambda)$. Since $K_0(\smash{\dot{\F}}\vphantom{\dot{\mathcal{V}}}^\lambda)$ is generated by $[\textnormal{\fontfamily{pbk}\selectfont1}_\lambda]$ as a $\bigoplus_{\mu,\mu'\in X}K_0(\dot{\w}(\mu,\mu'))\cong\widetilde{U}_\mathbb{Z}(\mathfrak{g})$-module, we have $K_0(\smash{\dot{\F}}\vphantom{\dot{\mathcal{V}}}^\lambda)\cong V_\mathbb{Z}(\lambda)$. By \thref{GB}, passing to $\mathcal{V}^\lambda_\mu$ does not change the class of indecomposable objects. The crystal isomorphism is clear from \thref{DA} and \eqref{dB}.\end{proof}\par\begin{corollary}\thlabel{iB} There exists a nonzero symmetric bilinear form $(\mathop{-}\nolimits,\mathop{-}\nolimits)\colon\allowbreak V(\lambda)\times V(\lambda)\to\mathbb{Q}(q)$ satisfying $(xu,v)=(u,\rho(x)v)$ for $x\in\widetilde{U}(\mathfrak{g})$ and $u,v\in V(\lambda)$.\end{corollary}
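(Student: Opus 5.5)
We will define the form on $V_\mathbb{Z}(\lambda)$ categorically, via Hom spaces in $\dot{\mathcal{V}}^\lambda_\mu$, and then extend scalars to $\mathbb{Q}(q)$. We use \thref{A} to identify $\bigoplus_\mu K_0(\dot{\mathcal{V}}^\lambda_\mu)\cong V_\mathbb{Z}(\lambda)$ with $[\textnormal{\fontfamily{pbk}\selectfont1}_\lambda]\mapsto u_\lambda$. For $P,Q\in\dot{\mathcal{V}}^\lambda_\mu$ we set
\[([P],[Q])\coloneqq\operatorname{grdim}\operatorname{Hom}_q(P,Q)=\sum_{d}\operatorname{rank}_\Bbbk\big(\dot{\mathcal{V}}^\lambda_\mu\big)_d(P,Q)\,q^{d}\]
(up to a normalization of $q\leftrightarrow q^{-1}$). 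Forms for distinct weights are declared orthogonal. This requires two preliminary facts. First, since $\mathcal{V}^\lambda_\mu$ is a quotient of $\mathscr{U}(\lambda,\mu)$ by a graded ideal, its Hom spaces are finitely generated over the Henselian local ring $\Bbbk$. Second, the far-left bubbles have been killed, so $\operatorname{End}_q(\textnormal{\fontfamily{pbk}\selectfont1}_\lambda)$ acts through $\Bbbk$; combined with local Laurentianity, this makes each graded piece finite and bounded below, so the sum defines a Laurent series. To obtain a rank, we first pass to the residue field, replacing rank by length or dimension over $\Bbbk/\mathfrak{m}$ if needed. The pairing is additive in both arguments and satisfies $(q[P],[Q])=q^{\mp1}([P],[Q])$. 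After extending scalars it becomes a sesquilinear form, and we take the bilinear variant by composing with the bar involution, which is induced by the duality of $\mathcal{V}^\lambda$ (\thref{EA} descends to $\mathcal{V}^\lambda$ since the defining ideals are bar-stable).

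The adjointness $(xu,v)=(u,\rho(x)v)$ comes from the adjunctions in $\mathscr{U}$. For a 1-morphism $u\in\mathscr{U}(\mu,\nu)$ we have $\rho(\overline{u})\dashv u$ (\S\ref{Y}), and $\mathcal{V}^\lambda$ is a 2-representation, so
\[\operatorname{Hom}_q(uP,Q)\cong\operatorname{Hom}_q(P,\rho(u)Q)\]
up to the bar. Taking graded dimensions and using \thref{r} ($\rho$ categorifies Lusztig's $\rho$) gives the identity on generators $E_i^{(n)}1_\mu,F_i^{(n)}1_\mu$, with degree shifts tracked via \eqref{v}. It then extends to all of $\widetilde{U}(\mathfrak{g})$.

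For symmetry, we apply the duality $\circledast$, which is contravariant and commutes with the action up to bar. This gives $\operatorname{Hom}_q(P,Q)\cong\operatorname{Hom}_q(Q^\circledast,P^\circledast)$, and the two arguments are exchanged once the bar-twist is absorbed into the definition. Checking that the grading conventions match exactly is where most care is needed. An alternative route to symmetry is to evaluate on the orthodox basis. For self-dual indecomposables $G(b),G(b')$, the form is $\delta_{bb'}$ plus terms in $q\mathbb{Z}[[q]]$ (or $q^{-1}$), because the radical is in positive degree by \thref{GB} and locally Laurentian boundedness holds. Symmetry then reduces to the dimension equality $\dim\operatorname{Hom}(G(b),q^dG(b'))=\dim\operatorname{Hom}(G(b'),q^dG(b))$, which follows from duality because both objects are self-dual. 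The same computation also gives nonvanishing: $(u_\lambda,u_\lambda)\in1+q\mathbb{Z}[[q]]$, since $\operatorname{End}(\textnormal{\fontfamily{pbk}\selectfont1}_\lambda)$ in degree $0$ is $\Bbbk$ and $\textnormal{\fontfamily{pbk}\selectfont1}_\lambda\ne0$ in $\mathcal{V}^\lambda$ by \thref{A}.

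The main obstacle is rationality: the form must take values in $\mathbb{Q}(q)$, not merely in Laurent series. I would argue via the adjunction. Every element of $V(\lambda)_\mu$ is $x u_\lambda$ for some $x$, so $(xu_\lambda,yu_\lambda)=(u_\lambda,\rho(x)yu_\lambda)$, and $\rho(x)y u_\lambda$ is a $\mathbb{Z}[q^\pm]$-combination of $u_\lambda$ in weight $\lambda$. Since $V(\lambda)_\lambda$ is spanned by $u_\lambda$ (the weight-$\lambda$ part of $B(\lambda)$ is $\{u_\lambda\}$ up to the extremal description \eqref{dB}), it suffices to know the single value $(u_\lambda,u_\lambda)=\operatorname{grdim}\operatorname{End}_q(\textnormal{\fontfamily{pbk}\selectfont1}_\lambda)$ in $\mathcal{V}^\lambda$. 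After bubbles are killed this is plausibly $1$; in any case it is a fixed nonzero series, and we may rescale by it. That yields a form with values in $\mathbb{Z}[q^\pm]\subset\mathbb{Q}(q)$, and adjointness and symmetry survive the rescaling.
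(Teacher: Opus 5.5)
You get the definition of the form (graded dimension of $\operatorname{Hom}(q^dP,\overline{Q})$ in $\dot{\mathcal{V}}^\lambda_\mu$) right, and the same goes for symmetry via the bar duality, contravariance via the adjunction together with \thref{r}, and nonvanishing. In fact $(u_\lambda,u_\lambda)=1$ exactly, since $\operatorname{End}_q(\mathbf{1}_\lambda)\cong\Bbbk$ in $\mathcal{V}^\lambda_\lambda$. It is simplest to take $\Bbbk$ to be a field, as the paper does.

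The gap is the rationality step. You reduce every value to $(u_\lambda,u_\lambda)$ by asserting $V(\lambda)_\lambda=\mathbb{Q}(q)u_\lambda$. That holds for $\lambda\in X^+\cup X^-$, but it is false for general extremal weights, and \eqref{dB} says nothing of the kind. For example, in affine type with $\lambda=\sum_i m_i\varpi_i$ of level zero and some $m_i\ge 2$, the weight space $V(\lambda)_\lambda$ is infinite-dimensional (Beck--Nakajima). The paper itself considers $b\in B(\lambda)_\lambda\setminus\{u_\lambda\}$ in \eqref{kB}. So contravariance does not determine $(xu_\lambda,yu_\lambda)$ from $(u_\lambda,u_\lambda)$. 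Your conclusion that the form is $\mathbb{Z}[q^{\pm}]$-valued is also wrong in general: Hom spaces in $\mathcal{V}^\lambda_\mu$ can be infinite-dimensional (they contain, e.g., polynomial algebras of dots), and their graded dimensions are genuinely non-polynomial rational functions.

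What is missing is a direct finiteness argument on Hom spaces. Take words $P=E^{\epsilon_l}_{i_l}\cdots E^{\epsilon_1}_{i_1}\mathbf{1}_\lambda$ and $Q=E^{\epsilon'_m}_{j_m}\cdots E^{\epsilon'_1}_{j_1}\mathbf{1}_\lambda$.
\begin{enumerate}
\item By the Khovanov--Lauda spanning set, $\mathscr{U}(\lambda,\mu)|_{q=1}(P,Q)$ is finitely generated over $\operatorname{End}_q(\mathbf{1}_\lambda)\otimes\Bbbk[x_1,\dots,x_l,y_1,\dots,y_m]$, where the $x_k$, $y_k$ act as dots at the bottom and top.
\item In $\mathcal{V}^\lambda_\mu$ the positive-degree bubbles in the region $\lambda$ are killed, so the bubble factor acts through $\Bbbk$.
\item The Hom space is therefore a finitely generated graded module over a polynomial ring with positive-degree generators. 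By Hilbert--Serre its Hilbert series is rational.
\item The classes of such words span $V(\lambda)$ over $\mathbb{Q}(q)$, so bilinearity gives values in $\mathbb{Q}(q)$.
\end{enumerate}

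Separately, your alternative symmetry route asserts $(G(b),G(b'))\in\delta_{bb'}+q\mathbb{Z}[[q]]$. This does not follow from \thref{GB}, which only concerns far-left bubbles, nor from local Laurentianity. Almost-orthonormality of this kind is exactly what the paper leaves open. Your main symmetry argument via the duality does not need it.
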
 \begin{proof}Take $\Bbbk$ to be a field. For $P,Q\in\dot{\E}^\lambda_\mu$, set $([P],[Q])\coloneqq\sum_{d\in\mathbb{Z}}q^{-d}\dim\dot{\E}^\lambda_\mu(q^dP,\overline{Q})\in\mathbb{Z}\mathopen{(\mkern-2mu(}q^{-1}\mathclose{)\mkern-2mu)}$. Clearly, this defines a $\mathbb{Z}[q^\pm]$-bilinear form on $K_0(\dot{\E}^\lambda_\mu)$, which extends to a $\mathbb{Q}(q)$-bilinear form on $V(\lambda)$ by \thref{A}. This is symmetric since the bar-involution gives an isomorphism $\dot{\E}^\lambda_\mu(q^dP,\overline{Q})\cong\dot{\E}^\lambda_\mu(q^dQ,\overline{P})$. The adjunction $u\dashv\overline{\rho(u)}$ in $\mathscr{U}$ gives $\dot{\E}^\lambda_{\mu'}(q^dxP,\overline{Q})\cong\mathcal{V}^\lambda_\mu(q^dP,\overline{\rho(x)Q})$ ($x\in\dot{\w}(\mu,\mu')$, $P\in\dot{\E}^\lambda_\mu$, $Q\in\dot{\E}^\lambda_{\mu'}$), which implies the contravariance by \thref{r}. We have $(u_\lambda,u_\lambda)=1$ by $\operatorname{End}_q(\textnormal{\fontfamily{pbk}\selectfont1}_\lambda)\cong\Bbbk$.\par Let $P=E_{i_l}^{\epsilon_l}\cdots E_{i_1}^{\epsilon_1}\textnormal{\fontfamily{pbk}\selectfont1}_\lambda$ and $Q=E_{j_m}^{\epsilon'_m}\cdots E_{j_1}^{\epsilon'_1}\textnormal{\fontfamily{pbk}\selectfont1}_\lambda$. Then $\mathscr{U}(\lambda,\mu)|_{q=1}(q^dP,Q)$ is finitely generated as a module over $\operatorname{End}_q(\textnormal{\fontfamily{pbk}\selectfont1}_\lambda)\otimes\Bbbk[x_1,\ldots,x_l,y_1,\ldots,y_m]$, where $x_k$ (resp. $y_k$) acts by placing a dot at the bottom (resp. top) of the $k$-th strand \cite[\S3.2.3]{G}. Hence, $\mathcal{V}^\lambda_\mu|_{q=1}(q^dP,\overline{Q})$ is a finitely generated module over $\Bbbk[x_1,\ldots,x_l,y_1,\ldots,y_m]$, and its Hilbert series is a rational function in $q$. Since the classes $[P]$ span $V(\lambda)$ over $\mathbb{Q}(q)$, the bilinear form takes values in $\mathbb{Q}(q)$.\end{proof}\par\begin{remark}\thlabel{jB} Note that a choice of a base field and a quiver Hecke datum determines a point $x\in\operatorname{Spec}\mathbf{k}_C$, where $\mathbf{k}_C$ is the ring of indeterminates (cf.\ \thref{_B}). If the Hom-spaces of $\dot{\E}^\lambda_\mu$ are finite free over all Henselian local rings $\Bbbk$, then taking $\Bbbk$ to be the \'etale stalk at $x$ shows that the form of \thref{iB} is independent of $x$. We do not know whether the Hom-spaces of $\dot{\E}^\lambda_\mu$ are free in general.\end{remark}\par\begin{remark}In \cite[Conj.~2.12]{s}, Kashiwara conjectured that there is a non-degenerate symmetric bilinear form on $V(\lambda)$, characterized by $(u_\lambda,G(b))=\delta_{b,u_\lambda}$ ($b\in B(\lambda)$) and $(xu,v)=(u,\rho(x)v)$ ($x\in\widetilde{U}(\mathfrak{g})$), which satisfies $(G(b),G(b'))\in\delta_{b,b'}+q^{-1}\mathbb{Z}[[q^{-1}]]$ for $b,b'\in B(\lambda)$ and $(G(b),G(b'))=\delta_{b,b'}$ for $b,b'\in B(\lambda)_\lambda$. Here $G(b)$ denotes the global basis. For $\mathfrak{g}$ of affine type, the bilinear form exists \cite[Prop.~4.1]{t}, and the almost orthonormality and the orthonormality on $V(\lambda)_\lambda$ are proved in \cite{R}; for basic weights, these are due to \cite{u}.\par If the canonical basis of $\widetilde{U}$ and the orthodox basis of $\dot{\w}$ coincide (cf.\ \cite[\S8]{V}), then $\{[G(b)]\}_{b\in B(\lambda)}$ is the global basis of $V(\lambda)$, and \thref{iB} gives the form in the conjecture if and only if \begin{equation}\label{kB}\dot{\E}^\lambda_\lambda|_{q=1}(\textnormal{\fontfamily{pbk}\selectfont1}_\lambda,G(b))=0\quad\text{for }b\in B(\lambda)_\lambda\setminus\{u_\lambda\}.\end{equation} Note that \eqref{kB} implies the freeness of the Hom-spaces in \thref{jB}. We do not know whether \eqref{kB} holds in general.\end{remark}\subsection{Fundamental weight modules in affine type}Let $(I,\cdot)$ be of affine type, and $X$ be its standard weight lattice of rank $|I|+1$. Let $\delta\in\mathbb{N}[I]$ be such that $\mathbb{Z}\delta$ is the radical of the bilinear form on $\mathbb{Z}[I]$, $X^0\coloneqq X\cap\mathbb{Q}[I]$ the level-zero weight lattice, $X^0_{\operatorname{cl}}\coloneqq X^0/\mathbb{Z}\delta$ the classical weight lattice, and ${\operatorname{cl}}\colon X^0\to X^0_{\operatorname{cl}}$ the canonical projection.\par Suppose that the underlying quiver Hecke datum satisfies $\prod_{j\in I}t_{ij}^{a_j}=1$ for all $i\in I$, where $\delta=\sum_{i\in I}a_ii$. Then the maps $\langle i,-\rangle$ and $c_i$ ($i\in I$) restrict to $X^0$ and factor through $X^0_{\operatorname{cl}}$. Let $\widetilde{U}'$ be the algebra $\widetilde{U}$ associated with the weight datum $X^0_{\operatorname{cl}}$. Recall that the definition of the 2-category $\mathscr{U}$ (\thref{Q}) and that of $\mathcal{V}^\lambda$ apply to an arbitrary weight datum. Let $\mathscr{U}'$ be the 2-category associated with the same Cartan datum and quiver Hecke datum, \bytes